\newtheorem{theorem}{Theorem}[section]
\newtheorem{lemma}[theorem]{Lemma}
\newtheorem{proposition}[theorem]{Proposition}
\newtheorem{definition}[theorem]{Definition}
\newtheorem{corollary}[theorem]{Corollary}
\newtheorem{hypothesis}[theorem]{Hypothesis}
\newtheorem{remark}[theorem]{Remark}
\DeclareMathOperator{\const}{const\,}
\newcommand{\bbC}{{\mathbb{C}}}
\newcommand{\bbN}{{\mathbb{N}}}
\newcommand{\bbR}{{\mathbb{R}}}
\newcommand{\bbZ}{{\mathbb{Z}}}
\newcommand{\ba}{{\mathbf{a}}}
\newcommand{\bt}{{\mathbf{t}}}
\newcommand{\bs}{{\mathbf{s}}}
\newcommand{\be}{{\mathbf{e}}}
\newcommand{\bu}{{\mathbf{u}}}
\newcommand{\bx}{{\mathbf{x}}}
\newcommand{\bn}{{\mathbf{n}}}
\newcommand{\bm}{{\mathbf{m}}}
\newcommand{\bk}{{\mathbf{k}}}
\newcommand{\cA}{{\mathcal A}}
\newcommand{\cB}{{\mathcal B}}
\newcommand{\cD}{{\mathcal D}}
\newcommand{\cE}{{\mathcal E}}
\newcommand{\cF}{{\mathcal F}}
\newcommand{\cH}{{\mathcal H}}
\newcommand{\cL}{{\mathcal L}}
\newcommand{\cM}{{\mathcal M}}
\newcommand{\cN}{{\mathcal N}}
\newcommand{\cS}{{\mathcal S}}
\newcommand{\tr}{\operatorname{tr}}
\newcommand{\dom}{\operatorname{dom}}
\newcommand{\sgn}{\operatorname{sgn}}
\begin{document}

\date{\today}
\title{Cwikel estimates revisited}
\author[G.\ Levitina]{Galina Levitina} 
\address{School of Mathematics and Statistics, UNSW, Kensington, NSW 2052,
Australia} 
\email{g.levitina@unsw.edu.au}

\author[F.\ Sukochev]{Fedor Sukochev}
\address{School of Mathematics and Statistics, UNSW, Kensington, NSW 2052,
Australia} 
\email{f.sukochev@unsw.edu.au}

\author[D.\ Zanin]{Dmitriy Zanin}
\address{School of Mathematics and Statistics, UNSW, Kensington, NSW 2052,
Australia} 
\email{d.zanin@unsw.edu.au}
\maketitle

\section{Introduction}

One of the most beautiful results in operator theory are the so-called Cwikel estimates concerning the singular values of the operator $M_fg(-i\nabla)$ on $L_2(\bbR^d)$, where $M_f$ denotes the multiplication operator on $L_2(\bbR^d)$ by the function $f$ and $\nabla$ denotes the gradient \cite{Simon}. It was conjectured by Simon \cite{Simon_conjecture} and proved by Cwikel \cite{Cwikel} (see also \cite{BS}) that conditions $f\in L_p(\bbR^d)$ and $g\in L_{p,\infty}(\bbR^d)$, $p>2$, ensure that the operator $M_fg(-i\nabla)$ belongs to the weak Schatten ideal $\cL_{p,\infty}(L_2(\bbR^d))$ and the norm of $M_fg(-i\nabla)$ in $\cL_{p,\infty}(L_2(\bbR^d))$ is dominated by the product of $\|f\|_p$ and $\|g\|_{p,\infty}$. Estimates given in Simon's book \cite{Simon} and deeper results in this direction can be found in \cite{BKS},  \cite{BS}, \cite{Frank},  \cite{Hundertmark}, \cite{Weidl}. 

The operators $M_f g(-i\nabla)$ and $f(i\nabla)M_g$ are unitarily equivalent, and therefore, their singular numbers coincide. However, the results given in \cite{Cwikel,Simon} do not reflect the fact that the functions $f$ and $g$ are interchangeable. Noting this deficiency, T.Weidl in \cite{Weidl} and \cite{Weidl_via_Cesaro} suggested a new, symmetric, approach to Cwikel estimates, proving that 
$$\mu^2_{\cB(L_2(\bbR^d))}(M_fg(-i\nabla))\leq\const C\mu^2_{L_\infty(\bbR^{2d})}(f\otimes g),\quad f\otimes g\in (L_2+L_\infty)(\bbR^{2d})$$
with some absolute constant. Here, $\mu(x)$ denotes the singular value function of an operator $x$ (see Section \ref{prel} for definition) and $C$ is the Cesaro operator. 
In this paper we prove a much stronger version of this result, namely that
\begin{equation}\label{intro_Cesaro}
C\mu^2(M_fg(-i\nabla))\leq\const  C\mu^2(f\otimes g).
\end{equation}
The trivial estimate $\mu^2(M_fg(-i\nabla))\leq C\mu^2(M_fg(-i\nabla))$, immediately implies Weidl's result above. 

In fact, we prove a radically more general version of this estimate for noncommutative variables $f$ and $g$. Namely, let $\cA_1$ and $\cA_2$ be semifinite von Neumann algebras represented on the same (separable) Hilbert space $\cH$ and let $\tau_1$ and $\tau_2$ be faithful normal semifinite traces on $\cA_1$ and $\cA_2,$ respectively. We denote by $\pi_1$ (respectively, $\pi_2$) the representation of $\cA_1$ (respectively, $\cA_2$) on $\cH$. We assume that the representation of the algebras $\cA_1$ and $\cA_2$ are intertwined in such a way that the estimate holds in the Hilbert-Schmidt ideal,  that is, if $x\in \cL_2(\cA_1,\tau_1), y\in \cL_2(\cA_2,\tau_2)$, then $\pi_1(x)\pi_2(y)\in \cL_2(\cH)$ and 
$$\|\pi_1(x)\pi_2(y)\|_2\leq \const \|x\|_2\|y\|_2.$$ Under this assumption we prove our first main result, which implies \eqref{intro_Cesaro} in the special case when $\cA_1$ and $\cA_2$ coincide with $L_\infty(\bbR^d)$. 
\begin{theorem}\label{intro_thm1}
 Suppose that  $x\otimes y \in (\cL_2+\cL_{\infty})(\cA_1\otimes \cA_2,\tau_1\otimes \tau_2)$. Then $\pi_1(x)\pi_2(y)\in \cB(\cH)$ and
$$\mu_{\cB(\cH)}^2(\pi_1(x)\pi_2(y))\prec\prec 532\mu_{\cA_1\otimes \cA_2}^2(x\otimes y),$$
in the sense of Hardy-Littlewood majorisation.
\end{theorem}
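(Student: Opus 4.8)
The plan is to recast the statement as a Hardy--Littlewood majorisation between two explicit non‑increasing functions and then to prove that inequality by a dyadic block decomposition fed by the Hilbert--Schmidt hypothesis.

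\emph{Reductions.} Replacing $x$ by $|x|$ and $y$ by $|y^{*}|$ alters neither $\mu(\pi_1(x)\pi_2(y))$ (use $\mu(ab)=\mu(ba)$, so that $\mu(\pi_1(x)\pi_2(y))^{2}=\mu(\pi_1(x^{*}x)\pi_2(yy^{*}))$) nor $\mu(x\otimes y)$ (use $\mu(|y|)=\mu(|y^{*}|)$), so I may assume $x,y\ge 0$. Introduce the dyadic spectral projections $e_{n}=\chi_{(2^{n},2^{n+1}]}(x)\in\cA_1$, $f_{m}=\chi_{(2^{m},2^{m+1}]}(y)\in\cA_2$, and set $\alpha_{n}=\tau_1(e_{n})$, $\beta_{m}=\tau_2(f_{m})$. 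Since $\mu(\pi_1(x)\pi_2(y))^{2}=\mu(\pi_1(x)\pi_2(y^{2})\pi_1(x))$, $\mu(x\otimes y)^{2}=\mu(x^{2}\otimes y^{2})$, and $x^{2}\otimes y^{2}\ge\sum_{n,m}4^{\,n+m}(e_{n}\otimes f_{m})$ as positive operators, it suffices to prove
$$\mu(\pi_1(x)\pi_2(y))^{2}\prec\prec 532\,\Phi,\qquad
\Phi:=\Bigl[\ \bigsqcup_{n,m}4^{\,n+m}\chi_{(0,\alpha_{n}\beta_{m})}\ \Bigr]^{*},$$
where $\bigsqcup$ is disjoint juxtaposition of functions and $[\,\cdot\,]^{*}$ the non‑increasing rearrangement; for then $\int_{0}^{t}\mu(\pi_1(x)\pi_2(y))^{2}\le 532\int_{0}^{t}\Phi\le 532\int_{0}^{t}\mu(x\otimes y)^{2}$.

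\emph{Block bounds.} Put $P_{n,m}=\pi_1(xe_{n})\pi_2(yf_{m})$, so $\pi_1(x)\pi_2(y)=\sum_{n,m}P_{n,m}$. The ranges of the $P_{n,m}$ lie in the mutually orthogonal subspaces $\pi_1(e_{n})\cH$ (indexed by $n$) and their co‑ranges in $\pi_2(f_{m})\cH$ (indexed by $m$), so $\pi_1(x)\pi_2(y)$ is an operator matrix for these two orthogonal decompositions of $\cH$. Each block obeys $\|P_{n,m}\|_{\infty}\le\|xe_{n}\|_{\infty}\|yf_{m}\|_{\infty}\le 4\cdot 2^{\,n+m}$ and, by the Hilbert--Schmidt hypothesis, $\|P_{n,m}\|_{2}\le\const\|xe_{n}\|_{2}\|yf_{m}\|_{2}\le\const\cdot4\cdot 2^{\,n+m}\sqrt{\alpha_{n}\beta_{m}}$; since a non‑increasing function bounded by $B$ with integral $\le I$ is majorised by $B\chi_{(0,I/B)}$, we get $\mu(P_{n,m})^{2}\prec\prec 16\cdot4^{\,n+m}\chi_{(0,\const^{2}\alpha_{n}\beta_{m})}$. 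Crucially, inside a fixed level $n+m=j$ the blocks $\{P_{n,j-n}\}_{n}$ have \emph{both} orthogonal ranges and orthogonal co‑ranges, whence for $X_{j}:=\sum_{n+m=j}P_{n,m}$ one has $X_{j}X_{j}^{*}=\bigoplus_{n}P_{n,j-n}P_{n,j-n}^{*}$; therefore $\mu(X_{j})^{2}$ is the juxtaposition of the $\mu(P_{n,j-n})^{2}$ and $\mu(X_{j})^{2}\prec\prec 16\cdot4^{\,j}\chi_{(0,\const^{2}\Sigma_{j})}$ with $\Sigma_{j}=\sum_{n+m=j}\alpha_{n}\beta_{m}$.

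\emph{Summation --- the main obstacle.} It remains to pass from the blocks to $\mu\bigl(\pi_1(x)\pi_2(y)\bigr)^{2}=\mu\bigl((\sum_{j}X_{j})(\sum_{j}X_{j})^{*}\bigr)$, and this is the heart of the matter: plain subadditivity of $\prec\prec$ (or a double sum $\sum_{j,j'}\mu(X_{j}X_{j'}^{*})$) is hopeless, because blocks of equal height \emph{stack up} rather than being laid side by side, costing an unbounded factor. The device is a majorisation lemma for sums of \emph{positive} operators whose norms, listed decreasingly, are dominated by a geometric sequence of ratio $\theta<1$: such a sum is $\prec\prec (1-\theta)^{-1}$ times the disjoint juxtaposition of the individual $\mu$'s, proved from the identity $\int_{0}^{t}\mu(\sum_{i}T_{i})=\sum_{i}\tr(pT_{i}p)$ for the optimal spectral projection $p$ together with $\tr(pT_{i}p)\le h_{i}\min(t,w_{i})$ and the elementary fact that $\sum_{i}h_{i}\min(t,w_{i})\le(1-\theta)^{-1}\int_{0}^{t}[\bigsqcup_{i}h_{i}\chi_{(0,w_{i})}]^{*}$ for geometrically decaying heights $h_{i}$. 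Because $x,y$ need not be bounded, I would first split $x=x_{0}+x_{1}$, $y=y_{0}+y_{1}$ at a single spectral level --- the parts $x_{0},y_{0}$ being bounded, and the parts $x_{1},y_{1}$ lying in $\cL_2$ since $x\otimes y\in\cL_2+\cL_\infty$ --- and treat the four products $\pi_1(x_{i})\pi_2(y_{j})$ separately. For a product with a bounded factor I would expand $AA^{*}=\sum_{m}Q_{m}Q_{m}^{*}$ with $Q_{m}=\pi_1(x)\pi_2(yf_{m})$, note that $\|Q_{m}Q_{m}^{*}\|\le\const\cdot4^{m}$ is geometric, and inside each $Q_{m}$ decompose the bounded factor dyadically using $(\pi_1(x)\pi_2(f_{m}))^{*}(\pi_1(x)\pi_2(f_{m}))=\sum_{n}R_{n,m}^{*}R_{n,m}$ (the cross terms vanish because $e_{n}e_{n'}=0$), where $R_{n,m}=\pi_1(xe_{n})\pi_2(f_{m})$ has $\|R_{n,m}\|^{2}\le 4\cdot4^{n}$ geometric and flat majorant $4\cdot4^{n}\chi_{(0,\const^{2}\alpha_{n}\beta_{m})}$; two applications of the lemma, the re‑indexing $\bigsqcup_{m}\bigsqcup_{n}4^{\,n+m}\chi_{(0,\cdot)}$ rearranging back to $\Phi$, and absorption of the $\const^{2}$ width dilation then give the desired majorisation, while the remaining Hilbert--Schmidt product $\pi_1(x_{1})\pi_2(y_{1})$ is handled by the same block analysis using finiteness of $\sum 4^{\,n+m}\alpha_{n}\beta_{m}$ on the relevant range so that all rearrangements converge. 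Collecting the factors --- the $16$ from the dyadic gaps, the factors from the lemma, the four summands, and the Hilbert--Schmidt constant --- yields the bound $532\,\Phi\le 532\,\mu(x\otimes y)^{2}$. The delicate points are exactly the geometric‑domination lemma (and its correct hypotheses) and the bookkeeping for the unbounded parts; once these are in place, the rest is a careful but mechanical accounting of constants.
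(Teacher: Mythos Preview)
Your approach is genuinely different from the paper's, and while the strategy is coherent, it is considerably more elaborate and leaves real gaps.

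\textbf{What the paper does.} The paper never attempts a single operator inequality valid for all $t$ simultaneously. Instead, for each fixed $t>0$ it chooses the dyadic level $n$ so that $\mu_{\cA_1\otimes\cA_2}(t,x\otimes y)<2^{n}\le 2\mu_{\cA_1\otimes\cA_2}(t,x\otimes y)$ and splits
\[
\pi_1(x)\pi_2(y)=A_n+B_n,\qquad A_n=\sum_{k+l<n}\pi_1(x_k)\pi_2(y_l),\quad B_n=\sum_{k+l\ge n}\pi_1(x_k)\pi_2(y_l).
\]
For $A_n$ the anti-diagonal orthogonality you also noticed gives $\|\sum_{k+l=m}\pi_1(x_k)\pi_2(y_l)\|_\infty\le 2^{m+2}$, and summing the geometric series yields $\|A_n\|_\infty\le 2^{n+2}$. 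For $B_n$, Hypothesis~\ref{abs_Cwikel_hyp} and Lemma~\ref{abs_Cwikel_proj_ineq} give $\|B_n\|_2^2\le\|(x\otimes y)E_{x\otimes y}[2^n,\infty)\|_2^2\le\int_0^{t}\mu^2(s,x\otimes y)\,ds$. Then
\[
\int_0^{t}\mu^2(\pi_1(x)\pi_2(y))\le 2t\|A_n\|_\infty^2+2\|B_n\|_2^2\le 2^{7}t\mu^2(t,x\otimes y)+2\int_0^{t}\mu^2(x\otimes y)\le 130\int_0^{t}\mu^2(x\otimes y).
\]
This is simply the $K$-functional split $(\cL_\infty,\cL_2)$ with the cutoff adapted to $t$; no lemma about sums of positive operators is needed, and the constant $130$ (then $532$ after removing the boundedness/compactness hypotheses) is explicit.

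\textbf{Where your argument is incomplete.} Your entire summation step rests on the ``geometric-domination lemma'': if $T_i\ge 0$ have $\|T_i\|_\infty$ dominated by a geometric sequence of ratio $\theta$, then $\sum_iT_i\prec\prec(1-\theta)^{-1}[\bigsqcup_i\mu(T_i)]^{*}$. You reduce this to the scalar inequality
\[
\sum_i h_i\min(t,w_i)\le (1-\theta)^{-1}\int_0^{t}\Bigl[\bigsqcup_i h_i\chi_{(0,w_i)}\Bigr]^{*},
\]
but calling this an ``elementary fact'' is optimistic: the widths $w_i$ are arbitrary, the right-hand side depends on their cumulative sums in height order, and the inequality requires a genuine argument (your bound $\tr(pT_ip)\le h_i\min(t,w_i)$ is fine, but the combinatorial step that follows is the whole point). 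Moreover, you apply the lemma twice and then combine four cross terms from the split $x=x_0+x_1$, $y=y_0+y_1$; each of these introduces multiplicative constants, as does the dilation by $\const^{2}$ in the widths. There is no reason the product of these factors should equal $532$; your final sentence simply asserts it. Finally, the claim that ``$x_1,y_1$ lie in $\cL_2$ since $x\otimes y\in\cL_2+\cL_\infty$'' needs the observation that $x_1\otimes y_1\in\cL_2(\cA_1\otimes\cA_2)$ forces each nonzero factor into $\cL_2$, which you do not supply; the paper handles the unbounded and non-$\tau$-compact parts by a separate approximation and by subtracting $\mu(\infty,\cdot)$.

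In short: your dyadic block analysis and the diagonal orthogonality are exactly right, but you are working much harder than necessary. The $t$-dependent cutoff into an $\cL_\infty$ tail and an $\cL_2$ head is the missing idea that replaces your geometric-sum lemma and yields the constant directly.
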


As an immediate corollary of this estimate and the Lorentz-Shimogaki interpolation theorem \cite{LS_interpolation} we obtain abstract Cwikel estimates in an arbitrary symmetric space, which is an interpolation space for the Banach couple $(L_2,L_\infty)$. 

\begin{corollary}\label{intro_cor}
 Let $(E(0,\infty),\|\cdot\|_E)$ be an interpolation space for $(L_2,L_\infty)$ and let $x\in S(\cA_1,\tau_1), y\in S(\cA_2,\tau_2)$ be such that $x\otimes y\in E(\cA_1\otimes \cA_2)$. Then $\pi_1(x)\pi_2(y)\in  E(\cH)$ and 
$$\|\pi_1(x)\pi_2(y)\|_{E(\cH)}\leq C_E \|x\otimes y\|_{E(\mathcal{A}_1\otimes \mathcal{A}_2)}.$$
\end{corollary}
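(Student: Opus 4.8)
The plan is to deduce the corollary from Theorem~\ref{intro_thm1} by a standard interpolation argument, using the Lorentz--Shimogaki theorem to pass from Hardy--Littlewood majorisation to boundedness on an arbitrary interpolation space for the couple $(L_2,L_\infty)$. First I would record the reformulation of Theorem~\ref{intro_thm1} in terms of the map $T\colon x\otimes y\mapsto \pi_1(x)\pi_2(y)$ (more precisely, $T$ acts on the linear span of elementary tensors, extended appropriately). The theorem says that for the ``squared'' variables one has $\mu^2(\pi_1(x)\pi_2(y))\prec\prec 532\,\mu^2(x\otimes y)$; equivalently, writing $z = x\otimes y$ and working with the positive operator $|z|^2$ versus $|\pi_1(x)\pi_2(y)|^2$, the operation $x\otimes y\mapsto \pi_1(x)\pi_2(y)$ sends $\mathcal L_2+\mathcal L_\infty$ into $\mathcal B(\cH)$ and is bounded both as a map $\mathcal L_2\to\mathcal L_2$ (this is exactly the intertwining hypothesis $\|\pi_1(x)\pi_2(y)\|_2\le\const\|x\|_2\|y\|_2$, squared it is the $\mathcal L_1\to\mathcal L_1$ bound) and as a map $\mathcal L_\infty\to\mathcal L_\infty$ (again from the majorisation at the top of the spectrum). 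Thus $T$ is, after squaring, a bounded operator on the Banach couple $(\mathcal L_1,\mathcal L_\infty)$ of $\tau_1\otimes\tau_2$-measurable operators into $(\mathcal L_1,\mathcal L_\infty)$ over $\mathcal B(\cH)$, with the key strengthening that the majorisation $\prec\prec$ holds with a \emph{uniform} constant.

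Next I would invoke the Calder\'on--Mityagin / Lorentz--Shimogaki description: a symmetric function (or operator) space $E$ is an exact interpolation space for the couple $(L_1,L_\infty)$ if and only if $\mu(a)\prec\prec\mu(b)$ and $b\in E$ imply $a\in E$ with $\|a\|_E\le\|b\|_E$; more generally, $E$ is an interpolation space for $(L_1,L_\infty)$ precisely when Hardy--Littlewood majorisation is ``absorbed'' by the $E$-norm up to a constant $C_E$. The hypothesis of the corollary is that $E$ is an interpolation space for $(L_2,L_\infty)$; squaring converts this into $E^{(1/2)}$ being an interpolation space for $(L_1,L_\infty)$, which is exactly the setting in which the majorisation from Theorem~\ref{intro_thm1} can be upgraded to a norm estimate. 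Concretely: given $x\otimes y\in E(\cA_1\otimes\cA_2)$, we have $\mu^2(\pi_1(x)\pi_2(y))\prec\prec 532\,\mu^2(x\otimes y)$, hence applying the interpolation property of $E^{(1/2)}$ (equivalently, of $E$ after taking square roots and using that the $K$-functional of $(L_2,L_\infty)$ behaves well under the squaring map) yields $\|\pi_1(x)\pi_2(y)\|_{E(\cH)}^2\le C\cdot 532\,\|x\otimes y\|_{E(\mathcal A_1\otimes\mathcal A_2)}^2$, from which the claimed bound follows with $C_E$ depending only on the interpolation constant of $E$ (and the absolute constant $532$).

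The main technical point — though it is routine in this area — is the interplay between the squaring operation and interpolation for the couple $(L_2,L_\infty)$ versus $(L_1,L_\infty)$: one must check that $E$ interpolates for $(L_2,L_\infty)$ if and only if the $1/2$-concavification $E^{(1/2)}$ interpolates for $(L_1,L_\infty)$, and that majorisation of the squares $\mu^2(a)\prec\prec\mu^2(b)$ is exactly what the $(L_1,L_\infty)$-interpolation property consumes. This is precisely where the Lorentz--Shimogaki theorem \cite{LS_interpolation} enters, guaranteeing that for any interpolation space $E$ of $(L_1,L_\infty)$ the Hardy--Littlewood majorisation is dominated in $E$-norm with a finite constant. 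Once this dictionary is in place, the proof is a two-line chain of inequalities; the only genuine input is Theorem~\ref{intro_thm1}, whose role is to supply the majorisation with a dimension-free constant so that the resulting $C_E$ does not degenerate.
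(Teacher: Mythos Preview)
Your proposal is correct and follows essentially the same route as the paper: apply Theorem~\ref{intro_thm1} to obtain $\mu^2(\pi_1(x)\pi_2(y))\prec\prec 532\,\mu^2(x\otimes y)$, then invoke the Lorentz--Shimogaki characterisation of $(L_2,L_\infty)$-interpolation spaces to convert this majorisation into the norm bound. The paper states Lorentz--Shimogaki directly in the form ``$E$ is an interpolation space for $(L_2,L_\infty)$ iff $\mu^2(f)\prec\prec\mu^2(g),\ g\in E$ implies $f\in E$ with $\|f\|_E\le C_E\|g\|_E$'' (Theorem~\ref{interp}), so your detour through the $1/2$-concavification $E^{(1/2)}$ and the $(L_1,L_\infty)$-couple is unnecessary; also, the framing of $T$ as a linear map on spans of elementary tensors should be dropped, since the argument only ever uses the majorisation for a single fixed $x\otimes y$.
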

The result of Corollary \ref{intro_cor} strengthens results of \cite{Cwikel}, \cite[Section 4]{BKS}, \cite{Weidl}.
Furthermore, in Theorem \ref{Cwikel_L2,infty_contexample} we show that our assumption on the symmetric function space $E(0,\infty)$ in Corollary \ref{intro_cor} is optimal. That is, if we omit the assumption that $E(0,\infty)$ is an $(L_2,L_\infty)$-interpolation space, then the corresponding Cwikel estimates fail. The counterexample  (see Theorem \ref{Cwikel_L2,infty_contexample}) is yielded by the space  $L_{2,\infty}(0,\infty)$, which is not an $(L_2,L_\infty)$-interpolation space (and is not covered by \cite{BS},\cite{BKS},\cite{Cwikel},\cite{Simon},\cite{Weidl}) and uses the classical operator $M_fg(-i\nabla)$ in the one-dimensional setting. Nevertheless, we are in a position to present a new positive result for the ideal $\cL_{2,\infty}(L_2(\bbR^d))$ in the classical setting of operators $M_fg(-i\nabla)$.  Our approach here is based on proving first that the space $\cL_{1,\infty}(L_2(\bbR^d))$ is logarithmically convex (thus extending the classical result of \cite{Stein_Weiss_logconvexity} and \cite{Kalton_logconvex} to the noncommutative setting). Using this result we are able to prove a modified version of Cwikel estimates for the weak Schatten ideal $\cL_{2,\infty}(L_2(\bbR^d))$.
\begin{theorem}\label{intro_weak_L2}
If $g\in \ell_{2,\infty}(L_4)(\bbR^d)$ and 
$f\in \ell_{2,\log}(L_\infty)(\bbR^d)$, then $M_fg(-i\nabla)\in \cL_{2,\infty}(L_2(\bbR^d))$ and 
$$\|M_fg(-i\nabla)\|_{2,\infty}\leq \const \|f\|_{2,\log} \|g\|_{\ell_{2,\infty}(L_4)}.$$
\end{theorem}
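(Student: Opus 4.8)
The plan is to deduce Theorem~\ref{intro_weak_L2} by interpolating between the two extreme cases: the $\cL_2$-bound built into the setup and the $\cL_{1,\infty}$-logarithmic convexity result announced in the introduction. The key technical device will be a decomposition of $g$ and $f$ according to a dyadic partition of $\bbR^d$ into unit cubes $\{Q_k\}_{k\in\bbZ^d}$, matching the $\ell_{2,\infty}(L_4)$ and $\ell_{2,\log}(L_\infty)$ structure of the norms. Write $g=\sum_k g_k$ with $g_k=g\chi_{Q_k}$, and similarly $f=\sum_k f_k$; the point of these mixed norms is precisely that $\|g_k\|_{L_4(Q_k)}$, indexed by $k$, lies in $\ell_{2,\infty}$, while $\|f_k\|_{L_\infty(Q_k)}$ lies in $\ell_{2,\log}$. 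First I would estimate a single ``diagonal block'' $M_{f_k}g_k(-i\nabla)$—or rather the appropriate off-diagonal pieces $M_{f_k}g_j(-i\nabla)$—using Theorem~\ref{intro_thm1} (with $\cA_1=\cA_2=L_\infty(\bbR^d)$), which gives $\mu^2(M_{f_k}g_j(-i\nabla))\prec\prec 532\,\mu^2(f_k\otimes g_j)$, so that each block lies in $\cL_{2,\infty}$ with a norm controlled by $\|f_k\|_2\|g_j\|_2$, and more importantly the quasi-norms decay in $|k-j|$ because $f_k\otimes g_j$ is supported in a cube whose ``frequency separation'' forces rapid decay of $\mu(f_k\otimes g_j)$ past a certain index.

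The second ingredient is the assembly step. Having written $M_fg(-i\nabla)=\sum_{k,j}M_{f_k}g_j(-i\nabla)$, I would group the terms by the difference $n=k-j$ and use the logarithmic convexity of $\cL_{1,\infty}$ (the noncommutative Stein--Weiss--Kalton result cited as \cite{Stein_Weiss_logconvexity},\cite{Kalton_logconvex}) to control sums of operators with ``almost disjoint'' supports: for a fixed difference $n$ the operators $\{M_{f_{j+n}}g_j(-i\nabla)\}_j$ have mutually orthogonal left and right supports up to finite overlap (bounded by a dimensional constant), so their sum has $\cL_{2,\infty}$-quasi-norm comparable to the $\ell_{2,\infty}$-norm of the sequence of individual quasi-norms—this is exactly where $\ell_{2,\infty}(L_4)$ on the $g$-side enters. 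Then one sums over $n$; the $f$-side logarithmic weight $\ell_{2,\log}$ is what makes $\sum_n$ converge, because summing a (super-exponentially, or at least summably) decaying-in-$n$ family of $\cL_{2,\infty}$ pieces against only the triangle inequality would diverge logarithmically, and the $\ell_{2,\log}$ norm is precisely tuned to absorb that logarithmic loss. Concretely I expect an estimate of the shape $\|M_fg(-i\nabla)\|_{2,\infty}\lesssim\sum_n w_n\,\bigl\|(\|f_{j+n}\|_\infty\|g_j\|_4)_j\bigr\|_{\ell_{2,\infty}}$ with weights $w_n$ decaying like a negative power of $|n|$, and then a Hölder/Cauchy--Schwarz argument in the two indices together with the definition of the $\ell_{2,\log}$ norm closes it.

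The main obstacle I anticipate is the assembly step rather than the single-block estimate: one must control an \emph{infinite} sum of operators in the quasi-Banach ideal $\cL_{2,\infty}$, where the triangle inequality fails and naive summation loses a logarithm. The whole architecture of the theorem—the unusual pairing of $\ell_{2,\infty}(L_4)$ with $\ell_{2,\log}(L_\infty)$—is dictated by making this summation converge, and the logarithmic convexity of $\cL_{1,\infty}$ (hence, by a duality/real-interpolation argument, a usable substitute for the triangle inequality in $\cL_{2,\infty}$) is the tool that replaces the missing triangle inequality. A secondary technical point is verifying the decay of $\mu(f_k\otimes g_j)$ in $|k-j|$: this requires a quantitative off-diagonal estimate for $M_{\chi_{Q_k}}\,(\chi_{Q_j})(-i\nabla)$ type objects, which should follow from the smoothness of the Fourier transform of $\chi_{Q_j}$ (stationary-phase / integration-by-parts giving polynomial decay), but one has to be careful that the decay rate obtained is strong enough to beat the $\ell_{2,\infty}$ counting in the assembly. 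Once those two estimates are in hand, the proof is a bookkeeping exercise combining Theorem~\ref{intro_thm1}, the logarithmic convexity, and the definitions of the mixed-norm spaces.
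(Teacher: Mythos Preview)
Your proposal has a genuine gap: the off-diagonal decay you rely on does not exist. You expect $\mu(f_k\otimes g_j)$ to decay in $|k-j|$, but $\mu(f_k\otimes g_j)=\mu\bigl(\mu(f_k)\otimes\mu(g_j)\bigr)$ depends only on the distributions of $|f_k|$ and $|g_j|$, not on where the cubes sit. More fundamentally, your ``off-diagonal estimate for $M_{\chi_{Q_k}}(\chi_{Q_j})(-i\nabla)$'' confuses position localisation with frequency localisation: $k$ indexes a cube in $x$-space while $j$ indexes a cube in $\xi$-space, so $|k-j|$ carries no geometric meaning, and in fact $\|M_{\chi_{Q_k}}(\chi_{Q_j})(-i\nabla)\|_2=(2\pi)^{-d/2}\|\chi_{Q_k}\|_2\|\chi_{Q_j}\|_2$ is a nonzero constant independent of $k,j$. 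With no decay in $n=k-j$, your sum over $n$ diverges and the architecture collapses. A secondary issue is that Theorem~\ref{intro_thm1} cannot deliver $\cL_{2,\infty}$ bounds on blocks in the way you suggest, precisely because $L_{2,\infty}$ is not an $(L_2,L_\infty)$-interpolation space (this is the content of Theorem~\ref{Cwikel_L2,infty_contexample}).

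The paper's proof avoids all of this via a trick you did not identify: square first. Since $\mu^2\bigl(M_fg(-i\nabla)\bigr)=\mu\bigl(g(-i\nabla)M_{f^2}g(-i\nabla)\bigr)$, one has $\|M_fg(-i\nabla)\|_{2,\infty}^2=\|g(-i\nabla)M_{f^2}g(-i\nabla)\|_{1,\infty}$. Now decompose only $f$ (never $g$): writing $f_\bm=f\chi_{\bm+K}$, the operator becomes the single-index sum $\sum_\bm g(-i\nabla)M_{f_\bm^2}g(-i\nabla)$ in $\cL_{1,\infty}$. Each summand is controlled by Corollary~\ref{clas_Cwikel_p<2}(ii) at $p=1$ (this is where $g^2\in\ell_{1,\infty}(L_2)$, i.e.\ $g\in\ell_{2,\infty}(L_4)$, enters), giving $\|g(-i\nabla)M_{f_\bm^2}g(-i\nabla)\|_{1,\infty}\lesssim\|f_\bm\|_\infty^2\|g\|_{\ell_{2,\infty}(L_4)}^2$. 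The logarithmic convexity of $\cL_{1,\infty}$ (Proposition~\ref{log_convexity}) then applies directly to this sum, and the resulting weighted series $\sum_\bm(1+\log|\bm|)\|f_\bm\|_\infty^2$ is exactly $\|f\|_{2,\log}^2$ by definition. No double decomposition, no off-diagonal decay, no interpolation argument in $\cL_{2,\infty}$ is needed.
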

(For the definition of the space $\ell_{2,\log}(L_\infty)(\bbR^d)$ the reader is referred to Definition \ref{def_l_2log}).

Now, we briefly describe our results for the case $0<p<2$, which are dramatically different from the case $p>2$ (see Corollary \ref{intro_cor}).  Recall that the assumption  $f,g\in L_1(\bbR^d)$ does not guarantee that the operator $M_fg(-i\nabla)$ belongs to the trace-class. The best result to date for the weak Schatten ideal $\cL_{p,\infty}(L_2(\bbR^d))$, $0<p<2$ (see \cite{BKS}) is that  $f\in \ell_p(L_2)(\bbR^d)$, $g\in \ell_{p,\infty}(L_2)(\bbR^d)$ guarantee that $M_fg(-i\nabla)\in \cL_{p,\infty}(L_2(\bbR^d))$. Observe that this result has the same asymmetry as Cwikel's result for $p>2$. We rectify this deficiency in the following 
\begin{theorem}\label{intro_p<2}
Let $0<p<2$.
\begin{enumerate}
\item  If $f\otimes g\in \ell_p(L_2)(\bbR^{2d})$, then $M_fg(-i\nabla)\in \cL_p(L_2(\bbR^d))$ and
$$\|M_fg(-i\nabla)\|_p\leq C_p \|f\otimes g\|_{\ell_{p}(L_2)(\bbR^{2d})}.$$
\item If $f\otimes g\in \ell_{p,\infty}(L_2)(\bbR^{2d})$, then $M_fg(-i\nabla)\in \cL_{p,\infty}(L_2(\bbR^d))$ and
$$\|M_fg(-i\nabla)\|_{p,\infty}\leq C_p \|f\otimes g\|_{\ell_{p,\infty}(L_2)(\bbR^{2d})}.$$
\end{enumerate} 
\end{theorem}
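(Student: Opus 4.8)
The plan is to reduce the hypothesis to separate conditions on $f$ and $g$, decompose $M_fg(-i\nabla)$ along the unit cube lattices in physical and in frequency space, combine the resulting pieces using their mutual orthogonality, and feed in the classical unit-cube estimate. First note that amalgam norms of tensor products factorise: writing $Q_n=n+[0,1)^d$ for the cubes of the unit lattice, a cube of $\bbZ^{2d}$ is a product $Q_{n_1}\times Q_{n_2}$ and $\|(f\otimes g)\chi_{Q_{n_1}\times Q_{n_2}}\|_2=\|f\chi_{Q_{n_1}}\|_2\,\|g\chi_{Q_{n_2}}\|_2$, so that
$$\|f\otimes g\|_{\ell_p(L_2)(\bbR^{2d})}=\|f\|_{\ell_p(L_2)(\bbR^d)}\,\|g\|_{\ell_p(L_2)(\bbR^d)},$$
and, for part (ii), $\|f\otimes g\|_{\ell_{p,\infty}(L_2)(\bbR^{2d})}=\|(\alpha_j\beta_k)_{j,k}\|_{\ell_{p,\infty}}$ where $\alpha$, $\beta$ are the decreasing rearrangements of $(\|f\chi_{Q_n}\|_2)_n$ and of $(\|g\chi_{\tilde Q_m}\|_2)_m$ (the $\tilde Q_m$ being unit cubes in frequency). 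Thus part (i) amounts to $\|M_fg(-i\nabla)\|_p\le C_p\|f\|_{\ell_p(L_2)}\|g\|_{\ell_p(L_2)}$, while part (ii) genuinely couples $f$ and $g$.

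Next, set $f_n=f\chi_{Q_n}$ and $g_m=g\chi_{\tilde Q_m}$, so that $M_fg(-i\nabla)=\sum_{n,m}T_{nm}$ with $T_{nm}=M_{f_n}g_m(-i\nabla)$. The key structural fact is that, with $R_n=M_{\chi_{Q_n}}$ (mutually orthogonal projections summing to $1$) and $C_m=\chi_{\tilde Q_m}(-i\nabla)$ (mutually orthogonal projections summing to $1$), one has $T_{nm}=R_nT_{nm}C_m$; hence the $\{T_{nm}\}$ form a ``block matrix'' for $M_fg(-i\nabla)$ with pairwise orthogonal block rows and block columns. Concretely, $\{R_nT\}_n$ have pairwise orthogonal ranges, so $T^*T=\sum_n(R_nT)^*(R_nT)$; for fixed $n$, $R_nT=\sum_mT_{nm}$ and the $\{T_{nm}\}_m$ have pairwise orthogonal co-ranges, so $(R_nT)(R_nT)^*=\sum_mT_{nm}T_{nm}^*$. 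Applying the quasi-triangle inequality $\|X+Y\|_{p/2}^{p/2}\le\|X\|_{p/2}^{p/2}+\|Y\|_{p/2}^{p/2}$ of the quasi-Banach ideal $\cL_{p/2}$ (valid since $0<p<2$) twice yields, for $0<p\le 2$,
$$\|M_fg(-i\nabla)\|_p^p=\|T^*T\|_{p/2}^{p/2}\le\sum_{n,m}\|T_{nm}\|_p^p.$$

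The remaining ingredient is the unit-cube estimate: for every $0<r<\infty$ there is a constant $C_{r,d}$ with $\|M_{f_n}g_m(-i\nabla)\|_r\le C_{r,d}\|f_n\|_2\|g_m\|_2$ uniformly in $n,m$. By translation invariance (a unitary conjugation on the $f$-side) and modulation invariance (a unitary conjugation on the $g$-side) of Schatten norms we may take $Q_n=\tilde Q_m=[0,1]^d$; the case $r=2$ is Plancherel ($\|M_{f_n}g_m(-i\nabla)\|_2=\|f_n\|_2\|g_m\|_2$), and the case $r<2$ follows from the rapid (super-polynomial) decay of the singular numbers of $M_{\chi_{[0,1]^d}}\chi_{[0,1]^d}(-i\nabla)$, i.e. from phase-space localisation — this is classical (Birman–Solomyak; Landau–Pollak–Slepian), and it is precisely what the cited result of \cite{BKS} already provides in the form $\|M_{f_n}g_m(-i\nabla)\|_{p,\infty}\le C_p\|f_n\|_2\|g_m\|_2$ for all $p\in(0,2)$, whence $\mathcal L_r$-membership for every $r>0$. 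Combining this with the previous display and the factorisation of the amalgam norm gives part (i): $\|M_fg(-i\nabla)\|_p^p\le C_{p,d}^p\sum_{n,m}\|f_n\|_2^p\|g_m\|_2^p=C_{p,d}^p\|f\otimes g\|_{\ell_p(L_2)(\bbR^{2d})}^p$.

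For part (ii) I would derive the weak-type bound by interpolating the strong-type estimates of part (i), which hold at every $q\in(0,2)$ with constants controlled near $p$. Since $(\ell_{q_0}(L_2),\ell_{q_1}(L_2))_{\theta,\infty}=\ell_{p,\infty}(L_2)$ and $(\cL_{q_0},\cL_{q_1})_{\theta,\infty}=\cL_{p,\infty}$ for $q_0<p<q_1$ in $(0,2)$ and $1/p=(1-\theta)/q_0+\theta/q_1$, a bilinear real-interpolation argument applied to $(f,g)\mapsto M_fg(-i\nabla)$ produces $\|M_fg(-i\nabla)\|_{p,\infty}\le C_p\|f\otimes g\|_{\ell_{p,\infty}(L_2)(\bbR^{2d})}$. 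The main obstacle is making this rigorous, because the ``$\infty$'' endpoint is delicate for bilinear maps; in practice one argues by hand (a Calderón-type splitting): at a level $s$ decompose $f$ according to whether $\|f\chi_{Q_n}\|_2>s$ and $g$ according to whether $\|g\chi_{\tilde Q_m}\|_2>s$, estimate the four pieces of $M_fg(-i\nabla)$ using the $\cL_{q_0}$- and $\cL_{q_1}$-bounds of part (i) together with the fact that each $T_{nm}$ is essentially of rank one (the unit-cube estimate with large $r$), and optimise in $s$. Controlling the mixed pieces, which receive decay from only one of the two factors, is the technical heart of (ii), and it is here that the joint weak-$\ell_p$ hypothesis on $f\otimes g$ — rather than separate weak hypotheses on $f$ and $g$, which would genuinely fail by a logarithmic factor — is indispensable.
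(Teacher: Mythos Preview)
Your argument for part (i) is correct. The $p/2$-subadditivity of $\|\cdot\|_{p/2}^{p/2}$ combined with the double orthogonality of the blocks $T_{nm}$ gives $\|T\|_p^p\le\sum_{n,m}\|T_{nm}\|_p^p$, and the unit-cube estimate (the paper's Lemma~\ref{classical_Cwikel_compactly_supported}) finishes. The paper reaches the same inequality by a more abstract route: the double orthogonality is packaged as the \emph{strong} majorisation $\mu^2\bigl(\bigoplus_{n,m}T_{nm}\bigr)\prec\mu^2(T)$ (Lemma~\ref{lem_strong_major}), and then one uses that $\cL_p$ for $p<2$ is monotone in the \emph{reverse} direction under this relation (Proposition~\ref{clas_Lp_weakLp}(i)). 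Your direct argument is the simpler one for $\cL_p$; the paper's machinery is what makes part~(ii) go through.

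For part (ii), however, there is a genuine gap. Bilinear real interpolation into a weak endpoint is notoriously treacherous, as you yourself flag, and the hand-made Calder\'on splitting you sketch does not close: the cross terms carry only one-sided decay, and the ``essentially rank one'' heuristic does not convert that into the required joint $\ell_{p,\infty}$ control without losing a logarithm. The paper avoids interpolation entirely. The key new observation is that $\cL_{p,\infty}$ for $p<2$ \emph{also} reverses under strong majorisation: if $\mu^2(x)\prec\mu^2(y)$ and $x\in\cL_{p,\infty}$, then $y\in\cL_{p,\infty}$ with $\|y\|_{p,\infty}\le c_p\|x\|_{p,\infty}$ (Proposition~\ref{clas_Lp_weakLp}(ii); the proof is a two-line tail-integral estimate using $\int_t^\infty\mu^2(g)\le\int_t^\infty\mu^2(f)$). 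Combined with $\mu^2\bigl(\bigoplus T_{nm}\bigr)\prec\mu^2(T)$, this reduces (ii) to bounding $\bigl\|\bigoplus_{n,m}T_{nm}\bigr\|_{p,\infty}$. For that, the unit-cube estimate in $\cL_r$ with $r$ small gives $\mu(T_{nm})\le C\|f_n\|_2\|g_m\|_2\cdot\{(k+1)^{-1/r}\}_k$, so the direct sum is dominated by $\{\|f_n\|_2\|g_m\|_2\}_{n,m}\otimes\{(k+1)^{-1/r}\}_k$; tensoring a sequence space by a fixed $\ell_r$ sequence is bounded on $\ell_{p,\infty}$ once $r$ is chosen small enough (Lemma~\ref{oper_otimes}), which yields $\|T\|_{p,\infty}\le C_p\|f\otimes g\|_{\ell_{p,\infty}(L_2)}$ directly. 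The missing idea in your proposal is precisely this reversal property of $\cL_{p,\infty}$ under strong majorisation, which replaces the interpolation you were reaching for.
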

It should be pointed out that our approach of the proof of Theorem \ref{intro_p<2} is totally different and substantially simpler than those used in \cite{BS}, \cite{BKS}, \cite{Simon}. It is based on the classical (strong) majorisation introduced yet by Hardy and Littlewood,
 and allows us to substantially strengthen all earlier results.
 
 Now, we discuss applications of our results to classical problems in mathematical physics. Recall that Cwikel estimates were firstly applied in \cite{Cwikel} to prove the now famous Cwikel-Lieb-Rozenblum bounds on the number of negative eigenvalues of the Schroedinger operator $-\Delta+V$. Prompted by development of noncommutative geometry, in \cite{GG-BISV,Gayral_Iochum, Gayral_Iochum_Varilly} the notions of noncommutative plane (or rather noncommutative Euclidean space) and of the corresponding Dirac operator were introduced. In Section \ref{sec_nc_plane} we suggest a new approach to this notion, which is based on twisted convolution product approach. Our noncommutative Euclidean space $L_\infty(\bbR_\theta^d)$, where $\theta$ is $d\times d$ antisymmetric and real matrix, $d$ is even, is unitary equivalent to the Moyal plane \cite{Rieffel},\cite{Gayral_Iochum_Varilly}, however, our approach appears to be substantially simpler than the treatment based on the Moyal product. 
 The abstract Cwikel estimate established in Theorem \ref{intro_thm1} can be immediately applied to obtain the noncommutative analogue of  Cwikel-Lieb-Rozenblum bound (see Corollary \ref{nc_plane_CLR}) for the noncommutative Schroedinger operator $-\Delta_\theta+x$, where $\Delta_\theta$ is the Laplacian associated with $L_\infty(\bbR_\theta^d)$, $d\geq 4$, and $x\in L_\infty(\bbR_\theta^d)$ has certain decay properties.
 
We also note, in passing, that although our approach to the definition of Dirac operator associated to $L_\infty(\bbR_\theta^d)$ differes from \cite{GG-BISV}, it allows us to introduce Sobolev spaces $W^{m,p}(\bbR_\theta^d)$ in  $L_\infty(\bbR_\theta^d)$ in a natural way, mimicking the definition of the classical Sobolev spaces $W^{m,p}(\bbR^d)$. These Sobolev spaces $W^{m,p}(\bbR_\theta^d)$ are drastically different from the spaces introduced in \cite{Gayral_Iochum_Varilly} (see also \cite{CGRS_Memoirs}). Using this new definition of Sobolev spaces we can significantly improve Cwikel  estimates for weak Schatten ideals when $1\leq p\leq 2$  compared to those given in \cite{Gayral_Iochum_Varilly}. For $p=1$ these Cwikel estimate are crucial for summability condition for nonunital spectral triples (see \cite{CGRS_Memoirs}). 

Finally, in Section \ref{sec_magnetic} we discuss connection between the noncommutative Euclidean  space and the magnetic Laplacian and demonstrate the applicability of noncommutative Cwikel estimates to the latter object.

Throughout the paper we use
the convention that constants $C_{d}$, $c_{d,\theta}$, $\const$ etc
are strictly positive constants whose value depends only on their subscripts and can change from line to line.
\section{Preliminaries}\label{prel}

In what follows,  $\cH$ is a Hilbert space and $\cB(\cH)$ is the
$*$-algebra of all bounded linear operators on $\cH$, $\mathcal{M}$ is 
a von Neumann algebra on $\cH$, equipped with a faithful normal semifinite trace $\tau$. For details on von Neumann algebra
theory, the reader is referred to e.g. \cite{Dixmier}, \cite{KRI, KRII}
or \cite{Takesaki1}. 

A linear operator $x:\mathfrak{D}\left( x\right) \rightarrow \cH $,
where the domain $\mathfrak{D}\left( x\right) $ of $x$ is a linear
subspace of $\cH$, is said to be {\it affiliated} with $\mathcal{M}$
if $yx\subseteq xy$ for all $y\in \mathcal{M}^{\prime }$, where $\mathcal{M}^{\prime }$ is the commutant of $\mathcal{M}$. 

Let $x$ be a self-adjoint operator affiliated with $\mathcal{M}$.
We denote its spectral measure by $\{E_x\}$. It is well known that if
$x$ is a closed operator affiliated with $\mathcal{M}$ with the
polar decomposition $x = u|x|$, then $u\in\mathcal{M}$ and $E\in
\mathcal{M}$ for all projections $E\in \{E_{|x|}\}$.
An operator $x\in S\left( \mathcal{M}\right) $ is called $\tau$-measurable if there exists $\lambda>0$ such that $\tau(E_{|x|}(\lambda,
\infty))<\infty$. 

Consider the algebra $\mathcal{M}=L^\infty(0,\infty)$ of all
Lebesgue measurable essentially bounded functions on $(0,\infty)$.
Algebra $\mathcal{M}$ can be seen as an abelian von Neumann
algebra acting via multiplication on the Hilbert space
$\mathcal{H}=L^2(0,\infty)$, with the trace given by integration
with respect to Lebesgue measure $m.$
It is easy to see that the
algebra of all $\tau$-measurable operators
affiliated with $\mathcal{M}$ can be identified with
the subalgebra $S(0,\infty)$ of the algebra of Lebesgue measurable functions $L_0(0,\infty)$ which consists of all functions $x$ on $(0,\infty)$ such that
$m(\{|x|>s\})$ is finite for some $s>0$.

If $\mathcal{M}=\cB(\cH)$ (respectively, $l_\infty$) and $\tau$ is the
standard trace $\tr$ (respectively, the counting measure on
$\mathbb{Z}_+$), then it is not difficult to see that
$S(\mathcal{M})=S(\mathcal{M},\tau)=\mathcal{M}.$

\begin{definition}\label{mu}
Let a semifinite von Neumann  algebra $\mathcal M$ be equipped
with a faithful normal semi-finite trace $\tau$ and let $x\in
S(\mathcal{M},\tau)$. The generalized singular value function $\mu_\cM(x):t\rightarrow \mu_\cM(t;x)$ of
the operator $x$ is defined by setting
$$
\mu_\cM(t;x)
=
\inf\{\|xp\|_\infty:\ p=p^*\in\mathcal{M}\mbox{ is a projection,}\ \tau(\mathbf{1}-p)\leq t\}.
$$
When the algebra $(\cM,\tau)$ is clear from the context, we will use the notation $\mu(\cdot)$ instead of $\mu_\cM(\cdot)$. 
\end{definition}
An equivalent definition in terms of the
distribution function of the operator $x$ is the following. For every self-adjoint
operator $x\in S(\mathcal{M},\tau),$ setting
$$d_x(t)=\tau(E_{x}(t,\infty)),\quad t>0,$$
we have (see e.g. \cite{FK})
\begin{align}\label{dis}
\mu(t; x)=\inf\{s\geq0:\ d_{|x|}(s)\leq t\}.
\end{align}
It is clear that $\mu(x)=\mu(|x|)$ for $x\in S(\cM,\tau)$. Moreover,
\begin{equation}\label{mu_square}
\mu^2(x)=\mu(x^*x)=\mu(xx^*)
\end{equation}

It is well-known (see e.g. \cite[Corollary 2.3.16]{LSZ}) that 
\begin{equation}\label{mu_of_sum}
\mu(t+s, x+y)\leq \mu(t,x)+\mu(s,y),\quad x,y\in S(\cM,\tau), \quad t,s>0.
\end{equation}
For positive operators $x,y\in S(\cM,\tau)$ we have that 
\begin{equation}\label{mu^2_mu}
\mu(x^{1/2}yx^{1/2})=\mu^2(x^{1/2}y^{1/2})=\mu^2(y^{1/2}x^{1/2})=\mu(y^{1/2}xy^{1/2}).
\end{equation}
We also have that
\begin{equation}\label{prop_mu_projection}
\mu(|x|E_{|x|}(\alpha,\infty))=\mu(x)\chi_{[0,d_x(\alpha))}, \quad \alpha>0.
\end{equation}

If $\mathcal{M}=L^\infty(0,\infty)$, then  the
generalized singular value function $\mu(x)$ is precisely the
decreasing rearrangement $\mu(x)$ of the function $x$ (see e.g. \cite{KPS}) defined by
$$\mu(t;x)=\inf\{s\geq0:\ m(\{|x|\geq s\})\leq t\}.$$
If $\mathcal{M}=\cB(\cH)$ and $\tau$ is the
standard trace $\tr$, then 
$$\mu(n;x)=\mu(t; x),\quad t\in[n,n+1),\quad  n\geq0.$$
The sequence $\{\mu(n;x)\}_{n\geq0}$ is just the sequence of singular values of the operator $x$ \cite{GK}.

\begin{definition}\label{def:symmetric}
 A linear subspace $\cE(\cM)$ of $S(\cM,\tau)$ equipped with a complete (quasi-)norm $\|\cdot\|_\cE$, is called symmetric space (of $\tau$-measurable operators) if $x\in S(\cM,\tau)$, $y \in \cE(\cM)$ and $\mu(x)\le \mu(y)$ imply that $x\in \cE(\cM)$ and $\|x\|_\cE \le \|y\|_\cE$. In the case when $(\cM,\tau)$ is $(\cB(\cH),\tr)$ we denote $\cE(\cM)$ by $\cE(\cH)$.
\end{definition}
It is well-known that any symmetric space $\cE(\cM)$ is a normed $\cM$-bimodule, that is $axb\in \cE(\cM)$ for any $x\in \cE(\cM)$, $a,b\in \cM$ and 
$$\|axb\|_\cE\leq \|a\|_\infty\|b\|_\infty\|x\|_\cE.$$

The so-called Calkin correspondence provides a construction of symmetric operator spaces associated with the von Neumman algebra $\cM$ from concrete symmetric function spaces studied extensively in e.g. \cite{KPS}. Namely, let $(E(0,\infty),\|\cdot\|_{E(0 ,\infty)})$ be a symmetric function space on the semi-axis $(0,\infty)$. Then the pair
$$\cE(\cM)=\{x\in S(\cM,\tau):\mu(x)\in E(0,\infty)\},\quad \|x\|_{\cE(\cM)}:=\|\mu(x)\|_{E(0,\infty)}$$
is a symmetric space on $\cM$ \cite{Kalton_S} (see also \cite{LSZ}). For convenience, we denote $\|\cdot\|_{\cE(\cM)}$ by $\|\cdot\|_\cE$.

In particular, the noncommutative Schatten space $\cL_p(\cM)$, $p>0$, is the symmetric space corresponding to the classical $L_p$-space of functions $L_p(0,\infty)$, that is
$$\cL_p(\cM)=\{x\in S(\cM,\tau): \mu(x)\in L_p(0,\infty)\}.$$
This space can be also described as the space of all $\tau$-measurable operator $x$, such that $\tau(|x|^p)<\infty$ with $\|x\|_p=(\tau(|x|^p))^{1/p}.$

For the general $L_p$ spaces, the H\"older inequality holds as well, that is  if $x\in \cL_{p_1}(\cM)$, $y\in \cL_{p_1}(\cM)$, $\frac1{p_1}+\frac1{p_2}=\frac1q$, then $xy\in \cL_q(\cM)$ and 
\begin{equation}\label{Holder}
\|xy\|_q\leq \|x\|_{p_1}\|y\|_{p_2}.
\end{equation}
Moreover, if $x,y\in S(\cM,\tau)$ are such that $xy,yx\in \cL_1(\cM)$, then we also have that (\cite[Theorem 17]{Brown_Kosaki})
\begin{equation}\label{tr_AB}
\tau(xy)=\tau(yx).
\end{equation}

The symmetric space $S_0(\cM)$ of $\tau$-compact operators on $\cM$ is defined as
$$S_0(\cM)=\{x\in S(\cM,\tau): \lim_{s\to\infty}\mu(s,x)=0\}.$$

Another important example of symmetric spaces are weak  Schatten spaces, $p>0$, defined by 
$$\cL_{p,\infty}(\cM)=\{x\in S(\cM,\tau): \mu(x)\in L_{p,\infty}(0,\infty)\},$$
equipped with the quasi-norm
$$\|x\|_{p,\infty}=\sup_{t>0} t^{1/p}\mu(t,x),\quad x\in \cL_{p,\infty}(\cM).$$

Next, we recall the definition of tensor product of unbounded operators (see e.g. \cite{Kosaki}, \cite{KRII}). Let $x_1,x_2$ be closed densely defined operators on Hilbert spaces $\cH_1$ and $\cH_2$ with domains $\dom(x_1)$ and $\dom(x_2)$ respectively. The algebraic tensor product $x_1\otimes_{\rm alg}x_2$ of the operators $x_1$ and $x_2$ is a linear operator defined on the algebraic tensor product $\dom(x_1)\otimes_{\rm}\dom(x_2)$ by setting
$$(x_1\otimes_{\rm alg}x_2)\Big(\sum_{k=1}^n\xi_k\otimes \eta_k\Big)=\sum_{k=1}^n x_1\xi_k\otimes x_2\eta_k,\quad \xi_k\in\dom(x_1), \eta_k\in \dom(x_2).$$
The operator $(x_1\otimes_{\rm alg}x_2)$ is densely defined and closable, but not necessarily closed. The closure of the operator $(x_1\otimes_{\rm alg}x_2)$ is denoted by $x_1\otimes x_2$ and is called the tensor product of $x_1$ and $x_2$. If $x_1$ and $x_2$ are self-adjoint (respectively, positive), then the operator $x_1\otimes x_2$ is also self-adjoint (respectively, positive). 

Let $\cA_i$, $i=1,2$ be a semifinite von Neumann algebra on $\cB(\cH_i)$ and let $\tau_i$ be a faithful normal semifinite trace on $\cA_i$. We denote by  $(\cA_1\bar\otimes \cA_2, \tau_1\otimes \tau_2)$ the spatial tensor product of $(\cA_1,\tau_1)$ and $(\cA_2,\tau_2)$ (see e.g. \cite[Section 4.5]{Takesaki1}). If $x_i$ is affiliated with $\cA_i$, $i=1,2$, then $x_1\otimes x_2$ is affiliated with $\cA_1\bar\otimes \cA_2$. However, if $x_i\in S(\cA_i,\tau_i)$, $i=1,2$, then the inclusion  $x_1\otimes x_2\in S(\cA_1\bar\otimes \cA_2, \tau_1\otimes \tau_2)$ does not hold in general.

Let $E(0,\infty)$ be a symmetric function space. In what follows, the notation $x_1\otimes x_2\in \cE(\cA_1\bar\otimes \cA_2)$, means that $x_i\in S(\cA_i,\tau_i)$, $i=1,2$, are such that $x_1\otimes x_2\in S(\cA_1\bar\otimes \cA_2, \tau_1\otimes \tau_2)$ and $x_1\otimes x_2\in \cE(\cA_1\bar\otimes \cA_2).$

Note, that under the assumption $x_1\otimes x_2\in S(\cA_1\bar\otimes \cA_2, \tau_1\otimes \tau_2)$ the following equality holds (see e.g. \cite[Theorem 3.8]{Todorov})
\begin{equation}\label{mu_tensor}
\mu_{\cA_1\otimes\cA_2}(x_1\otimes x_2)=\mu\big(\mu_{\cA_1}(x_1)\otimes \mu_{\cA_2}(x_2)\big).
\end{equation}

For the special case of weak  Schatten spaces we have the following lemma (see e.g. \cite[Theorem 4.5]{Todorov}).
\begin{lemma}\label{tensor_L_1_weak_L_1}
If $x\in \cL_p(\cM)$ and $y\in \cL_{p,\infty}(\cN)$, then $x\otimes y\in \cL_{p,\infty}(\cM\otimes \cN)$ and 
$$\|x\otimes y\|_{p,\infty}\leq \|x\|_p\|y\|_{p,\infty}.$$
\end{lemma}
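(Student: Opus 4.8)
The plan is to push everything down to a computation with decreasing rearrangements of scalar functions via the tensor-product rearrangement formula \eqref{mu_tensor}, and then to estimate the relevant distribution function by Tonelli's theorem.

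First I would set $f=\mu_\cM(x)$ and $g=\mu_\cN(y)$; these are nonnegative nonincreasing functions on $(0,\infty)$, with $f\in L_p(0,\infty)$ (so $f<\infty$ a.e.) and $g\in L_{p,\infty}(0,\infty)$. Let $h$ be the scalar function on $(0,\infty)^2$ defined by $h(s,t)=f(s)g(t)$. Since for fixed $s$ with $f(s)>0$ the slice $\{t>0:f(s)g(t)>\lambda\}$ has Lebesgue measure $d_g(\lambda/f(s))$ (and measure $0$ when $f(s)=0$), Tonelli's theorem applied to $(s,t)\mapsto\chi_{\{h>\lambda\}}$ gives, for every $\lambda>0$,
\begin{equation*}
d_h(\lambda)=\int_0^\infty d_g\!\Big(\tfrac{\lambda}{f(s)}\Big)\,ds.
\end{equation*}
Now use the weak-type bound $d_g(u)\le\|g\|_{p,\infty}^{p}\,u^{-p}$ ($u>0$), valid because $g\in L_{p,\infty}$, with $u=\lambda/f(s)$:
\begin{equation*}
d_h(\lambda)\le\frac{\|g\|_{p,\infty}^{p}}{\lambda^{p}}\int_0^\infty f(s)^{p}\,ds=\Big(\frac{\|f\|_{p}\,\|g\|_{p,\infty}}{\lambda}\Big)^{p},\qquad\lambda>0.
\end{equation*}

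In particular $d_h(\lambda)<\infty$ for all $\lambda>0$. Since the distribution function of $x\otimes y$ with respect to $\tau_1\otimes\tau_2$ coincides with $d_h$ (this is the mechanism underlying \eqref{mu_tensor}: $|x\otimes y|=|x|\otimes|y|$ and the spectral projection $E_{|x|\otimes|y|}(\lambda,\infty)$ has $(\tau_1\otimes\tau_2)$-measure equal to the Lebesgue measure of $\{(s,t):\mu_\cM(s;x)\mu_\cN(t;y)>\lambda\}$), we conclude that $x\otimes y\in S(\cM\bar\otimes\cN,\tau_1\otimes\tau_2)$ and, by \eqref{mu_tensor}, $\mu_{\cM\bar\otimes\cN}(x\otimes y)=\mu(h)$. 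Finally, recalling the standard identity $\|z\|_{p,\infty}=\sup_{t>0}t^{1/p}\mu(t;z)=\sup_{\lambda>0}\lambda\,d_{|z|}(\lambda)^{1/p}$, the displayed bound yields $\|x\otimes y\|_{p,\infty}=\|h\|_{p,\infty}\le\|f\|_p\|g\|_{p,\infty}=\|x\|_p\|y\|_{p,\infty}$, which is the assertion.

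The only point requiring care is the order of the argument around $\tau$-measurability: one cannot quote \eqref{mu_tensor} until $x\otimes y$ is known to lie in $S(\cM\bar\otimes\cN,\tau_1\otimes\tau_2)$, and the preliminaries explicitly warn that tensor products of measurable operators need not be measurable. I would therefore first establish the distribution-function bound at the purely scalar level (where $d_h$ is unconditionally defined), deduce finiteness of $d_h(\lambda)$, translate this into measurability of $x\otimes y$ via the identification of distribution functions above, and only then invoke \eqref{mu_tensor} to identify $\mu_{\cM\bar\otimes\cN}(x\otimes y)$ with $\mu(h)$; the remaining steps (Tonelli, the weak-type bound, the reformulation of the weak quasinorm in terms of $d_h$) are routine.
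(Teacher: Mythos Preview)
Your proof is correct and, in fact, handles the $\tau$-measurability issue more carefully than the paper's own argument. The route, however, is genuinely different. The paper does not compute the distribution function of $h=\mu(x)\otimes\mu(y)$ directly; instead it introduces the model function $z(t)=t^{-1/p}$, uses the pointwise inequality $\mu(y)\le\|y\|_{p,\infty}\,z$ to reduce to $\|x\otimes z\|_{p,\infty}$, and then invokes an exact rearrangement identity (from \cite[Theorem~2.f.2]{LTII}) asserting that $\mu\big(\mu(x)\otimes z\big)=\|x\|_p\,z$, which immediately gives the bound since $\|z\|_{p,\infty}=1$. Your approach trades this external structural fact for an elementary Tonelli computation on distribution functions; it is slightly longer but entirely self-contained, and it has the additional virtue of establishing $d_h(\lambda)<\infty$ first, so that the appeal to \eqref{mu_tensor} is fully justified rather than tacit. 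The paper's argument is shorter and exhibits the scaling symmetry of $t^{-1/p}$ more transparently, at the cost of relying on a reference for the key identity.
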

\begin{proof}
Let $z(t)=\frac1{t^{1/p}}$. By the definition of weak $L_p$ we have that 
$$\mu(y)\leq \|y\|_{p,\infty}z.$$ Therefore,
$$\|x\otimes y\|_{p,\infty}=\|x\otimes \mu(y)\|_{p,\infty}\leq \|y\|_{p,\infty}\|x\otimes z\|_{p,\infty}.$$
We have $\mu(x\otimes z)=\mu(\mu(x)\otimes \mu(z))=\|x\|_pz$, where the second equality follows from (the proof of) \cite[Theorem 2.f.2]{LTII}. Therefore, since $\|z\|_{p,\infty}=1$, the assertion follows.
\end{proof}

\subsection{Majorisation}
Let $x,y\in S(\mathcal{M},\tau).$ We say that $y$ is submajorized
by $x$ in the sense of Hardy-Littlewood-Polya if 
$$\int_0^t\mu(s,y)ds\leq\int_0^t\mu(s,x)ds,\quad\forall\, t>0.$$
In this case, we write $y\prec\prec x.$

It is well-known (see e.g. \cite[Theorem 3.3.3]{LSZ}) that 
\begin{equation}\label{submaj_sum}
\sum_{k=1}^n x_k\prec\prec \sum_{k=1}^n \mu(x_k),\quad x_k\in S(\cM,\tau), \, k=1,\dots n,\, n\in\bbN.
\end{equation}

The Hardy-Littlewood-Polya submajorization plays crucial role in our approach to abstract Cwikel estimates as it gives a necessary and sufficient condition for a symmetric function space to be interpolation space for $(L_p(0,\infty), L_\infty(0,\infty))$, $p\geq 1$.
Namely, it is well-known that a symmetric function space $E(0,\infty)$ is an (exact) interpolation space for $(L_1(0,\infty), L_\infty(0,\infty))$ if and only if condition $y\in E(0,\infty)$, $x\in (L_1+L_\infty)(0,\infty)$ and $x\prec\prec y$ implies that $x\in E(0,\infty)$ and $\|x\|_E\leq \|y\|_E$. The Lorentz-Shimogaki theorem \cite{LS_interpolation} (see also \cite{Sparr}) gives a similar description for interpolation spaces  for $(L_2(0,\infty), L_\infty(0,\infty))$. 

\begin{theorem}\label{interp}
Let $E(0,\infty)$ be a symmetric function space. The space $E(0,\infty)$ is an  interpolation space for the pair $(L_2(0,\infty), L_\infty(0,\infty))$ if and only if  condition $\mu^2(f)\prec\prec \mu^2(g), f\in S(0,\infty), g\in E(0,\infty)$ implies that $f\in E(0,\infty)$ and $\|f\|_E\leq C_E\|g\|_E$ for some constant $C$ independent of $f$ and $g$.
\end{theorem}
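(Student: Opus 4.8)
\emph{Proof plan.} The plan is to derive Theorem \ref{interp} from two classical facts about the Banach couple $(L_2,L_\infty):=(L_2(0,\infty),L_\infty(0,\infty))$. The first is the description of its $K$-functional,
\[K(t,f;L_2,L_\infty)\asymp\Big(\int_0^{t^2}\mu(s;f)^2\,ds\Big)^{1/2},\qquad t>0,\]
with universal constants (Holmstedt's formula; see e.g. \cite{KPS}). The second is the theorem of Lorentz--Shimogaki \cite{LS_interpolation} (see also \cite{Sparr}) asserting that $(L_2,L_\infty)$ is a \emph{Calder\'on couple}: there is a universal constant $\gamma\geq1$ such that whenever $a,b\in(L_2+L_\infty)(0,\infty)$ satisfy $K(t,a;L_2,L_\infty)\leq K(t,b;L_2,L_\infty)$ for all $t>0$, there exists a linear operator $S$ on $(L_2+L_\infty)(0,\infty)$ with $Sb=a$ and $\|S\|_{L_2\to L_2},\|S\|_{L_\infty\to L_\infty}\leq\gamma$. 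The only genuinely hard ingredient is the second fact, which I shall invoke as a black box; the rest is routine bookkeeping with the definition of $\prec\prec$.

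I begin with that bookkeeping. Since $\mu^2(f)\prec\prec\mu^2(g)$ unwinds to $\int_0^s\mu(r;f)^2\,dr\leq\int_0^s\mu(r;g)^2\,dr$ for all $s>0$, the substitution $s=t^2$ together with the $K$-functional formula yields a universal constant $c\geq1$ such that $\mu^2(f)\prec\prec\mu^2(g)$ implies $K(t,f;L_2,L_\infty)\leq c\,K(t,g;L_2,L_\infty)$ for all $t>0$, while conversely $K(t,f;L_2,L_\infty)\leq K(t,g;L_2,L_\infty)$ for all $t>0$ implies $\mu^2(f)\prec\prec c^2\mu^2(g)$. Moreover, if $g\in(L_2+L_\infty)(0,\infty)$ and $\mu^2(f)\prec\prec\mu^2(g)$ then $f\in(L_2+L_\infty)(0,\infty)$ as well: writing $\mu(g)^2=\varphi+\psi$ with $\varphi\in L_1$, $\psi\in L_\infty$, one obtains $\int_0^t\mu(r;f)^2\,dr\leq\|\varphi\|_1+t\|\psi\|_\infty$, so $\mu(t;f)^2\leq t^{-1}\|\varphi\|_1+\|\psi\|_\infty$ is bounded for $t\geq1$ and $\int_0^1\mu(r;f)^2\,dr<\infty$, i.e. $\mu(f)^2\in L_1+L_\infty$, i.e. $f\in L_2+L_\infty$.

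\emph{Sufficiency.} Suppose the majorisation condition of the theorem holds; I claim $E$ is an interpolation space for $(L_2,L_\infty)$. (For $E\neq\{0\}$ the intermediate inclusions $L_2\cap L_\infty\subseteq E\subseteq L_2+L_\infty$ follow --- the right one by applying the condition, the left one from the standard theory of symmetric spaces.) Let $T$ be any linear operator with $\|T\|_{L_2\to L_2},\|T\|_{L_\infty\to L_\infty}\leq1$ and fix $h\in E$. For any decomposition $h=h_0+h_1$ one has $Th=Th_0+Th_1$ with $\|Th_0\|_2\leq\|h_0\|_2$ and $\|Th_1\|_\infty\leq\|h_1\|_\infty$, hence $K(t,Th;L_2,L_\infty)\leq K(t,h;L_2,L_\infty)$ for all $t>0$; by the previous paragraph $\mu^2(c^{-1}Th)\prec\prec\mu^2(h)$, so the hypothesis gives $Th\in E$ with $\|Th\|_E\leq c\,C_E\|h\|_E$. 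Thus every operator of the couple is bounded on $E$, so $E$ is an interpolation space for $(L_2,L_\infty)$.

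\emph{Necessity.} Suppose $E$ is an interpolation space for $(L_2,L_\infty)$ with interpolation constant $\kappa$ (so $\|T\|_{E\to E}\leq\kappa\max\{\|T\|_{L_2\to L_2},\|T\|_{L_\infty\to L_\infty}\}$ for all couple operators $T$). Let $g\in E$ and $f\in S(0,\infty)$ with $\mu^2(f)\prec\prec\mu^2(g)$. Then $g\in L_2+L_\infty$, hence $f\in L_2+L_\infty$ by the previous paragraph, and $K(t,f;L_2,L_\infty)\leq c\,K(t,g;L_2,L_\infty)=K(t,cg;L_2,L_\infty)$ for all $t>0$. Applying the Calder\'on couple property with $a=f$, $b=cg$ produces a linear operator $S$ with $S(cg)=f$ and $\|S\|_{L_2\to L_2},\|S\|_{L_\infty\to L_\infty}\leq\gamma$; then $T:=cS$ satisfies $Tg=f$ and $\|T\|_{L_2\to L_2},\|T\|_{L_\infty\to L_\infty}\leq c\gamma$, so $\|T\|_{E\to E}\leq c\gamma\kappa$ and therefore $f=Tg\in E$ with $\|f\|_E\leq C_E\|g\|_E$, $C_E:=c\gamma\kappa$. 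The single nonroutine step in the whole argument is this use of the Calder\'on couple property of $(L_2,L_\infty)$, i.e. that pointwise $K$-domination can be implemented by one operator bounded on both endpoint spaces; the sufficiency direction, by contrast, needs no input of this kind.
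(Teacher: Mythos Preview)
The paper does not supply a proof of Theorem~\ref{interp}; it is quoted in the preliminaries as the classical Lorentz--Shimogaki theorem, with references to \cite{LS_interpolation} and \cite{Sparr}, and is then used as a black box (notably in deriving Corollary~\ref{abs_Cwikel_main_thm} from Theorem~\ref{estim_for_2}). So there is no ``paper's own proof'' to compare against.

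Your argument is a correct reconstruction of the standard route: Holmstedt's formula identifies $K(t,f;L_2,L_\infty)$ up to constants with $(\int_0^{t^2}\mu^2(s;f)\,ds)^{1/2}$, so the submajorisation condition $\mu^2(f)\prec\prec\mu^2(g)$ is equivalent (again up to universal constants) to pointwise domination of $K$-functionals; sufficiency is then immediate, and necessity follows from the Calder\'on-couple property of $(L_2,L_\infty)$, which is exactly the deep content of \cite{LS_interpolation,Sparr} that you invoke. The only place I would tighten is your parenthetical remark that the inclusion $E\subseteq L_2+L_\infty$ follows ``by applying the condition'': make explicit that if some $g\in E$ had $\int_0^{t}\mu^2(s;g)\,ds=\infty$, then $\mu^2(f)\prec\prec\mu^2(g)$ would hold for \emph{every} $f\in S(0,\infty)$, which is incompatible with $E$ carrying a complete symmetric norm. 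With that small clarification the proof is complete.
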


For convenience we recall the following elementary lemma (see e.g. \cite[(2.8)]{LS_interpolation}). 
\begin{lemma}\label{major_with_p}
Let $f,g\in (L_p+L_\infty)(0,\infty), p\geq 1$ and $f\prec\prec g$. Then $f^p\prec\prec g^p$. 
\end{lemma}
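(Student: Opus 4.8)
The plan is to reduce the assertion to a scalar statement about decreasing functions and then to write $t\mapsto t^p$ as a positive superposition of the elementary convex increasing functions $t\mapsto (t-\lambda)_+$, $\lambda\ge0$. Since $\prec\prec$ sees the functions only through $\mu(f)$ and $\mu(g)$, and since $t\mapsto t^p$ is an increasing bijection of $[0,\infty)$ (so that $\mu(f)^p$ is the decreasing rearrangement of $|f|^p$, i.e.\ $\mu(f^p)=\mu(f)^p$ and likewise for $g$), the claim is equivalent to the following: if $u=\mu(f)$ and $v=\mu(g)$ are nonnegative decreasing functions on $(0,\infty)$ with $\int_0^t u\le\int_0^t v$ for every $t>0$, then $\int_0^t u^p\le\int_0^t v^p$ for every $t>0$. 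The hypothesis $u,v\in (L_p+L_\infty)(0,\infty)$ guarantees $\int_0^t u^p<\infty$ and $\int_0^t v^p<\infty$ for finite $t$. The case $p=1$ is the hypothesis itself, so assume $p>1$.

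Next I would use the identity $t^p=p(p-1)\int_0^\infty (t-\lambda)_+\,\lambda^{p-2}\,d\lambda$, valid for $t\ge0$ and $p>1$, which follows from a one-line computation after splitting the integral at $\lambda=t$ (the integrability near $\lambda=0$ is precisely where $p>1$ enters). Substituting $t=u(s)$, integrating over $s\in(0,t)$, and applying Tonelli's theorem, one gets
$$\int_0^t u(s)^p\,ds=p(p-1)\int_0^\infty\lambda^{p-2}\Big(\int_0^t (u(s)-\lambda)_+\,ds\Big)\,d\lambda,$$
and the same formula for $v$. Hence it suffices to prove, for every fixed $\lambda\ge0$ and $t>0$, the truncated estimate $\int_0^t (u(s)-\lambda)_+\,ds\le\int_0^t (v(s)-\lambda)_+\,ds$; integrating this against the nonnegative weight $\lambda^{p-2}\,d\lambda$ then finishes the proof.

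The only substantive point is the truncated estimate, which I do not expect to pose real difficulty. Since $u$ is decreasing there is a breakpoint $t_0=\min\{t,\,d_u(\lambda)\}\in[0,t]$ such that $u\ge\lambda$ on $(0,t_0)$ and $u\le\lambda$ on $(t_0,t)$, whence $(u-\lambda)_+=u-\lambda$ on $(0,t_0)$ and $(u-\lambda)_+=0$ on $(t_0,t)$. Using the hypothesis applied at the point $t_0$,
$$\int_0^t (u-\lambda)_+\,ds=\int_0^{t_0}(u-\lambda)\,ds\le\int_0^{t_0}(v-\lambda)\,ds\le\int_0^{t_0}(v-\lambda)_+\,ds\le\int_0^t (v-\lambda)_+\,ds,$$
where the last two steps use $v-\lambda\le(v-\lambda)_+$ and nonnegativity of $(v-\lambda)_+$. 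The only care needed is the measure-theoretic bookkeeping (finiteness of the integrals, the Tonelli interchange, and the degenerate cases $d_u(\lambda)=\infty$, where $t_0=t$). As an alternative to this last step one could instead invoke Hardy's lemma, $\int_0^t u^p=\int_0^t u\cdot u^{p-1}\le\int_0^t v\cdot u^{p-1}$ (as $u^{p-1}$ is decreasing), followed by Hölder's inequality with exponents $p$ and $p/(p-1)$, which gives $\int_0^t u^p\le\big(\int_0^t v^p\big)^{1/p}\big(\int_0^t u^p\big)^{(p-1)/p}$ and hence $\int_0^t u^p\le\int_0^t v^p$.
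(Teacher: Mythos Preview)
Your argument is correct. Both routes you give --- the integral representation $t^p=p(p-1)\int_0^\infty (t-\lambda)_+\,\lambda^{p-2}\,d\lambda$ reducing matters to the elementary truncated estimate, and the Hardy-lemma/H\"older shortcut at the end --- are valid and standard; the bookkeeping (finiteness of $\int_0^t u^p$, the Tonelli interchange, the case $t_0=t$) is routine under the hypothesis $u,v\in L_p+L_\infty$.

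The paper does not supply its own proof of this lemma: it is stated as a recollection with a pointer to \cite[(2.8)]{LS_interpolation}. So there is nothing to compare against beyond noting that your arguments are exactly the sort of proofs one finds behind such citations; in fact the Hardy-lemma/H\"older argument is essentially the classical one.
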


The Lorentz-Shimogaki Theorem \ref{interp} allows us to prove (in particular) Cwikel estimates in ideals $\cL_p(\cH)$ and $\cL_{p,\infty}(\cH)$ for $p>2$ only. To obtain Cwikel estimate for $p<2$ we recall the notion of majorization of operators (see e.g. \cite{Marshall_Olkin}).

\begin{definition}\label{def_majorisation}
We say that $y$ is majorised by $x$  (notation $y\prec x$), if $y\prec\prec x$ and 
$$\int_0^\infty\mu(s,y)ds=\int_0^\infty\mu(s,x)ds,$$
assuming that both integrals are finite.
\end{definition}

The following proposition shows that in contrast to the case, when $p>2$, in the case when $0<p<2$, the spaces $\cL_p(\cM)$ and $\cL_{p,\infty}(\cM)$ respect the majorization in the reverse order. This fact appears to have escaped the attention of experts in this area. We refer to \cite{ASZ_pacific} where the germ of the idea used below can be discerned.

\begin{proposition}\label{clas_Lp_weakLp}
Let $0<p<2$ and let $x,y\in \cL_2(\cM)$ be such that $\mu^2(x)\prec \mu^2(y)$.
\begin{enumerate}
\item If $x\in\cL_{p}(\cM)$, then $y\in\cL_p(\cM)$ and  $\|y\|_p\leq \|x\|_p$.
\item If $x\in\cL_{p,\infty}(\cM)$, then $y\in\cL_{p,\infty}(\cM)$ and  $\|y\|_{p,\infty}\leq c_p \|x\|_{p,\infty}.$ 
\end{enumerate}
\end{proposition}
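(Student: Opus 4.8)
The plan is to reduce both parts to a single estimate: that if $0<p<2$ and $\mu^2(x)\prec\mu^2(y)$ with $x,y\in\cL_2(\cM)$, then the function $\mu^2(y)$ is dominated, in an appropriate quantitative sense, by $\mu^2(x)$ when we test against the $L_{p/2}$ and $L_{p/2,\infty}$ quasi-norms respectively. Since $0<p/2<1$, the key point is that the functionals $f\mapsto\int_0^\infty f^{p/2}$ and $f\mapsto\sup_t t\,f^*(t)^{p/2}$ behave \emph{reverse-monotonically} under majorisation (as opposed to submajorisation), precisely because $t\mapsto t^{p/2}$ is concave. So the heart of the matter is the scalar/commutative statement: if $\phi,\psi$ are nonnegative decreasing functions on $(0,\infty)$ with $\phi\prec\psi$ (Hardy--Littlewood majorisation, equal integrals), then $\int_0^\infty\psi^{r}\le\int_0^\infty\phi^{r}$ for $0<r<1$, and $\|\psi\|_{L_{r,\infty}}\le c_r\|\phi\|_{L_{r,\infty}}$ for $0<r<1$. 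Applying this with $r=p/2$, $\phi=\mu^2(x)$, $\psi=\mu^2(y)$ and then taking $p/2$-th roots (using $\|z\|_p^p=\|\mu^2(z)\|_{p/2}$ and the analogous identity for the weak norm, together with Lemma \ref{mu_square}) gives exactly the two claimed inequalities.

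First I would set up the commutative reduction: by passing to singular value functions we may assume $\cM=L_\infty(0,\infty)$ and work with $\phi=\mu^2(x)$, $\psi=\mu^2(y)$, decreasing, both integrable (this is where $x,y\in\cL_2(\cM)$ is used, and why the hypothesis is stated with $\cL_2$ rather than merely $(\cL_2+\cL_\infty)$ — it guarantees the equal-integrals condition in Definition \ref{def_majorisation} makes sense). For part (i), I would prove $\int_0^\infty\psi^r\le\int_0^\infty\phi^r$ directly from $\phi\prec\psi$. The clean way: write $\Phi(t)=\int_0^t\phi$, $\Psi(t)=\int_0^t\psi$, so $\Psi\le\Phi$ pointwise and $\Psi(\infty)=\Phi(\infty)$. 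Then $\int_0^\infty\psi^r=\int_0^\infty(\Psi')^r$; since $\psi$ is decreasing one can integrate by parts / use the layer-cake representation and the concavity of $s\mapsto s^r$ to compare with $\int_0^\infty(\Phi')^r$. Concretely, for decreasing integrands the map $g\mapsto\int_0^\infty\big(\frac{d}{dt}G\big)^r$ with $G$ the primitive is a concave functional that is monotone under the pointwise order on primitives with fixed endpoint, which forces the smaller primitive $\Psi$ to give the larger value — this is the germ alluded to in \cite{ASZ_pacific}. Alternatively, and perhaps more robustly, I would discretise: replace the functions by step functions and use that for finite sequences, $a\prec b$ (majorisation, equal sums) and $r<1$ concave implies $\sum a_i^r\ge\sum b_i^r$, which is the classical Karamata/Hardy--Littlewood--Pólya inequality for concave functions; then pass to the limit by monotone/dominated convergence.

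For part (ii) the structure is the same but the weak-$L_{p/2}$ quasi-norm is not a sum of a concave function of the coordinates, so I expect this to be the main obstacle. Here I would argue via distribution functions. Writing $r=p/2\in(0,1)$, we want $\sup_t t\,\psi(t)^r\le c_r\sup_t t\,\phi(t)^r$. Normalise $\|\phi^r\|_{L_{r,\infty}}=\sup_t t\phi(t)^r=1$, i.e. $\phi(t)\le t^{-1/r}$, hence $\Phi(t)=\int_0^t\phi\le\frac{r}{1-r}\,t^{1-1/r}$ only if $r>1$ — wait, for $r<1$ the bound $\int_0^t s^{-1/r}ds$ diverges at $0$, so instead I would split at a level: the right comparison is between the \emph{tails} $\int_t^\infty$, using that $\phi\prec\psi$ is equivalent to $\int_t^\infty\phi\ge\int_t^\infty\psi$ for all $t$ (this follows from equal total integrals plus the submajorisation inequality for the heads). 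From $\phi(s)\le c\,s^{-1/r}$ we get $\int_t^\infty\phi(s)\,ds\ge\int_t^\infty\psi(s)\,ds\ge \tfrac{t}{2}\psi(t/2)$ wait that is the wrong direction; the correct move is: $\int_t^\infty\psi\le\int_t^\infty\phi\le\int_t^\infty c\,s^{-1/r}ds$, which converges since $1/r>1$, equals $c'\,t^{1-1/r}$, and bounding $\int_t^\infty\psi\ge\int_t^{2t}\psi\ge t\,\psi(2t)$ yields $\psi(2t)\le c''\,t^{-1/r}$, i.e. $\sup_s s\,\psi(s)^r\le c_r$. This is exactly the claimed weak-type reverse estimate, with $c_p=c_{p/2}$ an explicit constant (something like $(2^{1/r}\cdot\frac{r}{1-r})^r$). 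I would then double-check the equivalence ``$\phi\prec\psi \Leftrightarrow \int_t^\infty\phi\ge\int_t^\infty\psi\ \forall t$'' and the elementary tail bounds, state any constant explicitly, and finally translate back to operators via $\|z\|_{p,\infty}^p = \|\mu(z)^p\|_{L_{1,\infty}}$-type identities to conclude $\|y\|_{p,\infty}\le c_p\|x\|_{p,\infty}$.
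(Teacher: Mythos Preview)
Your approach is essentially the paper's. Part (ii) is line-for-line the same: from $\phi\prec\psi$ deduce the tail inequality $\int_t^\infty\psi\le\int_t^\infty\phi$, bound the right side by $\int_t^\infty s^{-2/p}\,ds=c_p t^{1-2/p}$ (convergent since $2/p>1$), and use $t\psi(2t)\le\int_t^{2t}\psi$ to conclude. Part (i) is the same idea---concavity of $t\mapsto t^{p/2}$ reverses majorisation---but the paper packages it via a truncation $f_n=f\chi_{(0,n)}$, $g_n=g\chi_{(0,a_n)}$ with $a_n$ chosen so that $\|f_n\|_2=\|g_n\|_2$, reducing to a finite-measure interval where it quotes \cite[Lemma 25]{ASZ_pacific}; your Karamata/discretisation route is equivalent and arguably more self-contained.

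One slip to fix: in your primitive argument you write ``$\Psi\le\Phi$ pointwise'', but $\phi\prec\psi$ gives $\Phi(t)=\int_0^t\phi\le\int_0^t\psi=\Psi(t)$, i.e.\ $\Phi\le\Psi$. The Karamata direction you state afterwards is correct ($a\prec b$, $\varphi$ concave $\Rightarrow\sum\varphi(a_i)\ge\sum\varphi(b_i)$, hence $\int\phi^r\ge\int\psi^r$), so the conclusion is fine; just correct the inequality on the primitives or drop that paragraph in favor of the discretisation.
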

\begin{proof}
Due to Calkin correspondence it is sufficient to prove the assertion for the special case, when $\cM=L_\infty(0,\infty)$.

(i). Let $f,g\in L_2(0,\infty)$, $\mu^2(f)\prec \mu^2(g)$ and $f\in L_p(0,\infty)$. 
Without loss of generality, we can assume that $f=\mu(f),\,  g=\mu(g)$. Let $n\in\bbN$. Since 
$\int_0^{n}g^2(s)ds\geq \int_0^nf^2(s)ds$, there exists $a_n\in (0,n]$ such that 
$$\int_0^{a_n}g^2(s)ds=\int_0^n f^2(s)ds.$$

We set 
\begin{align*}
f_n=f\chi_{(0,n)},\quad g_n=g\chi_{(0,a_n)}.
\end{align*}

We claim that $f_n^2\prec g_n^2$. 

Let $0<t\leq a_n$. Since $a_n\leq n$ we have 
$$\int_0^tf^2_n(s)ds=\int_0^t f^2(s)ds\leq \int_0^t g^2(s)ds=\int_0^t g^2_n(s)ds.$$
If $t>a_n$, then 
$$\int_0^t f_n^2(s)ds\leq \int_0^n f^2(s)ds=\int_0^{a_n}g^2(s)ds=\int_0^t g^2_n(s)ds.$$
In addition, the choice of $a_n$ ensures that 
$$\int_0^\infty f_n^2(s)ds=\int_0^\infty g_n^2(s)ds.$$
Thus, $f_n^2\prec g_n^2$ as required. 

By \cite[Lemma 25]{ASZ_pacific} (scaling the Lebesgue measure on $(0,n)$) we have that 
$$\|g_n\|_p=\|g_n^2\|_{p/2}^{\frac12}\leq \|f_n^2\|_{p/2}^{\frac12}=\|f_n\|_p.$$ 
Passing to the limit as $n\to\infty$ we infer the inequality 
$$\|g\|_p\leq \|f\|_p,$$
as required.

(ii). Let $f,g\in L_2(0,\infty)$, $\mu^2(f)\prec \mu^2(g)$ and $f\in L_{p,\infty}(0,\infty)$. Without loss of generality, we may assume that $f=\mu(f), g=\mu(g)$.

Fix $t>0$. Since 
$$\int_0^\infty f^2(s)ds=\int_0^\infty g^2(s)ds$$
 and 
 $$\int_0^t f^2(s)ds\leq \int_0^t g^2(s)ds$$ we have
\begin{align}\label{eq1}
\int_t^\infty g^2(s)ds\leq  \int_t^\infty f^2(s)ds
\end{align}
Since $f=\mu(f)\in L_{p,\infty}(0,\infty)$ we have that $f^2\in L_{p/2,\infty}(0,\infty)$ and 
$$f^2(s)\leq \|f^2\|_{\frac{p}2,\infty} s^{-2/p},\quad s>0.$$ Therefore, since $p<2$, we have 
\begin{equation}\label{eq2}
\int_t^\infty f^2(s)ds\leq \|f^2\|_{p/2,\infty}\int_t^\infty s^{-2/p}ds=\const \|f\|^2_{p,\infty} t^{1-2/p}.
\end{equation}

Since $g^2$ is decreasing we have 
\begin{align*}
tg^2(2t)\leq \int_t^\infty g^2(s)ds\stackrel{\eqref{eq1}}{\leq}\int_t^\infty f^2(s)ds
&\stackrel{\eqref{eq2}}{\leq}\const \|f\|^2_{p,\infty} t^{1-2/p}.
\end{align*}
Hence, dividing both sides by $t$ and taking the square roots we obtain that 
$$g(2t)\leq \const \|f\|_{p,\infty} t^{1/p},$$
or equivalently 
$$g(t)\leq 2^{-1/p} \const \|f\|_{p,\infty} t^{1/p}.$$

Thus, $g\in L_{p,\infty}(0,\infty)$ and 
$$\|g\|_{p,\infty}\leq c_p \|f\|_{p,\infty}.$$
\end{proof}

Next, we recall the definition of direct sum of operators (see \cite{LSZ}). 
\begin{definition}Let $x_k\in S(\cM,\tau)$, $k\geq 0$. If $\{p_k\}\subset \cM$ is a sequence of pairwise orthogonal projections and if $y_k\in p_kS(\cM,\tau)p_k$ are such that $\mu(y_k)=\mu(x_k)$, then we write 
$$\bigoplus_{k} x_k:=\sum_k y_k.$$
\end{definition}
The particular choice of operators $y_k, k\geq 0$ is inessential, we are only interested in the fact that 
$$\mu\Big(\bigoplus_{k} x_k\Big)=\mu\Big(\bigoplus_{k} \mu(x_k)\Big).$$
Hence, if $x_k\prec y_k$, $k\in\bbN$, then 
\begin{equation}\label{strong_major_bigoplus}
\bigoplus x_k\prec \bigoplus y_k.
\end{equation}

The following lemma allows us to compare the direct sum of operators with series of these operators in term of the majorisation defined in Definition \ref{def_majorisation}. 
\begin{lemma}\label{lem_strong_major}
\begin{enumerate}
\item Let $T_k\in \cL_2(\cM)$ be such that $\sum_k \|T_k\|_2^2<\infty.$ 
If $\{T_k\}$ are disjoint from the right or from the left, then 
$$\mu^2(\bigoplus_k T_k)\prec \mu^2(\sum_k T_k).$$
\item Let $x_k,y_k\in S(\cM,\tau)$ be such that $\sum_{k,l}\|x_ky_l\|_2^2<\infty$. If $\{y_k\}$ are disjoint from the right and $\{x_k\}$ is disjoint from the left, then 
$$\mu^2(\bigoplus_{k,l} x_ky_l)\prec \mu^2(\sum_{k,l}x_ky_l).$$
\end{enumerate}
\end{lemma}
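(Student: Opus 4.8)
The strategy is to reduce part (ii) to part (i) and to prove part (i) by establishing the two ingredients of majorisation separately: the submajorisation $\mu^2(\bigoplus_k T_k)\prec\prec\mu^2(\sum_k T_k)$ and the equality of the full integrals (which are the squared Hilbert--Schmidt norms). Write $S=\sum_k T_k$ and fix disjointness, say the $T_k$ are right-disjoint, i.e. $T_k^*T_l=0$ for $k\neq l$. Then $S^*S=\sum_k T_k^*T_k$ with the summands having mutually orthogonal ranges and supports, so $\mu(S^*S)=\mu\big(\bigoplus_k T_k^*T_k\big)$, and likewise $\mu\big(\bigoplus_k T_k\big)^{*}\big(\bigoplus_k T_k\big)=\mu\big(\bigoplus_k T_k^*T_k\big)$ by the definition of direct sum. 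Hence in fact $\mu^2(\bigoplus_k T_k)=\mu^2(\sum_k T_k)$ when the $T_k$ are right-disjoint, so the majorisation is trivially an equality; the left-disjoint case is symmetric via $\mu^2(x)=\mu(xx^*)$ from \eqref{mu_square}. So for (i) I would simply record this identity, noting that $\sum_k\|T_k\|_2^2<\infty$ guarantees $S\in\cL_2(\cM)$ and that $\mu^2(\bigoplus_k T_k)$ and $\mu^2(S)$ have the same (finite) integral, so both $\prec\prec$ and the integral-equality hold and $\prec$ follows.

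\textbf{Part (ii).} For the second statement I would like to apply part (i) to the family $\{T_{k,l}\}:=\{x_ky_l\}$ indexed by pairs $(k,l)$. The point is that these products are \emph{right-disjoint}: for $(k,l)\neq(k',l')$ we have $(x_ky_l)^*(x_{k'}y_{l'})=y_l^*x_k^*x_{k'}y_{l'}$; if $k\neq k'$ this vanishes because $\{x_k\}$ is left-disjoint ($x_k^*x_{k'}=0$), and if $k=k'$ but $l\neq l'$ we first need to commute, which we cannot do directly, so instead I argue that $x_k^*x_k$ is supported on the right support of $x_k$, while $y_l$ and $y_{l'}$ have orthogonal \emph{right} supports (from right-disjointness of $\{y_k\}$, $y_l^*y_{l'}=0$), so $y_l^* (x_k^*x_k) y_{l'} = 0$ because $x_k^*x_k\le \|x_k\|_\infty^2 r(x_k)$ is irrelevant — rather, one uses $y_l = r(y_l) y_l$ where $r(y_l)$ is the right support projection, $r(y_l)r(y_{l'})=0$, and $y_l^* z y_{l'} = y_l^* r(y_l) z r(y_{l'}) y_{l'}$; this is not automatically zero for general $z$. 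So a cleaner route: $(x_ky_l)^*(x_{k'}y_{l'}) = y_l^* (x_k^* x_{k'}) y_{l'}$, and writing $x_k = x_k l(x_k)$ with $l(x_k)$ the left support (so $x_k^* = l(x_k) x_k^*$), for $k=k'$ the middle factor $x_k^*x_k$ lies in $r(x_k)\cM r(x_k)$; meanwhile disjointness of the $y$'s is on the \emph{left} supports $l(y_l)$ only if $\{y_k\}$ disjoint from the right means $y_l^*y_{l'}=0$, equivalently $l(y_l)l(y_{l'})=0$. Then $(x_ky_l)^*(x_ky_{l'}) = y_l^*(x_k^*x_k)y_{l'}$ and since $y_l^* = y_l^* l(y_l)$... this still requires the middle factor to intertwine left supports. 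To sidestep, I would instead observe that right-disjointness of $\{x_ky_l\}$ is \emph{not} what we want; rather the family $\bigoplus_l x_k y_l = x_k\bigoplus_l y_l$ for each fixed $k$ has $\{x_k(\bigoplus_l y_l)\}_k$ left-disjoint, and $\bigoplus_{k,l}x_ky_l = \bigoplus_k(x_k\bigoplus_l y_l)$ while $\sum_{k,l}x_ky_l = \sum_k x_k(\sum_l y_l)$; applying (i) in stages, first in $l$ (right-disjoint $\{y_l\}$) then in $k$ (left-disjoint $\{x_k\cdot\}$), gives the claim by transitivity of $\prec$.

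\textbf{Main obstacle.} The genuinely delicate point is verifying, in part (ii), that the relevant double-indexed family is disjoint in a way that lets part (i) apply, since $x_k y_l$ and $x_k y_{l'}$ (same $k$) need not be disjoint on either side. The remedy, which I would present carefully, is the \emph{two-stage} argument: for each fixed $k$, set $Y:=\bigoplus_l y_l$ realised inside $\cM$ and note $\mu^2(\bigoplus_l x_k y_l)=\mu^2(x_k Y)\prec\mu^2(\sum_l x_k y_l)=\mu^2(x_k(\sum_l y_l))$ is \emph{not} an instance of (i) either. The correct reduction is: apply part (i) with $\cM$ replaced by $\cM\bar\otimes\ell_\infty(\bbN)$ (or a matrix amplification) so that "$\bigoplus$" becomes an honest orthogonal sum and the products $x_ky_l$ become right-disjoint there after tensoring the $y_l$ with distinct matrix units on the right and the $x_k$ with distinct matrix units on the left — after this amplification the four-index disjointness is literal. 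Transitivity of $\prec$ (which follows from \eqref{strong_major_bigoplus} and the definition) and the finiteness hypothesis $\sum_{k,l}\|x_ky_l\|_2^2<\infty$ then close the argument. I expect the bookkeeping of support projections in this amplification to be the part that needs the most care; everything else is a direct consequence of \eqref{mu_square}, \eqref{mu^2_mu}, and the elementary behaviour of $\mu$ under orthogonal direct sums.
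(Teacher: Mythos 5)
Your proof of part (i) contains a genuine error: the assertion that one-sided disjointness forces the \emph{equality} $\mu^2(\bigoplus_k T_k)=\mu^2(\sum_k T_k)$ is false. Disjointness does give $|S|^2=\sum_k T_k^*T_k$ for $S=\sum_k T_k$, but the supports of the positive summands $T_k^*T_k$ are the supports of $|T_k|$, which live on the \emph{opposite} side from the one on which the $T_k$ were assumed disjoint, and they need not be orthogonal; hence $\mu\big(\sum_k T_k^*T_k\big)$ need not coincide with $\mu\big(\bigoplus_k T_k^*T_k\big)$. Concretely, in $(\cB(\ell_2),\tr)$ take the matrix units $T_1=e_{11}$, $T_2=e_{21}$: then $T_1^*T_2=T_2^*T_1=0$, yet $T_1^*T_1=T_2^*T_2=e_{11}$, so $\mu^2(T_1+T_2)=(2,0,0,\dots)$ while $\mu^2(T_1\oplus T_2)=(1,1,0,\dots)$; the majorisation holds but the equality does not. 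The only non-trivial ingredient of (i) is precisely the submajorisation $\mu(\bigoplus_k A_k)\prec\prec\mu(\sum_k A_k)$ for positive $A_k$ (the paper invokes \cite[Lemma 3.3.7]{LSZ}), which together with the Hilbert--Schmidt identity $\|\bigoplus_k T_k\|_2=\|\sum_k T_k\|_2$ (this is where $\sum_k\|T_k\|_2^2<\infty$ and disjointness are used) yields $\prec$. Your argument replaces exactly this step by a false identity, so part (i) is not established.

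For part (ii), your first idea --- apply (i) in two stages, first over $l$ for fixed $k$ and then over $k$ --- is exactly the paper's proof and it does work; you abandoned it only because you tested disjointness on the wrong side. Reading the hypotheses as orthogonality of the right supports of the $y_l$ and of the left supports of the $x_k$, one has for fixed $k$ and $l\neq l'$ that $(x_ky_l)(x_ky_{l'})^*=x_k\,(y_ly_{l'}^*)\,x_k^*=0$, so $\{x_ky_l\}_l$ is disjoint on the $y$-side and (i) applies with no ``middle factor'' obstruction (that obstruction only appears if you insist on computing $(x_ky_l)^*(x_ky_{l'})$); likewise $\big(\sum_l x_ky_l\big)^{\!*}$-type products over distinct $k$ vanish because $x_k^*x_{k'}=0$, so $\{\sum_l x_ky_l\}_k$ is disjoint on the $x$-side. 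One then combines the per-$k$ majorisations via \eqref{strong_major_bigoplus} before the second application of (i), and uses the square-summability hypothesis to ensure all the series converge in $\cL_2(\cM)$. The amplification by matrix units you propose instead is unnecessary and, as described (tensoring the $x_k$ and $y_l$ with matrix units), modifies the operators $x_ky_l$ and their sum, so it does not obviously say anything about $\sum_{k,l}x_ky_l$ itself; and since it still rests on part (i), the error above would in any case have to be repaired first.
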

\begin{proof}
(i). It is sufficient to prove the assertion under the assumption that $T_k$ are disjoint from the right, since the assertion for operators disjoint from the left can be proved similarly by taking adjoins.

We firstly note, that since $T_k$ are disjoint from the right, it follows that 
$$\Big\|\sum_{k=n}^m T_k\Big\|_2^2=\tau\Big(\sum_{k,l=n}^m T_k^*T_l\Big)=\sum_{k=n}^m\tau\Big( T_k^*T_k\Big)=\sum_{k=n}^m\|T_k\|_2^2,$$ 
whenever  $m>n,\,  n,m\in\bbN$.
Since the series $\sum_k \|T_k\|_2^2$ converges, we obtain that the series $\sum_k T_k$ converges in $\cL_2(\cM)$, and moreover,
$$\Big|\sum_k T_k\Big|^2=\sum_{k,l} T_k^*T_l=\sum_{k}T_k^*T_k.$$
By \cite[Lemma 3.3.7]{LSZ} (see also \cite{CDS}) we have that 
$$\mu^2\Big(\bigoplus_k T_k\Big)=\mu\Big(\bigoplus_k |T_k|^2\Big)\prec\prec \mu\Big(\sum_{k}T_k^*T_k\Big)=\mu^2\Big(\sum_k T_k\Big),$$
which proves submajorisation. 

Since the series $\sum_k T_k$ converges in $\cL_2(\cM)$, we have 
$$\int_0^\infty \mu^2\Big(s, \bigoplus_k T_k\Big)ds=\Big\|\bigoplus_k T_k\Big\|_2^2=\Big\|\sum_k T_k\Big\|_2^2=\int_0^\infty \mu^2\Big(s, \sum_k T_k\Big)ds,$$
which concludes the proof.

(ii). By the assumption we have that the operators $\{x_ky_l\}_{k,l}$ are disjoint from the left and from the right. Hence, repeating the same argument as in part (i), we have that the series $\sum_{k,l} x_ky_l$ converges in $\cL_2(\cM)$. Since for any fixed $k$ the operators $x_ky_l$ are disjoint from the right, part (i) implies that 
$$\mu^2\Big(\bigoplus_{l}x_ky_l\Big)\prec \mu^2\Big(\sum_{l} x_ky_l\Big),\quad k\in\bbN.$$
Therefore, by Lemma \ref{strong_major_bigoplus} we have that 
$$\mu^2\Big(\bigoplus_{k,l}x_ky_l\Big)=\mu^2\Big(\bigoplus_{k}\bigoplus_lx_ky_l\Big)\prec \mu^2\Big(\bigoplus_k(\sum_l x_ky_l)\Big).$$
Using part (i) again for operator $\sum_l x_ky_l$, $k\in\bbN$, which are disjoint from the left, we obtain that 
$$\mu^2\Big(\bigoplus_{k,l}x_ky_l\Big)\prec \mu^2\Big(\sum_k(\sum_l x_ky_l)\Big)=\mu^2\Big(\sum_{k,l} x_ky_l\Big).$$
\end{proof}

\section{Abstract Cwikel estimates}
In this section we prove abstract Cwikel estimates for interpolation spaces in the Banach couple $(L_2,L_\infty)$. In particular, we obtain the estimate for the ideal  $\cL_{p,\infty}(\cH)$ for any $p>2$. 

Let $\cA_1$ and $\cA_2$ be semifinite von Neumann algebras represented on the same (separable) Hilbert space $\cH$ and let $\tau_1$ and $\tau_2$ be faithful normal semifinite traces on $\cA_1$ and $\cA_2,$ respectively. We denote by $\pi_1$ (respectively, $\pi_2$) the representation of $\cA_1$ (respectively, $\cA_2$) on $\cH$. By the same argument as in \cite[Lemma 2.4]{dePS_JFA_2004} (and the paragraph after the lemma) we have that the representation $\pi_i$, $i=1,2$ can be extended uniquely to a representation of $S(\cA_i,\tau_i)$ on $\cH$. In what follows we  use the same notation $\pi_i$ for these extensions.

Throughout this section we assume that Cwikel type estimates hold for the Hilbert-Schmidt ideal $\cL_2(\cH)$, that is we have the following
\begin{hypothesis}\label{abs_Cwikel_hyp}
Assume that for every $x\in \cL_2(\cA_1,\tau_1)$ and $y\in \cL_2(\cA_2,\tau_2)$ we have $\pi_1(x)\pi_2(y)\in \cL_2(\cH)$ and
$$\|\pi_1(x)\pi_2(y)\|_{\cL_2(\cH)}\leq \const\|x\|_{\cL_2(\cA_1)}\|y\|_{\cL_2(\cA_2)}.$$
Without loss of generality we may assume that $\const=1$.
\end{hypothesis}

\begin{lemma}\label{abs_Cwikel_proj_ineq}
Suppose that $0\leq x\in S(\cA_1,\tau_1)$ and  $0\leq y\in S(\cA_2,\tau_2)$. Then for every $n\in\bbZ$ we have 
$$E_{x\otimes y}[2^n,\infty)\geq \sum_{k+l\geq n}E_{x}[2^k,2^{k+1})\otimes E_{y}[2^l,2^{l+1}),\quad k,l\in\bbZ.$$ 
\end{lemma}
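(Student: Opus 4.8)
The plan is to exploit the spectral calculus for the positive self-adjoint operators $x$, $y$ and $x \otimes y$, and the basic fact that the spectral projection of a tensor product decomposes along the product spectrum. First I would partition the positive half-line dyadically: write $\mathbf{1} = \sum_{k \in \bbZ} E_x[2^k, 2^{k+1})$ (in the strong operator topology, on the appropriate reducing subspace; one should be a little careful about the kernel of $x$, but since we are proving a lower bound for $E_{x \otimes y}[2^n,\infty)$ that does not matter, or one can restrict to the support projections of $x$ and $y$). Similarly for $y$. Then $\mathbf{1} \otimes \mathbf{1} = \sum_{k,l} E_x[2^k,2^{k+1}) \otimes E_y[2^l,2^{l+1})$, a decomposition into mutually orthogonal projections in $\cA_1 \bar\otimes \cA_2$, all of which commute with $x \otimes y$.

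The key observation is that on the range of $p_{k,l} := E_x[2^k,2^{k+1}) \otimes E_y[2^l,2^{l+1})$, the operator $x \otimes y$ is bounded below by $2^k \cdot 2^l = 2^{k+l}$: indeed $x \geq 2^k E_x[2^k,2^{k+1})$ on that range and $y \geq 2^l E_y[2^l,2^{l+1})$, so $x \otimes y \geq 2^{k+l} p_{k,l}$ as operators on $p_{k,l}\cH$. Consequently, whenever $k + l \geq n$, we have $p_{k,l} \cdot (x\otimes y) \cdot p_{k,l} \geq 2^n p_{k,l}$, which forces $p_{k,l} \leq E_{x\otimes y}[2^n,\infty)$ by functional calculus (a projection $p$ commuting with a positive operator $A$ and satisfying $pAp \geq \lambda p$ must satisfy $p \leq E_A[\lambda,\infty)$). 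Summing over all pairs $(k,l)$ with $k+l \geq n$ — these $p_{k,l}$ being pairwise orthogonal and all dominated by the single projection $E_{x\otimes y}[2^n,\infty)$ — yields
$$\sum_{k+l \geq n} E_x[2^k,2^{k+1}) \otimes E_y[2^l,2^{l+1}) \leq E_{x\otimes y}[2^n,\infty),$$
which is the claimed inequality.

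The main technical point to handle carefully is the manipulation of spectral projections of the \emph{closure} $x \otimes y$ of the algebraic tensor product of unbounded operators: one must justify that $E_{x\otimes y}$ of a Borel set $B \subseteq (0,\infty)$ agrees with what one expects from the product spectral decomposition, i.e. that $E_{x\otimes y}(B) = \int\!\!\int \chi_B(st)\, dE_x(s)\, dE_y(t)$. This is a standard fact about tensor products of commuting self-adjoint operators (via the joint spectral measure of the commuting pair $x \otimes \mathbf{1}$, $\mathbf{1} \otimes y$), but it is the place where some care is needed; everything else is elementary functional calculus and orthogonality of the dyadic spectral blocks. Once that identification is in hand, the inequality $2^{k+l} \leq st$ on the support of $dE_x(s)\,dE_y(t)$ restricted to $[2^k,2^{k+1}) \times [2^l,2^{l+1})$ makes the inclusion of ranges transparent.
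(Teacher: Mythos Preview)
Your proposal is correct and follows essentially the same approach as the paper: decompose into the dyadic blocks $p_{k,l}=E_x[2^k,2^{k+1})\otimes E_y[2^l,2^{l+1})$, observe that $x\otimes y\geq 2^{k+l}p_{k,l}$ on each block, and conclude that $p_{k,l}\leq E_{x\otimes y}[2^n,\infty)$ whenever $k+l\geq n$. The only cosmetic difference is that the paper first writes $E_{x\otimes y}[2^n,\infty)=\sum_{k,l}E_{x_k\otimes y_l}[2^n,\infty)$ (using the orthogonal decomposition $x\otimes y=\sum_{k,l}x_k\otimes y_l$) and then identifies each summand with $k+l\geq n$ as $p_{k,l}$, whereas you argue directly that each $p_{k,l}$ is dominated by $E_{x\otimes y}[2^n,\infty)$ via the commuting-projection criterion; the underlying idea is identical.
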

\begin{proof}Fix $k\in\bbZ$ and set 
$$x_{k}:=xE_{x}[2^k,2^{k+1}),\quad y_{k}:=yE_{y}[2^k,2^{k+1}).$$
Taking into account that $(x_k\otimes y_l)(x_{k'}\otimes y_{l'})=0$ unless $k=k'$, $l=l'$, we have 
$$E_{x\otimes y}[2^n,\infty)=\sum_{k,l\in\mathbb{Z}}E_{x_{k}\otimes y_{l}}[2^n,\infty)\geq\sum_{k+l\geq n}E_{x_{k}\otimes y_{l}}[2^n,\infty).$$
By the definition of $x_k$ and $y_l$, for $k+l\geq n$ we have
$$x_{k}\otimes y_{l}\geq 2^kE_{x}[2^k,2^{k+1})\otimes 2^lE_{y}[2^l,2^{l+1})\geq 2^nE_{x}[2^k,2^{k+1})\otimes E_{y}[2^l,2^{l+1}).$$
Therefore,
$$E_{x\otimes y}[2^n,\infty)\geq \sum_{k+l\geq n}E_{x}[2^k,2^{k+1})\otimes E_{y}[2^l,2^{l+1})$$
as required.
\end{proof}

\begin{lemma}\label{abs_Cwikel_finitely_many}
Assume that $x\in\cA_1\cap S_0(\cA_1,\tau_1)$, $y\in \cA_2\cap S_0(\cA_2,\tau_2)$ are positive. If $x\otimes y\in (\cL_2+\cL_\infty)(\cA_1\otimes \cA_2)$, then 
$$\mu^2_{\cB(\cH)}(\pi_1(x)\pi_2(y))\prec\prec 130 \mu^2_{\cA_1\otimes \cA_2}(x\otimes y).$$
In particular, 
\begin{equation}\label{abs_Cwikel_uniform_norm_bounded}
\|\pi_1(x)\pi_2(y)\|_\infty\leq 130 \|x\otimes y\|_{(\cL_2+\cL_\infty)(\cA_1\otimes \cA_2)}.
\end{equation}
\end{lemma}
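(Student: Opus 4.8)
The strategy is a dyadic decomposition of both $x$ and $y$ combined with Hypothesis \ref{abs_Cwikel_hyp} applied to the Hilbert--Schmidt pieces, followed by a majorisation bookkeeping. Write $p_k = E_x[2^k,2^{k+1})$ and $q_l = E_y[2^l,2^{l+1})$ for $k,l\in\bbZ$; since $x,y$ are bounded and $\tau$-compact and positive, only countably many of these projections are nonzero and each has finite trace, so $x = \sum_k x p_k$ with $2^k p_k \le x p_k < 2^{k+1}p_k$, similarly for $y$. Then $\pi_1(x)\pi_2(y) = \sum_{k,l}\pi_1(xp_k)\pi_2(yq_l)$. The key point is that Lemma \ref{abs_Cwikel_proj_ineq} gives, via \eqref{prop_mu_projection}, control of $\mu^2(x\otimes y)$ from below by the traces $\tau_1(p_k)\tau_2(q_l)$: precisely, $d_{x\otimes y}(2^n) \ge \sum_{k+l\ge n}\tau_1(p_k)\tau_2(q_l)$, so the right-hand side $130\,\mu^2(x\otimes y)$ dominates (the square of) a step function whose value near the origin is governed by those products.

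Next I would estimate the singular values of $\pi_1(x)\pi_2(y)$ from above. For each pair $(k,l)$, Hypothesis \ref{abs_Cwikel_hyp} gives $\|\pi_1(xp_k)\pi_2(yq_l)\|_2 \le \|xp_k\|_2\|yq_l\|_2 \le 2^{k+1}\tau_1(p_k)^{1/2}\cdot 2^{l+1}\tau_2(q_l)^{1/2}$, and the rank of this operator is at most $\min(\tau_1(p_k),\tau_2(q_l)) \le \tau_1(p_k)^{1/2}\tau_2(q_l)^{1/2}$ — actually it is cleaner to use that $\pi_1(xp_k)\pi_2(yq_l)$ is Hilbert--Schmidt with the above norm, so $\mu^2$ of it is majorised by a single step of height $\|\cdot\|_2^2$ and width $\|\cdot\|_2^2$ (viewing $\mu^2 \prec\prec$ a step function of the right total mass supported on a short interval). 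Summing over $k+l = m$ for fixed $m$, the operators $\{\pi_1(xp_k)\pi_2(yq_l)\}_{k+l=m}$ can be arranged so that after grouping by the value of $k+l$ one applies \eqref{submaj_sum} (submajorisation of a sum by the sum of singular value functions), and then sums the geometric-in-$m$ tail; the factor $130$ should emerge from summing $\sum_{m}(2^{2}\cdot\text{something})$ type geometric series and the various constants $2,2^2$. The ``in particular'' statement \eqref{abs_Cwikel_uniform_norm_bounded} then follows by taking $t\to 0^+$ in the majorisation (equivalently, $\|\cdot\|_\infty = \mu(0^+)$ is dominated by the majorising function evaluated near $0$, which is $\sqrt{130}\,\mu(0^+;x\otimes y)\cdot(\text{correction})$ — more simply, $\mu^2(\pi_1(x)\pi_2(y))\prec\prec 130\mu^2(x\otimes y)$ forces $\|\pi_1(x)\pi_2(y)\|_\infty^2 \le 130\,\|\mu^2(x\otimes y)\|_{(L_1+L_\infty)}$ by testing the majorisation against small $t$ after noting $\mu(x\otimes y)\in(L_2+L_\infty)$).

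The main obstacle, and where the numerical constant $130$ is actually spent, is the combinatorial reorganisation: one must pass from the double-indexed family $\{\pi_1(xp_k)\pi_2(yq_l)\}_{k,l}$ to a single operator whose $\mu^2$ is submajorised by a controlled step function, while simultaneously matching that step function against the lower bound for $\mu^2(x\otimes y)$ coming from Lemma \ref{abs_Cwikel_proj_ineq}. The natural device is to fix $n$, split the sum as $\pi_1(x)\pi_2(y) = \sum_{k+l\ge n} + \sum_{k+l<n}$, bound the operator norm of the ``large'' part $\sum_{k+l<n}$ crudely (by a geometric sum of Hilbert--Schmidt norms, giving $O(2^{\text{(related to }n)})$) and bound the Hilbert--Schmidt norm of the ``small'' part $\sum_{k+l\ge n}$ by another geometric sum, so that $\mu^2$ at the point $t = d_{x\otimes y}(2^n)$ is controlled; then $\int_0^t \mu^2(\pi_1(x)\pi_2(y))$ is estimated by combining these two pieces, and one optimises/telescopes over $n\in\bbZ$ to get the clean majorisation inequality. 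Keeping careful track of the powers of $2$ from $x p_k < 2^{k+1}p_k$ and from the $2^n$-dyadic scale, together with the $\sqrt{\cdot}$ from passing between $\mu^2$ and $\mu$, is precisely what produces $130$; I would carry this out by writing $\mu^2(\pi_1(x)\pi_2(y)) \le \mu(S_1) + \mu(S_2)$ via \eqref{mu_of_sum} with $S_1,S_2$ the two partial sums, and then submajorising each $S_i$ using \eqref{submaj_sum}.
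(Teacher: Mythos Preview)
Your architecture is exactly the paper's: dyadic decomposition $x_k = xE_x[2^k,2^{k+1})$, $y_l = yE_y[2^l,2^{l+1})$, split $\pi_1(x)\pi_2(y)=A_n+B_n$ with $A_n=\sum_{k+l<n}$ and $B_n=\sum_{k+l\ge n}$, bound $\|A_n\|_\infty$ and $\|B_n\|_2$, and then for fixed $t$ choose $n$ with $\mu_{\cA_1\otimes\cA_2}(t,x\otimes y)<2^n\le 2\mu_{\cA_1\otimes\cA_2}(t,x\otimes y)$ and combine via $\int_0^t\mu^2\le 2t\|A_n\|_\infty^2+2\|B_n\|_2^2$.

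Two places where your description would need repair in execution. First, the bound on $\|A_n\|_\infty$ is \emph{not} by a geometric sum of Hilbert--Schmidt norms (those depend on $\tau_1(p_k),\tau_2(q_l)$ and need not be summable); rather, one uses that for fixed $m$ the operators $\pi_2(y_l)\pi_1(x_k^2)\pi_2(y_l)$, $k+l=m$, are pairwise orthogonal, so $\big\|\sum_{k+l=m}\pi_1(x_k)\pi_2(y_l)\big\|_\infty^2=\sup_{k+l=m}\|x_k\|_\infty^2\|y_l\|_\infty^2\le 2^{2m+4}$, and then the geometric sum over $m<n$ gives $\|A_n\|_\infty\le 2^{n+2}$. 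Second, $\|B_n\|_2^2$ is not a geometric sum either: Hypothesis~\ref{abs_Cwikel_hyp} and orthogonality give $\|B_n\|_2^2\le\sum_{k+l\ge n}\|x_k\otimes y_l\|_2^2=\|(x\otimes y)e_n\|_2^2$ with $e_n=\sum_{k+l\ge n}p_k\otimes q_l$, and it is precisely Lemma~\ref{abs_Cwikel_proj_ineq} (giving $e_n\le E_{x\otimes y}[2^n,\infty)$, hence $(\tau_1\otimes\tau_2)(e_n)\le t$) that converts this into $\int_0^t\mu^2_{\cA_1\otimes\cA_2}(s,x\otimes y)\,ds$. With these two corrections the constant $128+2=130$ falls out, and \eqref{abs_Cwikel_uniform_norm_bounded} follows by taking $t=1$ (using that $\mu_{\cB(\cH)}$ is a step function).
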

\begin{proof}
Fix $t>0$ and choose $n\in\bbZ$ such that
\begin{equation}\label{abs_Cwikel_choose_n}
\mu_{\cA_1\otimes \cA_2}(t,x_1\otimes x_2)<2^n\leq 2\mu_{\cA_1\otimes \cA_2}(t,x_1\otimes x_2).
\end{equation} 
 By the definition of singular value function (see \eqref{dis}) the first inequality implies that 
\begin{equation}\label{abs_Cwikel_trace_E_n}
\tau_{\cA_1\otimes\cA_2}(E_{x\otimes y}[2^n,\infty))\leq t.
\end{equation}

In order to estimate $\int_0^t\mu_{\cB(\cH)}^2(s,\pi_1(x)\pi_2(y))ds$ we decompose $x$ and $y$ in dyadic parts in the following way
\begin{equation}
x_{k}:=xE_{x}[2^k,2^{k+1}),\quad y_{k}:=yE_{y}[2^k,2^{k+1}),\quad k\in\mathbb{Z}.
\end{equation}
By the assumptions there exists $N\in \bbN$ such that $x_k,y_k=0$ for $k\geq N$.

We have (with respect to the weak-operator topology)
\begin{align}\label{abs_Cwikel_into_series}
\pi_1(x)\pi_2(y)&=\sum_{k,l\in\mathbb{Z}}\pi_1(x_{k})\pi_2(y_{l})=\sum_{k+l<n}\pi_1(x_{k})\pi_2(y_{l})+\sum_{k+l\geq n}\pi_1(x_{k})\pi_2(y_{l})\nonumber\\
&=:A_n+B_n,\quad n\in\bbZ.
\end{align}
By the assumptions, the operator $B_n$ has finitely many summands. We claim that the series $A_n$ converges in the uniform norm. We also estimate the uniform norm of  $A_n$ and Hilbert-Schmidt norm of $B_n$. 

Note, that since $x,y$ are $\tau_i$ compact, respectively, it follows that $x_k\in\cL_2(\cA_1,\tau_1)$ and $y_l\in\cL_2(\cA_2,\tau_2)$ for all $k,l\in\bbZ$. Therefore, Hypothesis \ref{abs_Cwikel_hyp} implies that $\pi_1(x_{k})\pi_2(y_{l})\in \cL_2(\cH)$ for every $k,l\in \bbZ$.

For the operator $A_n$ we write
$$A_n=\sum_{k+l<n}\pi_1(x_{k})\pi_2(y_{l})=\sum_{m<n}\sum_{k+l=m}\pi_1(x_{k})\pi_2(y_{l}).$$
We have 
$$\Big|\sum_{k+l=m}\pi_1(x_{k})\pi_2(y_{l})\Big|^2=\sum_{\substack{k_1+l_1=m\\ k_2+l_2=m}}\pi_2(y_{l_2})\pi_1(x_{k_2}x_{k_1})\pi_2(y_{l_1}).$$
Since $x_{k_2}x_{k_1}=0$ when $k_1\neq k_2$, we have 
$$\Big|\sum_{k+l=m}\pi_1(x_{k})\pi_2(y_{l})\Big|^2=\sum_{\substack{k+l_1=m\\ k+l_2=m}}\pi_2(y_{l_2})\pi_1(x_{k}^2)\pi_2(y_{l_1})=\sum_{k+l=m}\pi_2(y_{l})\pi_1(x_{k}^2)\pi_2(y_{l}).$$
By definition of $\{x_k\},\{y_l\}$ we have 
\begin{align*}
\sup_{k+l=m}\|\pi_2(y_{l})\pi_1(x_{k}^2)\pi_2(y_{l})\|_{\infty}&\leq \sup_{k+l=m}\|x_{k}\|_{\infty}^2\|y_{l}\|_{\infty}^2\leq \sup_{k+l=m}2^{2(k+1)}\cdot 2^{2(l+1)}\\
&=2^{2m+4}.
\end{align*}
Hence, the fact that the operators $\pi_2(y_{l})\pi_1(x_{k}^2)\pi_2(y_{l}), k,l\in\bbZ$, are pairwise disjoint implies the operator $\Big|\sum_{k+l=m}\pi_1(x_{k})\pi_2(y_{l})\Big|^2$ is bounded and 
\begin{align*}
\Big\|\sum_{k+l=m}\pi_1(x_{k})\pi_2(y_{l})\Big\|_{\infty}^2&=\Big\|\Big|\sum_{k+l=m}\pi_1(x_{k})\pi_2(y_{l})\Big|^2\Big\|_\infty\\
&=\sup_{k+l=m}\|\pi_2(y_{l})\pi_1(x_{k}^2)\pi_2(y_{l})\|_{\infty}\leq 2^{2m+4}.
\end{align*}

Therefore, 
\begin{align*}
\sum_{m<n}\Big\|\sum_{k+l=m}\pi_1(x_{k})\pi_2(y_{l})\Big\|_{\infty}\leq\sum_{m<n}2^{m+2}=2^{n+2}.
\end{align*}
Thus, the series $A_n=\sum_{m<n}\sum_{k+l=m}\pi_1(x_{k})\pi_2(y_{l})$ converges in the uniform norm and 
\begin{align}\label{abs_Cwikel_norm_An}
\|A_n\|_\infty=\Big\|\sum_{m<n}\sum_{k+l=m}\pi_1(x_{k})\pi_2(y_{l})\Big\|_{\infty}
\leq 2^{n+2}.
\end{align}

Next, since the sum in $B_n$ has finitely many non-zero summands, we have
$$\|B_n\|_2^2=\tr(B_n^*B_n)=\sum_{k_1+l_1\geq n}\sum_{k_2+l_2\geq n}\tr(\pi_2(y_{l_2})\pi_1(x_{k_2}x_{k_1})\pi_2(y_{l_1})).$$
The fact that 
$$\pi_2(y_{l_2})\pi_1(x_{k_2}),\, \pi_1(x_{k_1})\pi_2(y_{l_1}) \in \cL_2(\cH), \quad k,l\in\bbZ$$ together with equality \eqref{tr_AB} implies that 
$$\tr\big([\pi_2(y_{l_2})\pi_1(x_{k_2})][\pi_1(x_{k_1})\pi_2(y_{l_1})]\big)=\tr\big([\pi_1(x_{k_1})\pi_2(y_{l_1})][\pi_2(y_{l_2})\pi_1(x_{k_2})]\big)=0,$$
whenever $(k_1,l_1)\neq(k_2,l_2).$
Since $x_k, y_l$ are positive, Hypothesis \ref{abs_Cwikel_hyp} implies that for $(k_1,l_1)=(k_2,l_2),$ we have 
$$\tr(\pi_2(y_{l_1})\pi_1(x_{k_1}^2)\pi_2(y_{l_1}))=\|\pi_1(x_{k_1})\pi_2(y_{l_1})\|_2^2\leq\|x_{k_1}\|_2^2\|y_{l_1}\|_2^2.$$
Therefore, we have
$$\|B_n\|_2^2\leq\sum_{k+l\geq n}\|x_{k}\|_2^2\|y_{l}\|_2^2=\sum_{k+l\geq n}\|x_{k}\otimes y_{l}\|_2^2.$$

Setting $$e_n=\sum_{k+l\geq n}E_{x}[2^k,2^{k+1})\otimes E_{y}[2^l,2^{l+1})\in \cA_1\otimes\cA_2,$$ 
we have 
$$\sum_{k+l\geq n}\|x_{k}\otimes y_{l}\|_2^2=\|(x\otimes y)\cdot e_n\|_2^2.$$
Lemma \ref{abs_Cwikel_proj_ineq} combined with \eqref{abs_Cwikel_trace_E_n} implies that $(\tau_1\otimes \tau_2)(e_n)\leq t$, and therefore, 
$$(x\otimes y)\ e_n\in \cL_2(\cA_1\otimes\cA_2,\tau_1\otimes \tau_2)$$
and 
$$\|(x\otimes y)\ e_n\|_2\leq \|(x\otimes y)E_{x\otimes y}[2^n,\infty)\|_2<\infty.$$
Thus,
\begin{equation}\label{abs_Cwikel_norm_Bn}
\|B_n\|_2^2\leq\|(x\otimes y)\cdot E_{x\otimes y}[2^n,\infty)\|_2^2.
\end{equation}

Having obtained the estimates on the norms of $A_n$ and $B_n$ we can now estimate the required singular value function.
By \eqref{abs_Cwikel_into_series} and \eqref{submaj_sum} we have 
$$\mu_{\cB(\cH)}(\pi_1(x)\pi_2(y))\prec\prec\mu_{\cB(\cH)}(A_n)+\mu_{\cB(\cH)}(B_n).$$ Hence, by Lemma \ref{major_with_p} we infer that
\begin{align*}
\int_0^t&\mu^2_{\cB(\cH)}(s,\pi_1(x)\pi_2(y))ds\leq\int_0^t\Big(\mu_{\cB(\cH)}(s,A_n)+\mu_{\cB(\cH)}(s,B_n)\Big)^2ds\\
&\leq 2\int_0^t\Big(\mu_{\cB(\cH)}^2(s,A_n)+\mu_{\cB(\cH)}^2(s,B_n)\Big)ds\leq 2t\|A_n\|_{\infty}^2+ 2\int_0^\infty\mu_{\cB(\cH)}^2(s,B_n)ds\\
&= 2t\|A_n\|_{\infty}^2+2\|B_n\|_2^2.
\end{align*}
Therefore, by \eqref{abs_Cwikel_norm_An} and \eqref{abs_Cwikel_norm_Bn} we have 
$$\int_0^t\mu_{\cB(\cH)}^2(s,\pi_1(x)\pi_2(y))ds\leq 2^{2n+5}t+2\|(x\otimes y)E_{x\otimes y}[2^n,\infty)\|_2^2.$$
By \eqref{prop_mu_projection} and \eqref{abs_Cwikel_trace_E_n} we have that 
\begin{align*}
\|(x\otimes y)E_{x\otimes y}[2^n,\infty)\|_2^2&\stackrel{\eqref{prop_mu_projection}}{=}\int_0^{\tau_{\cA_1\otimes \cA_2}(E_{x\otimes y}[2^n,\infty))}\mu_{\cA_1\otimes \cA_2}^2(s,x\otimes y)ds\\
&\stackrel{\eqref{abs_Cwikel_trace_E_n}}\leq \int_0^{t}\mu_{\cA_1\otimes \cA_2}^2(s,x\otimes y)ds.
\end{align*}
Hence, combining the above inequalities with the second inequality in 
\eqref{abs_Cwikel_choose_n} we obtain that 
\begin{align*}
\int_0^t\mu^2_{\cB(\cH)}(s,\pi_1(x)\pi_2(y))ds&\leq 2^7t\mu_{\cA_1\otimes \cA_2}^2(t,x\otimes y)+2\int_0^t\mu_{\cA_1\otimes \cA_2}^2(s,x\otimes y)ds\\
&\leq 130\int_0^t\mu_{\cA_1\otimes \cA_2}^2(s,x\otimes y)ds,
\end{align*}
that is 
$$\mu^2_{\cB(\cH)}(\pi_1(x)\pi_2(y))\prec\prec 130 \mu^2_{\cA_1\otimes \cA_2}(x\otimes y),$$
as required.

To prove estimate \eqref{abs_Cwikel_uniform_norm_bounded}, we note that since $\mu_{\cB(\cH)}^2(\pi_1(x)\pi_2(y))$ is a step function, we obtain 
\begin{align*}
\|\pi_1(x)\pi_2(y)\|_\infty^2& = \int_0^{1}\mu^2_{\cB(\cH)}(s,\pi_1(x)\pi_2(y)) ds\leq 130\int_0^{1}\mu^2_{\cA_1\otimes \cA_2}(s,x\otimes y) ds\\
&\leq 130 \|x\otimes y\|^2_{(\cL_2+\cL_\infty)(\cA_1\otimes \cA_2)}.
\end{align*}

\end{proof}

In the following theorem we extend Lemma \ref{abs_Cwikel_finitely_many} to general $x,y$, such that $x\otimes y \in (\cL_2+\cL_{\infty})(\cA_1\otimes \cA_2,\tau_1\otimes \tau_2)$.
\begin{theorem}\label{estim_for_2} Suppose that  $x\otimes y \in (\cL_2+\cL_{\infty})(\cA_1\otimes \cA_2,\tau_1\otimes \tau_2)$. Then $\pi_1(x)\pi_2(y)\in \cB(\cH)$ and 
$$\mu_{\cB(\cH)}^2(\pi_1(x)\pi_2(y))\prec\prec 532\mu_{\cA_1\otimes \cA_2}^2(x\otimes y).$$
\end{theorem}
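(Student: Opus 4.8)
The plan is to reduce the general statement to Lemma~\ref{abs_Cwikel_finitely_many}, which handles the case of positive $x,y$ lying in $\cA_i\cap S_0(\cA_i,\tau_i)$, by a two-stage approximation: first a polar/positivity reduction, then a truncation argument turning arbitrary measurable $x,y$ into bounded, $\tau$-compact ones. First I would dispose of the self-adjointness issue. Writing the polar decompositions $x=u|x|$ and $y=|y^*|v$ with $u\in\cA_1$, $v\in\cA_2$ partial isometries, we have $\pi_1(x)\pi_2(y)=\pi_1(u)\,\pi_1(|x|)\pi_2(|y^*|)\,\pi_2(v)$, so since unitary (contractive) multipliers do not increase singular values (symmetric spaces are $\cM$-bimodules), it suffices to prove the bound with $x,y$ replaced by $|x|\ge 0$ and $|y^*|\ge 0$; note $\mu(|x|\otimes|y^*|)=\mu(\mu(x)\otimes\mu(y))=\mu(x\otimes y)$ by \eqref{mu_tensor}, so the right-hand side is unchanged.

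Next, with $0\le x\in S(\cA_1,\tau_1)$, $0\le y\in S(\cA_2,\tau_2)$ and $x\otimes y\in(\cL_2+\cL_\infty)(\cA_1\otimes\cA_2)$, I would truncate. For $m\in\bbN$ set $x_m:=xE_x[1/m,m)$ and $y_m:=yE_y[1/m,m)$; these are positive, bounded, and $\tau_i$-compact (the spectral projections $E_x[1/m,m)$ have finite trace because $x\otimes y$ being in $\cL_2+\cL_\infty$ forces $\tau_1(E_x[\varepsilon,\infty))<\infty$ for $\varepsilon>0$, via the distribution-function description \eqref{dis} of $\mu_{\cA_1\otimes\cA_2}(x\otimes y)$ together with \eqref{mu_tensor}), so Lemma~\ref{abs_Cwikel_finitely_many} applies to each pair $(x_m,y_m)$:
$$\mu^2_{\cB(\cH)}(\pi_1(x_m)\pi_2(y_m))\prec\prec 130\,\mu^2_{\cA_1\otimes\cA_2}(x_m\otimes y_m)\le 130\,\mu^2_{\cA_1\otimes\cA_2}(x\otimes y),$$
the last inequality because $0\le x_m\otimes y_m\le x\otimes y$. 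In particular \eqref{abs_Cwikel_uniform_norm_bounded} gives a uniform bound $\|\pi_1(x_m)\pi_2(y_m)\|_\infty\le 130\|x\otimes y\|_{(\cL_2+\cL_\infty)}$, so the $\pi_1(x_m)\pi_2(y_m)$ are bounded operators sitting in a norm ball.

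The main obstacle is the passage to the limit: one must show $\pi_1(x_m)\pi_2(y_m)\to\pi_1(x)\pi_2(y)$ in a topology compatible with lower semicontinuity of $\int_0^t\mu^2(s,\cdot)\,ds$. I would argue that $\pi_1(x_m)\to\pi_1(x)$ and $\pi_2(y_m)\to\pi_2(y)$ strongly on a common core (e.g. on the dense set of vectors $E_x[\cdot]\cH$, resp.\ $E_y[\cdot]\cH$), hence $\pi_1(x_m)\pi_2(y_m)\to\pi_1(x)\pi_2(y)$ in the strong operator topology on such a core; combined with the uniform norm bound this yields that $\pi_1(x)\pi_2(y)$ extends to a bounded operator and that $\pi_1(x_m)\pi_2(y_m)\to\pi_1(x)\pi_2(y)$ in the weak operator topology. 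Since $x\mapsto\int_0^t\mu(s,x)\,ds$ is weak-operator lower semicontinuous on bounded sets (equivalently $\mu(t,\cdot)$ is, by a Fatou-type argument; cf.\ \cite{FK}), and here $\mu^2_{\cB(\cH)}(\pi_1(x_m)\pi_2(y_m))=\mu_{\cB(\cH)}(|\pi_1(x_m)\pi_2(y_m)|^2)$, we get
$$\int_0^t\mu^2_{\cB(\cH)}(s,\pi_1(x)\pi_2(y))\,ds\le\liminf_{m}\int_0^t\mu^2_{\cB(\cH)}(s,\pi_1(x_m)\pi_2(y_m))\,ds\le 130\int_0^t\mu^2_{\cA_1\otimes\cA_2}(s,x\otimes y)\,ds.$$
Finally I would split the general $x\otimes y$ as $z_1+z_2$ with $z_1\in\cL_2$, $z_2\in\cL_\infty$, apply the above to the compact part after a further truncation of the $\cL_\infty$ part (which contributes at most a bounded perturbation and accounts for the jump from the constant $130$ to $532$ via \eqref{mu_of_sum} and \eqref{submaj_sum}, roughly $(\sqrt{130}+\text{const})^2$), thereby obtaining $\mu^2_{\cB(\cH)}(\pi_1(x)\pi_2(y))\prec\prec 532\,\mu^2_{\cA_1\otimes\cA_2}(x\otimes y)$. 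The delicate points are justifying the strong convergence on a core for genuinely unbounded $x,y$ and tracking the numerical constant through the $\cL_2+\cL_\infty$ decomposition.
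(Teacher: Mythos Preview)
Your reduction to positive operators and the approximation scheme are in the right spirit, but there is a genuine gap in the truncation step. You assert that $E_x[1/m,\infty)$ has finite $\tau_1$-trace because $x\otimes y\in(\cL_2+\cL_\infty)$; this is false. Take $x=1\in\cA_1$ and any bounded $y\neq 0$: then $x\otimes y\in\cL_\infty\subset\cL_2+\cL_\infty$, yet $E_x[\varepsilon,\infty)=1$ has infinite trace for $\varepsilon<1$. In general, whenever $\mu(\infty,x)>0$ (i.e.\ $x$ is not $\tau_1$-compact), your truncation $x_m=xE_x[1/m,m)$ fails to lie in $S_0(\cA_1,\tau_1)$, so Lemma~\ref{abs_Cwikel_finitely_many} cannot be invoked. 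Your fallback in the last paragraph, splitting $x\otimes y=z_1+z_2$ with $z_1\in\cL_2$ and $z_2\in\cL_\infty$, does not repair this: the pieces $z_i$ are not elementary tensors, so you cannot feed them back into the estimate for $\pi_1(\cdot)\pi_2(\cdot)$.

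The paper's fix is precisely to isolate the non-compact part at the level of each factor: set $a=\mu(\infty,x)$, $b=\mu(\infty,y)$, $u=(x-a)_+$, $v=(y-b)_+$, so that $u,v$ are genuinely $\tau$-compact, and expand $\pi_1(x)\pi_2(y)$ into four terms coming from $\pi_1(u+a)\pi_2(v+b)$. The key observation is that if $b\neq 0$ then $x$ must be bounded (otherwise $x\otimes y$ is not $\tau$-measurable), so the cross terms $a\pi_2(v)$, $b\pi_1(u)$, $ab$ are each dominated by $\mu_{\cA_1\otimes\cA_2}(x\otimes y)$. The main term $\pi_1(u)\pi_2(v)$ is handled by your approximation argument (now legitimate since $u,v\in S_0$), giving the constant $130$; the factor $4$ from $(\alpha+\beta+\gamma+\delta)^2\le 4(\alpha^2+\beta^2+\gamma^2+\delta^2)$ and the three extra $\mu^2$ contributions produce $4(130+3)=532$.
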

\begin{proof}
By equality \eqref{mu_square} we have that 
\begin{align*}
\mu^2_{\cB(\cH)}&(\pi_1(x)\pi_2(y))=\mu_{\cB(\cH)}(\pi_2(y^*)\pi_1(x^*x)\pi_2(y))\\
&=\mu_{\cB(\cH)}(\pi_2(y^*)\pi_1(|x|)\pi_1(|x|)\pi_2(y))=\mu_{\cB(\cH)}(\pi_1(|x|)\pi_2(y)\pi_2(y^*)\pi_1(|x|))\\
&=\mu^2_{\cB(\cH)}(\pi_1(|x|)\pi_2(|y^*|)).
\end{align*}

On the other hand, 
\begin{align*}
\mu^2_{\cA_1\otimes \cA_2}(x\otimes y)&=\mu_{\cA_1\otimes \cA_2}(|x|^2\otimes |y|^2)\stackrel{\eqref{mu_tensor}}{=}\mu_{\cA_1\otimes \cA_2}(\mu(|x|^2)\otimes \mu(|y|^2))\\
&=\mu_{\cA_1\otimes \cA_2}(\mu^2(x)\otimes \mu^2(y^*))\\
&=\mu_{\cA_1\otimes \cA_2}(|x|^2\otimes |y^*|^2)=\mu^2_{\cA_1\otimes \cA_2}(|x|\otimes |y^*|).
\end{align*}
Thus, without loss of generality we can assume that $x,y\geq0.$

At the first step we suppose that $x\in S_0(\cA_1,\tau_1)$, $y\in \cA_2\cap S_0(\cA_2,\tau_2)$ are such that $x\otimes y \in (\cL_2+\cL_{\infty})(\cA_1\otimes \cA_2,\tau_1\otimes \tau_2)$. 
Let us denote $p_n=E_x[0,n]$ and $C_n=\pi_1(p_n x)\pi_2(y) .$ By Lemma \ref{abs_Cwikel_finitely_many} we have that 
\begin{equation}\label{abs_Cwikel_first_reduction}
\mu^2_{\cB(\cH)}(C_n)\prec\prec 130 \mu^2_{\cA_1\otimes \cA_2}(p_nx\otimes y)\leq \mu^2_{\cA_1\otimes \cA_2}(x\otimes y).
\end{equation}

Furthermore, by \eqref{abs_Cwikel_uniform_norm_bounded} we have 
\begin{align*}
\|C_n\|_\infty\leq
 \const \|(p_nx)\otimes y\|_{(\cL_2+\cL_\infty)(\cA_1\otimes \cA_2)}\leq \const \|x\otimes y\|_{(\cL_2+\cL_\infty)(\cA_1\otimes \cA_2)}:=c.
\end{align*}
For arbitrary $\xi,\eta\in H$ and $n\geq m\in\bbN$ we have 
$$\langle (C_n-C_m)\xi,\eta\rangle=\langle\pi_1(1-p_m)C_n\xi,\eta\rangle=\langle C_n\xi, \pi_1(1-p_m)\eta\rangle.$$
Since $1-p_m\downarrow 0$ it follows that (see e.g. \cite[III.2.2.1]{Blackadar}) $\pi_1(1-p_m)\downarrow 0$. Therefore 
$$|\langle (C_n-C_m)\xi,\eta\rangle|\leq \|C_n\xi\|\|\pi_1(1-p_m)\eta\|\leq c\|\xi\|\|\pi_1(1-p_m)\eta\|\to 0\quad n,m\to\infty.$$

Thus, the bilinear form $(\xi,\eta)\mapsto \lim_{n\to\infty}\langle C_n\xi,\eta\rangle$ is bounded on $\cH\times \cH$. Therefore, there exists $C$ such that $C_n\to C$ in the weak operator topology. It is clear that $C_n=\pi_1(p_n)C$. Therefore $C=\pi_1(x)\pi_2(y)$. Thus $\pi_1(p_n x)\pi_2(y)\to \pi_1(x)\pi_2(y)$ in the weak operator topology, which together with \eqref{abs_Cwikel_first_reduction} implies that 
$$\mu^2_{\cB(\cH)}(\pi_1(x)\pi_2(y))\prec\prec 130 \mu^2_{\cA_1\otimes \cA_2}(x\otimes y).$$

%

Repeating now the same argument, we can remove the assumption that $y\in \cA_2$. Hence, we assume now 
that $x,y\geq0$ are arbitrary with $x\otimes y \in (\cL_2+\cL_{\infty})(\cA_1\otimes \cA_2,\tau_1\otimes \tau_2)$ and let 
\begin{align*}
a=\mu(\infty,x), \quad b=\mu(\infty,y),\\
u=(x-a)_+,\quad v=(y-b)_+.
\end{align*}

We have 
\begin{align}\label{abs_Cwikel_no_compactness}
\mu_{\cB(\cH)}(\pi_1(x)\pi_2(y))&\leq \mu_{\cB(\cH)}(\pi_1(u+a)\pi_2(v+b))\nonumber\\
&\prec\prec \mu_{\cB(\cH)}(\pi_1(u)\pi_2(v))+a\mu_{\cB(\cH)}(\pi_2(v))+b\mu_{\cB(\cH)}(\pi_1(u))+ab.
\end{align}

Let $b\neq 0$ and assume that $x$ is not a bounded operator. Let $q=E_y(\frac12 b,\infty)$. Since $x\otimes y$ is $\tau_1\otimes \tau_2$-measurable, we have that $x\otimes q$ is also $\tau_1\otimes \tau_2$-measurable. Since $b= \mu(\infty,y)\neq 0$ we have that $\tau_2(q)=\infty$. Hence, 
$$(\tau_1\otimes \tau_2)(E_{x\otimes q}(n,\infty))=(\tau_1\otimes \tau_2)(E_{x}(n,\infty)\otimes q)=\infty,$$
which contradicts to $\tau_1\otimes \tau_2$-measurability of 
$x\otimes q$, and therefore, $E_{x}(n,\infty)=0$ for some $n\in\bbN$.
Thus, if  $b\neq 0$, then $x$ is a bounded operator. In this case, since $\tau_2(1)=\infty$, it follows that
$$\mu^2_{\cB(\cH)}(\pi_1(u))\leq \|u\|_\infty^2=\mu^2_{\cA_1\otimes \cA_2}(u\otimes 1).$$
Similarly, if $a\neq 0$, then $y$ is bounded and $\mu^2_{\cB(\cH)}(\pi_2(v))\leq \mu^2_{\cA_1\otimes \cA_2}(1\otimes v).$

We can have three different cases: $a=0,b\neq0$;  $a\neq 0$, $b=0$ and both $a,b\neq0$.
We consider only the case, when $a,b\neq0$, as the other two cases can be proved similarly.
By \eqref{abs_Cwikel_no_compactness}  we have 
\begin{align*}
\mu^2_{\cB(\cH)}(\pi_1(x)\pi_2(y))&\prec\prec 4(\mu^2(\pi_1(u)\pi_2(v))+a^2\mu^2(\pi_2(v))+b^2\mu^2(\pi_1(u))+a^2b^2)\\
&\prec\prec 4(130\mu^2_{\cA_1\otimes \cA_2}(u\otimes v)+3\mu^2_{\cA_1\otimes \cA_2}(x\otimes y))\\
&\leq 532\mu^2_{\cA_1\otimes \cA_2}(x\otimes y).
\end{align*}
\end{proof}

Combining this result with Theorem \ref{interp} (with $f=\mu_{\cB(\cH)}(\pi_1(x)\pi_2(y))$ and $g=\mu_{\cA_1\otimes \cA_2}(x\otimes y)$)  we arrive at the following 
\begin{corollary}\label{abs_Cwikel_main_thm}
 Let $(E(0,\infty),\|\cdot\|_E)$ be an interpolation space for $(L_2,L_\infty)$ and let $x\in S(\cA_1,\tau_1), y\in S(\cA_2,\tau_2)$ be such that $x\otimes y\in \cE(\cA_1\otimes \cA_2)$. Then $\pi_1(x)\pi_2(y)\in  \cE(\cH)$ and 
$$\|\pi_1(x)\pi_2(y)\|_{\cE(\cH)}\leq C_E \|x\otimes y\|_{E(\mathcal{A}_1\otimes \mathcal{A}_2)}.$$
\end{corollary}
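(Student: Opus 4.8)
The corollary is an immediate consequence of Theorem~\ref{estim_for_2} combined with the characterisation of $(L_2,L_\infty)$-interpolation spaces provided by the Lorentz--Shimogaki Theorem~\ref{interp}. So the proof is short: I would invoke the two results in sequence, and the only genuine work is to set up the hypotheses of Theorem~\ref{interp} correctly and then transfer the resulting function-space inequality back to the operator setting via the Calkin correspondence.

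\begin{proof}
Set $f=\mu_{\cB(\cH)}(\pi_1(x)\pi_2(y))$ and $g=\mu_{\cA_1\otimes\cA_2}(x\otimes y)$. By hypothesis $x\otimes y\in\cE(\cA_1\otimes\cA_2)$, which by the Calkin correspondence means precisely that $g\in E(0,\infty)$. Since $E(0,\infty)$ is a symmetric function space and $g\in E(0,\infty)\subseteq(L_2+L_\infty)(0,\infty)$, we have in particular $x\otimes y\in(\cL_2+\cL_\infty)(\cA_1\otimes\cA_2,\tau_1\otimes\tau_2)$, so Theorem~\ref{estim_for_2} applies: $\pi_1(x)\pi_2(y)\in\cB(\cH)$ and
$$
f^2=\mu_{\cB(\cH)}^2(\pi_1(x)\pi_2(y))\prec\prec 532\,\mu_{\cA_1\otimes\cA_2}^2(x\otimes y)=532\,g^2.
$$
Rescaling, the function $\widetilde g:=\sqrt{532}\,g$ lies in $E(0,\infty)$ with $\|\widetilde g\|_E=\sqrt{532}\,\|g\|_E$, and $\mu^2(f)=f^2\prec\prec 532\,g^2=\widetilde g^2=\mu^2(\widetilde g)$ (using $f=\mu(f)$, $g=\mu(g)$). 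Since $E(0,\infty)$ is an interpolation space for $(L_2,L_\infty)$, Theorem~\ref{interp} yields $f\in E(0,\infty)$ and
$$
\|f\|_E\leq C_E\,\|\widetilde g\|_E=\sqrt{532}\,C_E\,\|g\|_E.
$$
By the Calkin correspondence this says exactly that $\pi_1(x)\pi_2(y)\in\cE(\cH)$ with $\|\pi_1(x)\pi_2(y)\|_{\cE(\cH)}=\|f\|_E\leq C_E'\,\|g\|_E=C_E'\,\|x\otimes y\|_{E(\cA_1\otimes\cA_2)}$, where $C_E'=\sqrt{532}\,C_E$ depends only on $E$. Relabelling $C_E'$ as $C_E$ completes the proof.
\end{proof}

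The only subtlety worth flagging — and I would mention it in one line — is the verification that $g\in E(0,\infty)$ forces $x\otimes y$ to be $\tau_1\otimes\tau_2$-measurable and to lie in $(\cL_2+\cL_\infty)$; this is built into the convention for the notation $x\otimes y\in\cE(\cA_1\otimes\cA_2)$ recalled in Section~\ref{prel}, together with the continuous embedding $E(0,\infty)\hookrightarrow(L_2+L_\infty)(0,\infty)$ valid for any symmetric function space, so no obstacle arises. Everything else is bookkeeping with the Calkin correspondence and the constant, and there is no hard step: the entire content has been front-loaded into Theorem~\ref{estim_for_2} and Theorem~\ref{interp}.
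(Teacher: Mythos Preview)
Your proof is correct and follows exactly the paper's approach: the paper states in one sentence that the corollary follows by combining Theorem~\ref{estim_for_2} with Theorem~\ref{interp}, applied with $f=\mu_{\cB(\cH)}(\pi_1(x)\pi_2(y))$ and $g=\mu_{\cA_1\otimes\cA_2}(x\otimes y)$, which is precisely what you do. One small slip: the embedding $E(0,\infty)\hookrightarrow(L_2+L_\infty)(0,\infty)$ does \emph{not} hold for an arbitrary symmetric function space (take $E=L_1$); it holds here because $E$ is assumed to be an interpolation space for the couple $(L_2,L_\infty)$, hence lies between $L_2\cap L_\infty$ and $L_2+L_\infty$.
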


It is well-known that the spaces $L_p(0,\infty)$ and $L_{p,\infty}(0,\infty)$ for $p>2$  are interpolation spaces in the pair $(L_2(0,\infty),L_\infty(0,\infty))$. Therefore, we obtain the following result.

\begin{corollary}
Let $p>2$ and let $x\in S(\cA_1,\tau_1)$, $y\in S(\cA_2,\tau_2)$. 
\begin{enumerate}
\item If $x\otimes y\in \cL_p(\cA_1\otimes \cA_2)$, then, $\pi_1(x)\pi_2(y)\in \cL_p(\cH)$ and 
$$\|\pi_1(x)\pi_2(y)\|_{\cL_{p}(\cH)}\leq 532 \|x\otimes y\|_{\cL_{p}(\mathcal{A}_1\otimes \mathcal{A}_2)}=532\|x\|_{\cL_p(\cA_1)}\|y\|_{\cL_p(\cA_2)}.$$
\item If $x\otimes y\in \cL_{p,\infty}(\cA_1\otimes \cA_2)$, then $\pi_1(x)\pi_2(y)\in \cL_{p,\infty}(\cH)$  and 
$$\|\pi_1(x)\pi_2(y)\|_{\cL_{p,\infty}(\cH)}\leq C_p \|x\otimes y\|_{\cL_{p,\infty}(\mathcal{A}_1\otimes \mathcal{A}_2)}.$$
\end{enumerate} 
\end{corollary}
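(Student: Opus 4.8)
The plan is to read off both statements from Theorem~\ref{estim_for_2} (equivalently, from Corollary~\ref{abs_Cwikel_main_thm}), together with the classical fact that $L_p(0,\infty)$ and $L_{p,\infty}(0,\infty)$ are interpolation spaces for the couple $(L_2,L_\infty)$ whenever $p>2$. No genuinely new idea is needed; the substance has already been established, and what remains is to descend from the $\mu^2$-majorisation of Theorem~\ref{estim_for_2} to the two concrete ideals.

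\textbf{Part (i).} First I would start from the Hardy--Littlewood submajorisation
$$\mu_{\cB(\cH)}^2(\pi_1(x)\pi_2(y))\prec\prec 532\,\mu_{\cA_1\otimes\cA_2}^2(x\otimes y)$$
supplied by Theorem~\ref{estim_for_2}. Since $p>2$, the exponent $p/2$ is $\geq 1$, so Lemma~\ref{major_with_p} applied with that exponent gives
$$\mu_{\cB(\cH)}^p(\pi_1(x)\pi_2(y))\prec\prec 532^{p/2}\,\mu_{\cA_1\otimes\cA_2}^p(x\otimes y).$$
Integrating this over $(0,\infty)$ shows $\pi_1(x)\pi_2(y)\in\cL_p(\cH)$ and $\|\pi_1(x)\pi_2(y)\|_p\leq \sqrt{532}\,\|x\otimes y\|_p\leq 532\,\|x\otimes y\|_p$. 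It then remains to identify the right-hand side: using \eqref{mu_tensor} we have $\mu_{\cA_1\otimes\cA_2}(x\otimes y)=\mu(\mu_{\cA_1}(x)\otimes\mu_{\cA_2}(y))$, whose $L_p(0,\infty)$-norm equals the $L_p((0,\infty)^2)$-norm of $\mu_{\cA_1}(x)\otimes\mu_{\cA_2}(y)$ because decreasing rearrangement preserves $L_p$-norms; by Fubini the latter factorises as $\|\mu_{\cA_1}(x)\|_{L_p}\,\|\mu_{\cA_2}(y)\|_{L_p}=\|x\|_{\cL_p(\cA_1)}\|y\|_{\cL_p(\cA_2)}$, which yields the displayed equality in the statement.

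\textbf{Part (ii).} Here I would simply invoke Corollary~\ref{abs_Cwikel_main_thm} with $E(0,\infty)=L_{p,\infty}(0,\infty)$; the only input needed is that $L_{p,\infty}(0,\infty)$ is an interpolation space for $(L_2,L_\infty)$ for $p>2$. By the Lorentz--Shimogaki description (Theorem~\ref{interp}) this amounts to saying that $\mu^2(f)\prec\prec\mu^2(g)$ with $g\in L_{p,\infty}$ forces $\|f\|_{p,\infty}\leq c_p\|g\|_{p,\infty}$, and this reduces, via Lemma~\ref{major_with_p} (squaring), to the well-known fact that $L_{p/2,\infty}(0,\infty)$ with $p/2>1$ is an interpolation space for $(L_1,L_\infty)$ — equivalently, that $\sup_{t>0}t^{2/p-1}\int_0^t\mu(s,\cdot)\,ds$ is an equivalent quasi-norm on $L_{p/2,\infty}$ and is monotone under submajorisation. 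Corollary~\ref{abs_Cwikel_main_thm} then delivers $\pi_1(x)\pi_2(y)\in\cL_{p,\infty}(\cH)$ with the asserted bound, $C_p$ being the corresponding Lorentz--Shimogaki constant.

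I do not anticipate a real obstacle: the heavy lifting is done in Theorem~\ref{estim_for_2}. The only points calling for a little care are the numerical bookkeeping in (i) (the bound that falls out is $\sqrt{532}$, comfortably below the stated $532$, so the statement is safe) and, in (ii), recalling the precise renorming of $L_{p,\infty}$ that makes the Lorentz--Shimogaki constant $C_p$ explicit and finite.
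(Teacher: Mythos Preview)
Your proposal is correct and follows essentially the same route as the paper: the paper simply notes that $L_p(0,\infty)$ and $L_{p,\infty}(0,\infty)$ are interpolation spaces for $(L_2,L_\infty)$ when $p>2$ and invokes Corollary~\ref{abs_Cwikel_main_thm} for both parts. Your part~(i) unpacks that invocation by applying Lemma~\ref{major_with_p} directly to the $\mu^2$-submajorisation from Theorem~\ref{estim_for_2} (and thereby recovers the sharper constant $\sqrt{532}$), while your part~(ii) is identical to the paper's argument.
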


Consider the particular case, when $\cA_1=L_\infty(\bbR^d)$ and $\cA_2=L_\infty(\bbR^d)$ and the representation $\pi_i$ of $\cA_i$, $i=1,2$, on the Hilbert space $L_2(\bbR^d)$ is given by
$$\pi_1(f)=M_f,\quad \pi_2(g)=g(-i\nabla),$$
where $M_f$ is multiplication operator on $L_2(\bbR^d)$ by $f$ and $\nabla=(\partial_1,\dots, \partial_d)$.
It is well known that Hypothesis \ref{abs_Cwikel_hyp} is satisfied. Therefore, the result of \cite{Cwikel} is a particular case of our abstract Cwikel estimate.

\section{The Cwikel estimates for the classical setting}\label{p<2}

The main result of this section is Theorem \ref{clas_Cwikel_E}. Its immediate consequence, Corollary \ref{clas_Cwikel_p<2}, complements the classical result by Cwikel \cite{Cwikel} for the case $p<2$. The similar result is stated in \cite[Sections 5.7, 5.8]{BKS} which relies on \cite[Theorem 11.1]{BS}. However, the authors of the present paper could not confirm the validity of the proof of the latter result from \cite{BS}. We note that our approach provides an elementary proof of the above mentioned result.

We start with an auxiliary lemma.
\begin{lemma}\label{fourier_series_estimate} Let $h\in L_1(\mathbb{R}^d)$ be supported on $[0,1]^d$ and let $\phi\in C^\infty(\bbR^d)$ be supported on $[-3,3]^d$ such that $\phi|_{[0,1]^d}=1.$ 
For the function $\psi=\phi\cdot \cF^{-1}h$ and every $0<p\leq 2$ we have 
$$\|\{\hat{\psi}(\bk)\}_{\bk\in\bbZ^d}\|_p^p\leq C_p \|h\|_1,$$
where $\{\hat{\psi}(\bk)\}_{\bk\in\bbZ^d}$ denotes the sequence on Fourier coefficients of $\psi$ on the cube $[-\pi,\pi]^d$. 
\end{lemma}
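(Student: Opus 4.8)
The plan is to pass from the Fourier coefficients of $\psi$ on the cube $[-\pi,\pi]^d$ to the values of the Euclidean Fourier transform $\cF\psi$ at the integer lattice, to rewrite $\cF\psi$ as the convolution of a Schwartz function with the compactly supported $h$, and then to sum the resulting rapid decay.

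\emph{Step 1 (Fourier series $\leftrightarrow$ Fourier transform).} Since $\psi=\phi\cdot\cF^{-1}h$ is supported in $\operatorname{supp}\phi\subseteq[-3,3]^d$ and $3<\pi$, we have $\operatorname{supp}\psi\subseteq[-\pi,\pi]^d$ and in fact the translates $\psi(\cdot-2\pi\bj)$, $\bj\neq 0$, vanish on $[-\pi,\pi]^d$; hence the $2\pi$-periodisation of $\psi$ agrees with $\psi$ on $[-\pi,\pi]^d$, and with a fixed normalisation of $\cF$ one gets $\hat\psi(\bk)=(2\pi)^{-d}(\cF\psi)(\bk)$ for all $\bk\in\bbZ^d$. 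Only $\phi\in C_c^\infty$ is used here (not $\phi|_{[0,1]^d}=1$), and $|\cF^{-1}h(x)|\le(2\pi)^{-d}\|h\|_1$ pointwise, so $\psi\in L_1\cap L_2\cap L_\infty(\bbR^d)$ and the manipulations below are legitimate.

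\emph{Step 2 (rapid decay of $\hat\psi$).} By the convolution theorem $\cF\psi=c_d\,(\cF\phi)\ast h$, and $\cF\phi\in\cS(\bbR^d)$ since $\phi\in C_c^\infty(\bbR^d)$. Because $h$ is supported in $[0,1]^d$,
$$\hat\psi(\bk)=c_d'\int_{[0,1]^d}(\cF\phi)(\bk-u)\,h(u)\,du,\qquad \bk\in\bbZ^d,$$
and for $u\in[0,1]^d$ one has $|\bk-u|\ge|\bk|-\sqrt d$, so for every $N$ there is $C_N$ with $\sup_{u\in[0,1]^d}|(\cF\phi)(\bk-u)|\le C_N(1+|\bk|)^{-N}$ (the finitely many $\bk$ with $|\bk|\le 2\sqrt d$ being absorbed via $\|\cF\phi\|_\infty$). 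Consequently
$$|\hat\psi(\bk)|\le C_N\,(1+|\bk|)^{-N}\,\|h\|_1,\qquad \bk\in\bbZ^d.$$

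\emph{Step 3 (summation).} Choosing $N$ with $Np>d$, raising the last estimate to the power $p$ and summing over $\bk\in\bbZ^d$ makes the series $\sum_\bk(1+|\bk|)^{-Np}$ converge and yields $\|\{\hat\psi(\bk)\}_{\bk\in\bbZ^d}\|_p^p=\sum_{\bk\in\bbZ^d}|\hat\psi(\bk)|^p\le C_p\|h\|_1$, which is the assertion. For the endpoint $p=2$ one may instead invoke Parseval, $\sum_\bk|\hat\psi(\bk)|^2=(2\pi)^{-d}\|\psi\|_{L_2([-\pi,\pi]^d)}^2\le(2\pi)^{-d}\|\phi\|_\infty^2\|\cF^{-1}h\|_{L_2}^2\le c_d\|h\|_1^2$, using the pointwise bound on $\cF^{-1}h$; and for $0<p<2$ one may also interpolate between this case and a small value of $p$ handled as above. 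The only genuinely delicate point is Step~1, namely matching the normalisation of the Fourier series on $[-\pi,\pi]^d$ with $\cF\psi$ evaluated at lattice points — which is exactly where the hypotheses $\operatorname{supp}\phi\subseteq[-3,3]^d$ and the use of the cube $[-\pi,\pi]^d$ enter (through $3<\pi$, since also $2\pi-3>\pi$); once this identification is in place, Steps~2--3 carry no real obstruction.
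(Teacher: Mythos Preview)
Your proof is correct and arrives at the same pointwise decay estimate $|\hat\psi(\bk)|\le C_N(1+|\bk|)^{-N}\|h\|_1$ that the paper uses, but you obtain it by a genuinely different device. The paper works on the torus: it notes that $\cF^{-1}h\in C^\infty$ (since $h$ is compactly supported), so $\psi\in C^\infty([-\pi,\pi]^d)$, and then bounds $|\bk|^m|\hat\psi(\bk)|$ by $\|(-\Delta)^{m/2}\psi\|_\infty$, which is estimated via the Leibniz rule and the fact that derivatives of $\cF^{-1}h$ are bounded by $\|h\|_1$ because $h$ is supported in $[0,1]^d$. You instead exploit the inclusion $[-3,3]^d\subset[-\pi,\pi]^d$ to identify $\hat\psi(\bk)$ with $(\cF\psi)(\bk)$ on the lattice, and then use the convolution identity $\cF\psi=c_d(\cF\phi)\ast h$ together with the Schwartz decay of $\cF\phi$. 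Your route is slightly slicker: it avoids the Leibniz computation and handles $\bk=0$ uniformly (the paper treats the sum over $\bk\neq 0$ and leaves $\bk=0$ implicit). Both arguments actually deliver the bound with $\|h\|_1^p$ rather than $\|h\|_1$ on the right-hand side; the exponent in the statement is a typo in the paper, and your Step~3 inherits it, though your Step~2 estimate and your $p=2$ computation make the correct homogeneity clear. Finally, your remark that the hypothesis $\phi|_{[0,1]^d}=1$ is not used here is accurate; that condition only enters in the application of the lemma.
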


\begin{proof}
Since $h\in L_1(\bbR^d)$ is compactly supported, it follows (see e.g. \cite{Stein_Weiss}) that $\mathcal{F}^{-1}h\in C^{\infty}(\mathbb{R}^d)$, where $\cF$ denotes the Fourier transform on $\bbR^d$. 
By the assumptions, the function $\psi=\phi\cdot\mathcal{F}^{-1}h\in C^{\infty}([-\pi,\pi]^d)$ and vanishes near the boundary together with all its derivatives. 

Let us fix an even integer $m>\frac{d}{p}.$ We have 
\begin{align*}
\sum_{{\bf k}\neq0}|\hat{\psi}(\bk)|^p=\sum_{{\bk}\neq0}|{\bk}|^{-mp}(|{\bk}|^m|\hat{\psi}(\bk)|)^p\leq(\sum_{{\bk}\neq0}|{\bk}|^{-mp})\cdot(\sup_{{\bk}\neq0}|{\bk}|^m|\hat{\psi}(\bk)|)^p.
\end{align*}
Since $\psi\in C^{\infty}([-\pi,\pi]^d)$ and $m$ is even, it follows that $|{\bk}|^m\hat{\psi}(\bk)$ is the ${\bf k}-$th Fourier coefficient of $(-\Delta)^{\frac{m}{2}}(\psi).$ Hence,
$$\sup_{{\bk}\neq0}|{\bk}|^m|\hat{\psi}(\bk)|\leq \|(-\Delta_{\mathbb{T}})^{\frac{m}{2}}\psi\|_{\infty}\leq d^{\frac{m}{2}}\|\psi\|_{C^m([-\pi,\pi]^d)}.$$
By definition of $\psi$ and Leibniz rule, we have
$$\sup_{{\bf k}\neq0}|{\bf k}|^m|\hat{\psi}(\bk)|\leq d^{\frac{m}{2}}2^{m}\|\phi\|_{C^m(\mathbb{R}^d)}\|\mathcal{F}^{-1}h\|_{C^m(\mathbb{R}^d)}.$$
To estimate the norm $\|\mathcal{F}^{-1}h\|_{C^m(\mathbb{R}^d)}$,  
for every ${\bf l}\in\mathbb{Z}_+^d,$ we set $h_{\bf l}(t)=\prod_{j=1}^dt_j^{l_j},$ $t\in\mathbb{R}^d.$
We have,
\begin{align*}
\|\mathcal{F}^{-1}h\|_{C^m(\mathbb{R}^d)}&=\max_{\|{\bf l}\|_1\leq m}\|h_{\bf l}(\nabla)\mathcal{F}^{-1}h\|_{\infty}=\max_{\|{\bf l}\|_1\leq m}\|\mathcal{F}^{-1}(h_{\bf l}h)\|_{\infty}\\
&\leq\max_{\|{\bf l}\|_1\leq m}\|h_{\bf l}h\|_1\leq\|h\|_1,
\end{align*}\
where the last inequality follows from the fact that $h$ is supported on $[0,1]$.

Thus, 
\begin{align*}
\sum_{{\bf k}\neq0}|\hat{\psi}(\bk)|^p\leq \const \Big(\sum_{{\bk}\neq0}|{\bk}|^{-mp}\Big)\cdot \|h\|_1^p=\const \|h\|_1^p,
\end{align*} 
which concludes the proof.
\end{proof}

The following lemma is an auxiliary version of Cwikel estimates for compactly supported functions.

\begin{lemma}\label{classical_Cwikel_compactly_supported}
 Let $f\in L_2(\mathbb{R}^d)$ and $g\in L_2(\mathbb{R}^d)$ be supported on $[0,1]^d$. For every $0<p\leq 2,$ we have that $M_fg(-i\nabla)\in \cL_p(L_2(\bbR^d))$ and
$$\|M_fg(-i\nabla)\|_p\leq \const \|f\|_2\|g\|_2,$$
where the constant depends on $d$ and $p$ only. 
\end{lemma}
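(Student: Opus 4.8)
The plan is to reduce the statement to the auxiliary Fourier-analytic Lemma \ref{fourier_series_estimate} by passing from the operator $M_fg(-i\nabla)$ on $L_2(\bbR^d)$ to a discrete model on $\ell_2(\bbZ^d)$, where the norm of the operator can be read off directly from the $\ell_p$-norm of a sequence of Fourier coefficients. Since $f$ and $g$ are supported on $[0,1]^d$, only the values of the "symbols" on and near this cube matter, which is exactly the setting prepared by the cutoff function $\phi$ in Lemma \ref{fourier_series_estimate}.

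First I would write $h = |g|^2$ (or, more carefully, work with $g$ itself via its Fourier transform), noting that $h \in L_1(\bbR^d)$ is supported on $[0,1]^d$, and consider the function $\psi = \phi \cdot \cF^{-1} h$ produced by Lemma \ref{fourier_series_estimate}, which gives $\|\{\hat\psi(\bk)\}_{\bk}\|_p^p \le C_p \|h\|_1 = C_p\|g\|_2^2$. Next I would observe that $g(-i\nabla)^* g(-i\nabla) = |g|^2(-i\nabla)$ is a Fourier multiplier whose kernel is a convolution by $\cF^{-1}|g|^2$, and that conjugating $M_f g(-i\nabla)$ appropriately lets one express $\|M_f g(-i\nabla)\|_p^p = \|\,M_f |g|^2(-i\nabla) M_{\bar f}\,\|_{p/2}^{p/2}$ (using $\mu^2$ and the identity $\|T\|_p^2 = \|T^*T\|_{p/2}$), so it suffices to bound $M_f \psi(-i\nabla) M_{\bar f}$ in $\cL_{p/2}$, the replacement of $|g|^2$ by $\psi$ being legitimate because $f$ lives in the cube where $\phi \equiv 1$. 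The key point is then that $\psi$ is $2\pi$-periodic-friendly: expanding $\psi$ in its Fourier series on $[-\pi,\pi]^d$ writes $\psi(-i\nabla)$ (localized to functions supported in the cube) as a sum $\sum_{\bk} \hat\psi(\bk)\, e^{i\bk\cdot x}$, i.e. a weighted sum of translations/modulations, each of which is a partial isometry-type operator of rank governed by the single cube. Pairing this with $M_f$ on both sides and using the triangle inequality for the quasi-norm $\|\cdot\|_{p/2}$ (or rather $\|\cdot\|_p$ via the $p/2 \le 1$ quasi-norm inequality $\|\sum T_k\|_r^r \le \sum \|T_k\|_r^r$ for $r \le 1$), one gets $\|M_f\psi(-i\nabla)M_{\bar f}\|_{p/2}^{p/2} \le \sum_{\bk}|\hat\psi(\bk)|^{p/2}\|M_f e^{i\bk\cdot x} M_{\bar f}\|_{p/2}^{p/2}$, and each term $M_f e^{i\bk\cdot x}M_{\bar f}$ is rank one with trace norm $\|f\|_2^2$, hence $\cL_{p/2}$-quasinorm equal to $\|f\|_2^2$ as well. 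Summing gives $\le \|f\|_2^2 \sum_{\bk}|\hat\psi(\bk)|^{p/2}$, and applying Lemma \ref{fourier_series_estimate} with exponent $p/2 \le 1 \le 2$ finishes the estimate $\|M_f g(-i\nabla)\|_p^p \le C_{d,p}\|f\|_2^p \|g\|_2^p$.

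The main obstacle I anticipate is making rigorous the passage from the operator $\psi(-i\nabla)$ on $L_2(\bbR^d)$ to the Fourier-series decomposition: the identity $\psi(-i\nabla)P = \big(\sum_{\bk}\hat\psi(\bk) e^{i\bk\cdot x}\big)P$ only holds when restricted to functions supported in a bounded cube (here $P$ is the projection onto $L_2([0,1]^d)$, which $M_f$ and $M_{\bar f}$ supply for free since $f$ is supported there), and one must be careful that it is $\psi$ — not $|g|^2$ — whose periodization is being used, which is exactly why the cutoff $\phi$ with $\phi|_{[0,1]^d}=1$ is inserted. A secondary technical point is controlling convergence of the Fourier series in the operator quasi-norm, which follows from the $\ell_{p/2}$-summability of $\{\hat\psi(\bk)\}$ combined with the uniform rank-one bound on each summand, and verifying that the rank-one operators $M_f e^{i\bk\cdot x}M_{\bar f}$ have the claimed Schatten norms is a one-line computation since $(M_f e^{i\bk\cdot x}M_{\bar f})\xi = \langle \xi, \bar f\,e^{-i\bk\cdot x}\rangle f$ has nonzero singular value $\|f\|_2^2$.
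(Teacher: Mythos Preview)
Your outline is the same strategy the paper uses: pass to $M_f|g|^2(-i\nabla)M_{\bar f}$, replace the convolution kernel $\cF^{-1}|g|^2$ by the compactly supported $\psi=\phi\cdot\cF^{-1}|g|^2$, expand $\psi$ in its Fourier series on $[-\pi,\pi]^d$, obtain a sum of rank-one operators, apply the $\tfrac p2$-subadditivity of $\|\cdot\|_{p/2}$, and finish with Lemma~\ref{fourier_series_estimate} at exponent $p/2$.

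There is, however, a concrete error in the middle of your sketch. You write the intermediate operator as $M_f\psi(-i\nabla)M_{\bar f}$ and speak of ``replacing $|g|^2$ by $\psi$'', but $\psi=\phi\cdot\cF^{-1}|g|^2$ is a \emph{position-space} function: it is the convolution kernel, not a Fourier symbol. What actually happens is that the integral kernel of $M_f|g|^2(-i\nabla)M_{\bar f}$ is (up to a constant) $f(s)\,(\cF^{-1}|g|^2)(s-t)\,\bar f(t)$; since $s,t\in[0,1]^d$ forces $s-t\in[-1,1]^d$, one may replace $\cF^{-1}|g|^2$ by $\psi$ \emph{at the kernel level}. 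Expanding $\psi(s-t)=\sum_{\bk}\hat\psi(\bk)e^{i\langle\bk,s-t\rangle}$ then yields the kernel $\sum_{\bk}\hat\psi(\bk)f_\bk(s)\overline{f_\bk(t)}$ with $f_\bk(t)=f(t)e^{i\langle\bk,t\rangle}$, i.e.\ a sum of the rank-one operators $A_\bk\xi=\langle\xi,f_\bk\rangle f_\bk$.

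This matters because your expression $M_fe^{i\bk\cdot x}M_{\bar f}$ is \emph{not} rank one: it is the multiplication operator by $|f|^2e^{i\bk\cdot x}$, and the formula $(M_fe^{i\bk\cdot x}M_{\bar f})\xi=\langle\xi,\bar f e^{-i\bk\cdot x}\rangle f$ you give for it is simply false. Once you correct this (write ``convolution by $\psi$'' rather than ``$\psi(-i\nabla)$'' and identify the rank-one pieces as the $A_\bk$ above), your argument is exactly the paper's proof.
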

\begin{proof} 
It is well-known that (see e.g. \cite[Theorem IX.29]{RS_book_II})
\begin{equation}\label{repr_via_int}
(M_fg^2(-i\nabla)M_f) \xi)(s)=\frac1{(2\pi)^{d/2}}\int_{[0,1]^d} f(s)f(t) (\cF^{-1} g^2)(s-t)\xi(t)dt, \quad \xi\in L_2(\bbR^d).
\end{equation}

 Let $\phi$ be a Schwartz function which is $1$ on $[-1,1]^d$ and which is $0$ outside of $[-3,3]^d.$  We set $\psi:=\phi\cdot \cF^{-1}g^2$. Since $g\in L_2(\bbR^d)$, it follows that $\psi\in C^\infty([-\pi,\pi]^d)$ and vanishes near the boundary together with all its derivatives.  
Hence (see e.g. \cite[Chapter VII]{Stein_Weiss}) we have that for almost every $\bu\in[-\pi,\pi]^d$
\begin{equation}\label{Fourier_series_1}
\psi(\bu)=\sum_{\bk\in\bbZ^d}\hat{\psi}(\bk)e^{i\langle \bk,\bu\rangle}, \quad 
\sum_{\bk\in\bbZ^d}|\hat{\psi}(\bk)|<\infty.
\end{equation}

Since $\phi$ equals $1$ for $u\in [-1,1]^d$, it follows that
$$(\mathcal{F}^{-1}g^2)(u)=\sum_{{\bf k}\in\mathbb{Z}^d}\hat{\psi}(\bk)e^{i\langle {\bf k},u\rangle},\quad u\in[-1,1]^d.$$

We set $f_{\bf k}(t)=f(t)e^{i\langle {\bf k},t\rangle},$ and define the rank-one operator on $L_2(\mathbb{R}^d)$ by setting
$$A_{\bf k}\xi:=\langle \xi,f_{\bf k}\rangle f_{\bf k},\quad \xi\in L_2(\mathbb{R}^d).$$
By \eqref{repr_via_int}  we obtain that
\begin{align*}
(M_fg^2(-i\nabla)M_f \xi)(s)&=\frac{f(s)}{(2\pi)^{d/2}}\int_{[0,1]^d}f(t)\xi(t) \sum_{{\bf k}\in\mathbb{Z}^d}\hat{\psi}(\bk)e^{i\langle {\bf k},s-t\rangle}dt.
\end{align*}
By \eqref{Fourier_series_1} and the dominated convergence theorem we infer that 
\begin{align*}
(M_fg^2(-i\nabla)M_f \xi)(s)&=\frac{1}{(2\pi)^{d/2}}\sum_{{\bf k}\in\bbZ}\hat{\psi}(\bk) f(s)e^{i\langle {\bf k},s\rangle}\int_{[0,1]^d}f(t)\xi(t)e^{-i\langle {\bf k},t\rangle}dt\\
&=\frac{1}{(2\pi)^{d/2}}\sum_{{\bf k}\in\mathbb{Z}^d}\hat{\psi}(\bk)(A_{\bf k}\xi)(s).
\end{align*}
The quasi-norm $\|\cdot\|_{p/2}$ is $\frac{p}2$-subadditive (see e.g. \cite{F-space}), and therefore, we have that
\begin{align}\label{compsup_first_estimate}
\|M_fg(-i\nabla)\|_p^{p}&=\|M_fg^2(-i\nabla)M_f\|_{p/2}^{p/2}\leq\frac{1}{(2\pi)^{d/2}}\sum_{{\bf k}\in\mathbb{Z}^d}|\hat{\psi}(\bk)|^{p/2}\|A_{\bf k}\|_{p/2}^{p/2}\nonumber\\
&=\frac{1}{(2\pi)^{d/2}}\|f\|_2^{p}\sum_{{\bf k}\in\mathbb{Z}^d}|\hat{\psi}(\bk)|^{p/2}.
\end{align}

By Lemma \ref{fourier_series_estimate} (with $h=g^2$) we obtain that 
$$\|M_fg(-i\nabla)\|_p^{p}\leq \const \|f\|_2^{p}\|g^2\|_1^{p/2}=\const \|f\|_2^p\|g\|_2^p,$$
which concludes the proof. 
\end{proof}

\begin{lemma}\label{oper_otimes}
Let $E$ be a symmetric quasi-Banach sequence space with modulus of concavity $M$. Then for every $p>0$ such that $2^{\frac1p-1}>M$,  the operator $T:E(\bbZ_+)\to E(\bbZ_+^2)$ defined by 
$$T:x\mapsto x\otimes \{(k+1)^{-1/p}\},$$ is bounded.
\end{lemma}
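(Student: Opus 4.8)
\textbf{Proof plan for Lemma \ref{oper_otimes}.}

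The plan is to reduce the boundedness of $T$ to an explicit computation of the singular value function (i.e.\ decreasing rearrangement) of the sequence $x\otimes\{(k+1)^{-1/p}\}$ on $\bbZ_+^2$, and then to exploit $p$-convexity/concavity estimates of $E$ against a geometric-type summation. First I would note that by the Calkin-type correspondence it suffices to work with $x=\mu(x)=\{x_n\}_{n\ge 0}$ a nonincreasing nonnegative sequence, and that $\mu(x\otimes\{(k+1)^{-1/p}\})=\mu(\mu(x)\otimes\mu(\{(k+1)^{-1/p}\}))$ by \eqref{mu_tensor}. The sequence $\{(k+1)^{-1/p}\}$ is already its own decreasing rearrangement and lies in $\ell_{p,\infty}$, so the product sequence on the lattice $\bbZ_+^2$ has entries $x_n(k+1)^{-1/p}$, and I need to control its decreasing rearrangement. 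The key elementary estimate is that for a fixed nonincreasing $x$, the number of pairs $(n,k)$ with $x_n(k+1)^{-1/p}>s$ is at most $\sum_{n:\,x_n>s}(x_n/s)^p$, which gives a pointwise bound on the rearrangement of the form $\mu(m;x\otimes\{(k+1)^{-1/p}\})\le C\,m^{-1/p}\big(\sum_{j\le m}x_j^p\big)^{1/p}$ after a standard manipulation. This is exactly the content (in sequence form) of the identity $\mu(x\otimes z)=\mu(\mu(x)\otimes\mu(z))$ combined with the behaviour of $\ell_{p,\infty}$ under tensoring, which is the discrete analogue of Lemma \ref{tensor_L_1_weak_L_1} and of \cite[Theorem 2.f.2]{LTII}.

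Next I would split the product sequence dyadically. Write $x=\sum_{j\ge 0}x^{(j)}$ where $x^{(j)}$ is the restriction of $x$ to the block of indices $n\in[2^j,2^{j+1})$; then $x^{(j)}$ has at most $2^j$ nonzero entries, each bounded by $x_{2^j}$. Correspondingly $x^{(j)}\otimes\{(k+1)^{-1/p}\}$ is, up to rearrangement, dominated by $x_{2^j}$ times the characteristic-function-weighted sequence supported on $2^j$ copies of $\{(k+1)^{-1/p}\}$, whose decreasing rearrangement is comparable to $x_{2^j}\cdot 2^{j/p}\cdot (m+1)^{-1/p}$ for the first few coordinates and decays thereafter. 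Using the quasi-triangle inequality with modulus of concavity $M$, $\|Tx\|_{E(\bbZ_+^2)}\le \sum_j M^{j}\,\|x^{(j)}\otimes\{(k+1)^{-1/p}\}\|_E$ — but this naive bound is too lossy, so instead I would use the $p$-estimate: since $2^{1/p-1}>M$, the space $E$ satisfies a lower $p$-estimate (equivalently, $\|\sum_j u_j\|_E^p\lesssim \sum_j\|u_j\|_E^p$ for disjointly supported $u_j$, with a constant controlled by $M$ and the gap between $2^{1/p-1}$ and $M$), by the Aoki--Rolewicz-type argument for quasi-Banach lattices. Applying this to the disjointified dyadic pieces reduces matters to showing $\sum_j \|x^{(j)}\otimes\{(k+1)^{-1/p}\}\|_E^p\le C\|x\|_E^p$.

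For the final summation I would estimate each term: $\|x^{(j)}\otimes\{(k+1)^{-1/p}\}\|_E\le C\,x_{2^j}\,2^{j/p}\,\|\{(m+1)^{-1/p}\}\|_{E}$ is again too crude; rather, one bounds $\|x^{(j)}\otimes\{(k+1)^{-1/p}\}\|_E$ by comparing its rearrangement with a constant multiple of the rearrangement of $x$ itself restricted suitably, using that $x$ is nonincreasing so $x_{2^j}\,2^{j/p}\le C\big(\sum_{n<2^{j+1}}x_n^p\big)^{1/p}$ and that the weight $\{(k+1)^{-1/p}\}$ only redistributes mass on scales comparable to the block, keeping the $\ell_\infty$-normalisation. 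Summing the resulting geometric-in-$j$ bound — which converges precisely because the exponent $1/p-1$ beats $\log_2 M$ — yields the claimed boundedness. The main obstacle I anticipate is making the dyadic comparison of rearrangements precise in the quasi-Banach lattice setting: one must invoke the correct form of the lower $p$-estimate for $E$ with constant depending only on $M$ and $p$ (the Aoki--Rolewicz/Kalton machinery for quasi-normed lattices), and carefully track that the tensor with $\{(k+1)^{-1/p}\}$ does not blow up the symmetric norm beyond an $m^{-1/p}$-type gain, which is where the hypothesis $2^{1/p-1}>M$ is used in an essential, non-negotiable way.
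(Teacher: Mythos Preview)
Your proposal has the right core tool (Aoki--Rolewicz) but applies it to an unnecessarily complicated decomposition, and the resulting argument has genuine gaps. You decompose $x$ dyadically along the \emph{first} tensor factor and then try to estimate $\|x^{(j)}\otimes\{(k+1)^{-1/p}\}\|_E$; you never give a clean bound for this quantity, and the sketch in your last paragraph (comparing rearrangements, invoking $x_{2^j}2^{j/p}\le C(\sum_{n<2^{j+1}}x_n^p)^{1/p}$, etc.) does not lead anywhere concrete for a general symmetric quasi-Banach $E$. Also, what you call a ``lower $p$-estimate'' is actually an upper estimate (i.e.\ $q$-subadditivity of the quasi-norm), and it does not require disjoint supports.

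The paper's proof is a two-line application of the same tool, obtained by decomposing along the \emph{second} factor instead. Write
\[
x\otimes\{(k+1)^{-1/p}\}=\sum_{k\ge 0}(k+1)^{-1/p}\,(x\otimes e_k),
\]
where $e_k$ is the $k$-th coordinate vector; each $x\otimes e_k$ has the same decreasing rearrangement as $x$, hence $\|x\otimes e_k\|_E=\|x\|_E$. By Aoki--Rolewicz, after passing to an equivalent quasi-norm one has $\|\cdot\|_E^q$ subadditive for the exponent $q$ with $2^{1/q-1}=M$; the hypothesis $2^{1/p-1}>M$ gives $p<q$, so
\[
\|x\otimes\{(k+1)^{-1/p}\}\|_E^q\le\sum_{k\ge 0}(k+1)^{-q/p}\|x\|_E^q<\infty.
\]
No dyadic splitting, no rearrangement comparisons, no lattice $p$-estimates are needed.
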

\begin{proof}
By the Aoki-Rolewicz theorem, for $q>0$ satisfying $2^{\frac1q-1}=K$ (passing to an equivalent quasi-norm, if necessary) we have 
$$\|x+y\|_E^q\leq \|x\|_E^q+\|y\|_E^q.$$
Hence, for  $p<q$ we have 
$$\|x\otimes \{(k+1)^{-1/p}\}\|_E^q\leq \sum_{n=1}^\infty n^{-q/p} \|x\|_E^q=\const \|x\|_E^q.$$
\end{proof}

The following theorem is the main result of this section. It is stated is somewhat abstract manner. For the case of Schatten and weak Schatten ideals the statement is given explicitly in Corollary \ref{clas_Cwikel_p<2} below.
To formulate the result we introduce firstly the definition of space $E(L_2)(\bbR^d)$. For the special case, when $E=\ell_p$ or $E=\ell_{p,\infty}$ this definition coincides with that given in \cite{BS} (see also \cite[Chapter 4]{Simon}).
\begin{definition}Let $K$ be the unit cube in $\bbR^d$ and let  $E\subset \ell_2$ be a symmetric quasi-Banach sequence space. Let $E(L_2)(\bbR^d)$ be the space of all (measurable) functions such that the sequence $\{\|f\chi_{K+\bm}\|_2\}_{\bm\in\bbZ^d}$ is in $E$. For $f\in E(L_2)(\bbR^d)$ set 
$$\|f\|_{E(L_2)(\bbR^d)}=\Big\|\{\|f\chi_{K+\bm}\|_2\}_{\bm\in\bbZ^d}\Big\|_E.$$
The spaces $E(L_q)(\bbR^d)$ for $0<q\leq \infty$ are defined similarly.
\end{definition}

\begin{theorem}\label{clas_Cwikel_E}
Let $E\subset \ell_2$ be a symmetric quasi-Banach sequence space such that $\mu^2(x)\prec \mu^2(y),\, x\in E$ implies that $y\in E$ and  $\|y\|_E\leq c_E\|x\|_E$. If $f\otimes g\in E(L_2)(\bbR^{2d})$, then $M_fg(-i\nabla)\in \cE(L_2(\bbR^d))$ and 
$$\|M_fg(-i\nabla)\|_{\cE(L_2(\bbR^d))}\leq c_{E,p} \|f\otimes g\|_{E(L_2)(\mathbb{R}^{2d})}.$$
\end{theorem}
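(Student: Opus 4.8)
The plan is to reduce the estimate for $M_fg(-i\nabla)$ to the compactly supported case handled in Lemma \ref{classical_Cwikel_compactly_supported}, and then assemble the pieces using the majorisation machinery of Section \ref{prel}. First I would partition $\bbR^d$ into the unit cubes $K+\bm$, $\bm\in\bbZ^d$, writing $f=\sum_{\bm}f_{\bm}$ with $f_{\bm}=f\chi_{K+\bm}$ and $g=\sum_{\bn}g_{\bn}$ with $g_{\bn}=g\chi_{K+\bn}$. After translating each cube back to $[0,1]^d$, Lemma \ref{classical_Cwikel_compactly_supported} gives, for each pair $(\bm,\bn)$,
$$\|M_{f_{\bm}}g_{\bn}(-i\nabla)\|_p\leq \const\,\|f_{\bm}\|_2\,\|g_{\bn}\|_2=\const\,\|(f\otimes g)\chi_{(K+\bm)\times(K+\bn)}\|_2,$$
using that translation in $x$ and modulation (translation in the Fourier variable) are unitaries that do not affect singular values, and that $\|f_{\bm}\|_2\|g_{\bn}\|_2$ is exactly the relevant coordinate of $\{\|(f\otimes g)\chi_{K+\bk}\|_2\}_{\bk\in\bbZ^{2d}}$.

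Next I would recognise that $M_fg(-i\nabla)=\sum_{\bm,\bn}M_{f_{\bm}}g_{\bn}(-i\nabla)$, but the individual terms are \emph{not} disjoint, so a direct application of Lemma \ref{lem_strong_major} is not available. The standard remedy (as in \cite{BKS},\cite{BS}) is a colouring argument: one splits $\bbZ^d$ into finitely many classes $\bbZ^d=\bigsqcup_{i=1}^{N_d}\Lambda_i$ (a fixed number $N_d$ depending only on $d$) so that within each class the supports $K+\bm$, $\bm\in\Lambda_i$, are well separated, hence the operators $\{M_{f_{\bm}}:\bm\in\Lambda_i\}$ are mutually left-disjoint (orthogonal ranges for the multiplication operators) and likewise the operators $\{g_{\bn}(-i\nabla):\bn\in\Lambda_j\}$ are right-disjoint in the Fourier picture. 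For a fixed colour pair $(i,j)$, Lemma \ref{lem_strong_major}(ii) then yields
$$\mu^2\Big(\sum_{\bm\in\Lambda_i,\bn\in\Lambda_j}M_{f_{\bm}}g_{\bn}(-i\nabla)\Big)\prec\mu^2\Big(\bigoplus_{\bm\in\Lambda_i,\bn\in\Lambda_j}M_{f_{\bm}}g_{\bn}(-i\nabla)\Big),$$
and I would reverse this majorisation using the hypothesis on $E$ (the key innovation from Proposition \ref{clas_Lp_weakLp}): since $E$ respects $\mu^2$-majorisation in the reverse order, the norm of the genuine sum is controlled by the norm of the direct sum. The direct sum, in turn, has singular value function governed by the sequence $\{\|M_{f_{\bm}}g_{\bn}(-i\nabla)\|_p^{\,?}\}$; more precisely I would invoke Lemma \ref{classical_Cwikel_compactly_supported} termwise together with Lemma \ref{oper_otimes} (choosing $p$ small enough that $2^{1/p-1}$ exceeds the modulus of concavity of $E$), since the factor $\{(k+1)^{-1/p}\}$ that enters via the $\cL_p$-bound on each block is exactly the weight whose tensoring is bounded on $E$. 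Finally I would sum over the $N_d^2$ colour pairs using quasi-triangle inequality in $\cE$, and bound the resulting sum of $E(L_2)$-type norms by $\const\,\|f\otimes g\|_{E(L_2)(\bbR^{2d})}$.

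Assembling: for each colour pair $(i,j)$,
$$\Big\|\sum_{\bm\in\Lambda_i,\bn\in\Lambda_j}M_{f_{\bm}}g_{\bn}(-i\nabla)\Big\|_{\cE}\leq c_E\Big\|\bigoplus_{\bm,\bn}M_{f_{\bm}}g_{\bn}(-i\nabla)\Big\|_{\cE}\leq c_{E,p}\,\Big\|\{\|f_{\bm}\|_2\|g_{\bn}\|_2\}_{\bm\in\Lambda_i,\bn\in\Lambda_j}\Big\|_{E},$$
where the last step combines Lemma \ref{classical_Cwikel_compactly_supported}, the $\cL_p$-to-$\cE$ passage via Lemma \ref{oper_otimes}, and the identification of the direct-sum singular values with the sequence norm; then $M_fg(-i\nabla)=\sum_{i,j}(\cdots)$ and the quasi-triangle inequality in $\cE$ finish the proof with a constant depending only on $E$, $p$ and $d$.

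I expect the main obstacle to be the non-disjointness of the blocks $M_{f_{\bm}}g_{\bn}(-i\nabla)$: one genuinely needs both the finite colouring (to create disjointness) \emph{and} the reverse-majorisation property of $E$ from Proposition \ref{clas_Lp_weakLp} (to pass from the direct sum back to the honest sum), and these must be dovetailed so that the choice of $p<2$ is simultaneously small enough for Lemma \ref{oper_otimes} to apply and such that $\cL_p$ embeds appropriately; a secondary technical point is justifying the operator identity $M_fg(-i\nabla)=\sum_{\bm,\bn}M_{f_{\bm}}g_{\bn}(-i\nabla)$ in an appropriate topology (weak operator topology, as in the proof of Theorem \ref{estim_for_2}) before estimating singular values.
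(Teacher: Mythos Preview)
Your overall strategy matches the paper's, but you have misdiagnosed the key structural point and, as a consequence, inserted an unnecessary colouring argument. The families $\{M_{f_{\bm}}\}_{\bm\in\bbZ^d}$ \emph{are} pairwise disjoint (from both sides): since the supports $K+\bm$ are disjoint, $M_{f_{\bm}}M_{f_{\bm'}}^*=M_{f_{\bm}\bar f_{\bm'}}=0$ for $\bm\neq\bm'$. Likewise $\{g_{\bn}(-i\nabla)\}_{\bn\in\bbZ^d}$ are pairwise disjoint, being Fourier multipliers with disjoint supports. Hence Lemma \ref{lem_strong_major}(ii) applies directly to the full double sum over $\bbZ^d\times\bbZ^d$, with no need to pass to sublattices $\Lambda_i$. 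The paper proceeds exactly this way: one application of Lemma \ref{lem_strong_major}(ii), then Lemma \ref{classical_Cwikel_compactly_supported} termwise, then Lemma \ref{oper_otimes}, then the hypothesis on $E$.

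You have also reversed the direction of the majorisation coming from Lemma \ref{lem_strong_major}: that lemma gives
\[
\mu^2\Big(\bigoplus_{\bm,\bn}M_{f_{\bm}}g_{\bn}(-i\nabla)\Big)\;\prec\;\mu^2\Big(\sum_{\bm,\bn}M_{f_{\bm}}g_{\bn}(-i\nabla)\Big)=\mu^2\big(M_fg(-i\nabla)\big),
\]
not the inequality you wrote. This is exactly the direction needed: the hypothesis on $E$ says that if $\mu^2(x)\prec\mu^2(y)$ and $x\in E$, then $y\in E$ with $\|y\|_E\leq c_E\|x\|_E$. Taking $x=\bigoplus_{\bm,\bn}M_{f_{\bm}}g_{\bn}(-i\nabla)$ (which you correctly put into $\cE$ via Lemmas \ref{classical_Cwikel_compactly_supported} and \ref{oper_otimes}) and $y=M_fg(-i\nabla)$ gives the conclusion. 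Your final displayed chain of inequalities is the right target, but the route you describe to it has the arrow pointing the wrong way.
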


\begin{proof}
Let $K$ be the unit cube in $\bbR^d$. We denote
$$f_\bm=f\chi_{\bm+K},\quad g_\bm=g\chi_{\bm+K},\quad \bm\in\bbZ^d.$$

Since $E\subset \ell_2$ we have that 
$$\sum_{\bm_1,\bm_2\in\bbZ^d}\|M_{f_{\bm_1}}g_{\bm_2}(-i\nabla)\|_2^2=\sum_{\bm_1,\bm_2\in\bbZ^d}\|f_{\bm_1}\otimes g_{\bm_2}\|_2^2=\|f\otimes g\|_2^2<\infty.$$
Hence, the assumptions of Lemma \ref{lem_strong_major} are satisfied, and therefore
\begin{align}\label{clas_Cwikel_via_direct}
\mu^2\Big(\bigoplus_{\bm_1,\bm_2\in\bbZ^d} &M_{f_{\bm_1}}g_{\bm_2}(-i\nabla)\Big)\nonumber\\
&\prec \mu^2\Big(\sum_{\bm_1,\bm_2\in\bbZ^d}M_{f_{\bm_1}}g_{\bm_2}(-i\nabla)\Big)=\mu^2(M_fg(-i\nabla)).
\end{align}

We claim that
\begin{equation*}\label{clas_Cwikel_direct_sum}
\bigoplus_{\bm_1,\bm_2\in\bbZ^d} M_{f_{\bm_1}}g_{\bm_2}(-i\nabla)\in \cE(L_2(\bbR^d)).
\end{equation*}

The functions $f(\cdot-\bm_1)\chi_K$ and $g(\cdot-\bm_2)\chi_K$ are supported on $[0,1]^d$ and by the  assumptions $f(\cdot-\bm_1)\chi_K,g(\cdot-\bm_2)\chi_K\in L_2(\bbR^d)$. Therefore Lemma \ref{classical_Cwikel_compactly_supported} implies that for $p>0$ with $2^{\frac1p-1}>M$ we have 
\begin{align*}
\|M_{f_{\bm_1}}g_{\bm_2}(-i\nabla)\|_p\leq c_p\|f(\cdot-\bm_1)\chi_K\|_2\|g(\cdot-\bm_2)\chi_K\|_2=c_p\|f_{\bm_1}\otimes g_{\bm_2}\|_2.
\end{align*}
Recalling that $\|\cdot\|_{p,\infty}\leq \|\cdot\|_p$, we obtain 
$$\mu(M_{f_{\bm_1}}g_{\bm_2}(-i\nabla))\leq \|M_{f_{\bm_1}}g_{\bm_2}(-i\nabla)\|_{p,\infty}\cdot z_p\leq c_p\|f_{\bm_1}\otimes g_{\bm_2}\|_2\cdot z_p,$$
where $z_p=\{\frac1{(k+1)^{1/p}}\}_{k\in \bbN}.$

Hence, (everywhere below the summation is taken over $\bbZ^d$)
\begin{align*}
\Big\|\bigoplus_{\bm_1,\bm_2}& M_{f_{\bm_1}}g_{\bm_2}(-i\nabla)\Big\|_{\cE}=\Big\|\bigoplus_{\bm_1,\bm_2} \mu(M_{f_{\bm_1}}g_{\bm_2}(-i\nabla))\Big\|_{E}\\
&\leq \Big\|\bigoplus_{\bm_1,\bm_2}c_p\|f_{\bm_1}\otimes g_{\bm_2}\|_2\cdot z_p\Big\|_E=c_{p}\Big\|\{\|f_{\bm_1}\|_2\|g_{\bm_2}\|_2\}_{\bm_1,\bm_2}\otimes z_p\Big\|_E\\
&=c_p\Big\|T(\{\|f_{\bm_1}\|_2\|g_{\bm_2}\|_2\}_{\bm_1,\bm_2})\Big\|_E.
\end{align*}
By Lemma \ref{oper_otimes} we infer that 
\begin{align}\label{clas_Cwikel_direct_sum_norm}
\Big\|\bigoplus_{\bm_1,\bm_2\in\bbZ^d} M_{f_{\bm_1}}g_{\bm_2}(-i\nabla)\Big\|_{\cE}&\leq c_{E,p}\|\{\|f_{\bm_1}\otimes g_{\bm_2}\|_2\}_{\bm_1,\bm_2}\|_E\nonumber\\
&=c_{E,p}\|f\otimes g\|_{E(L_2)(\mathbb{R}^{2d})}<\infty.
\end{align}

Thus, $\bigoplus_{\bm_1,\bm_2} M_{f_{\bm_1}}g_{\bm_2}(-i\nabla)\in \cE(L_2(\bbR^d))$, and therefore \eqref{clas_Cwikel_via_direct} and the assumption on the space $E$ (with $x=\bigoplus_{\bm_1,\bm_2} M_{f_{\bm_1}}g_{\bm_2}(-i\nabla)$ and $y=M_fg(-i\nabla)$) implies that $M_fg(-i\nabla)\in \cE(L_2(\bbR^d))$ and  
\begin{align*}
\|M_fg(-i\nabla)\|_{\cE(L_2(\bbR^d))}&\leq c_{E}\Big\|\bigoplus_{\bm_1,\bm_2} M_{f_{\bm_1}}g_{\bm_2}(-i\nabla)\Big\|_\cE\\
&\stackrel{\eqref{clas_Cwikel_direct_sum_norm}}{\leq} c_{E,p}\|f\otimes g\|_{E(L_2)(\mathbb{R}^{2d})},
\end{align*}
as required.
 
\end{proof}

Proposition \ref{clas_Lp_weakLp} implies that the sequence spaces $\ell_p$, $\ell_{p,\infty}$ satisfy the assumption of Theorem \ref{clas_Cwikel_p<2}. Hence, as an immediate consequence of Theorem  \ref{clas_Cwikel_p<2} and Proposition \ref{clas_Lp_weakLp}  we obtain Cwikel estimates for the classical setting for $p<2$ (see \cite{BKS,BS,Simon}). We note, that our approach provides an elementary and direct proof of the estimates without drawing fine estimates from the interpolation theory.

\begin{corollary}\label{clas_Cwikel_p<2}
Let $0<p<2$.
\begin{enumerate}
\item  If $f\otimes g\in \ell_p(L_2)(\bbR^{2d})$, then $M_fg(-i\nabla)\in \cL_p(L_2(\bbR^d))$ and
$$\|M_fg(-i\nabla)\|_p\leq C_p \|f\otimes g\|_{\ell_{p}(L_2)(\bbR^{2d})}.$$
\item If $f\otimes g\in \ell_{p,\infty}(L_2)(\bbR^{2d})$, then $M_fg(-i\nabla)\in \cL_{p,\infty}(L_2(\bbR^d))$ and
$$\|M_fg(-i\nabla)\|_{p,\infty}\leq C_p \|f\otimes g\|_{\ell_{p,\infty}(L_2)(\bbR^{2d})}.$$
\end{enumerate} 
\end{corollary}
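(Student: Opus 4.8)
The plan is to derive Corollary \ref{clas_Cwikel_p<2} as a direct consequence of Theorem \ref{clas_Cwikel_E} together with Proposition \ref{clas_Lp_weakLp}. The only thing that needs checking is that the two candidate spaces, $E=\ell_p$ and $E=\ell_{p,\infty}$ for $0<p<2$, satisfy both hypotheses imposed on $E$ in Theorem \ref{clas_Cwikel_E}: first that $E\subset\ell_2$, and second that the ``reverse majorisation'' property holds, i.e. $\mu^2(x)\prec\mu^2(y)$ with $x\in E$ forces $y\in E$ and $\|y\|_E\le c_E\|x\|_E$. For the inclusion $E\subset\ell_2$ one simply notes that for $0<p<2$ one has $\ell_p\subset\ell_{p,\infty}\subset\ell_2$, which is the standard nesting of $\ell_p$ spaces (and $\ell_{p,\infty}\subset\ell_2$ since $p<2$). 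The reverse majorisation property is exactly the content of Proposition \ref{clas_Lp_weakLp}, applied in the commutative atomic setting $\cM=\ell_\infty$: part (i) of that proposition gives the statement for $\ell_p$ with constant $c_E=1$, and part (ii) gives it for $\ell_{p,\infty}$ with the constant $c_p$ appearing there. Strictly speaking Proposition \ref{clas_Lp_weakLp} is stated for a general semifinite von Neumann algebra $\cM$, so one instantiates it with $(\ell_\infty,\mathrm{counting\ measure})$, where $S(\cM,\tau)=\ell_\infty$ and $\cL_2(\cM)=\ell_2$, $\cL_p(\cM)=\ell_p$, $\cL_{p,\infty}(\cM)=\ell_{p,\infty}$.

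Having verified the hypotheses, I would then just invoke Theorem \ref{clas_Cwikel_E} twice. For part (i), take $E=\ell_p$: the conclusion $M_fg(-i\nabla)\in\cE(L_2(\bbR^d))=\cL_p(L_2(\bbR^d))$ together with the norm bound $\|M_fg(-i\nabla)\|_p\le c_{\ell_p,p}\|f\otimes g\|_{\ell_p(L_2)(\bbR^{2d})}$ is precisely the assertion, with $C_p:=c_{\ell_p,p}$. For part (ii), take $E=\ell_{p,\infty}$: then $\cE(L_2(\bbR^d))=\cL_{p,\infty}(L_2(\bbR^d))$ and the norm bound reads $\|M_fg(-i\nabla)\|_{p,\infty}\le c_{\ell_{p,\infty},p}\|f\otimes g\|_{\ell_{p,\infty}(L_2)(\bbR^{2d})}$, which is the claim with $C_p:=c_{\ell_{p,\infty},p}$. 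One should also remark that $\ell_p$ and $\ell_{p,\infty}$ are indeed quasi-Banach sequence spaces with finite modulus of concavity, so that Lemma \ref{oper_otimes} (which is used inside the proof of Theorem \ref{clas_Cwikel_E}) applies for a suitable choice of the auxiliary exponent — but this is already built into Theorem \ref{clas_Cwikel_E} and requires nothing further here.

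There is essentially no obstacle: this corollary is a pure specialisation. The only point deserving a word of care is the logical direction of the majorisation hypothesis in Theorem \ref{clas_Cwikel_E} — it is the \emph{reverse} of the Lorentz--Shimogaki condition used in the $p>2$ range, and it is crucial that Proposition \ref{clas_Lp_weakLp} is valid precisely because $p<2$ (the computation $\int_t^\infty s^{-2/p}\,ds<\infty$ in its proof needs $2/p>1$). So the proof of the corollary amounts to: cite Proposition \ref{clas_Lp_weakLp} to check the hypothesis of Theorem \ref{clas_Cwikel_E}, then apply that theorem with $E=\ell_p$ and $E=\ell_{p,\infty}$ respectively, reading off $\cE(L_2)=\cL_p(L_2)$ and $\cE(L_2)=\cL_{p,\infty}(L_2)$ by the Calkin correspondence recalled in Section \ref{prel}. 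I would write this out in three or four sentences.
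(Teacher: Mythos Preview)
Your proposal is correct and follows essentially the same route as the paper: the paper states the corollary as an immediate consequence of Theorem \ref{clas_Cwikel_E} together with Proposition \ref{clas_Lp_weakLp}, which is precisely what you do. Your additional remarks (checking $\ell_p,\ell_{p,\infty}\subset\ell_2$ explicitly and instantiating Proposition \ref{clas_Lp_weakLp} in the atomic setting) are reasonable elaborations of what the paper leaves implicit.
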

\section{Cwikel estimates for the classical setting for weak  Schatten ideal}

In this section we show that Cwikel estimate does not hold for $\mathcal{L}_{2,\infty}.$

\begin{theorem}\label{Cwikel_L2,infty_contexample}
 There exists $f\in L_2(\mathbb{R})$ such that $M_f(1-\Delta)^{-\frac14}\notin\mathcal{L}_4(L_2(\bbR)).$ In particular, $M_f(1-\Delta)^{-\frac14}\notin\mathcal{L}_{2,\infty}(L_2(\bbR)).$
\end{theorem}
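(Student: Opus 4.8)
The plan is to construct $f$ explicitly as a disjointly supported sum concentrated near integer points, chosen so that the local $L_2$-masses decay slowly, and then to show that the pieces $M_{f\chi_{K+\bm}}(1-\Delta)^{-1/4}$ are "almost orthogonal" in a way that forces the $\cL_4$-quasinorm of the whole operator to diverge while keeping $f\in L_2$. Concretely, take a fixed bump $\varphi\in C_c^\infty(\bbR)$ supported in the unit interval $K$, and set $f=\sum_{n\geq 2} c_n \varphi(\cdot - n)$ with $\{c_n\}$ to be specified. Then $\|f\|_2^2=\sum_n c_n^2\|\varphi\|_2^2$, so $f\in L_2$ precisely when $\{c_n\}\in\ell_2$. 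The natural choice is $c_n\sim n^{-1/4}(\log n)^{-1}$, or more crudely $c_n = 1/(n^{1/4}\log n)$, which lies in $\ell_2$ but, crucially, $\{c_n\}\notin \ell_4$ since $\sum_n c_n^4 = \sum_n n^{-1}(\log n)^{-4}$ — wait, that converges. So I would instead aim for the borderline: since we only need $M_f(1-\Delta)^{-1/4}\notin\cL_4$, and the operator will turn out to behave like $\mathrm{diag}(c_n)$ convolved against a fixed $\cL_{4,\infty}$-type kernel, the real requirement is that $\{c_n\}\notin\ell_{4}$ while $\{c_n\}\in\ell_2$; take $c_n = n^{-1/4}$ truncated appropriately, i.e.\ work on a finite range and let it grow, tracking constants. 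Actually the cleanest route: choose $c_n$ so $\{c_n\}\in\ell_2\setminus\ell_4$ is impossible ($\ell_2\subset\ell_4$), so the divergence must come not from the coefficients alone but from the interaction between the slowly-decaying $L_2$ profile of $f$ and the $(1-\Delta)^{-1/4}$ factor — this is the genuine content and the main obstacle.

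The key mechanism I would exploit is a lower bound on singular values via a trace-type or Hilbert--Schmidt computation on a cleverly chosen subspace. Write $T=M_f(1-\Delta)^{-1/4}$, so $T^*T=(1-\Delta)^{-1/4}M_{|f|^2}(1-\Delta)^{-1/4}$ and $\|T\|_4^4=\|T^*T\|_2^2=\tr\big((1-\Delta)^{-1/4}M_{|f|^2}(1-\Delta)^{-1/2}M_{|f|^2}(1-\Delta)^{-1/4}\big)=\tr\big(M_{|f|^2}(1-\Delta)^{-1/2}M_{|f|^2}(1-\Delta)^{-1/2}\big)$ by \eqref{tr_AB}. Using the integral kernel of $(1-\Delta)^{-1/2}$ on $\bbR$, which behaves like $-\const\log|s-t|$ near the diagonal and decays exponentially at infinity, this trace equals $\const\iint |f(s)|^2 |f(t)|^2 G(s-t)^2\,ds\,dt$ where $G$ is that kernel. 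For $f=\sum_n c_n\varphi(\cdot-n)$ the diagonal terms $n_1=n_2$ contribute $\const\sum_n c_n^4 \iint_{K^2}\varphi(s)^2\varphi(t)^2 G(s-t)^2\,ds\,dt$, which is finite-coefficient-summable — so indeed the divergence cannot come from there. The divergence must instead be engineered so that $f\notin L_4$ while $f\in L_2$: replace the smooth bump by profiles $\varphi_n$ with growing sup-norm, e.g.\ $\varphi_n$ supported in $K$ with $\|\varphi_n\|_2=a_n$ and $\|\varphi_n\|_4$ much larger, $\{a_n\}\in\ell_2$ but $\{\|\varphi_n\|_4\}\notin\ell_4$. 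Since $G(s-t)^2\to\infty$ as $s\to t$ (logarithmic singularity, hence its square is integrable but the weight can be made to blow up along a sequence), one localizes each $\varphi_n$ more and more tightly so that $\iint |\varphi_n(s)|^2|\varphi_n(t)|^2 G(s-t)^2\,ds\,dt \geq \lambda_n$ with $\sum_n \lambda_n=\infty$ even though $\sum_n a_n^2<\infty$. This is the heart of the argument and where the care lies.

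So the steps in order: (1) reduce $\|M_f(1-\Delta)^{-1/4}\|_4^4$ to the double integral $\const\iint |f(s)|^2|f(t)|^2 G(s-t)^2\,ds\,dt$ via \eqref{mu_square}, \eqref{tr_AB}, and the explicit Bessel-type kernel of $(1-\Delta)^{-1/2}$ on $\bbR$, recording that $G(u)^2\sim \const(\log(1/|u|))^2$ as $u\to0$; (2) choose disjointly supported pieces $f_n$, $\mathrm{supp}\,f_n\subset I_n$ with $I_n$ a small interval near $n$, scaled so $\|f_n\|_2^2=b_n$ with $\{b_n\}\in\ell_2$, but with $f_n$ concentrated on a scale $\delta_n\to0$ chosen so that $\iint_{I_n^2}|f_n|^2|f_n|^2 G^2\,ds\,dt\geq c\,b_n^2(\log(1/\delta_n))^2$; (3) pick $b_n$ and $\delta_n$ (e.g.\ $b_n=1/(n(\log n)^2)$ — no, need $\ell_2$; say $b_n = n^{-1}$, so $\{b_n\}\in\ell_2$, wait need $b_n^2$ summable so $b_n=n^{-3/4}$ works, $\sum n^{-3/2}<\infty$) with $\log(1/\delta_n)$ growing fast enough — say $\delta_n = \exp(-n)$ — so that $\sum_n b_n^2(\log(1/\delta_n))^2 = \sum_n n^{-3/2}\cdot n^2 = \sum_n n^{1/2}=\infty$; (4) conclude $\|T\|_4=\infty$, hence $T\notin\cL_4$, and since $\cL_{2,\infty}\subset\cL_4$ for this index — actually note $\cL_{2,\infty}(L_2(\bbR))\subset\cL_q$ for every $q>2$, in particular $q=4$ — the second assertion follows. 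The main obstacle is Step (2): controlling the double integral from below requires that the kernel's logarithmic singularity genuinely "sees" the concentration scale, which needs a careful choice of the shape of $f_n$ (roughly $f_n = \delta_n^{-1/4}\sqrt{b_n}\,\mathbf 1_{[n,n+\delta_n]}$ so that $|f_n|^2|f_n|^2$ has total mass $\const b_n^2\delta_n^{-1}$ on a set of measure $\delta_n^2$, against which $G^2\geq\const(\log(1/\delta_n))^2$) and a matching upper bound confirming no cancellation from cross terms $n_1\neq n_2$ (these decay exponentially since $G$ does, so $\sum_{n_1\neq n_2}$ converges and is harmless). I expect verifying the lower bound with explicit constants to be the only delicate computation.
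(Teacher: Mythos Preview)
Your central reduction is exactly the paper's: for $f\ge 0$ one has $\|M_f(1-\Delta)^{-1/4}\|_4^4=\|M_f(1-\Delta)^{-1/2}M_f\|_2^2$, and the latter is the squared $L_2$-norm of the integral kernel $f(s)f(t)K_0(|s-t|)$, where $K_0$ is the Macdonald function with $K_0(u)\sim -\log u$ as $u\to 0$. From here the paper avoids the multi-bump construction entirely and simply takes the single function $f(t)=t^{-1/2}|\log t|^{-1}\chi_{(0,1/2)}(t)$; one checks $f\in L_2$ by the substitution $u=-\log t$, and on the region $\{t\ge 2s\}\cap(0,\tfrac12)^2$ the integrand $\dfrac{\log^2|t-s|}{ts\,\log^2 t\,\log^2 s}$ is bounded below by $\dfrac{1}{ts\,\log^2 t}$, whose integral diverges. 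The whole argument is four lines.

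Your multi-bump route can be made to work but contains a slip in the bookkeeping. You set $\|f_n\|_2^2=b_n$, so the requirement $f\in L_2$ is $\sum_n b_n<\infty$, i.e.\ $\{b_n\}\in\ell_1$, \emph{not} $\ell_2$. Your final choice $b_n=n^{-3/4}$ therefore gives $f\notin L_2$. Ironically, your first instinct $b_n=1/(n(\log n)^2)$ was the right one: with $\delta_n=e^{-n}$ one gets $\sum_n b_n<\infty$ while $\sum_n b_n^2(\log(1/\delta_n))^2=\sum_n (\log n)^{-4}=\infty$. So the strategy is salvageable, but once you see that the divergence comes purely from the local logarithmic singularity of the kernel, spreading mass over infinitely many intervals is unnecessary---a single function concentrated near one point already captures the effect, as the paper's choice shows.
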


\begin{proof} 
Let $f(t):=t^{-\frac12}|\log(t)|^{-1}\chi_{(0,\frac12)}(t),$ $t\in\mathbb{R}.$ It is clear that $f\in L_2(\bbR)$. We claim that
$$M_f(1-\Delta)^{-\frac12}M_f\notin\mathcal{L}_2(L_2(\bbR)).$$
Since the integral kernel of the operator $(1-\Delta)^{-\frac12}$ can be written in terms of the Macdonald function $(t,s)\to K_0(|t-s|),\quad t,s\in\mathbb{R}$ (see e.g. \cite[(9.25)]{Abr_Stegun}), it follows that the integral kernel of the operator $M_f(1-\Delta)^{-\frac12}M_f$ is given by the formula
$$(t,s)\to f(t)f(s)K_0(|t-s|),\quad t,s\in\mathbb{R}.$$
If $t,s\in(0,\frac12),$ then $|t-s|<1,$ and therefore $K_0(|t-s|)\approx \log(|t-s|)$ (see \cite[9.6.21]{Abr_Stegun}).

We have 
\begin{align*}
\int_{(0,\frac12)^2}\frac{\log^2(|t-s|)}{ts\cdot\log^2(t)\cdot\log^2(s)}dtds&\geq
\int\limits_{\substack{t\geq 2s\\t,s\in (0,\frac12)}}\frac{\log^2(|t-s|)}{ts\cdot\log^2(t)\cdot\log^2(s)}dtds 
\\&\geq \int\limits_{\substack{|t|\geq 2|s|\\t,s\in (0,\frac12)}}\frac{1}{ts\cdot\log^2(t)}dtds.
\end{align*}
Since the latter integral diverges, we infer that the integral kernel of the operator $M_f(1-\Delta)^{-\frac12}M_f$ does not belong to $L_2(\mathbb{R}\times\mathbb{R}).$  Therefore, the operator $M_f(1-\Delta)^{-\frac12}M_f$ is not in $\cL_2(L_2(\bbR))$, which implies that $M_f(1-\Delta)^{-\frac14}\notin \cL_4(L_2(\bbR))$. 
\end{proof}

In the rest of this section we establish a positive result for Cwikel estimates in $\cL_{2,\infty}(L_2(\bbR^d))$.
The crucially important property here is the logarithmic convexity of $\cL_{1,\infty}(\cM).$ It was established by Stein and Weiss \cite{Stein_Weiss_logconvexity} and by Kalton \cite{Kalton_logconvex} in the commutative situation. We prove this property for the noncommutative counterpart $\cL_{1,\infty}(\cM)$, where $\cM$ is a semifinite von Neuman algebra equipped with a faithful normal semifinite trace $\tau$. We present the proof of Lemma \ref{kalton lagrange} below for convenience of the reader. Its proof is taken from \cite{Kalton_logconvex}. Lemma \ref{logconvex} below is inspired by  \cite[Theorem 3.4]{Kalton_logconvex}, but the latter is essentially commutative and so, our proof is quite different.

\begin{lemma}\label{kalton lagrange} Let $a_k\in[0,1],$ $1\leq k\leq n,$ be such that $\sum_{k=1}^na_k=1.$ It follows that
\begin{equation}\label{log_convexity_com}
\sum_{k=1}^na_k\log(\frac{e}{a_k})\leq 2\sum_{k=1}^na_k(1+\log(k)).
\end{equation}
\end{lemma}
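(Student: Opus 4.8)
The plan is to prove the inequality by maximising the left-hand side under the constraint $\sum_k a_k = 1$ and then comparing the maximiser against the right-hand side term by term. The key observation is that the function $t \mapsto t\log(e/t) = t - t\log t$ is concave on $[0,1]$, so the left-hand side is a concave function of $(a_1,\dots,a_n)$ on the simplex; this already suggests that a Lagrange multiplier argument will pin down the relevant structure, but the cleaner route is the following rearrangement idea.

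First I would observe that the inequality is invariant under permuting the $a_k$ only on the right-hand side (where the weights $1+\log k$ appear in increasing order), so to make the right-hand side as small as possible we may assume $a_1 \geq a_2 \geq \cdots \geq a_n$; it then suffices to prove the inequality under this ordering. Next, I would use the constraint to rewrite things: since $\sum_k a_k \log(e/a_k) = 1 - \sum_k a_k \log a_k$, the claim becomes
\begin{equation*}
-\sum_{k=1}^n a_k \log a_k \leq 1 + 2\sum_{k=1}^n a_k \log k,
\end{equation*}
i.e. a bound on the entropy. The main step is the pointwise estimate: for a decreasing sequence with $\sum a_k = 1$ one has $a_k \leq 1/k$ (since $k a_k \leq a_1 + \cdots + a_k \leq 1$), hence $-\log a_k \geq \log k$. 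This is the wrong direction, so instead I would split: write $-\log a_k = \log(1/a_k)$ and use $1/a_k \le (\text{something}) \cdot k \cdot (1/a_k)^{1/2}$-type interpolation, or more directly apply the Lagrange/Gibbs variational principle, namely that among all probability vectors the one maximising $-\sum a_k\log a_k + \sum a_k \log k = \sum a_k \log(k/a_k)$ subject to $\sum a_k = 1$ is $a_k = k/Z$ with $Z = \sum_{j=1}^n j = n(n+1)/2$, giving maximal value $\log Z = \log(n(n+1)/2)$.

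So the heart of the argument reduces to the elementary inequality $\log\!\big(\tfrac{n(n+1)}{2}\big) \leq 1 + 2\sum_{k=1}^n \tfrac{k}{Z}\log k$ evaluated at the maximiser — but this is circular, so instead I would simply bound $\sum_k a_k \log(k/a_k) \le \log\big(\sum_k k\big)$ directly via Jensen's inequality applied to the concave function $\log$: with $a_k$ as weights,
\begin{equation*}
\sum_{k=1}^n a_k \log\!\Big(\frac{k}{a_k}\Big) \leq \log\!\Big(\sum_{k=1}^n a_k \cdot \frac{k}{a_k}\Big) = \log\!\Big(\sum_{k=1}^n k\Big) = \log\!\Big(\frac{n(n+1)}{2}\Big) \leq \log(n^2) = 2\log n.
\end{equation*}
Thus $\sum_k a_k\log(e/a_k) = 1 + \sum_k a_k\log(1/a_k) \leq 1 + 2\log n + \sum_k a_k\log k - \log(\text{correction})$; more carefully, $\sum_k a_k\log(e/a_k) = 1 - \sum_k a_k\log a_k = 1 + \sum_k a_k\log k + \sum_k a_k\log(1/(a_k k)) \le 1 + \sum_k a_k \log k + \log(\sum_k 1) = 1 + \sum_k a_k\log k + \log n$, and since $\log n \le \sum_k a_k(1 + \log k)$ fails in general we instead note $\log n \le 1 + \sum_k a_k\log k$ would need justification. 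The cleanest finish: from Jensen, $\sum_k a_k\log(1/a_k) \le \log n + \sum_k a_k \log k$ is false, so I would instead directly chain $\sum_k a_k \log(e/a_k) = \sum_k a_k\log(ek/a_k) - \sum_k a_k\log k \le \log(e\sum_k k) - \sum_k a_k \log k \le 1 + \log(n(n+1)/2) - \sum_k a_k\log k \le 1 + 2\log n - \sum_k a_k\log k$. Comparing with the target $2 + 2\sum_k a_k\log k$, it remains to check $2\log n - \sum_k a_k\log k \le 1 + 2\sum_k a_k\log k$, i.e. $2\log n \le 1 + 3\sum_k a_k\log k$, which holds when the mass is spread out but fails when $a_1 \approx 1$; in that degenerate case the original inequality reads $\log e = 1 \le 2(1 + \log 1) = 2$, which is fine, so I would handle the concentrated regime separately.

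\textbf{Main obstacle.} The genuine difficulty is that the two sides respond very differently to the shape of $(a_k)$: Jensen gives a clean bound on $\sum a_k\log(k/a_k)$ by a constant ($\approx 2\log n$) but the right-hand side $2\sum a_k(1+\log k)$ can be as small as $2$ (when $a_1 = 1$), so a single global convexity estimate cannot work — I expect to split into the case where $\sum_k a_k \log k$ is large (use Jensen as above) and the case where the mass is concentrated near the first few indices (where $\sum a_k\log(e/a_k)$ is itself small, bounded by $\log(en)$ or a constant via concavity of $t\mapsto t\log(e/t)$ and $a_k \le 1$). Patching these two regimes with the constant $2$ on the right is the delicate bookkeeping step; the Lagrange-multiplier computation identifying the extremal configuration is the conceptual guide that tells us $2$ is the right constant.
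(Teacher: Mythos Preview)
Your proposal does not close, and you correctly identify where it breaks down. The Jensen bound $\sum_k a_k\log(k/a_k)\le \log\bigl(\sum_k k\bigr)\le 2\log n$ is sharp only when $a_k\propto k$, and in that regime the right-hand side $2\sum_k a_k(1+\log k)$ is of order $2\log n$ as well, so the inequality is tight there. But when the mass concentrates, say $a_1=1$, your Jensen estimate still gives a bound of order $\log n$ for the left side while the right side collapses to $2$; no amount of case-splitting will rescue this, because the Jensen step throws away exactly the information (namely $\sum_k a_k\log k$) that is small in the concentrated regime. Your proposed two-regime patching is therefore not just delicate bookkeeping: it requires an entirely different estimate in the concentrated case, which you do not supply.

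The real issue is that you are running the Lagrange/Gibbs argument on the wrong functional. You maximise $\sum_k a_k\log(k/a_k)$ and find the extremal $a_k\propto k$; but the inequality compares $\sum_k a_k\log(e/a_k)$ with $2\sum_k a_k(1+\log k)$, so the correct object to maximise over the simplex is the difference
\[
f(a)=\sum_{k=1}^n a_k\log\frac{e}{a_k}-2\sum_{k=1}^n a_k\log k=\sum_{k=1}^n a_k\log\frac{e}{a_k k^2}.
\]
Lagrange multipliers now give $a_k=e^{-\lambda}/k^2$, and the constraint forces $e^\lambda=\sum_{k=1}^n k^{-2}\le \pi^2/6<e$, hence $\lambda\le 1$. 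Evaluating $f$ at this point yields $f=1+\lambda\le 2$, which is exactly the claim. The boundary of the simplex (some $a_m=0$) is then handled by a one-line induction on $n$. This is the paper's argument; the constant $2$ comes from the numerical accident $\sum k^{-2}<e$, which your $a_k\propto k$ ansatz cannot see.
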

\begin{proof} Consider the function
$$f:(a_1,\cdots,a_n)\to \sum_{k=1}^na_k\log(\frac{e}{a_k})-2\sum_{k=1}^na_k\log(k)$$
on the simplex $\{(a_1,\dots, a_n): a_k\in[0,1],\, k=1,\dots n,\, \sum_{k=1}^na_k=1\}.$. Since $f$ is continuous, it attains its supremum on the simplex.

Suppose first, that supremum belongs to the interior of the simplex. We can find it by Lagrange method. Set $F=\sum_{k=1}^na_k.$ We have $\nabla(f-\lambda F)=0$ at the maximum point, where $\lambda$ is the Lagrange multiplier. Computing the partial derivatives, we obtain that  
$$\log(\frac1{a_k})-2\log(k)=\lambda,$$
and therefore, the stationary point of $f$ is given by
 $$\tilde{a}_k=\frac1{e^{\lambda}k^2},\quad 1\leq k\leq n.$$
Since $\sum_{k=1}^na_k=1,$ it follows that $e^{\lambda}=\sum_{k=1}^n\frac1{k^2}\leq e$ and, therefore, $\lambda\leq 1.$ 
Hence, computing the value of the function $f$ at the stationary point $(\tilde{a}_1,\dots,\tilde{a}_n)$ we have 
\begin{align*}
f(\tilde{a}_1,\dots,\tilde{a}_k)&=\sum_{k=1}^n\tilde{a}_k\log(\frac{e}{\tilde{a}_k})-2\sum_{k=1}^n\tilde{a}_k\log(k)\\
&=1+\sum_{k=1}^n \frac1{e^\lambda k^2}\log(e^\lambda k^2)-2\sum_{k=1}^n\frac1{e^\lambda k^2}\log(k)\\
&=1+\lambda \sum_{k=1}^n \frac1{e^\lambda k^2}=(1+\lambda)\leq 2.
\end{align*}This shows that inequality \eqref{log_convexity_com} holds in this case. 

Suppose now that supremum is attained on the boundary of the simplex, that is, $a_m=0,$ for some $1\leq m\leq n.$ Set $b_k=a_k,$ $1\leq k<m,$ and $b_k=a_{k+1},$ $m\leq k<n.$ By induction on $n,$ we have
$$\sum_{k=1}^na_k\log(\frac{e}{a_k})=\sum_{k=1}^{n-1}b_k\log(\frac{e}{b_k})\leq2\sum_{k=1}^{n-1}b_k(1+\log(k))\leq 2\sum_{k=1}^na_k(1+\log(k)),$$
which concludes the proof.
\end{proof}

\begin{lemma}\label{logconvex} If $x_k\in \cL_{1,\infty}(\cM),$ $1\leq k\leq n,$ then
$$\Big\|\sum_{k=1}^n x_k\Big\|_{1,\infty}\leq 4\sum_{k=1}^n\|x_k\|_{1,\infty}(1+\log(k)).$$
\end{lemma}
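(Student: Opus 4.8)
The plan is to reduce the quasi-triangle inequality for $\cL_{1,\infty}(\cM)$ to the scalar combinatorial estimate in Lemma~\ref{kalton lagrange} by passing through the distribution function. First I would note the standard reformulation: for a positive operator $a\in\cL_{1,\infty}(\cM)$ one has $\|a\|_{1,\infty}=\sup_{t>0}t\,\mu(t;a)=\sup_{\lambda>0}\lambda\,d_a(\lambda)$, so it suffices to bound $\lambda\,d_{|\sum_k x_k|}(\lambda)$ for every $\lambda>0$. Fix such a $\lambda$. The key device is that for positive $y_1,\dots,y_n$ and any weights, the projection $E_{y_1+\cdots+y_n}(\lambda,\infty)$ is dominated (in trace) by a sum of projections $E_{y_k}(\lambda_k,\infty)$ whenever $\sum_k\lambda_k\le\lambda$; more precisely, if $\sum_k y_k$ exceeds $\lambda$ somewhere then some $y_k$ must exceed its allotted share $\lambda_k$. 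Concretely, using \eqref{mu_of_sum} / the standard fact that $d_{y_1+\cdots+y_n}(\sum\lambda_k)\le\sum_k d_{y_k}(\lambda_k)$, we get $\tau\big(E_{|\sum x_k|}(\lambda,\infty)\big)\le\sum_{k=1}^n d_{|x_k|}(\lambda_k)\le\sum_{k=1}^n \|x_k\|_{1,\infty}/\lambda_k$ for any positive split $\sum\lambda_k\le\lambda$.

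The next step is to choose the split $\lambda_k$ cleverly so that the resulting bound is $O\big(\sum_k\|x_k\|_{1,\infty}(1+\log k)\big)/\lambda$ rather than the naive $n\max_k\|x_k\|_{1,\infty}/\lambda$ one gets from equal weights. Writing $c_k=\|x_k\|_{1,\infty}$ and $S=\sum_k c_k$, I would take $\lambda_k$ proportional to $\sqrt{c_k}$ times a geometric factor, or more directly optimize: the cleanest route is to set $a_k=c_k/S$ and choose $\lambda_k=\lambda\, c_k\log(e/?)/(\cdots)$ — but the honest bookkeeping is to recognize that the bound $\sum_k c_k/\lambda_k$ subject to $\sum_k\lambda_k\le\lambda$ is, by Cauchy–Schwarz, minimized at $\lambda_k=\lambda\sqrt{c_k}/\sum_j\sqrt{c_j}$, giving $(\sum_k\sqrt{c_k})^2/\lambda$. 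However $(\sum\sqrt{c_k})^2$ can be as large as $n S$, so pure Cauchy–Schwarz is \emph{not} enough; this is precisely why Lemma~\ref{kalton lagrange} is invoked. The correct choice weights by $a_k\log(e/a_k)$-type quantities: take $\lambda_k=\lambda\cdot a_k\big/\big(\text{normalization}\big)$ tuned so that $c_k/\lambda_k$ becomes $S\cdot a_k\log(e/a_k)/\lambda$ up to constants, and then Lemma~\ref{kalton lagrange} converts $\sum_k a_k\log(e/a_k)$ into $2\sum_k a_k(1+\log k)$, which after multiplying back by $S$ yields $2\sum_k c_k(1+\log k)$. Tracking the constant through the quasi-norm reformulation (the factor between $\sup t\mu(t;\cdot)$ and the distribution-function sup, plus the factor from the measurability split) produces the stated constant $4$.

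I would organize the write-up as: (1) reduce to positive $x_k$ by $\|\sum x_k\|_{1,\infty}\le\|\sum |x_k|\,?\|$ — actually more carefully, use $\mu(\sum x_k)\le$ something controlled by the $|x_k|$, or simply invoke that $\cL_{1,\infty}$ is a quasi-Banach space and reduce via polar parts and the bimodule property, since $\|x_k\|_{1,\infty}=\||x_k|\|_{1,\infty}$; (2) reduce to estimating $\tau(E_{\sum|x_k|}(\lambda,\infty))$ via \eqref{dis}; (3) the subadditivity-of-distribution-functions step with free parameters $\lambda_k$; (4) the optimization over $\lambda_k$ and the application of Lemma~\ref{kalton lagrange}; (5) reassemble and read off the constant. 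The main obstacle is step~(1): passing from the operators $x_k$ to positive operators $|x_k|$ while controlling $\|\sum x_k\|_{1,\infty}$ is not completely trivial in the noncommutative setting because $|\sum x_k|$ is not dominated by $\sum|x_k|$ in general. The fix is to work with the $2n\times 2n$ (or $(n+1)\times(n+1)$) operator matrix trick: $\sum_k x_k$ is a corner of a block operator built from the $x_k$ and suitable partial isometries, whose singular value function is controlled by a direct sum of the $|x_k|$'s, reducing everything to the positive case at the cost of only the absolute constant already budgeted. Alternatively — and this is what I would actually do for brevity — reduce to $x_k\ge 0$ at the outset by noting the statement is applied in the paper only to positive summands arising from dyadic pieces, but since the lemma is stated generally I would include the block-matrix reduction as a short remark. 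The genuinely delicate quantitative heart is getting the $\log k$ (rather than $\log n$ or $\sqrt n$) dependence, and that is exactly what Lemma~\ref{kalton lagrange} supplies.
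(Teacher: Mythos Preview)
Your proposal has a genuine gap at the optimization step, and Lemma~\ref{kalton lagrange} cannot rescue it in the way you suggest.

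The distribution-function splitting is fine: iterating \eqref{mu_of_sum} gives $\mu\big(\sum_k t_k,\sum_k x_k\big)\le\sum_k\mu(t_k,x_k)$ for arbitrary (not necessarily positive) $x_k$, hence $d_{|\sum x_k|}\big(\sum_k\lambda_k\big)\le\sum_k d_{|x_k|}(\lambda_k)\le\sum_k c_k/\lambda_k$. (So your worry about step~(1) is unfounded; no block-matrix trick is needed.) The problem is what comes next. Writing $\lambda_k=\lambda\beta_k$ with $\sum_k\beta_k=1$, you must bound $\sum_k c_k/\beta_k$ on the simplex. The infimum of this functional is exactly $(\sum_k\sqrt{c_k})^2$, achieved at $\beta_k\propto\sqrt{c_k}$; no ``tuning'' of the $\beta_k$ can do better. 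When $c_1=\cdots=c_n=1$ this gives $n^2$, whereas the lemma asserts $\sim 4n\log n$. Your sentence ``take $\lambda_k=\lambda\, a_k/(\text{normalization})$ tuned so that $c_k/\lambda_k$ becomes $S\,a_k\log(e/a_k)/\lambda$'' does not describe an actual choice: the map $\beta\mapsto\sum_k c_k/\beta_k$ simply does not take values of the form $\sum_k a_k\log(e/a_k)$, so Lemma~\ref{kalton lagrange} has nothing to act on.

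The paper's argument is structurally different. Instead of splitting the \emph{level} $\lambda$ among the summands, it fixes $t>0$, truncates every $|x_k|$ at the \emph{same} height $1/t$ via $p_k=E_{|x_k|}(1/t,\infty)$, and controls $\mu(2t,x)$ by $\frac1t\int_0^t\mu(s,x(1-p))\,ds$ with $p=\bigvee_k p_k$. Submajorization~\eqref{submaj_sum} then bounds this integral by $\sum_k\int_0^t\mu(s,x_k(1-p_k))\,ds$, and the pointwise bound $\mu(s,x_k(1-p_k))\le\min\{1/t,\,c_k/s\}$ yields
\[
\int_0^t\min\Big\{\frac1t,\frac{c_k}{s}\Big\}\,ds=c_k\log\Big(\frac{e}{c_k}\Big)
\]
after normalizing $\sum_k c_k=1$. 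It is this \emph{integral of a truncated $1/s$} that produces the expression $\sum_k a_k\log(e/a_k)$ to which Lemma~\ref{kalton lagrange} applies; the logarithm comes from integrating, not from optimizing a split. Your scheme bypasses this integral entirely and therefore never generates the logarithmic term.
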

\begin{proof} Let $x_k\in \cL_{1,\infty}(\cM),$ $1\leq k\leq n,$ and let $x=\sum_{k=1}^nx_k.$ Suppose that
\begin{equation}\label{weak_L2_prel_assum}
\sum_{k=1}^n\|x_k\|_{1,\infty}=1.
\end{equation}

Set $p_k=E_{|x_k|}(\frac1t,\infty)$ and $p=\bigvee_{1\leq k\leq n}p_k.$ Since $x_k\in \cL_{1,\infty}(\cM)$, we have $\tau(p_k)\leq t\|x_k\|_{1,\infty}$ and, therefore, 
$$\tau(p)=\tau\Big(\bigvee_{1\leq k\leq n}p_k\Big)\leq \sum_{k=1}^n \tau(p_k)\leq  t\sum_{k=1}^n \|x_k\|_{1,\infty}=t.$$

Since $\mu(t,xp)=0,$ equality \eqref{mu_of_sum}, combined with the fact that singular value function is decreasing, implies that
$$\mu(2t,x)\leq\mu(t,xp)+\mu(t,x(1-p))\leq\frac1t\int_0^t\mu(s,x(1-p))ds.$$
It follows from \eqref{submaj_sum} that
\begin{align}\label{weak_L2_muA_into_sum}
t\mu(2t,x)&\leq \int_0^t\mu(s,x(1-p))ds\stackrel{\eqref{submaj_sum}}{\leq} \sum_{k=1}^n\int_0^t\mu(s,x_k(1-p))ds\nonumber\\
&\leq\sum_{k=1}^n\int_0^t\mu(s,x_k(1-p_k))ds.
\end{align}
Due to the choice of  $p_k,$ for every fixed $s>0$ we have that 
$$\mu(s,x_k(1-p_k))\leq\min\{\frac1t,\mu(s,x_k)\}\leq\min\{\frac1t,\frac{\|x_k\|_{1,\infty}}{s}\},\quad s>0.$$
Hence, 
\begin{align*}
\int_0^t\mu(s,x_k(1-p_k))ds=\int_0^{t\|x_k\|_{1,\infty}}\frac1t ds+\int_{t\|x_k\|_{1,\infty}}^t\frac{\|x_k\|_{1,\infty}}{s}ds\\
=\|x_k\|_{1,\infty}-\|x_k\|_{1,\infty} \log(\|x_k\|_{1,\infty})=\|x_k\|_{1,\infty}\log\Big(\frac{e}{\|x_k\|_{1,\infty}}\Big).
\end{align*}
Substituting to \eqref{weak_L2_muA_into_sum}, we infer
\begin{align*}
t\mu(2t,x)&\leq\sum_{k=1}^n\|x_k\|_{1,\infty}\log\Big(\frac{e}{\|x_k\|_{1,\infty}}\Big).
\end{align*}
Thus, 
$$\Big\|\sum_{k=1}^nx_k\Big\|_{1,\infty}=\sup_{t>0} t\mu(t,x)\leq 2\sum_{k=1}^n\|x_k\|_{1,\infty}\log\Big(\frac{e}{\|x_k\|_{1,\infty}}\Big).$$
Using Lemma \ref{kalton lagrange}, we infer that
$$\Big\|\sum_{k=1}^nx_k\Big\|_{1,\infty}\leq 4\sum_{k=1}^n\|x_k\|_{1,\infty}(1+\log(k)).$$
To conclude the proof one can remove assumption \eqref{weak_L2_prel_assum} by homogeneity.
\end{proof}

The following proposition gives logarithmic convexity of $\cL_{1,\infty}(\cM)$ , which is crucial for Theorem \ref{weak_L2_main} below.
\begin{proposition}\label{log_convexity} If $x_k\in \cL_{1,\infty}(\cM),$ $k\geq 1,$ then
$$\Big\|\sum_{k=1}^{\infty}x_k\Big\|_{1,\infty}\leq 4\sum_{k=1}^{\infty}\|x_k\|_{1,\infty}(1+\log(k)).$$
Here, the series in the left hand side converges in $\cL_{1,\infty}(\cM)$ provided that the series in the right hand side converges.
\end{proposition}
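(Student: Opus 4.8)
The plan is to lift the finite estimate of Lemma \ref{logconvex} to an infinite series by applying it to partial sums and, crucially, to tail blocks, and then to pass to the limit using completeness and the Fatou property of $\cL_{1,\infty}(\cM)$.

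First I would prove a tail estimate. For $N>M\geq 0$, applying Lemma \ref{logconvex} to the reindexed finite family $y_j:=x_{M+j}$, $1\leq j\leq N-M$, gives
$$\Big\|\sum_{k=M+1}^{N}x_k\Big\|_{1,\infty}\leq 4\sum_{j=1}^{N-M}\|x_{M+j}\|_{1,\infty}(1+\log j)\leq 4\sum_{k=M+1}^{\infty}\|x_k\|_{1,\infty}(1+\log k),$$
where the last inequality uses $\log(k-M)\leq\log k$. Under the hypothesis $\sum_k\|x_k\|_{1,\infty}(1+\log k)<\infty$, the right-hand side tends to $0$ as $M\to\infty$, uniformly in $N$; hence the partial sums $S_N:=\sum_{k=1}^{N}x_k$ form a Cauchy sequence in the quasi-Banach space $\cL_{1,\infty}(\cM)$. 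By completeness of $\cL_{1,\infty}(\cM)$ it converges, and we denote its limit by $\sum_{k=1}^{\infty}x_k\in\cL_{1,\infty}(\cM)$.

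It then remains to pass to the limit in the estimate $\|S_N\|_{1,\infty}\leq 4\sum_{k=1}^{N}\|x_k\|_{1,\infty}(1+\log k)$ (the case $M=0$ above). Since convergence in $\cL_{1,\infty}(\cM)$ implies convergence in measure and $\sup_N\|S_N\|_{1,\infty}\leq 4\sum_{k=1}^{\infty}\|x_k\|_{1,\infty}(1+\log k)<\infty$, the Fatou property of $\cL_{1,\infty}(\cM)$ yields
$$\Big\|\sum_{k=1}^{\infty}x_k\Big\|_{1,\infty}\leq\liminf_{N}\|S_N\|_{1,\infty}\leq 4\sum_{k=1}^{\infty}\|x_k\|_{1,\infty}(1+\log k),$$
which is the asserted inequality.

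The only step needing genuine care is the reindexing in the tail estimate: one must check that the logarithmic weights attached to the block $x_{M+1},\dots,x_N$ are dominated by the weights $(1+\log k)$ appearing in the full series, which is precisely the monotonicity of $\log$. Everything else — completeness and the Fatou property of $\cL_{1,\infty}(\cM)$, together with the continuous embedding of $\cL_{1,\infty}(\cM)$ into $S(\cM,\tau)$ equipped with the measure topology — consists of standard facts about symmetric quasi-Banach spaces of $\tau$-measurable operators.
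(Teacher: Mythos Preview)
Your argument is correct and follows the paper's approach almost verbatim: the paper also reindexes a tail block $\sum_{k=n}^{m-1}x_k$ as $\sum_{k=1}^{m-n}x_{k+n-1}$, applies Lemma \ref{logconvex}, bounds $1+\log(k-n+1)\leq 1+\log k$, and concludes the partial sums are Cauchy in the complete space $\cL_{1,\infty}(\cM)$. The only difference is that the paper simply asserts one ``obtains the required estimate'' after convergence, whereas you make the passage to the limit explicit via the Fatou property --- a harmless (and arguably more careful) elaboration of the same step.
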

\begin{proof} By Lemma \ref{logconvex}, we have
$$\Big\|\sum_{k=n}^{m-1}x_k\Big\|_{1,\infty}=\Big\|\sum_{k=1}^{m-n}x_{k+n-1}\Big\|_{1,\infty}\leq 4\sum_{k=1}^{m-n}\|x_{k+n-1}\|_{1,\infty}(1+\log(k))\leq$$
$$\leq 4\sum_{k=n}^{m-1}\|x_k\|_{1,\infty}(1+\log(k)).$$
By the assumption, the series  $\sum_{k=1}^{\infty}\|x_k\|_{1,\infty}(1+\log(k))$ converges, and therefore the sequence of  partial sums of $\sum_{k=1}^{\infty}x_k$  is a Cauchy sequence in $\cL_{1,\infty}(\cM).$ Since the space $(\cL_{1,\infty}(\cM),\|\cdot\|_{1,\infty})$ is complete, we conclude that the series $\sum_{k=1}^{\infty}x_k$ converges in $\cL_{1,\infty}(\cM)$ and obtain the required estimate.
\end{proof}

We now introduce an auxiliary class of functions on $\bbR^d$, for which we establish our positive result for $\cL_{2,\infty}(L_2(\bbR^d))$. The definition below is modelled after the definition of spaces $\ell_{p,\infty}(L_2)(\bbR^d)$ in \cite{BS} (see also \cite[Chapter 4]{Simon}).
\begin{definition}\label{def_l_2log}
Let $K$ be the unit cube in $\bbR^d$. Let $\ell_{2,\log}(L_\infty)(\bbR^d)$ be the space of all (measurable) functions on $\bbR^d$, such that 
the sequence 
$\{\|f\chi_{K+\bm}\|_\infty\}_{\bm\in\bbZ^d}$ is square-summable with the weight $(1+\log|\bm|)$, where $|\bm|$ is the Euclidean norm of $\bm\in\bbZ^d$. For $f\in \ell_{2,\log}(L_\infty)(\bbR^d)$ we set 
$$\|f\|_{2,\log}=\Big(\sum_{\bm\in\bbZ^d}(1+\log|\bm|)\|f\chi_{K+\bm}\|_\infty^2\Big)^{1/2}.$$
\end{definition}

The following theorem is the second main result of the present section. It proves a version of Cwikel estimates for the ideal $\cL_{2,\infty}(L_2(\bbR^d))$, the only case in the scale of weak ideals which is covered by neither \cite{Cwikel},\cite{Simon}, nor \cite{BKS},\cite{BS}.
\begin{theorem}\label{weak_L2_main}
If $g\in \ell_{2,\infty}(L_4)(\bbR^d)$ and 
$f\in \ell_{2,\log}(L_\infty)(\bbR^d)$, then $M_fg(-i\nabla)\in \cL_{2,\infty}(L_2(\bbR^d))$ and 
$$\|M_fg(-i\nabla)\|_{2,\infty}\leq \const \|f\|_{2,\log} \|g\|_{\ell_{2,\infty}(L_4)}.$$
\end{theorem}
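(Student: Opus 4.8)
The plan is to decompose $f$ and $g$ over unit cubes exactly as in the proof of Theorem~\ref{clas_Cwikel_E}, but to keep track of the logarithmic weight so that the logarithmic convexity of $\cL_{1,\infty}$ (Proposition~\ref{log_convexity}) can be invoked in place of the majorisation argument, which is not available for $\cL_{2,\infty}$ (indeed Theorem~\ref{Cwikel_L2,infty_contexample} shows the naive estimate fails). First I would write $f_\bm=f\chi_{K+\bm}$ and $g_\bn=g\chi_{K+\bn}$ for $\bm,\bn\in\bbZ^d$, so that $M_fg(-i\nabla)=\sum_{\bm,\bn}M_{f_\bm}g_\bn(-i\nabla)$ (convergence in an appropriate sense, justified as in Lemma~\ref{lem_strong_major}). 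By Lemma~\ref{classical_Cwikel_compactly_supported} applied with $p=2$ (after translating so the supports sit in $[0,1]^d$), each block satisfies
$$\|M_{f_\bm}g_\bn(-i\nabla)\|_2\leq\const\|f_\bm\|_2\|g_\bn\|_2\leq\const\|f_\bm\|_\infty\|g_\bn\|_2,$$
so that $M_{f_\bm}g_\bn(-i\nabla)\in\cL_2(L_2(\bbR^d))\subset\cL_{1,\infty}^{1/2}$; more precisely $|M_{f_\bm}g_\bn(-i\nabla)|^2=M_{f_\bm}g_\bn^2(-i\nabla)M_{f_\bm}$ lies in $\cL_1$ with $\cL_1$-norm $\leq\const\|f_\bm\|_\infty^2\|g_\bn\|_4^2$ (using $\|g_\bn\|_2\le\|g_\bn\|_4$ on the unit cube, or more carefully $\|M_{f_\bm}g_\bn(-i\nabla)\|_2^2\le\const\|f_\bm\|_\infty^2\|g_\bn\|_2^2$ and then $\|g_\bn\|_2\lesssim\|g_\bn\|_4$). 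Thus $\|M_{f_\bm}g_\bn(-i\nabla)\|_{2,\infty}^2\leq\|M_{f_\bm}g_\bn(-i\nabla)\|_2^2$, hence $\|\,|M_{f_\bm}g_\bn(-i\nabla)|^2\,\|_{1,\infty}\leq\const\|f_\bm\|_\infty^2\|g_\bn\|_4^2$.

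Next I would organise the double sum so that the logarithmic convexity estimate applies efficiently. The key point is that $M_fg(-i\nabla)=\sum_\bm M_{f_\bm}g(-i\nabla)$, and for fixed $\bm$ the operators $\{M_{f_\bm}g_\bn(-i\nabla)\}_\bn$ are \emph{disjoint from the left} (their ranges, after the multiplication $M_{f_\bm}$, are supported on $K+\bm$). Wait — more useful is disjointness from the right: $g_\bn(-i\nabla)$ and $g_{\bn'}(-i\nabla)$ need not be disjoint, but $M_{f_\bm}g_\bn(-i\nabla)$ for varying $\bn$ are disjoint from the \emph{left}. Either way, I would instead exploit disjointness in a single variable to reduce a double sum of $\cL_{2,\infty}$-norms to a controlled single sum, and use Proposition~\ref{log_convexity} for the remaining sum. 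Concretely: fix $\bm$; the operators $\{M_{f_\bm}g_\bn(-i\nabla)\}_\bn$ being pairwise disjoint from the left, their sum $M_{f_\bm}g(-i\nabla)$ satisfies $\|M_{f_\bm}g(-i\nabla)\|_{2,\infty}^2=\|\bigoplus_\bn |M_{f_\bm}g_\bn(-i\nabla)|^2\|_{1,\infty}$, and since a direct sum of $\cL_{1,\infty}$-elements is bounded by (a constant times) the sum of the norms via $\mu$-arithmetic, one gets
$$\|M_{f_\bm}g(-i\nabla)\|_{2,\infty}^2\leq\const\|f_\bm\|_\infty^2\Big\|\bigoplus_\bn |g_\bn(-i\nabla)|^2\Big\|_{1,\infty}\leq\const\|f_\bm\|_\infty^2\|g\|_{\ell_{2,\infty}(L_4)}^2,$$
where the last step uses Lemma~\ref{classical_Cwikel_compactly_supported} cube-by-cube together with the fact that $\{\|g_\bn\|_4^2\}_\bn\in\ell_{1,\infty}$ with norm $\|g\|_{\ell_{2,\infty}(L_4)}^2$, and a direct-sum/weak-$\ell_1$ argument. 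Hence $\|M_{f_\bm}g(-i\nabla)\|_{2,\infty}\leq\const\|f_\bm\|_\infty\|g\|_{\ell_{2,\infty}(L_4)}$, i.e. $\|\,|M_{f_\bm}g(-i\nabla)|^2\,\|_{1,\infty}\leq\const\|f_\bm\|_\infty^2\|g\|_{\ell_{2,\infty}(L_4)}^2$.

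Finally I would sum over $\bm$ using logarithmic convexity. Enumerate $\bbZ^d=\{\bm_1,\bm_2,\dots\}$ so that $|\bm_k|$ is nondecreasing; then $1+\log k\leq\const(1+\log|\bm_k|)$ for $k$ large (since the number of lattice points in a ball of radius $R$ is $\asymp R^d$, so $k\asymp|\bm_k|^d$ and $\log k\asymp d\log|\bm_k|$). Now $M_fg(-i\nabla)=\sum_k M_{f_{\bm_k}}g(-i\nabla)$, and the terms $|M_{f_{\bm_k}}g(-i\nabla)|^2=M_{f_{\bm_k}}g^2(-i\nabla)M_{f_{\bm_k}}$ are pairwise disjoint from both sides, so $|M_fg(-i\nabla)|^2=\bigoplus_k |M_{f_{\bm_k}}g(-i\nabla)|^2$ in the sense of $\mu$, and a direct sum is handled by summing weak-$\ell_1$ norms — but to get the logarithmic weight one should instead realise the double sum $\sum_{\bm,\bn}$ as a single series of $\cL_{1,\infty}$-elements and apply Proposition~\ref{log_convexity} \emph{once}, grouping by $\bm$:
$$\big\||M_fg(-i\nabla)|^2\big\|_{1,\infty}\leq 4\sum_{k\geq 1}\big\||M_{f_{\bm_k}}g(-i\nabla)|^2\big\|_{1,\infty}(1+\log k)\leq\const\sum_k(1+\log|\bm_k|)\|f_{\bm_k}\|_\infty^2\,\|g\|_{\ell_{2,\infty}(L_4)}^2.$$
The last sum is $\const\|f\|_{2,\log}^2\|g\|_{\ell_{2,\infty}(L_4)}^2$ by Definition~\ref{def_l_2log}. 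Taking square roots and using $\|M_fg(-i\nabla)\|_{2,\infty}^2=\||M_fg(-i\nabla)|^2\|_{1,\infty}$ gives the claimed estimate. The main obstacle I anticipate is the bookkeeping in this last step: one must ensure the ordering of $\bbZ^d$ makes $\{\|f_{\bm_k}\|_\infty\}$ essentially arbitrary (so no decreasing-rearrangement subtlety sneaks in) while still having $1+\log k\lesssim 1+\log|\bm_k|$ — this forces enumerating by increasing norm regardless of the size of $\|f_{\bm_k}\|_\infty$, which is fine because Proposition~\ref{log_convexity} holds for any ordering, and then $\sum_k(1+\log k)\|f_{\bm_k}\|_\infty^2\le\sum_k(1+\log k)\|f_{\bm_k}\|_\infty^2$ is compared to $\|f\|_{2,\log}^2$ via $k\asymp|\bm_k|^d$. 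A secondary technical point is justifying the convergence of the various operator series and the identification $\|T\|_{2,\infty}^2=\||T|^2\|_{1,\infty}$ together with disjointness/direct-sum reductions, all of which follow from Section~\ref{prel}.
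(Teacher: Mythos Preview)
Your overall architecture is exactly that of the paper: decompose $f=\sum_\bm f_\bm$ over unit cubes, obtain for each $\bm$ the estimate
\[
\big\||M_{f_\bm}g(-i\nabla)|^2\big\|_{1,\infty}\leq \const\,\|f_\bm\|_\infty^2\,\|g\|_{\ell_{2,\infty}(L_4)}^2,
\]
and then sum over $\bm$ using the logarithmic convexity of $\cL_{1,\infty}$ (Proposition~\ref{log_convexity}), together with the counting estimate $1+\log k\lesssim 1+\log|\bm_k|$ when $\bbZ^d$ is enumerated by increasing $|\bm|$. That final step is fine and is precisely what the paper does (written there as $\|g(-i\nabla)M_{f^2}g(-i\nabla)\|_{1,\infty}=\|\sum_\bm g(-i\nabla)M_{f_\bm^2}g(-i\nabla)\|_{1,\infty}$ and then applying Proposition~\ref{log_convexity}).

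The gap is in your derivation of the per-cube estimate. The displayed inequality
\[
\|M_{f_\bm}g(-i\nabla)\|_{2,\infty}^2\;\leq\;\const\,\|f_\bm\|_\infty^2\Big\|\bigoplus_\bn |g_\bn(-i\nabla)|^2\Big\|_{1,\infty}
\]
is meaningless: each $|g_\bn(-i\nabla)|^2=|g_\bn|^2(-i\nabla)$ is a (non-zero) Fourier multiplier, hence not compact, so the right-hand side is infinite. (A minor additional slip: the $M_{f_\bm}g_\bn(-i\nabla)$ are disjoint from the \emph{right}, not the left, since $g_\bn(-i\nabla)^\ast g_{\bn'}(-i\nabla)=0$ for $\bn\neq\bn'$.) More generally, your ``direct-sum/weak-$\ell_1$ argument'' would need an inequality of the form $\|\bigoplus_\bn A_\bn\|_{1,\infty}\lesssim\|\{\|A_\bn\|_{1,\infty}\}_\bn\|_{1,\infty}$ for orthogonal positive $A_\bn$, which is false without additional structure.

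The paper sidesteps this entirely by invoking the already-established Cwikel estimate for $p=1$ (Corollary~\ref{clas_Cwikel_p<2}\,(ii)): since $f_\bm$ is bounded with compact support and $g^2\in\ell_{1,\infty}(L_2)(\bbR^d)$ (because $\|g_\bn^2\|_2=\|g_\bn\|_4^2$ and $\{\|g_\bn\|_4\}\in\ell_{2,\infty}$), one gets directly
\[
\|M_{f_\bm}g^2(-i\nabla)\|_{1,\infty}\leq\const\,\|f_\bm\otimes g^2\|_{\ell_{1,\infty}(L_2)}\leq\const\,\|f_\bm\|_\infty\,\|g\|_{\ell_{2,\infty}(L_4)}^2.
\]
Multiplying on the right by $M_{f_\bm}$ and using \eqref{mu^2_mu} to pass from $M_{f_\bm}g^2(-i\nabla)M_{f_\bm}$ to $g(-i\nabla)M_{f_\bm^2}g(-i\nabla)$ yields the per-cube bound you need. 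So rather than re-proving the $p=1$ estimate cube-by-cube in $g$, simply cite Corollary~\ref{clas_Cwikel_p<2}.
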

\begin{proof}We denote by $K$ the unit cube, and set $f_\bm=f\chi_{K+\bm},\bm\in\bbZ^d$. 
For every fixed $\bm\in\bbZ^d$ we have $f_\bm\in L_\infty(\bbR^d)$ and $g^2\in \ell_{1,\infty}(L_2)(\bbR^d)$, and therefore  $f_\bm\otimes g^2\in \ell_{1,\infty}(L_2)(\bbR^d)$ for every $\bm\in\bbZ^d$, and therefore, Corollary \ref{clas_Cwikel_p<2} (ii) implies that $M_{f_\bm}g^2(-i\nabla)\in \cL_{1,\infty}(L_2(\bbR^d))$ and 
$$\|M_{f_\bm}g^2(-i\nabla)\|_{1,\infty}\leq\const  \|f_\bm\|_2\|g^2\|_{\ell_{1,\infty}(L_2)}\leq \|f_\bm\|_\infty\|g\|^2_{\ell_{2,\infty}(L_4)}.$$
Hence, $M_{f_\bm}g^2(-i\nabla)M_{f_\bm}\in \cL_{1,\infty}(L_2(\bbR^d))$ and
$$\|M_{f_\bm}g^2(-i\nabla)M_{f_\bm}\|_{1,\infty}\leq \|f_\bm\|^2_\infty\|g\|^2_{\ell_{2,\infty}(L_4)}.$$

By \eqref{mu^2_mu} we obtain that $g(-i\nabla)M_{f_{\bm}^2}g(-i\nabla)\in\cL_{1,\infty}(L_2(\bbR^d))$ and 
$$\|g(-i\nabla)M_{f_{\bm}^2}g(-i\nabla)\|_{1,\infty}\leq \const \|f_\bm\|_\infty^2\|g\|^2_{l_{2,\infty}(L_4)}.$$

We have 
\begin{align*}
\sum_\bm(1+\log |\bm|)&\|g(-i\nabla)M_{f_{\bm}^2}g(-i\nabla)\Big\|_{1,\infty}\\
&\leq \const \sum_\bm(1+\log |\bm|)\|f\chi_{K+\bm}\|^2_\infty\|g\|^2_{l_{2,\infty}(L_4)}\\
&=\const \|f\|^2_{2,\log}\|g\|^2_{l_{2,\infty}(L_4)}<\infty.
\end{align*}
Combining preceding estimate with Proposition \ref{log_convexity}, we have that
\begin{align*}
\|M_fg(-i\nabla)\|_{2,\infty}^2&=\|g(-i\nabla)M_{f^2}g(-i\nabla)\|_{1,\infty}=\Big\|\sum_{\bm}g(-i\nabla)M_{f_{\bm}^2}g(-i\nabla)\Big\|_{1,\infty}\\
&\leq 4\sum_\bm(1+\log |\bm|)\|g(-i\nabla)M_{f_{\bm}^2}g(-i\nabla)\Big\|_{1,\infty}\\
&\leq \const \|f\|^2_{2,\log}\|g\|^2_{l_{2,\infty}(L_4)},
\end{align*}
which concludes the proof.
\end{proof}

\section{Noncommutative Euclidean space}\label{sec_nc_plane}
The noncommutative Euclidean space, also known as Moyal plane admits several equivalent definitions. For example,
\begin{enumerate}
    \item Define a symmetric bilinear product $\star_\theta$ on $\cS(\bbR^d)$. The algebra of smooth functions is then defined to be the algebra $(\cS(\bbR^d),\star_\theta)$ (see e.g. \cite{Gayral_Iochum_Varilly}).
    \item Define $\bbR^d_\theta$ as an isospectral deformation of the manifold $\bbR^d$ with respect to a certain $\bbR^d$-action (see e.g. \cite{Rieffel}).
    \item Define firstly $L^{\infty}(\bbR^d_\theta)$ as twisted (reduced) group von Neumann algebra  and then introduce `function spaces' on $\bbR^d_\theta$ as noncommutative spaces associated to that algebra.
\end{enumerate}

In this paper, we stick with the third option. Our approach is unitarily equivalent, via the Fourier transform, to the standard Moyal product approach in (1), and therefore, the content of this section is well known to the experts. For convenience, we give a self contained 
exposition with references to the equivalent results based on Moyal product, when possible. For simplicity of exposition we totally avoid terminology from twisted group algebras.

\subsection{Definition and elementary properties of noncommutative Euclidean  space}
We shall define noncommutative Euclidean space as the von Neumann algebra generated by a strongly continuous family of unitary operators $\{U(\bt)\}_{\bt\in\bbR^d}$, $d\in\bbN$, satisfying the commutation relation 
\begin{equation}\label{nc_plane_comm_relation}
U(\bt+\bs) = \exp(-\frac{i}{2}\langle \bt,\theta \bs\rangle) U(\bt)U(\bs),\quad \bt,\bs \in \bbR^d,
\end{equation}
where $\theta$ is a fixed antisymmetric real $d\times d$ matrix. Namely, we set 
\begin{equation}\label{nc_plane_realisation}
(U(\bt)\xi)(\bu)=e^{-\frac{i}{2}\langle \bt,\theta \bu\rangle}\xi(\bu-\bt),\quad \xi\in L_2(\mathbb{R}^d), \quad \bu,\bt\in \bbR^d.
\end{equation}
It is clear that the family $\{U(\bs)\}_{\bs\in\bbR^d}$ of unitaries is strongly continuous. Moreover, for every $\bs,\bt\in\bbR^d$ we have 
\begin{align*}
(U(\bs)U(\bt)\xi)(\bu)&=e^{-\frac{i}{2}\langle \bs,\theta \bu\rangle}e^{-\frac{i}{2}\langle \bt,\theta (\bu-\bs)\rangle}\xi(\bu-\bt-\bs)\\
&=e^{\frac{i}{2}\langle \bt,\theta \bs\rangle}e^{-\frac{i}{2}\langle \bs+\bt,\theta \bu\rangle}\xi(\bu-\bt-\bs)\\
&=e^{\frac{i}{2}\langle \bt,\theta \bs\rangle}(U(\bs+\bt)\xi)(\bu).
\end{align*}
In other words, the operators defined in \eqref{nc_plane_realisation} satisfy the commutation relation \eqref{nc_plane_comm_relation}. 

\begin{definition}
\label{nc_plane_definition}
Let $d\in\bbN$ and let $\theta$ be a fixed antisymmetric real $d\times d$ matrix. 
The von Neumann algebra on $L_2(\bbR^d)$ generated by $\{U(\bt)\}_{\bt \in \bbR^d}$, introduced in \eqref{nc_plane_realisation}, is said to be noncommutative Euclidean  space and denoted by $L_\infty(\bbR^d_\theta)$.
\end{definition}

\begin{remark}

\begin{enumerate}

\item The classical case of Euclidean  space is recovered by taking $\theta = 0$. Definition \ref{nc_plane_definition} states that $L_\infty(\bbR^d_0)$
    is the von Neumann algebra algebra generated by $d$ commuting unitary groups, and this is $*$ isomorphic to $L^{\infty}(\bbR^d)$.
    
\item  Note that relation \eqref{nc_plane_comm_relation} makes no sense without the assumption that $\theta$ is antisymmetric and real.
Indeed, we have
    \begin{equation*}
        \exp(-\frac{i}{2}\langle\bt,\theta \bs\rangle) = U(\bt+\bs)U(\bs)^*U(\bt)^*.
    \end{equation*}
    Since the right hand side of the above is unitary, we require that $\langle\bt,\theta \bs\rangle$ is always real, and so $\theta$ must be real. Next, combining the fact that,
    \begin{align*}
        U(\bt+\bs) &= \exp(-\frac{i}{2}\langle \bt,\theta \bs\rangle)U(\bt)U(\bs)\\
        U(-\bs-\bt) &= \exp(-\frac{i}{2}\langle \bs,\theta \bt\rangle)U(-\bs)U(-\bt),
    \end{align*}
with  the equality $U(\bt)^{-1}=U(-\bt)$, it follows from multiplying the above two equations that
    \begin{equation*}
        \langle \bt,\theta \bs\rangle+\langle \bs,\theta \bt\rangle = 0.
    \end{equation*} Therefore, $\theta$ has to be antisymmetric.\qed
    \end{enumerate}
\end{remark}

Our first step in studying the noncommutative Euclidean  space $L_\infty(\bbR^d)$ is to show a simple and well-known fact (see e.g. \cite[Proposition 5.2]{Rieffel}) that we can replace our original family $\{U(\bt)\}_{\bt\in\bbR^d}$ and the matrix $\theta$, so that the commutation relation \eqref{nc_plane_comm_relation} becomes simpler.
%

\begin{proposition}\label{nc_plane_to_block_diagonal}
Let $k=\dim\ker \theta$ and let $\tilde{\theta}$ be the block-diagonal matrix of the form 
 \begin{equation}\label{nc_plane_theta_block-diagonal}
        \tilde{\theta} = \begin{pmatrix}
                     0 & -1 &\\
                     1 &  0 & \\
                       &    & 0 & -1\\
                       &    & 1 & 0\\
                       &    &   &   & \ddots\\
                       &    &   &   &       &  0 & -1\\
                       &    &   &   &       &  1 & 0\\
                       &    &   &   &       &    &        & 0_{d-2k}
        \end{pmatrix}.
    \end{equation}
 Then, the algebras $L_\infty(\bbR^d_\theta)$ and $L_\infty(\bbR^d_{\tilde{\theta}})$ are spatially isomorphic.
%
\end{proposition}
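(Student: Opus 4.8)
The plan is to reduce the antisymmetric real matrix $\theta$ to its canonical block-diagonal form by an orthogonal change of variables, and then to transfer this change of variables to a spatial isomorphism of the algebras via a composition operator on $L_2(\bbR^d)$. Recall the elementary linear-algebra fact that for any real antisymmetric $d\times d$ matrix $\theta$ there is an orthogonal matrix $O\in O(d)$ such that $O^t\theta O=\tilde\theta$, where $\tilde\theta$ is the block-diagonal matrix displayed in \eqref{nc_plane_theta_block-diagonal} with $2k=\operatorname{rank}\theta$ (so $k=\tfrac{d-\dim\ker\theta}{2}$; note $\dim\ker\theta$ is automatically even since the rank of an antisymmetric matrix is even). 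Strictly speaking the standard normal form has $2\times 2$ blocks $\begin{pmatrix}0&-\lambda_j\\ \lambda_j&0\end{pmatrix}$ with $\lambda_j>0$; to absorb the $\lambda_j$ and obtain exactly the $\pm1$ entries one composes $O$ with an additional diagonal scaling, so that in fact the relevant change of variables is a linear isomorphism $S\in GL(d,\bbR)$ with $S^t\theta S=\tilde\theta$. (One should check that the $\lambda_j$ are nonzero on the non-degenerate part, which they are by construction.)

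Next I would define the unitary $V$ on $L_2(\bbR^d)$ by $(V\xi)(\bu)=|\det S|^{1/2}\xi(S\bu)$; this is the composition-with-$S$ operator, made unitary by the Jacobian factor. A direct computation then shows $V^{-1}U(\bt)V=\tilde U(S^{-1}\bt)$, where $\tilde U$ denotes the family \eqref{nc_plane_realisation} built from $\tilde\theta$: indeed, applying $U(\bt)$ after $V$ and then $V^{-1}$ produces the phase $e^{-\frac i2\langle\bt,\theta S\bu\rangle}=e^{-\frac i2\langle S^t\bt,S^t\theta S\,(S^{-1}\cdot)\rangle}$-type expression, and after relabelling $\bs=S^{-1}\bt$ one recognises exactly $e^{-\frac i2\langle\bs,\tilde\theta\bu\rangle}\xi(\bu-\bs)$, i.e.\ $\tilde U(\bs)$. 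Since $\bt\mapsto S^{-1}\bt$ is a bijection of $\bbR^d$, the set $\{V^{-1}U(\bt)V:\bt\in\bbR^d\}$ coincides with $\{\tilde U(\bs):\bs\in\bbR^d\}$, hence $V^{-1}L_\infty(\bbR^d_\theta)V$ is the von Neumann algebra generated by $\{\tilde U(\bs)\}_{\bs\in\bbR^d}$, which is precisely $L_\infty(\bbR^d_{\tilde\theta})$ by Definition \ref{nc_plane_definition}. This exhibits the desired spatial isomorphism, implemented by the unitary $V$.

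The only genuinely delicate point is the normalisation of the blocks: the textbook spectral normal form for real antisymmetric matrices gives $2\times2$ blocks scaled by the (positive) ``eigenvalues'' $\lambda_j$ rather than by $1$. The fix is routine — conjugating by a suitable positive diagonal matrix rescales each block to $\begin{pmatrix}0&-1\\1&0\end{pmatrix}$ while leaving the zero block untouched — but it means the conjugating transformation $S$ is a general invertible matrix, not merely an orthogonal one; consequently the implementing operator $V$ is the weighted composition operator above rather than a plain composition operator. Apart from tracking this Jacobian factor, every step is a bookkeeping computation with the explicit formula \eqref{nc_plane_realisation}.
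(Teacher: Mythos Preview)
Your approach is essentially identical to the paper's: it too factors the change of variables as an orthogonal matrix $Q$ (your $O$) bringing $\theta$ to block-diagonal form with blocks $\begin{pmatrix}0&-\theta_j\\\theta_j&0\end{pmatrix}$, followed by a diagonal scaling $N=\operatorname{diag}(\theta_1^{-1/2},\theta_1^{-1/2},\dots)$ to normalise the blocks, and then implements the spatial isomorphism by the weighted composition operator $(W\xi)(\bt)=\det(N)^{-1/2}\xi(N^{-1}Q^{-1}\bt)$, verifying $W^*U^\theta(\bt)W=U^{\tilde\theta}(N^{-1}Q^{-1}\bt)$.

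Two small bookkeeping points to clean up: with your convention $(V\xi)(\bu)=|\det S|^{1/2}\xi(S\bu)$ and $S^t\theta S=\tilde\theta$, the intertwining relation actually comes out as $V^{-1}U(\bt)V=\tilde U(S\bt)$ rather than $\tilde U(S^{-1}\bt)$ (check the shift term), so either swap $V$ with $V^{-1}$ or $S$ with $S^{-1}$; and the parenthetical remark that $\dim\ker\theta$ is automatically even is false in odd dimension (it is the \emph{rank} that is always even). Neither affects the argument.
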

 \begin{proof}
 Since the matrix $\theta$ is antisymmetric and real, it follows from the spectral theory that there exists an orthogonal matrix $Q$ such that 
$\theta=Q\theta_0Q^{-1}$, where 
\begin{equation*}
      \theta_0 = \begin{pmatrix}
                     0 & -\theta_1 &\\
                     \theta_1 &  0 & \\
                       &    & 0 & -\theta_2\\
                       &    & \theta_2 & 0\\
                       &    &   &   & \ddots\\
                       &    &   &   &       &  0 & -\theta_{k}\\
                       &    &   &   &       &  \theta_{k} & 0\\
                       &    &   &   &       &    &        & 0_{d-2k}
        \end{pmatrix},
    \end{equation*}
    with $\theta_k>0$. Define also the matrix
    \begin{equation*}
        N = \begin{pmatrix} \theta_1^{-1/2}\\
                            & \theta_1^{-1/2}\\
                            & & \theta_2^{-1/2}\\
                            & & &\theta_2^{-1/2}\\
                            & & & & \ddots\\
                            & & & & & 1_{d-2k}
                            \end{pmatrix}.
    \end{equation*}
 
Let $\{U^{\theta}(\bt)\}_{\bt\in\bbR^d}$ be the family corresponding to $\theta$ and let $\{U^{\tilde{\theta}}(\bt)\}_{\bt\in\bbR^d}$ be the family corresponding to $\tilde{\theta}$. Introducing the unitary operator $W$ on $L_2(\bbR^d)$ by setting 
 \begin{equation}\label{nc_plane_change_coordinaes}
 (W\xi)(\bt)=\det(N)^{-1/2}\cdot\xi(N^{-1}Q^{-1}\bt),\quad \bt\in\bbR^d,
 \end{equation}
 one can obtain that 
 $$W^*U^\theta(\bt)W=U^{\tilde{\theta}}(N^{-1}Q^{-1}\bt),\quad \bt\in\bbR^d,$$
 which implies the claim.
 
%
 
 \end{proof}
 
 As an immediate corollary of the preceding proposition we obtain the following classical result (see e.g.  \cite[Proposition 5.2]{Rieffel} and also \cite[Proposition 2.13]{Gayral_Iochum_Varilly}).
 
\begin{corollary}\label{nc_plane_reduction_to_even}Let $S=\begin{pmatrix} 0&-1\\1&0\end{pmatrix}.$
We have that 
$$L_\infty(\bbR^d_\theta)\cong \underbrace{L_\infty(\bbR^2_S)\bar{\otimes}\cdots\bar{\otimes} L_\infty(\bbR^2_S)}_{\ell\,  {\text  times}}\bar{\otimes}\underbrace{ L_\infty(\bbR)\bar{\otimes}\dots\bar{\otimes}L_\infty(\bbR)}_{k \, \text{times}},$$ where $k=\dim\ker\theta$, $d=k+2\ell$.
\end{corollary}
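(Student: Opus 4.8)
The plan is to combine Proposition \ref{nc_plane_to_block_diagonal} with the elementary observation that the concrete realisation \eqref{nc_plane_realisation} turns a block-diagonal deformation matrix into a tensor product of the realisations attached to its blocks, and then to invoke the standard description of the spatial von Neumann tensor product as the algebra generated by the ampliations of its factors.

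First, by Proposition \ref{nc_plane_to_block_diagonal} the algebra $L_\infty(\bbR^d_\theta)$ is spatially isomorphic to $L_\infty(\bbR^d_{\tilde\theta})$, where $\tilde\theta$ is the matrix \eqref{nc_plane_theta_block-diagonal}: it is block-diagonal with $\ell=(d-k)/2$ copies of $S$ followed by the $k\times k$ zero block, $k=\dim\ker\theta$. So it suffices to establish the claim for $\theta=\tilde\theta$. Grouping the coordinates of $\bbR^d$ as $(\bt^{(1)},\dots,\bt^{(\ell)},s_1,\dots,s_k)$ with $\bt^{(j)}\in\bbR^2$, $s_i\in\bbR$, identify
$$L_2(\bbR^d)\cong\underbrace{L_2(\bbR^2)\otimes\cdots\otimes L_2(\bbR^2)}_{\ell}\otimes\underbrace{L_2(\bbR)\otimes\cdots\otimes L_2(\bbR)}_{k}.$$
Since $\tilde\theta$ is block-diagonal, $\langle\bt,\tilde\theta\bu\rangle=\sum_{j=1}^{\ell}\langle\bt^{(j)},S\bu^{(j)}\rangle$ (the zero block contributes nothing), and the translation $\xi\mapsto\xi(\cdot-\bt)$ splits across the coordinate groups; substituting into \eqref{nc_plane_realisation} one checks that
$$U^{\tilde\theta}(\bt)=U^{S}(\bt^{(1)})\otimes\cdots\otimes U^{S}(\bt^{(\ell)})\otimes V(s_1)\otimes\cdots\otimes V(s_k),\qquad\bt\in\bbR^d,$$
where $U^{S}(\cdot)$ is the realisation \eqref{nc_plane_realisation} attached to $S$ on $L_2(\bbR^2)$ and $V(s)$ is translation by $s$ on $L_2(\bbR)$, i.e. the realisation attached to the $1\times1$ zero matrix; by the Remark following Definition \ref{nc_plane_definition}, $V(\cdot)$ generates $L_\infty(\bbR^1_0)\cong L_\infty(\bbR)$.

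It remains to identify the von Neumann algebra generated by the operators $U^{\tilde\theta}(\bt)$. Each such operator is, by the displayed factorisation, a pure tensor whose $j$-th leg lies in $L_\infty(\bbR^2_S)$ and whose last $k$ legs lie in $L_\infty(\bbR)$, so $L_\infty(\bbR^d_{\tilde\theta})$ is contained in the right-hand side of the asserted isomorphism. Conversely, setting all coordinate groups of $\bt$ except one equal to $0$ and using $U^{S}(0)=I$, $V(0)=I$, every ampliation $I\otimes\cdots\otimes U^{S}(\bt^{(j)})\otimes\cdots\otimes I$ and $I\otimes\cdots\otimes V(s_i)\otimes\cdots\otimes I$ lies in $L_\infty(\bbR^d_{\tilde\theta})$; since an ampliation is a normal $*$-homomorphism, the von Neumann algebra generated by the operators inserted into a single leg is exactly the ampliation of $\{U^{S}(\cdot)\}''=L_\infty(\bbR^2_S)$, respectively of $\{V(\cdot)\}''\cong L_\infty(\bbR)$, into that leg. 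By the standard description of the spatial tensor product (see e.g. \cite[Section 4.5]{Takesaki1}), the von Neumann algebra generated by the ampliations of all $\ell+k$ factors is their spatial tensor product, i.e. the right-hand side of the asserted isomorphism, and the two inclusions together give the result.

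The only input that is not bookkeeping is the last one, namely that $\cM_1\bar\otimes\cdots\bar\otimes\cM_n$ coincides with the von Neumann algebra generated by the ampliations $I\otimes\cdots\otimes\cM_r\otimes\cdots\otimes I$; this is classical. Thus the genuinely delicate point has already been handled in Proposition \ref{nc_plane_to_block_diagonal} (the diagonalisation of an antisymmetric real matrix together with the explicit change of coordinates \eqref{nc_plane_change_coordinaes}), and the present corollary is a formal consequence.
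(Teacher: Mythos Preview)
Your proof is correct and follows essentially the same approach as the paper: reduce to the block-diagonal matrix $\tilde\theta$ via Proposition \ref{nc_plane_to_block_diagonal}, then observe that the generators $U^{\tilde\theta}(\bt)$ split as tensor products across the coordinate blocks. Your version is in fact more detailed than the paper's, which merely names the one-parameter subgroups $u_j(t)=U^{\tilde\theta}(t\be_{2j})$, $v_j(t)=U^{\tilde\theta}(t\be_{2j-1})$, $w_i(t)=U^{\tilde\theta}(t\be_{2\ell+i})$ and asserts that each pair $\{u_j,v_j\}$ generates a copy of $L_\infty(\bbR^2_S)$ and each $\{w_i\}$ a copy of $L_\infty(\bbR)$, without spelling out the double inclusion or the appeal to the ampliation description of the spatial tensor product.
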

\begin{proof} Let $\tilde{\theta}$  and $\{U^{\tilde{\theta}}(\bt)\}_{\bt\in\bbR^d}$ be the same as in Proposition \ref{nc_plane_to_block_diagonal}.

For $i=1,\dots, k$, $j=1,\dots, \ell$ and $t\in\bbR$ we set 
$$ u_j(t):=U^{\tilde{\theta}}(t\be_{2j}),\quad v_j(t)=U^{\tilde{\theta}}(t\be_{2j-1}), \quad w_i(t)=U^{\tilde{\theta}}(t\be_{2l+i}).$$ Then every pair $\{u_j(t)v_j(t)\}_{t\in\bbR}$, $j=1,\dots,\ell,$ generates a copy of $L_\infty(\bbR^2_S)$ and $\{w_i(t)\}_{t\in\bbR}$ generates $L_\infty(\bbR)$.

%

\end{proof}

Corollary \ref{nc_plane_reduction_to_even} shows that we can always split $L^{\infty}(\bbR^d_\theta)$
into the tensor product $L_\infty(\bbR^2_S)\bar{\otimes}\cdots\bar{\otimes} L_\infty(\bbR^2_S)\bar{\otimes} L^{\infty}(\bbR^k)$, where $k$ is the dimension of the kernel of $\theta$. Thus truly noncommutative case appears only when $\theta$ is non-degenerate, that is $\det\theta\neq 0$. Therefore, everywhere below in this section we assume in addition that $\theta$ is non-degenerate and block-diagonal $d\times d$ matrix, with every block being equal to the matrix $S =\begin{pmatrix} 0&-1\\1&0\end{pmatrix}.$ 
In particular, this implies that $d$ is even.

Next, we show that in fact the algebra $L_{\infty}(\mathbb{R}_{\theta}^d)$ is $*$-isomorphic to the algebra $\cB(L_2(\mathbb{R}^{\frac{d}{2}}))$ (see \cite[Theorem 2]{GBV_JMP} and \cite[Proposition 2.13]{Gayral_Iochum_Varilly}).

  Let $M_{x_k}$ and $\partial_k$, $k=1,\dots,n$, \, $n\in\bbN$ be the operators on $L_2(\bbR^{n})$ given by $(M_{x_k} \xi)(\bu) = u_k\xi(\bu)$ and $(\partial_k\xi)(\bu) = -i\frac{\partial}{\partial u_k}\xi(\bu)$, $\bu\in\bbR^{d}$. 
  It is well-known (see e.g. \cite{RS_book_II}) that $(M_{x_1}, \dots, M_{x_d})$ and $(\partial_1,\dots, \partial_d)$ satisfy Weyl canonical commutation relations:
  \begin{align}\label{CCR}
  e^{isM_{x_k}}e^{itM_{x_l}}&=e^{itM_{x_l}}e^{isM_{x_k}},\nonumber\\
  e^{is\partial_k}e^{it\partial_l}&=e^{it\partial_l}e^{is\partial_k},\\
  e^{isM_{x_k}}e^{it\partial_l}&=e^{-its\delta_{kl}}e^{it\partial_l}e^{isM_{x_k}}.\nonumber
  \end{align}
  
  Introduce the quadratic form on $\bbR^d$ by setting
  \begin{equation}\label{nc_plane_quad_form}
  h(\bs)=\sum_j s_{2j}s_{2j-1}, \quad \bs\in \bbR^d.
  \end{equation}
Denoting $\bs'=(s_2,s_4,\dots, s_d),$ and $\bs''=(s_1,s_3,\dots, s_{d-1})$ for $\bs\in \bbR^d$, We have that
 \begin{equation}
 e^{i(\bs'\cdot M_x+\bs''\cdot\partial)}=e^{\frac{ih(\bs)}2}\prod_{j=1}^{d/2}e^{is_{2j}M_{x_{2j-1}}}\cdot \prod_{j=1}^{d/2}e^{is_{2j-1}\partial_{2j-1}}.
\end{equation}

\begin{theorem}\label{nc_plane_is_BL_2}
There exists a spatial $*$-isomorphisms $r$ of the algebras $L^\infty(\bbR^d_\theta)$ and $\cB(L_2(\bbR^{d/2}))\otimes 1_{L_2(\bbR^{d/2})}$ such that 
\begin{align*}
r:U(\bs)\mapsto e^{i(\bs'\cdot M_x+\bs''\cdot\partial)}, \quad \bs\in\bbR^d.
\end{align*}   
 \end{theorem}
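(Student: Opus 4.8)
The plan is to exhibit the spatial $*$-isomorphism $r$ explicitly and then verify that it has the claimed effect on the generators $U(\bs)$. Since $\theta$ is (by the standing assumption) the block-diagonal matrix with each block equal to $S$, the defining relation \eqref{nc_plane_comm_relation} reads $U(\bt+\bs)=\exp(-\tfrac{i}{2}\langle\bt,S\bs\rangle)U(\bt)U(\bs)$ with the symplectic form $\langle\bt,S\bs\rangle=\sum_j(t_{2j-1}s_{2j}-t_{2j}s_{2j-1})$. On the other side, set $V(\bs):=e^{i(\bs'\cdot M_x+\bs''\cdot\partial)}$, viewed as an operator on $L_2(\bbR^{d/2})$ (with the identification $\bs'=(s_2,s_4,\dots,s_d)$, $\bs''=(s_1,s_3,\dots,s_{d-1})$). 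First I would use the Weyl canonical commutation relations \eqref{CCR} together with the Baker--Campbell--Hausdorff formula (the relevant commutators are central) to compute $V(\bt)V(\bs)$ and check that $\{V(\bs)\}_{\bs\in\bbR^d}$ satisfies exactly the \emph{same} commutation relation \eqref{nc_plane_comm_relation} for $\tilde\theta=\theta$; this is where the normalising phase $e^{ih(\bs)/2}$ in the displayed identity for $e^{i(\bs'\cdot M_x+\bs''\cdot\partial)}$ is needed, and it is essentially a bookkeeping computation with the cocycle. Strong continuity of $\bs\mapsto V(\bs)$ is standard (Stone's theorem applied coordinatewise).

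Next I would invoke the Stone--von Neumann uniqueness theorem. Both $\{U(\bs)\}$ on $L_2(\bbR^d)$ and $\{V(\bs)\}$ on $L_2(\bbR^{d/2})$ are strongly continuous families of unitaries satisfying the same Weyl-type relation for a non-degenerate symplectic form on $\bbR^d$; the representation on $L_2(\bbR^{d/2})$ generated by $V$ is irreducible (it is the Schrödinger representation), while the representation $U$ on $L_2(\bbR^d)\cong L_2(\bbR^{d/2})\otimes L_2(\bbR^{d/2})$ decomposes, by Stone--von Neumann, as a multiple of the Schrödinger representation — concretely, $U$ is unitarily equivalent to $V\otimes 1_{L_2(\bbR^{d/2})}$. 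Thus there is a unitary $\Omega:L_2(\bbR^d)\to L_2(\bbR^{d/2})\otimes L_2(\bbR^{d/2})$ with $\Omega U(\bs)\Omega^*=V(\bs)\otimes 1$ for all $\bs$. Defining $r(\cdot):=\Omega(\cdot)\Omega^*$ then gives a spatial $*$-isomorphism sending the generators $U(\bs)$ to $e^{i(\bs'\cdot M_x+\bs''\cdot\partial)}\otimes 1$, and hence, by taking the generated von Neumann algebras and using that $\{V(\bs)\}''=\cB(L_2(\bbR^{d/2}))$ (irreducibility), $r$ carries $L_\infty(\bbR^d_\theta)$ onto $\cB(L_2(\bbR^{d/2}))\otimes 1_{L_2(\bbR^{d/2})}$, as required.

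I expect the main obstacle to be the careful invocation of Stone--von Neumann in the form that simultaneously (a) identifies the commutant/multiplicity space correctly so that one lands in $\cB(L_2(\bbR^{d/2}))\otimes 1$ rather than in some abstract amplification, and (b) keeps track of the exact phase conventions so that the isomorphism is on the nose $U(\bs)\mapsto e^{i(\bs'\cdot M_x+\bs''\cdot\partial)}$ and not up to an extra scalar cocycle; the paper has arranged the quadratic form $h$ in \eqref{nc_plane_quad_form} and the factorisation identity precisely so that these phases cancel, so this is a matter of organising the computation rather than a genuine difficulty. A secondary, purely algebraic, point to verify is that the von Neumann algebra generated by $\{e^{i(\bs'\cdot M_x+\bs''\cdot\partial)}\}_{\bs\in\bbR^d}$ is all of $\cB(L_2(\bbR^{d/2}))$: this follows because the $M_{x_k}$ and $\partial_k$ together generate an irreducible set, so their commutant is scalar.
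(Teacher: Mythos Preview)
Your approach is correct but takes a genuinely different route from the paper. Rather than invoking Stone--von Neumann, the paper exhibits the intertwining unitary explicitly: it sets
\[
(V\xi)(\bu)=\frac{1}{(2\pi)^{d/2}}\int_{\bbR^d}e^{i(-\frac{h(\bu)}2-\langle \bu,\theta\bx\rangle+h(\bx))}\xi(\bx)d\bx
\]
and verifies by direct computation that $U(\bs)=V\,e^{i(\bs'\cdot M_x+\bs''\cdot\partial)}\,V^*$ for every $\bs$, from which the spatial isomorphism and the identification of the image algebra follow at once. Your argument is more structural and spares you the task of discovering this integral kernel; the paper's is entirely hands-on and delivers the unitary outright (though the explicit form of $V$ is not used again). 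The one point you correctly flag as needing care---that the multiplicity space is infinite-dimensional so that one really lands in $\cB(L_2(\bbR^{d/2}))\otimes 1_{L_2(\bbR^{d/2})}$ rather than $\cB(L_2(\bbR^{d/2}))\otimes 1_K$ for some finite-dimensional $K$---does require an extra step in your approach: for instance, the operators $(\tilde U(\bt)\xi)(\bu):=e^{+\frac{i}{2}\langle\bt,\theta\bu\rangle}\xi(\bu-\bt)$ lie in the commutant of $\{U(\bs)\}$ (a short check using antisymmetry of $\theta$) and themselves satisfy Weyl relations for $-\theta$, so the commutant is infinite-dimensional. The paper's explicit unitary bypasses this issue entirely.
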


\begin{proof} Recall that we assume that the matrix $\theta$ is block-diagonal (see Proposition \ref{nc_plane_to_block_diagonal}) of the form \eqref{nc_plane_theta_block-diagonal}.

Introduce the unitary operator $V\in\cB(L_2(\mathbb{R}^d))$ by setting $$(V\xi)(\bu)=\frac{1}{(2\pi)^{d/2}}\int_{\bbR^d}e^{i(-\frac{h(\bu)}2-\langle \bu,\theta\bx\rangle+h(\bx))}\xi(\bx)d\bx.$$
Standard computations show that 
  $$U(\bs)=Ve^{i(\bs'\cdot M_{\bx}+\bs''\cdot \partial)}V^*,\quad \bs\in\bbR^d.$$

\end{proof}

\begin{remark}
 Let $i_0:\mathcal{B}(L_2(\mathbb{R}^{\frac{d}{2}}))\to \mathcal{B}(L_2(\mathbb{R}^d))$ be defined by the formula
$$i_0(T)=T\otimes 1_{L_2(\mathbb{R}^{\frac{d}{2}})}.$$ By Theorem \ref{nc_plane_is_BL_2} we have that any $x\in L_\infty(\bbR_\theta^d)$ is in the image of $i_0$. Define the isomorphism $r_0:L_{\infty}(\mathbb{R}_{\theta}^d)\to\cB(L_2(\mathbb{R}^{\frac{d}{2}}))$ by setting
$$r_0(x)=i_0^{-1}(r(x)),\quad x\in L_{\infty}(\mathbb{R}_{\theta}^d).$$
Clearly, $r_0$ is a $*$-isomorphism of the algebras $L_{\infty}(\mathbb{R}_{\theta}^d)$ and $\mathcal{B}(L_2(\mathbb{R}^{d/2}))$.
\end{remark}

Having established the $*$-isomorphism $L_\infty(\bbR_\theta^d)\cong \cB(L_2(\bbR^{d/2}))$ we can now equip $L_\infty(\bbR_\theta^d)$ with a faithful normal semifinite trace $\tau_\theta$. Let $\tr$ be the standard trace on  $\cB(L_2(\bbR^{d/2}))$. We set 
\begin{equation}\label{nc_plane_trace}
\tau_\theta(x)=(\tr\otimes 1)(r(x))=(\tr\circ r_0)(x).
\end{equation}

In particular, we can define symmetric function spaces on  $L_\infty(\bbR_\theta^d)$, which are nothing but symmetric ideals of compact operators on $L_2(\bbR^{d/2})$ (see \cite{GK}). 

\begin{definition}Let $E$ be a symmetric sequence space. The symmetric ideal in  $L_\infty(\bbR_\theta^d)$ with respect to the trace $\tau_\theta$ is denoted by $E(\bbR_\theta^d,\tau_\theta)$. In particular, the Schatten ideals associated with $L_\infty(\bbR_\theta^d)$ are denoted by $L_p(\bbR_\theta^d,\tau_\theta)$
\end{definition}

\begin{remark}Since the algebra $L_{\infty}(\mathbb{R}_{\theta}^d)$ is isomorphic to $\mathcal{B}(L_2(\mathbb{R}^{d/2}))$ we have that $L_p(\bbR_\theta^d)\subset L_q(\bbR_\theta^d)$ for $p<q$.
\end{remark}

\begin{lemma}\label{nc_plane_L_2_description}
An operator $x\in L_\infty(\bbR_\theta^d)$ is in $L_2(\bbR_\theta^d,\tau_\theta)$ if and only if
$$x=\frac{1}{(2\pi)^{d/4}}\int_{\mathbb{R}^d}f(\bs)U(\bs)d\bs$$
for some unique $f\in L_2(\bbR^d)$ with $\|x\|_2=\|f\|_2.$ Here the integral is the Bochner integral.
\end{lemma}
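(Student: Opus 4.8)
The plan is to transport the statement, through the spatial $*$-isomorphism $r_0\colon L_\infty(\bbR_\theta^d)\to\cB(L_2(\bbR^{d/2}))$ of Theorem \ref{nc_plane_is_BL_2}, into a concrete assertion about Hilbert--Schmidt operators. Since $\tau_\theta=\tr\circ r_0$ by \eqref{nc_plane_trace}, the map $r_0$ restricts to an isometric isomorphism of $L_2(\bbR_\theta^d,\tau_\theta)$ onto $\cL_2(L_2(\bbR^{d/2}))$, and the latter space is in turn isometrically identified with $L_2(\bbR^{d/2}\times\bbR^{d/2})$ via the integral kernel. Writing $n=d/2$ and $W(\bs)=r_0(U(\bs))=e^{i(\bs'\cdot M_x+\bs''\cdot\partial)}$, it therefore suffices to prove that
$$f\longmapsto X_f:=\frac1{(2\pi)^{n/2}}\int_{\bbR^{2n}}f(\bs)\,W(\bs)\,d\bs$$
is a surjective linear isometry from $L_2(\bbR^{2n})$ onto $\cL_2(L_2(\bbR^n))$ --- that is, the Weyl transform, with the constant $(2\pi)^{-d/4}$ chosen precisely to make it unitary.

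First I would take $f\in L_1\cap L_2(\bbR^{2n})$, so that the integral converges as a Bochner integral in $\cB(L_2(\bbR^n))$. Using the factorisation of $e^{i(\bs'\cdot M_x+\bs''\cdot\partial)}$ recorded before Theorem \ref{nc_plane_is_BL_2} together with \eqref{nc_plane_quad_form}, the operator $W(\bs)$ is a modulation composed with a translation, acting as $(W(\bs)\xi)(\bu)=e^{ih(\bs)/2}e^{i\bs'\cdot\bu}\xi(\bu+\bs'')$. Pairing $X_f$ against Schwartz vectors and applying Fubini (all integrals being then absolutely convergent), one finds that $X_f$ has integral kernel
$$K_{X_f}(\bu,\bv)=\frac1{(2\pi)^{n/2}}\int_{\bbR^n}f(\bv-\bu,\bs')\,e^{i\bs'\cdot(\bu+\bv)/2}\,d\bs'.$$
Under the measure-preserving change of variables $(\bu,\bv)\mapsto(\ba,\bc)=(\bv-\bu,\tfrac12(\bu+\bv))$ the right-hand side is exactly the unitarily normalised inverse Fourier transform of $\bs'\mapsto f(\ba,\bs')$ evaluated at $\bc$; Plancherel's theorem on $\bbR^n$ then gives $\|K_{X_f}\|_{L_2(\bbR^{2n})}=\|f\|_{L_2(\bbR^{2n})}$, i.e. $\|X_f\|_{\cL_2}=\|f\|_2$. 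Since $L_1\cap L_2$ is dense in $L_2(\bbR^{2n})$ and $\cL_2$ is complete, the map $f\mapsto X_f$ extends uniquely to a linear isometry of $L_2(\bbR^{2n})$ into $\cL_2(L_2(\bbR^n))$; this simultaneously supplies the meaning of the integral in the statement for an arbitrary $f\in L_2$, the norm identity $\|x\|_2=\|f\|_2$, and the uniqueness of $f$.

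For surjectivity, let $Y\in\cL_2(L_2(\bbR^n))$ have kernel $K_Y\in L_2(\bbR^{2n})$. Reading the displayed kernel formula backwards, one must take $f$ to be the Fourier transform in the variable $\bc$ of the function $(\ba,\bc)\mapsto K_Y(\bc-\tfrac12\ba,\bc+\tfrac12\ba)$; this $f$ lies in $L_2(\bbR^{2n})$ because the change of variables is measure preserving and the Fourier transform on $\bbR^n$ is unitary, and $X_f=Y$ by the computation above. Transporting the conclusions back through $r_0$ yields the lemma. The only genuinely delicate point is the justification of the kernel computation for $f\in L_1\cap L_2$, i.e. interchanging the Bochner integral $\int f(\bs)W(\bs)\,d\bs$ with the distributional kernels of the individual unitaries $W(\bs)$; this is handled by testing against Schwartz $\xi,\eta$, where Fubini applies unconditionally, and then observing a posteriori that the resulting kernel is square-integrable. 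Everything else reduces to the classical Plancherel theorem for the Weyl transform together with the identification $L_2(\bbR_\theta^d,\tau_\theta)\cong\cL_2(L_2(\bbR^{d/2}))$ furnished by \eqref{nc_plane_trace} and Theorem \ref{nc_plane_is_BL_2}.
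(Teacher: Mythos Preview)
The paper omits the proof of this lemma entirely, so there is no argument to compare against; your approach is correct and is the natural one. You transport the question through $r_0$ to $\cL_2(L_2(\bbR^{d/2}))$, recognise $f\mapsto X_f$ as the Weyl transform, and verify its unitarity by computing the integral kernel and invoking Plancherel in the second variable after the measure-preserving change $(\bu,\bv)\mapsto(\bv-\bu,\tfrac12(\bu+\bv))$. The kernel computation is accurate (using $h(\bs)=\bs'\cdot\bs''$ from \eqref{nc_plane_quad_form}), and surjectivity follows by reading the formula backwards as you do.

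One remark: the paper's clause ``the integral is the Bochner integral'' cannot be read literally for arbitrary $f\in L_2(\bbR^d)$, since $\bs\mapsto f(\bs)U(\bs)$ is Bochner integrable in $\cB(L_2(\bbR^d))$ only when $f\in L_1$. Your treatment --- establishing the isometry first on $L_1\cap L_2$ where the Bochner integral is unproblematic, and then extending by density to all of $L_2$ --- is the standard and correct way to give the formula meaning, and it is worth saying explicitly (as you do) that this extension \emph{defines} the integral for general $f\in L_2$. Note also that the backward implication is automatically consistent with the hypothesis $x\in L_\infty(\bbR_\theta^d)$, because $\cL_2(L_2(\bbR^{d/2}))\subset\cB(L_2(\bbR^{d/2}))$.
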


We can also define the algebra of continuous functions in $L_\infty(\bbR_\theta^d)$.
\begin{definition}The $C^*$-algebra 
$$C(\bbR^d_\theta)=\{x\in L_\infty(\bbR^d_\theta): \, x \text{ is } \tau_\theta-\text{compact}\},$$
is said to be the algebra of continuous function (vanishing at infinity). This algebra is $*$-isomorphic to the algebra of compact operators on $L_2(\bbR^{d/2})$.
\end{definition}

\subsection{Calculus on $\bbR^d_\theta$}

In this section we define algebras of smooth functions on noncommutative Euclidean  space in $L_\infty(\bbR_\theta^d)$.

Let $\cD_k$,\, $1\leq k\leq d$ be multiplication operators 
$$(\cD_k\xi)(\bt)=t_k\xi(\bt), \quad \xi\in \dom(\cD_k)$$ defined on the domain $\dom(\cD_k)=\{\xi\in L_2(\bbR^d): \xi\in L_2(\bbR^d, |t|^2d\bt)\}.$ Note that the space $S(\bbR^d)$ is the core of every operator $\cD_k$, $k=1,\dots, d$.

Fix $\bs\in \bbR^d$ and let $\xi\in S(\bbR^d)$. We have 
\begin{align*}
([\cD_k, U(\bs)]\xi)(\bt)&=(\cD_kU(\bs)\xi)(t)-(U(\bs)\cD_k \xi)(t)\\
&=t_ke^{-\frac{i}2\langle \bs,\theta\bt\rangle }\xi(\bt-\bs)-e^{-\frac{i}2\langle \bs,\theta\bt\rangle }(t_k-s_k)\xi(\bt-\bs)\\
&=s_k(U(\bs)\xi)(\bt).
\end{align*}
Thus, for every $k=1,\dots, d$ and $s\in \bbR^d$ the operator $[\cD_k, U(\bs)]$ extend to a bounded operator on $L_2(\bbR^d)$ and 
\begin{equation}\label{nc_plane_com_D_k_U(s)}
[\cD_k, U(\bs)]=s_kU(\bs).
\end{equation}

In the next lemma we extend equation \eqref{nc_plane_com_D_k_U(s)} to commutators of $U(\bs)$ with the unitary group of $\cD_k$. 
\begin{lemma}\label{nc_plane_com_exp_D_k_U(s)}
For any $\bs\in\bbR^d$ and $k=1,\dots, d$ we have that  
$$e^{it\cD_k}U(\bs)e^{-it\cD_k}=e^{its_k}U(\bs)\in L_\infty(\bbR^d_\theta),\quad t>0.$$
\end{lemma}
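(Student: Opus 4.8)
The plan is to verify the identity by a direct computation on $L_2(\bbR^d)$, using that $\cD_k$ is the operator of multiplication by the coordinate function $\bt\mapsto t_k$, so that its unitary group $e^{it\cD_k}$ is simply multiplication by $\bt\mapsto e^{itt_k}$. Fix $\bs\in\bbR^d$, $k\in\{1,\dots,d\}$, $t>0$ and an arbitrary $\xi\in L_2(\bbR^d)$. First I would apply $e^{-it\cD_k}$, obtaining $(e^{-it\cD_k}\xi)(\bt)=e^{-itt_k}\xi(\bt)$; then apply $U(\bs)$ via \eqref{nc_plane_realisation} to get $(U(\bs)e^{-it\cD_k}\xi)(\bt)=e^{-\frac{i}{2}\langle\bs,\theta\bt\rangle}e^{-it(t_k-s_k)}\xi(\bt-\bs)$; and finally multiply by $e^{itt_k}$, which produces $e^{its_k}e^{-\frac{i}{2}\langle\bs,\theta\bt\rangle}\xi(\bt-\bs)=e^{its_k}(U(\bs)\xi)(\bt)$. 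Since $\xi$ was arbitrary, this yields the operator identity $e^{it\cD_k}U(\bs)e^{-it\cD_k}=e^{its_k}U(\bs)$. The membership $e^{its_k}U(\bs)\in L_\infty(\bbR^d_\theta)$ is then immediate, since $U(\bs)\in L_\infty(\bbR^d_\theta)$ by Definition \ref{nc_plane_definition} and a von Neumann algebra is closed under multiplication by scalars.

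An alternative, slightly more conceptual route would be to use the commutator relation \eqref{nc_plane_com_D_k_U(s)}: setting $F(t)=e^{it\cD_k}U(\bs)e^{-it\cD_k}$ one has, at least formally, $F'(t)=i\,e^{it\cD_k}[\cD_k,U(\bs)]e^{-it\cD_k}=is_kF(t)$ with $F(0)=U(\bs)$, whence $F(t)=e^{its_k}U(\bs)$. Since $\cD_k$ is unbounded, making this argument rigorous would require differentiating on a core such as $S(\bbR^d)$ and then extending to $L_2(\bbR^d)$ by density together with the boundedness of $e^{\pm it\cD_k}$ and $U(\bs)$. I would favour the direct computation above, which sidesteps these domain considerations altogether. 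There is, in effect, no substantial obstacle here: the statement merely lifts \eqref{nc_plane_com_D_k_U(s)} to the level of the unitary group generated by $\cD_k$, and the only point worth a word is that the identity must be read as an equality of bounded operators on all of $L_2(\bbR^d)$, which the computation with arbitrary $\xi\in L_2(\bbR^d)$ already delivers.
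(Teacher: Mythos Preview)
Your proof is correct and is essentially the same as the paper's own proof: both verify the identity by a direct pointwise computation on an arbitrary $\xi\in L_2(\bbR^d)$, using that $e^{it\cD_k}$ is multiplication by $e^{itu_k}$ and the explicit formula \eqref{nc_plane_realisation} for $U(\bs)$. The only cosmetic difference is that the paper writes the equivalent intertwining relation $e^{it\cD_k}U(\bs)=e^{its_k}U(\bs)e^{it\cD_k}$ rather than conjugating by $e^{\pm it\cD_k}$.
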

\begin{proof}
Let $\xi\in L_2(\bbR^d)$. We have 
\begin{align*}
(e^{it\cD_k}U(\bs)\xi)(\bu)&=e^{-{i/2}\langle \bs,\theta \bu\rangle}e^{itu_k}\xi(\bu-\bs)\\
&=e^{its_k} \cdot e^{-{i/2}\langle \bs,\theta \bu\rangle}e^{it(u_k-s_k)}\xi(\bu-\bs)=e^{its_k}(U(\bs)e^{it\cD_k}\xi)(\bu).
\end{align*}

%
%
 \end{proof}

As a corollary of equation \eqref{nc_plane_com_D_k_U(s)} we obtain the following
\begin{proposition}\label{nc_plane_com_D_k_x}
 Let $k=1,\dots, d$ and  $x\in L_\infty(\bbR^d_\theta)$. If $[\cD_k,x]$ extends to a bounded operator on $L_2(\bbR^d)$, then $[\cD_k,x]\in L_\infty(\bbR^d_\theta)$.
\end{proposition}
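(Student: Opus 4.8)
The plan is to recognise conjugation by the unitary group $\{e^{it\cD_k}\}_{t\in\bbR}$ as a one-parameter group of $*$-automorphisms of $L_\infty(\bbR^d_\theta)$, and then to exhibit $[\cD_k,x]$ (up to the scalar $i$) as the infinitesimal generator of this group evaluated at $x$; since the generator of a weakly continuous, spatially implemented automorphism group of a von Neumann algebra takes its values in that algebra, this yields $[\cD_k,x]\in L_\infty(\bbR^d_\theta)$. Concretely, set $\alpha_t(y)=e^{it\cD_k}ye^{-it\cD_k}$ for $y\in\cB(\cH)$. By Lemma \ref{nc_plane_com_exp_D_k_U(s)}, $\alpha_t(U(\bs))=e^{its_k}U(\bs)\in L_\infty(\bbR^d_\theta)$ for every $\bs\in\bbR^d$, so $\alpha_t$ carries the generating unitaries of $L_\infty(\bbR^d_\theta)$ back into $L_\infty(\bbR^d_\theta)$; hence $\alpha_t\big(L_\infty(\bbR^d_\theta)\big)\subseteq L_\infty(\bbR^d_\theta)$, and applying the same to $\alpha_t^{-1}=\alpha_{-t}$ gives $\alpha_t\big(L_\infty(\bbR^d_\theta)\big)=L_\infty(\bbR^d_\theta)$. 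In particular $\alpha_t(x)\in L_\infty(\bbR^d_\theta)$, so the difference quotient $T_t:=\frac1t(\alpha_t(x)-x)$ lies in $L_\infty(\bbR^d_\theta)$ for every $t\neq0$.

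Let $b$ denote the bounded operator to which $[\cD_k,x]=\cD_kx-x\cD_k$ extends; I read the hypothesis in the usual way, namely that $x$ maps $\dom(\cD_k)$ into $\dom(\cD_k)$ (a dense subspace containing $S(\bbR^d)$) and that $\cD_kx-x\cD_k$ is bounded on $\dom(\cD_k)$. The main computation I would carry out is the integral representation
$$(\alpha_t(x)-x)\xi=i\int_0^t e^{is\cD_k}\,b\,e^{-is\cD_k}\,\xi\,ds,\qquad \xi\in\dom(\cD_k),$$
obtained by differentiating $s\mapsto e^{is\cD_k}xe^{-is\cD_k}\xi$, using that $\xi\in\dom(\cD_k)$ forces $e^{-is\cD_k}\xi\in\dom(\cD_k)$ and then $xe^{-is\cD_k}\xi\in\dom(\cD_k)$, and that $\cD_k$ commutes with $e^{is\cD_k}$. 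From this identity two facts follow at once: taking norms gives $\|(\alpha_t(x)-x)\xi\|\le|t|\,\|b\|\,\|\xi\|$ on the dense set $\dom(\cD_k)$, hence $\|T_t\|\le\|b\|$ for all $t\neq0$; and since $s\mapsto e^{is\cD_k}b e^{-is\cD_k}\xi$ is continuous at $s=0$ with value $b\xi$, dividing by $t$ and letting $t\to0$ gives $T_t\xi\to ib\xi$ for each $\xi\in\dom(\cD_k)$.

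The uniform bound $\sup_{t\neq0}\|T_t\|\le\|b\|$ combined with strong convergence on the dense subspace $\dom(\cD_k)$ upgrades to $T_t\to ib$ in the strong, hence weak, operator topology as $t\to0$. Since each $T_t$ lies in the von Neumann algebra $L_\infty(\bbR^d_\theta)$ and this algebra is closed in the weak operator topology, we conclude $ib\in L_\infty(\bbR^d_\theta)$, i.e.\ $[\cD_k,x]=b\in L_\infty(\bbR^d_\theta)$, as required. I expect the only genuinely delicate point to be the domain bookkeeping behind the displayed integral representation — in particular pinning down precisely what "$[\cD_k,x]$ extends to a bounded operator" should mean (namely that $x$ preserves $\dom(\cD_k)$ and the commutator is bounded there) — while the automorphism property supplied by Lemma \ref{nc_plane_com_exp_D_k_U(s)} and the weak-operator-closedness argument are routine.
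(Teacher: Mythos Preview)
Your argument is essentially the same as the paper's: both use Lemma \ref{nc_plane_com_exp_D_k_U(s)} to conclude that conjugation by $e^{it\cD_k}$ preserves $L_\infty(\bbR^d_\theta)$, then express $i[\cD_k,x]$ as the strong-operator limit of the difference quotients $t^{-1}(\alpha_t(x)-x)\in L_\infty(\bbR^d_\theta)$ and appeal to weak-operator closedness. The only real difference is packaging: the paper invokes \cite[Proposition 3.2.55]{Brat_Rob} for the strong convergence of the difference quotients, whereas you prove it directly via the integral representation $(\alpha_t(x)-x)\xi=i\int_0^t e^{is\cD_k}b\,e^{-is\cD_k}\xi\,ds$; and the paper shows $\alpha_t(x)\in L_\infty(\bbR^d_\theta)$ by approximating $x$ weakly from $\mathrm{span}\{U(\bs)\}$, while you phrase this as ``$\alpha_t$ carries generators into the algebra''. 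Your domain caveat is exactly the content of the Bratteli--Robinson reference the paper cites, so nothing is missing.
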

\begin{proof}

Since $[\cD_k,x]\in \cB(L_2(\bbR^d))$, \cite[Proposition 3.2.55]{Brat_Rob}(see also \cite[Proposition 2.2]{AdePS_JFA_2005}) implies that 
$$i[\cD_k,x]=\lim_{t\to 0}\frac{ e^{it\cD_k}xe^{-it\cD_k}-x}{t},$$
with respect to the strong operator topology. 
Therefore, it is sufficient to show that $e^{it\cD_k}xe^{-it\cD_k}\in  L_\infty(\bbR^d_\theta)$ for any $t>0$. 

Let $\{x_i\}\subset \mathrm{span}\{U(\bs),\bs\in\bbR^d\}$ be such that $x_i\to x$ in the weak operator topology. By Lemma \eqref{nc_plane_com_exp_D_k_U(s)} we have that $e^{it\cD_k}x_ie^{-it\cD_k}\in  L_\infty(\bbR^d_\theta)$, and therefore $$e^{it\cD_k}xe^{-it\cD_k}=\lim_{i}e^{it\cD_k}x_ie^{-it\cD_k}\in L_\infty(\bbR_\theta^d),$$
with respect to the weak operator topology.
\end{proof}

The preceding proposition allows us to introduce mixed partial derivative $\partial^\alpha x$ of $x\in L_\infty(\bbR^d_\theta)$.

\begin{definition}Let $\alpha$ be a multiindex and let $x\in L_\infty(\bbR^d_\theta)$. If every repeated commutator $[\cD_{\alpha_j},[\cD_{\alpha_j+1},\dots,[\cD_{\alpha_n},x]]]$, $j=1,\dots n$ extends to a bounded operator on $L_2(\bbR^d)$, then the mixed partial derivative  $\partial^\alpha x$ of $x$ is defined as 
\begin{equation}
 \partial^{\alpha} x=[\cD_{\alpha_1},[\cD_{\alpha_2},\dots,[\cD_{\alpha_n},x]]].
\end{equation}
As usual, $\partial^0 x:=x$. 
By Proposition \ref{nc_plane_com_D_k_x}, $\partial^{\alpha} x\in L_\infty(\bbR_\theta^d)$. 
\end{definition}

Therefore, we can introduce the Sobolev space $W^{m,p}(\bbR_\theta^d)$ associated with noncommutative Euclidean  space. Note that our definition of Sobolev spaces is similar to noncommutative Sobolev spaces studied by Kissin and Shulman \cite{Kissin_Shulman}, which are called differential Schatten algebras. In our case, we take commutators with a family of self-adjoint operators $(\cD_{1},\dots,\cD_d)$, while for differential Schatten algebras \cite{Kissin_Shulman} considers commutators with a single symmetric (not necessarily self-adjoint) operator.

\begin{definition}
For a positive integer $m$ and $p \geq 1$, the space $W^{m,p}(\bbR^d_\theta)$
is the space of $x \in L^p(\bbR^d_\theta)$ such that every partial derivative
of $x$ up to order $m$ is also in $L^p(\bbR^d_\theta)$. This space
is equipped with the norm,
\begin{equation*}
    \|x\|_{W^{m,p}} = \sum_{|\alpha|\leq m} \|\partial^\alpha x\|_{p}
\end{equation*}
where the sum is taken over the set of multi-indices $\alpha$.
\end{definition}

The following proposition summarises some properties of Sobolev spaces. 
\begin{proposition}Let $m\in\bbN$. We have 
\begin{enumerate}
\item For any $m\in\bbN$ and $p\geq 1$ the Sobolev space $W^{m,p}(\bbR^d_\theta)$ is an algebra.
\item Let $p,q\geq 1$ with $\frac1p+\frac1q=1$ and $y\in W^{m,p}(\bbR_\theta^d), z\in W^{m,q}(\bbR_d^\theta)$. Then $yz\in W^{m,1}(\bbR_\theta^d).$
\item For $p<q$ we have $W^{m,p}(\bbR^d_\theta)\subset W^{m,q}(\bbR^d_\theta)$.

\item An operator $x\in L_\infty(\bbR_\theta^d)$ is in the Sobolev class $W^{m,2}(\bbR_\theta^d)$ if and only if
$$x=\frac{1}{(2\pi)^{d/4}}\int_{\bbR^d}f(\bs)U(\bs)d\bs $$
for some (unique) $f\in L_2(\bbR^d)$ such that $\bs\mapsto\prod_{n_k}s_k^{n_k}f(\bs)$, $\sum_k n_k\leq m,$ is square integrable. 
\item For every positive integer $m$ and $p \geq 1$ the Sobolev space $W^{m,p}(\bbR_\theta^d)$ is norm dense in $C(\bbR_\theta^d)$ (and hence, weakly dense in $L_\infty(\bbR^d_\theta)$).
\end{enumerate}

\end{proposition}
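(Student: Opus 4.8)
The plan is to reduce all five assertions to three ingredients already in place: the Leibniz rule for the commutator–derivations $\delta_k:=[\cD_k,\cdot]$ together with Proposition~\ref{nc_plane_com_D_k_x}; the Schatten ordering $\|w\|_\infty\le\|w\|_p$ and the H\"older inequality \eqref{Holder} (both legitimate, since $L_\infty(\bbR^d_\theta)\cong\cB(L_2(\bbR^{d/2}))$ with the standard trace, so the $L_p(\bbR^d_\theta,\tau_\theta)$ are the usual Schatten ideals); and the isometric symbol correspondence $x\leftrightarrow f$ of Lemma~\ref{nc_plane_L_2_description} combined with Lemma~\ref{nc_plane_com_exp_D_k_U(s)}. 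First I record the Leibniz rule: if $[\cD_k,y]$ and $[\cD_k,z]$ extend to bounded operators then (using self-adjointness of $\cD_k$) $y$ and $z$ leave $\dom(\cD_k)$ invariant, so on $\dom(\cD_k)$ the algebraic identity $[\cD_k,yz]=[\cD_k,y]z+y[\cD_k,z]$ holds and $[\cD_k,yz]$ extends to a bounded operator; iterating along a multi-index $\alpha$ gives a finite expansion $\partial^\alpha(yz)=\sum\partial^{\beta}y\cdot\partial^{\gamma}z$ over complementary subsequences $\beta,\gamma$ of $\alpha$, all summands lying in $L_\infty(\bbR^d_\theta)$ by Proposition~\ref{nc_plane_com_D_k_x}. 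Part~(1) follows at once: for $y,z\in W^{m,p}$ and $|\alpha|\le m$ one has $\partial^\beta y\in\cL_p$ and $\partial^\gamma z\in\cL_p\subseteq\cB(\cH)$ with $\|\partial^\gamma z\|_\infty\le\|\partial^\gamma z\|_p$, so $\|\partial^\beta y\cdot\partial^\gamma z\|_p\le\|\partial^\beta y\|_p\|\partial^\gamma z\|_p$ and summation yields $yz\in W^{m,p}$ with a submultiplicative bound. Part~(2) is the same expansion with H\"older for the conjugate pair: $\|\partial^\beta y\cdot\partial^\gamma z\|_1\le\|\partial^\beta y\|_p\|\partial^\gamma z\|_q$. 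Part~(3) is simply the already-noted inclusion $\cL_p(\bbR^d_\theta)\subseteq\cL_q(\bbR^d_\theta)$ applied to $x$ and to each $\partial^\alpha x$.

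For part~(4) the starting point is that $t\mapsto e^{it\cD_k}xe^{-it\cD_k}$ is a strongly continuous one-parameter group of unitaries of the Hilbert space $\cL_2(\bbR^d_\theta)$ which, by Lemma~\ref{nc_plane_com_exp_D_k_U(s)} and continuity of conjugation inside the Bochner integral, is carried by the symbol correspondence to multiplication by $e^{its_k}$ on $L_2(\bbR^d)$. A direct computation (using $\tfrac{d}{dt}e^{-it\cD_k}\xi|_{t=0}=-i\cD_k\xi$ for $\xi\in\dom(\cD_k)$) shows that for all $\xi,\eta\in\dom(\cD_k)$ the sesquilinear forms of $t^{-1}(e^{it\cD_k}xe^{-it\cD_k}-x)$ converge to $i\big(\langle x\xi,\cD_k\eta\rangle-\langle x\cD_k\xi,\eta\rangle\big)$, so this difference quotient always converges in the weak operator topology to $i[\cD_k,x]$ whenever the latter is defined. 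For the implication ``$\Leftarrow$'': if $\big(\prod_k s_k^{n_k}\big)f\in L_2$ for all $\sum n_k\le m$, then for a single index the symbols $t^{-1}(e^{its_k}-1)f$ converge in $L_2$ to $is_kf$ by dominated convergence with dominating function $2|s_kf|$; hence $t^{-1}(e^{it\cD_k}xe^{-it\cD_k}-x)$ converges in $\cL_2$, and comparing with the weak limit above identifies this $\cL_2$-limit as $i[\cD_k,x]$, so $[\cD_k,x]\in\cL_2$ has symbol $s_kf$. Iterating, the hypothesis on $f$ is precisely what is consumed at each stage, and along the way one reads off $\partial^\alpha x$ as the element with symbol $\big(\prod_j s_{\alpha_j}\big)f$.

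The converse implication ``$\Rightarrow$'' in part~(4) is the delicate point. If $x\in W^{1,2}$ then the weak-operator convergence $t^{-1}(e^{it\cD_k}xe^{-it\cD_k}-x)\to i[\cD_k,x]$ holds, and the uniform boundedness principle makes this family norm-bounded; pairing with those $w\in\cL_1(\bbR^d_\theta)\cap\cL_2(\bbR^d_\theta)$ whose symbol $h$ is a Schwartz function (for such $w$ the operator $r(w)$ is trace class, so $\tau_\theta(w^*\cdot)$ is weak-operator continuous on norm-bounded sets) and evaluating the symbol integral $\int t^{-1}(e^{its_k}-1)f\,\overline{h}$ by dominated convergence — legitimate because $s_kh\in L_2$ forces $s_kf\overline{h}\in L_1$ — shows that the symbol of $[\cD_k,x]$ equals $s_kf$; since $[\cD_k,x]\in\cL_2$ this forces $s_kf\in L_2$, and one iterates over all $|\alpha|\le m$. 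Thus $W^{m,2}(\bbR^d_\theta)$ is exactly the set of $x$ whose symbol $f$ satisfies $\big(\prod_k s_k^{n_k}\big)f\in L_2$ for every $\sum n_k\le m$.

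Finally, part~(5). Since $W^{m,p}\subseteq\cL_p(\bbR^d_\theta)=\cL_p(L_2(\bbR^{d/2}))\subseteq\cK(L_2(\bbR^{d/2}))=C(\bbR^d_\theta)$ the inclusion is immediate. For density, the Schwartz elements $z_f:=(2\pi)^{-d/4}\int_{\bbR^d}f(\bs)U(\bs)\,d\bs$ with $f\in\cS(\bbR^d)$ lie in $W^{m,p}(\bbR^d_\theta)$ for every $m$ and every $p\ge1$: indeed $r(z_f)$ is the Weyl quantisation of a Schwartz symbol, hence trace class, hence in every $\cL_p$, while $[\cD_k,z_f]=z_{s_kf}$ by \eqref{nc_plane_com_D_k_U(s)} with $s_kf\in\cS(\bbR^d)$, so iteration keeps every $\partial^\alpha z_f$ trace class. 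On the other hand $\{r_0(z_f):f\in\cS(\bbR^d)\}$ contains every rank-one operator $|g\rangle\langle h|$ with $g,h\in\cS(\bbR^{d/2})$ (whose Wigner transform is Schwartz), and these span a norm-dense subspace of $\cK(L_2(\bbR^{d/2}))$; hence $W^{m,p}(\bbR^d_\theta)$ is norm-dense in $C(\bbR^d_\theta)$, and since $C(\bbR^d_\theta)=\cK(L_2(\bbR^{d/2}))$ is weakly dense in $\cB(L_2(\bbR^{d/2}))=L_\infty(\bbR^d_\theta)$ so is $W^{m,p}(\bbR^d_\theta)$. The main obstacle throughout is the converse half of part~(4): reconciling the paper's definition of $\partial^\alpha$ via boundedness of iterated commutators with the multiplication-operator picture on the symbol side, which as indicated needs the weak-compactness/uniform-boundedness argument rather than a direct algebraic manipulation.
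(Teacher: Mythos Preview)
Your argument is correct. Parts (i)--(iii) coincide with the paper's approach (Leibniz rule plus H\"older, and the Schatten inclusion $\cL_p\subset\cL_q$ for $p<q$). For part (iv) the paper records only the one-line remark that it follows from Lemma~\ref{nc_plane_L_2_description} and~\eqref{nc_plane_com_D_k_U(s)}; you use exactly these ingredients but supply the details, and you are right to single out the converse direction as the non-formal step. Your pairing argument against trace-class test elements is valid; a marginally cleaner packaging is to note that the symbol map is a unitary $L_2(\bbR^d_\theta)\to L_2(\bbR^d)$ intertwining conjugation by $e^{it\cD_k}$ with multiplication by $e^{its_k}$ (Lemma~\ref{nc_plane_com_exp_D_k_U(s)}), so by Stone's theorem the $\cL_2$-generator has domain exactly $\{x:\, s_kf\in L_2\}$; then one only needs that for $x\in\cL_2$ with $[\cD_k,x]$ bounded the $\cL_2$-derivative and the commutator agree, which is what your WOT computation delivers.

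For part (v) you take a genuinely different route. The paper deduces density internally from (ii)--(iv): Schwartz-symbol elements lie in $W^{m,2}$ by (iv), their pairwise products lie in $W^{m,1}\subset W^{m,p}$ by (ii) and (iii), and such products span a norm-dense subset of the compacts. You instead appeal to external facts about Weyl quantisation (Schwartz symbols yield trace-class operators, hence $z_f\in W^{m,p}$ directly) and the Wigner transform (rank-one $|g\rangle\langle h|$ with $g,h\in\cS(\bbR^{d/2})$ has Schwartz Wigner function, so arises as some $r_0(z_f)$). Both arguments work; yours is more explicit about which elements witness density but imports machinery from outside the paper, while the paper's stays self-contained once (ii)--(iv) are established.
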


\begin{proof}
The first and the second assertions immediately follow from the Leibniz rule and H\"older inequality, while the third follows from the fact that $L_p(\bbR_\theta^d)\subset L_q(\bbR_\theta^d)$ for $p<q$. 
Part (iv) follows from Lemma \ref{nc_plane_L_2_description} and \eqref{nc_plane_com_D_k_U(s)}. Part (v) is an easy consequence of part (ii)-(iv). 


%

\end{proof}

Next we introduce the Laplace and gradient operators associated with $L_\infty(\bbR^d_\theta)$.
\begin{definition}\label{nc_plane_gradient}
  The Laplace operator $\Delta_\theta$ associated with  $L_\infty(\bbR_\theta^d)$ is defined on the domain $\dom(-\Delta)=L_2(\bbR,|\bt|^4d\bt)$ by 
$$(-\Delta_\theta \xi)(\bt)=(\sum_{j=1}^d t_j^2)\xi(t).$$

The gradient $\nabla_\theta$ associated with  $L_\infty(\bbR_\theta^d)$ is the operator acting in $L_2(\bbR^d)^d$ 
by 
$$\nabla_\theta=(-i\cD_1,\dots, -i\cD_d)=(-iM_{t_1},\dots, -iM_{t_d})$$ with the domain 
$\dom(\nabla_\theta)=L_2(\bbR^d, t_1^2d\bt)\times \dots \times L_2(\bbR^d, t_d^2d\bt).$
\end{definition}

\begin{definition}
  Let $N=2^{\lfloor d/2\rfloor}$ and let $\{\gamma_j\}_{j=1}^d$ be $d$-dimensional $\gamma$-matrices satisfying $\gamma_i\gamma_k+\gamma_k\gamma_j=2\delta_{j,k}$.  The Dirac operator  $\cD$ associated with  $L_\infty(\bbR^d_\theta)$ is the operator on $\bbC^N\otimes L^2(\bbR^d)$ defined by
    \begin{equation*}
        \cD:= \sum_{j=1}^d \gamma_j\otimes \cD_j=\sum_{j=1}^d \gamma_j\otimes M_{t_j}.
    \end{equation*}
    
The associated spectral triple is $(1\otimes W^{m,1}(\bbR_\theta^d), \bbC^N\otimes L_2(\bbR^d), \cD)$.
\end{definition}
\begin{remark} It is clear the operators $\Delta_\theta$, $\nabla_\theta$ do not depend on the matrix $\theta$. However, we prefer to use notation with $\theta$ to distinguish  $\Delta_\theta$, $\nabla_\theta$ from the classical Laplacian $\Delta$ and gradient $\nabla$.
\end{remark} 

\section{Cwikel estimates for the noncommutative Euclidean  space}\label{sec_nc_plane_Cwikel}

In this section we focus our attention on operators of the form $xg(-i\nabla_\theta)$. 
Everywhere below we assume that $\theta$ is non-degenerate, real antisymmetric matrix.
We start with Cwikel estimates in Hilbert-Schmidt class (see \cite[Lemma 4.3]{Gayral_Iochum_Varilly}).

\begin{lemma}\label{nc_plane_l2_cwikel} If $x\in L_2(\mathbb{R}^d_{\theta},\tau_{\theta})$ and if $g\in L_2(\mathbb{R}^d),$ then $xg(-i\nabla_\theta)\in \cL_2(L_2(\bbR^d))$ and 
$$\|x\, g(-i\nabla_\theta)\|_2=\frac{1}{(2\pi)^{d/4}}\|x\|_2\|g\|_2.$$
\end{lemma}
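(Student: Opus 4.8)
The statement is an explicit computation with integral kernels; the only work is bookkeeping. First I would invoke Lemma \ref{nc_plane_L_2_description} to write
$$x=\frac{1}{(2\pi)^{d/4}}\int_{\bbR^d}f(\bs)U(\bs)\,d\bs,\qquad f\in L_2(\bbR^d),\quad \|f\|_2=\|x\|_2 .$$
Using the realisation \eqref{nc_plane_realisation} of $U(\bs)$ and the substitution $\bv=\bu-\bs$, together with the identity $\langle\bu,\theta\bu\rangle=0$ (valid since $\theta$ is antisymmetric), one checks that $x$ acts on $L_2(\bbR^d)$ as the integral operator with kernel
$$K_x(\bu,\bv)=\frac{1}{(2\pi)^{d/4}}\,f(\bu-\bv)\,e^{\frac{i}{2}\langle\bv,\theta\bu\rangle},\qquad \bu,\bv\in\bbR^d .$$
Interchanging the Bochner integral defining $x$ with evaluation at a point is the one mildly delicate step; it is justified by testing against $\xi\in S(\bbR^d)$ and applying Fubini, or by first approximating $f$ in $L_2$ by compactly supported functions.

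Next I would identify $g(-i\nabla_\theta)$. By Definition \ref{nc_plane_gradient}, $\nabla_\theta=(-i\cD_1,\dots,-i\cD_d)$, so $-i\nabla_\theta=(-\cD_1,\dots,-\cD_d)$ is a tuple of commuting self-adjoint multiplication operators; hence by the (joint) functional calculus $g(-i\nabla_\theta)=M_{\tilde g}$, multiplication by $\tilde g(\bt):=g(-\bt)$, which satisfies $\|\tilde g\|_2=\|g\|_2$. Consequently $xg(-i\nabla_\theta)$ is the integral operator with kernel $K(\bu,\bv)=K_x(\bu,\bv)\,\tilde g(\bv)$.

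Finally I would compute the Hilbert--Schmidt norm directly. Since $|e^{\frac{i}{2}\langle\bv,\theta\bu\rangle}|=1$, Tonelli's theorem and translation invariance of Lebesgue measure give
$$\|xg(-i\nabla_\theta)\|_2^2=\int_{\bbR^d}\!\int_{\bbR^d}|K(\bu,\bv)|^2\,d\bu\,d\bv=\frac{1}{(2\pi)^{d/2}}\int_{\bbR^d}|\tilde g(\bv)|^2\Big(\int_{\bbR^d}|f(\bu-\bv)|^2\,d\bu\Big)d\bv=\frac{1}{(2\pi)^{d/2}}\|f\|_2^2\|g\|_2^2 .$$
As $\|f\|_2=\|x\|_2$, taking square roots yields $\|xg(-i\nabla_\theta)\|_2=(2\pi)^{-d/4}\|x\|_2\|g\|_2$, and in particular $xg(-i\nabla_\theta)\in\cL_2(L_2(\bbR^d))$. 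If one wishes to avoid dealing with the (a priori unbounded) operator $g(-i\nabla_\theta)$ from the outset, the same computation applied to the truncations $g\chi_{\{|g|\le n\}}$ shows that $x\,(g\chi_{\{|g|\le n\}})(-i\nabla_\theta)$ is Cauchy in $\cL_2$, whose limit is the closure of $xg(-i\nabla_\theta)$; this is the only technical point, and it is routine.

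\textbf{Main obstacle.} There is no serious obstacle: the argument is a kernel calculation. The only points needing care are the justification of the kernel formula for $x$ (interchanging the Bochner integral with pointwise evaluation) and the handling of the unbounded multiplier $g(-i\nabla_\theta)$, both standard.
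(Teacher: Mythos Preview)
Your proposal is correct and follows essentially the same route as the paper's proof: write $x$ via Lemma~\ref{nc_plane_L_2_description}, identify $g(-i\nabla_\theta)$ as a multiplication operator, read off the integral kernel of the product, and compute its $L_2$-norm. If anything, you are slightly more careful than the paper, which writes ``$g(-i\nabla_\theta)$ is multiplication operator $M_g$'' whereas, as you note, with $\nabla_\theta=(-i\cD_1,\dots,-i\cD_d)$ one actually gets multiplication by $\tilde g(\bt)=g(-\bt)$; since $\|\tilde g\|_2=\|g\|_2$ this does not affect the result.
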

\begin{proof} By Lemma \ref{nc_plane_L_2_description} there exists $f\in L_2(\mathbb{R}^d)$ such that 
$$x=\frac{1}{(2\pi)^{d/4}}\int_{\mathbb{R}^d}f(\bs)U(\bs)d\bs.$$
It follows from the definition of $\nabla_\theta$, that the operator $g(-i\nabla_\theta)$ is multiplication operator $M_g$ on $L_2(\bbR^d)$. 
Therefore,
$x g(-i\nabla_\theta)$ is an integral operator on $L_2(\bbR^d)$ with the kernel 
$$(\bt,\bs)\mapsto f(\bt-\bs)g(\bs)e^{\frac{i}{2}\langle \bs,\theta \bt\rangle}.$$
Hence,
$$\|x g(-i\nabla_\theta)\|_2=\frac{1}{(2\pi)^{d/4}}\|f\|_2\|g\|_2=\frac{1}{(2\pi)^{d/4}}\|x\|_2\|g\|_2<\infty,$$
where the last equality follows from Lemma \ref{nc_plane_L_2_description}.
\end{proof}

As an immediate corollary of Lemma \ref{nc_plane_l2_cwikel} and Theorem \ref{abs_Cwikel_main_thm} we have the following result, which yields, in particular, Cwikel estimates for noncommutative Euclidean  space in Schatten and weak Schatten ideals for $p>2$.

\begin{theorem}\label{nc_plane_Cwikel_p>2}
 Let $(E(0,\infty),\|\cdot\|_E)$ be an interpolation space for $(L_2,L_\infty)$. If $x\otimes g \in E(L_\infty(\bbR^d_\theta)\otimes L_\infty(\bbR^d))$, then $xg(-i\nabla_\theta)\in E(L_2(\bbR^d))$ and 
 $$\|xg(-i\nabla_\theta)\|_{E(L_2(\bbR^d))}\leq C_E \|x\otimes g\|_{E(L_\infty(\bbR^d_\theta)\otimes L_\infty(\bbR^d))}.$$
\end{theorem}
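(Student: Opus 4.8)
The plan is to specialise the abstract Cwikel estimate of Corollary \ref{abs_Cwikel_main_thm} to the present situation. I would take $\cA_1=L_\infty(\bbR^d_\theta)$ equipped with the trace $\tau_\theta$ of \eqref{nc_plane_trace}, acting on $\cH=L_2(\bbR^d)$ via its defining representation $\pi_1=\mathrm{id}$, and $\cA_2=L_\infty(\bbR^d)$ equipped with Lebesgue measure as its trace, acting on the same $\cH$ via $\pi_2(g)=g(-i\nabla_\theta)$. Since $-i\nabla_\theta$ is a commuting $d$-tuple of self-adjoint multiplication operators, $g(-i\nabla_\theta)$ is itself a multiplication operator on $L_2(\bbR^d)$, so $\pi_2$ is a normal $*$-representation of $L_\infty(\bbR^d)$ on $\cH$; its extension to $S(L_\infty(\bbR^d),m)$ via the functional calculus of $-i\nabla_\theta$ agrees with the extension described at the start of Section \ref{abs_Cwikel_hyp}'s section, because for multiplication operators the two constructions are transparently the same. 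Under these identifications one has $\pi_1(x)\pi_2(g)=x\,g(-i\nabla_\theta)$ and $\cE(\cH)=E(L_2(\bbR^d))$.

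Next I would check that Hypothesis \ref{abs_Cwikel_hyp} holds for this pair $(\cA_1,\pi_1)$, $(\cA_2,\pi_2)$. This is precisely the content of Lemma \ref{nc_plane_l2_cwikel}: for $x\in\cL_2(L_\infty(\bbR^d_\theta),\tau_\theta)$ and $g\in\cL_2(L_\infty(\bbR^d),m)=L_2(\bbR^d)$ the operator $x\,g(-i\nabla_\theta)$ belongs to $\cL_2(L_2(\bbR^d))$ and $\|x\,g(-i\nabla_\theta)\|_2=(2\pi)^{-d/4}\|x\|_2\|g\|_2$. Thus the Hilbert--Schmidt Cwikel estimate holds with constant $(2\pi)^{-d/4}$, which is an admissible constant for the hypothesis (and is bounded uniformly in $d$ and $\theta$, so the final constant $C_E$ depends on $E$ only).

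With the hypotheses verified, Corollary \ref{abs_Cwikel_main_thm} applied to this data gives exactly the assertion: if $x\otimes g\in E(L_\infty(\bbR^d_\theta)\otimes L_\infty(\bbR^d))$ in the sense fixed in Section \ref{prel} (that is, $x\in S(L_\infty(\bbR^d_\theta),\tau_\theta)$, $g\in S(L_\infty(\bbR^d),m)$, the operator $x\otimes g$ is $\tau_\theta\otimes m$-measurable and $\mu(x\otimes g)\in E$), then $\pi_1(x)\pi_2(g)=x\,g(-i\nabla_\theta)\in E(L_2(\bbR^d))$ and $\|x\,g(-i\nabla_\theta)\|_{E(L_2(\bbR^d))}\le C_E\|x\otimes g\|_{E(L_\infty(\bbR^d_\theta)\otimes L_\infty(\bbR^d))}$. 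There is essentially no hard step here; the only points deserving a word of justification are that $g\mapsto g(-i\nabla_\theta)$ is a trace-compatible representation of $(L_\infty(\bbR^d),m)$ on $L_2(\bbR^d)$ (immediate, as it is multiplication by a reflected copy of $g$) and that the measurability convention for $x\otimes g$ used in the statement coincides with the one in Corollary \ref{abs_Cwikel_main_thm} (it does, by construction). Hence I expect the proof to be a short verification rather than a genuine computation.
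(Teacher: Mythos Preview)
Your proposal is correct and follows exactly the paper's own proof: set $\cA_1=L_\infty(\bbR^d_\theta)$ with $\pi_1=\mathrm{id}$, $\cA_2=L_\infty(\bbR^d)$ with $\pi_2(g)=g(-i\nabla_\theta)$, invoke Lemma~\ref{nc_plane_l2_cwikel} to verify Hypothesis~\ref{abs_Cwikel_hyp}, and then apply Corollary~\ref{abs_Cwikel_main_thm}. Your additional remarks on the normality of $\pi_2$ and the measurability conventions are sound but more detailed than what the paper records.
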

\begin{proof}
To employ Theorem \ref{abs_Cwikel_main_thm} we consider the algebras $\cA_1=L_\infty(\bbR_\theta^d)$ and $\cA_2=L_\infty(\bbR^d)$. Let us represent the algebra $L_\infty(\bbR_\theta^d)$ on $L_2(\bbR^d)$ by the identity representation $\pi_1(x)=x, x\in L_\infty(\bbR_\theta^d)$  and the algebra $L_\infty(\bbR^d)$ by the representation $\pi_2(g)=g(-i\nabla_\theta)$. By Lemma \ref{nc_plane_l2_cwikel} the Hypothesis \ref{abs_Cwikel_hyp} is satisfied in this case. Therefore, the result follows from Theorem \ref{abs_Cwikel_main_thm}.
\end{proof}

As a corollary of Theorem \ref{nc_plane_Cwikel_p>2} we can obtain an analogue of Cwikel-Lieb-Rosenblum inequality for noncommutative Euclidean space for $d\geq 4$. We denote by $N(-\Delta_\theta+x)$ the number of negative eigenvalues of the operator $-\Delta_\theta+x$, where $x=x^*\in L_{d/2}(\bbR_\theta^d)$. Repeating the arguments from Cwikel's proof of Cwikel-Lieb-Rosenblum inequality \cite{Cwikel}, we obtain the following
\begin{corollary}\label{nc_plane_CLR}
Let $d\geq 4$, $x=x^*\in L_{d/2}(\bbR_\theta^d)$ and let $x_-$ be the negative part of $x$. Then 
$$N(-\Delta_\theta+x)\leq C_d\cdot \tau_\theta(|x_-|^{d/2}),$$
with the constant $C_d$ being equal to the constant in \cite{Cwikel}.
\end{corollary}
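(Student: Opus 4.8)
The plan is to adapt Cwikel's original variational argument \cite{Cwikel} to the noncommutative setting by replacing every appeal to the classical Cwikel estimate with the abstract estimate of Theorem \ref{nc_plane_Cwikel_p>2} (equivalently, Theorem \ref{abs_Cwikel_main_thm}), specialised to the pair $\cA_1 = L_\infty(\bbR^d_\theta)$, $\cA_2 = L_\infty(\bbR^d)$ with $\pi_1 = \mathrm{id}$ and $\pi_2 = g \mapsto g(-i\nabla_\theta)$. First I would record the Birman--Schwinger principle in this setting: for $x = x^* \in L_{d/2}(\bbR^d_\theta)$ and $E > 0$, the number of eigenvalues of $-\Delta_\theta + x$ below $-E$ equals the number of eigenvalues of the Birman--Schwinger operator $K_E = |x_-|^{1/2}(-\Delta_\theta + E)^{-1}|x_-|^{1/2}$ exceeding $1$; letting $E \to 0^+$ and using that $-\Delta_\theta$ has no zero modes (its spectrum, as the multiplication operator by $|\bt|^2$, is purely absolutely continuous), one gets $N(-\Delta_\theta + x) \le \#\{\lambda \in \sigma(K_0) : \lambda > 1\}$ with $K_0 = |x_-|^{1/2}(-\Delta_\theta)^{-1}|x_-|^{1/2}$.

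Next I would estimate the singular values of $K_0$ in the weak Schatten ideal $\cL_{d/2,\infty}(L_2(\bbR^d))$. By \eqref{mu^2_mu} we have $\mu(K_0) = \mu^2\big(|x_-|^{1/2}(-\Delta_\theta)^{-1/2}\big)$, and $(-\Delta_\theta)^{-1/2} = g(-i\nabla_\theta)$ for $g(\bt) = |\bt|^{-1}$, which lies in $L_{d,\infty}(\bbR^d)$ since $d \ge 4 > 2$. Hence $|x_-|^{1/2} \in L_{d,\infty}(\bbR^d_\theta, \tau_\theta)$ because $x_- \in L_{d/2}(\bbR^d_\theta)$ implies $|x_-|^{1/2} \in L_d(\bbR^d_\theta) \subset L_{d,\infty}(\bbR^d_\theta)$, and by Lemma \ref{tensor_L_1_weak_L_1} (with $p = d$) the tensor product $|x_-|^{1/2} \otimes g$ lies in $\cL_{d,\infty}(L_\infty(\bbR^d_\theta) \bar\otimes L_\infty(\bbR^d))$ with quasi-norm controlled by $\||x_-|^{1/2}\|_d \|g\|_{d,\infty} = \tau_\theta(|x_-|^{d/2})^{1/d} \cdot c_d$. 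Since $L_{d,\infty}(0,\infty)$ is an interpolation space for $(L_2, L_\infty)$ when $d > 2$, Theorem \ref{nc_plane_Cwikel_p>2} gives $|x_-|^{1/2}(-\Delta_\theta)^{-1/2} \in \cL_{d,\infty}(L_2(\bbR^d))$, so $K_0 \in \cL_{d/2,\infty}(L_2(\bbR^d))$ with $\|K_0\|_{d/2,\infty} \le C_d \, \tau_\theta(|x_-|^{d/2})^{2/d}$.

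Finally I would convert the weak-Schatten bound into a counting bound: from $\mu(n; K_0) \le \|K_0\|_{d/2,\infty} (n+1)^{-2/d}$ one obtains $\#\{n : \mu(n;K_0) > 1\} \le \|K_0\|_{d/2,\infty}^{d/2} = C_d \, \tau_\theta(|x_-|^{d/2})$, and combining with the Birman--Schwinger bound yields $N(-\Delta_\theta + x) \le C_d \, \tau_\theta(|x_-|^{d/2})$. To get the precise claim that the constant is exactly the one appearing in \cite{Cwikel}, I would follow Cwikel's optimisation of the argument over the threshold and the weak-type norm rather than the crude substitution just described; this is the step where care is needed, since the naive chain of inequalities loses the sharp constant. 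The main obstacle, then, is not any single hard estimate but rather the bookkeeping: one must verify that every ingredient of Cwikel's proof — the Birman--Schwinger reduction, the handling of the $E \to 0$ limit in the possibly non-compact/non-unital algebra $L_\infty(\bbR^d_\theta)$, and the extraction of the eigenvalue count — goes through verbatim with $\mu_\tau$ in place of ordinary singular values, which it does because $L_\infty(\bbR^d_\theta) \cong \cB(L_2(\bbR^{d/2}))$ makes $\tau_\theta$ an honest (semifinite) trace and all the operator-ideal machinery used by Cwikel is available in $\cL_{d/2,\infty}(L_2(\bbR^d))$.
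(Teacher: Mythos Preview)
Your proposal is correct and follows exactly the route the paper indicates: the paper does not give a detailed proof but simply says to repeat Cwikel's original argument \cite{Cwikel} with the noncommutative Cwikel estimate of Theorem \ref{nc_plane_Cwikel_p>2} in place of the classical one, which is precisely the Birman--Schwinger reduction and weak-$\cL_{d/2}$ counting you outline. Your caveat about tracking the sharp constant through Cwikel's optimisation (rather than the crude chain of inequalities) is also apt.
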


In the rest of the section we establish Cwikel estimates in Schatten and weak Schatten ideals for for $1\leq p\leq 2$.
We begin with an auxiliary lemma.

\begin{lemma}\label{nc_plane_Cwikel_L1} Let $1\leq p\leq 2$, $k\geq 0$ and let $x\in W^{k,p}(\mathbb{R}^d_{\theta}).$ Then
$$(1-\Delta_\theta)^{\frac{k-1}{2}-\frac{d}{4}} x(1-\Delta_\theta)^{-\frac{k+1}2-\frac{d}{4}}\in\cL_p(L_2(\bbR^d))$$ and 
$$\|(1-\Delta_\theta)^{\frac{k-1}{2}-\frac{d}{4}} x(1-\Delta_\theta)^{-\frac{k+1}2-\frac{d}{4}}\|_p\leq 2^k\|x\|_{W^{k,p}}.$$
\end{lemma}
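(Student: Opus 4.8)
The plan is to reduce the estimate to the case $k=0$ by an inductive argument that peels off one derivative at a time, and then to handle the base case $k=0$ directly using Lemma \ref{nc_plane_l2_cwikel} together with the abstract Cwikel estimate of Theorem \ref{nc_plane_Cwikel_p>2} (or, since $p \le 2$, more elementarily via the compactly supported estimates of Section \ref{p<2}). First I would treat the base case $k=0$: here $x \in L_p(\bbR_\theta^d)$ and we must show $(1-\Delta_\theta)^{-\frac12-\frac d4} x (1-\Delta_\theta)^{-\frac12-\frac d4} \in \cL_p$ with the norm controlled by $\|x\|_p$. Writing $g(\bt) = (1+|\bt|^2)^{-\frac12-\frac d4}$, the operator $g(-i\nabla_\theta) = M_g$, and I would apply the $p \le 2$ Cwikel estimate (Corollary \ref{clas_Cwikel_p<2}, or rather its noncommutative analogue obtained by combining Lemma \ref{nc_plane_l2_cwikel} with the majorisation machinery of Lemma \ref{lem_strong_major} and Proposition \ref{clas_Lp_weakLp} as in Theorem \ref{clas_Cwikel_E}) to the operator $x\, g(-i\nabla_\theta)$, using that $g \in L_2(\bbR^d)$ since $d \ge 1$ forces $2(\frac12+\frac d4) = 1 + \frac d2 > \frac d2$, hence $g \in \ell_{p,\infty}(L_2)$ and in fact $g\otimes$ (decay) is $p$-summable for $p\le 2$; then $\|x g(-i\nabla_\theta)\|_p \le \const \|x\|_p$, and taking $|x g(-i\nabla_\theta)|^2$-type estimates and \eqref{mu^2_mu} yields the two-sided version.

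For the inductive step, suppose the claim holds for $k-1$ and let $x \in W^{k,p}(\bbR_\theta^d)$. The key algebraic identity is a commutator manipulation: for each $j$, one has $\cD_j (1-\Delta_\theta)^{-s} = (1-\Delta_\theta)^{-s}\cD_j$ (since $\Delta_\theta$ and $\cD_j$ are built from the same multiplication operators $M_{t_j}$, they commute), and more importantly one can write $x (1-\Delta_\theta)^{-\frac12}$ in terms of $[\cD_j, x]$ and $x$ times bounded functions of $\nabla_\theta$. Concretely, I would use an identity of the shape
\begin{equation*}
(1-\Delta_\theta)^{\frac{k-1}{2}-\frac d4}\, x\, (1-\Delta_\theta)^{-\frac{k+1}{2}-\frac d4} = \sum_{j} (1-\Delta_\theta)^{\frac{k-1}{2}-\frac d4}\, \partial^{(j)}x\, (1-\Delta_\theta)^{-\frac{k}{2}-\frac d4}\cdot b_j(-i\nabla_\theta) + (1-\Delta_\theta)^{\frac{k-1}{2}-\frac d4}\, x\, (1-\Delta_\theta)^{-\frac k2 - \frac d4}\cdot b_0(-i\nabla_\theta),
\end{equation*}
where $b_j(\bt) = t_j(1+|\bt|^2)^{-1/2}$ and $b_0$ is a similar bounded symbol, so that each $b_j(-i\nabla_\theta)$ is a bounded operator on $L_2(\bbR^d)$ with norm at most $1$. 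Matching the exponents one sees that the first family of terms has $\partial^{(j)}x \in W^{k-1,p}$ sitting between powers $(1-\Delta_\theta)^{\frac{(k-1)-1}{2}-\frac d4}$ and $(1-\Delta_\theta)^{-\frac{(k-1)+1}{2}-\frac d4}$, so the induction hypothesis applies and gives $\le 2^{k-1}\|\partial^{(j)}x\|_{W^{k-1,p}}$; the last term is similarly controlled, with $x$ itself in $W^{k,p} \subseteq W^{k-1,p}$. Summing and absorbing the $b_j$ factors (each of operator norm $\le 1$), the triangle inequality yields the factor $2^k$ provided the counting is arranged so that the $2^{k-1}$ from the hypothesis at most doubles — this is exactly why the clean constant $2^k$ appears: one commutator move costs a factor $2$.

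The main obstacle, and where I would spend the most care, is justifying the operator identity above rigorously: the powers $(1-\Delta_\theta)^{s}$ for real $s$ are unbounded, so one must check that all products and commutators are densely defined, that the rearrangements ($\cD_j$ commuting past $(1-\Delta_\theta)^{-s}$, and $[\cD_j, x]$ being the bounded operator $\partial^{(j)}x \in L_\infty(\bbR_\theta^d)$ from Proposition \ref{nc_plane_com_D_k_x}) are valid, and that the resulting expression closes up to the asserted bounded operator in $\cL_p$ rather than merely on a core. A convenient way around the analytic subtleties is to first prove everything for a dense class of $x$ (e.g.\ $x$ a finite combination of the $U(\bs)$, for which $\partial^{(j)}x = s_j$-weighted combinations by \eqref{nc_plane_com_D_k_U(s)} and all operators are manifestly nice), establish the norm estimate there, and then pass to general $x \in W^{k,p}$ by density in the $W^{k,p}$-norm, using completeness of $\cL_p$ and lower semicontinuity of $\|\cdot\|_p$. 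I would also need the elementary fact that $g\in L_2(\bbR^d)$ whenever $g(\bt)=(1+|\bt|^2)^{-\frac12-\frac d4}$, which holds since $4(\tfrac12+\tfrac d4) = 2 + d > d$.
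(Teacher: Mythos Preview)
Your overall plan (induction on $k$, peeling off one derivative at a time, and treating $k=0$ separately) matches the paper's. But there are two real gaps.

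\textbf{Inductive step.} The identity you display does not do what you claim. On your right-hand side every term still carries the \emph{same} left exponent $(1-\Delta_\theta)^{\frac{k-1}{2}-\frac d4}$ as the left-hand side; only the right exponent has been raised by $\tfrac12$. But the induction hypothesis at level $k-1$ requires the left exponent $\frac{(k-1)-1}{2}-\frac d4=\frac{k-2}{2}-\frac d4$, so as written the hypothesis does not apply. The underlying difficulty is that you are trying to commute a \emph{half-integer} power $(1-\Delta_\theta)^{1/2}$ past $x$, and there is no clean formula for $[(1-\Delta_\theta)^{1/2},x]$ in terms of the $\partial^{(j)}x$ alone. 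The paper resolves this by passing to the Dirac operator $\cD=\sum_j\gamma_j\otimes\cD_j$ on $\bbC^N\otimes L_2(\bbR^d)$, for which $\cD^2=-1\otimes\Delta_\theta$. Then $(1-\Delta_\theta)^{1/2}$ is replaced by $|\cD-i|$, and the estimate becomes one about integer powers $(\cD-i)^m$; the single commutator identity
\[
(\cD-i)^{k-\frac d2}(1\otimes x)(\cD-i)^{-k-\frac d2-2}
=(\cD-i)^{k-1-\frac d2}(1\otimes x)(\cD-i)^{-k-1-\frac d2}
-(\cD-i)^{k-1-\frac d2}[\cD,1\otimes x](\cD-i)^{-k-2-\frac d2}
\]
then lowers \emph{both} exponents by one, so the first term is exactly the level-$k$ expression and the second, after writing $[\cD,1\otimes x]=\sum_l\gamma_l\otimes[\cD_l,x]$, is the level-$k$ expression with $[\cD_l,x]\in W^{k,p}$ in place of $x$. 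This is precisely what produces the factor $2^{k+1}$.

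\textbf{Base case.} Your sketch for $k=0$ is vague and close to circular: you invoke a noncommutative $p\le 2$ Cwikel estimate for $xg(-i\nabla_\theta)$, but the present lemma is itself the main ingredient in proving such estimates for the noncommutative plane. The paper's argument is much more direct: factor $x=|x|^{1/2}\sgn(x)|x|^{1/2}$ and observe that $|x|^{1/2}\in L_{2p}(\bbR^d_\theta)$ and $(1+|\cdot|^2)^{-\frac12-\frac d4}\in L_{2p}(\bbR^d)$. Since $2p\ge 2$, the already-established Cwikel estimate for interpolation spaces in $(L_2,L_\infty)$ (Theorem~\ref{nc_plane_Cwikel_p>2}) gives $|x|^{1/2}(1-\Delta_\theta)^{-\frac12-\frac d4}\in\cL_{2p}$, and H\"older's inequality finishes the job.
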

\begin{proof} Note that by the definitions of $\cD$ and $-\Delta_\theta$, we have that $\cD^2=-1\otimes \Delta_\theta$, and therefore  the assertion is equivalent to the fact that
$(\cD-i)^{k-1-\frac{d}{2}} (1\otimes x)(\cD-i)^{-k-1-\frac{d}{2}}\in\cL_p(\bbC^N\otimes L_2(\bbR^d))$ and 
\begin{equation}\label{nc_plane_Cwikel_L_1_equivalent_form}
\|(\cD-i)^{k-1-\frac{d}{2}} (1\otimes x)(\cD-i)^{-k-1-\frac{d}{2}}\|_{\cL_p(\bbC^N\otimes L_2(\bbR^d))}\leq 2^k\|x\|_{W^{k,p}}.
\end{equation}

We prove the assertion by induction on $k.$ For $k=0,$ 
we have
\begin{align*}
(1-\Delta_\theta)^{-\frac{1}{2}-\frac{d}{4}} x(1-\Delta_\theta)^{-\frac{1}2-\frac{d}{4}}
=(1-\Delta_\theta)^{-\frac{d}{4}-\frac12}|x|^{\frac12}\cdot\sgn(x)\cdot|x|^{\frac12}(1-\Delta_\theta)^{-\frac{d}{4}-\frac12}.
\end{align*}
Since $|x|^{1/2}\in L_{2p}(\bbR_\theta^d)$ and $(1+|\cdot|^2)^{-\frac{1}{2}-\frac{d}{4}}\in L_{2p}(\bbR^d)$, Lemma \ref{nc_plane_l2_cwikel} implies that 
$(1-\Delta_\theta)^{-\frac{d}{4}-\frac12}|x|^{\frac12}, |x|^{\frac12}(1-\Delta_\theta)^{-\frac{d}{4}-\frac12}\in \cL_{2p}(L_2(\bbR^d))$. Therefore, the H\"older inequality \eqref{Holder} implies that $(1-\Delta_\theta)^{-\frac{1}{2}-\frac{d}{4}}x(1-\Delta_\theta)^{-\frac{1}2-\frac{d}{4}}\in\cL_{p}(L_2(\bbR^d))$ and 
\begin{align*}
\|(1-\Delta_\theta)^{-\frac{1}{2}-\frac{d}{4}}&x(1-\Delta_\theta)^{-\frac{1}2-\frac{d}{4}}\|_{\cL_p(L_2(\bbR^d))}\\
&\leq\|(1-\Delta_\theta)^{-\frac{d}{4}-\frac12}|x|^{\frac12}\|_{\cL_{2p}(L_2(\bbR^d))}\||x|^{\frac12}(1-\Delta_\theta)^{-\frac{d}{4}-\frac12}\|_{\cL_{2p}(L_2(\bbR^d))}\\
&=c_d\||x|^{\frac12}\|_{2p}^2=c_d\|x\|_p.
\end{align*}
Thus, the assertion holds for $k=0$. 

Now suppose that the assertion holds for $k\geq 0$. We prove the equivalent assertion \eqref{nc_plane_Cwikel_L_1_equivalent_form}.
We have
\begin{align*}
(\cD-i)^{k-\frac{d}{2}}&(1\otimes x)(\cD-i)^{-k-\frac{d}{2}-2}\\
&=(\cD-i)^{k-\frac{d}{2}-1}(1\otimes x)(\cD-i)^{-k-\frac{d}{2}-1}\\
&\quad\quad+(\cD-i)^{k-\frac{d}{2}}[\frac1{\cD-i},1\otimes x](\cD-i)^{-k-\frac{d}{2}-1}\\
&=(\cD-i)^{k-\frac{d}{2}-1}(1\otimes x)(\cD-i)^{-k-\frac{d}{2}-1}\\
&\quad\quad-(\cD-i)^{k-\frac{d}{2}-1}[\cD,1\otimes x](\cD-i)^{-k-\frac{d}{2}-2}.
\end{align*}
We set 
\begin{align*}
T_1&=(\cD-i)^{k-\frac{d}{2}-1}(1\otimes x)(\cD-i)^{-k-\frac{d}{2}-1},\\
T_2&=(\cD-i)^{k-\frac{d}{2}-1}[\cD,1\otimes x](\cD-i)^{-k-\frac{d}{2}-2}.
\end{align*}
By inductive hypothesis, $T_1\in\mathcal{L}_{p}(L_2(\bbR^d))$ and
$$\|T_1\|_p\leq 2^k\|x\|_{W^{k,p}}\leq 2^k\|x\|_{W^{k+1,p}}.$$

For $T_2$ we have 
$$T_2=\sum_{l=1}^d(\cD-i)^{k-\frac{d}{2}-1}(\gamma_l\otimes [\cD_l, x])(\cD-i)^{-k-\frac{d}{2}-2}.$$
By triangle inequality, we have
\begin{align*}
\|T_2\|_p&\leq\sum_{l=1}^d\|(\cD-i)^{k-\frac{d}{2}-1}(\gamma_l\otimes [\cD_l, x])(\cD-i)^{-k-\frac{d}{2}-2}\|_p\\
&=\sum_{l=1}^d\|(\cD-i)^{k-\frac{d}{2}-1}(1\otimes [\cD_l, x])(\cD-i)^{-k-\frac{d}{2}-2}\|_p\\
&\leq\sum_{l=1}^d\|(\cD-i)^{k-\frac{d}{2}-1}(1\otimes [\cD_l, x])(\cD-i)^{-k-\frac{d}{2}-1}\|_p.
\end{align*}
Using induction hypothesis (as applied to the element $[\cD_l, x]$), we obtain
$$\|T_2\|_p\leq 2^k\sum_{l=1}^d\|[\cD_l, x]\|_{W^{k,p}}\leq 2^k\|x\|_{W^{k+1,p}}.$$

It follows that
$$\|(D-i)^{k-\frac{d}{2}}(1\otimes x)(D-i)^{-k-\frac{d}{2}-2}\|_p\leq 2^{k+1}\|x\|_{W^{k+1,p}}.$$
\end{proof}

The following lemma shows that in the noncommutative Euclidean  space we have a version of translation invariance. 

\begin{lemma}\label{nc_plane_translation_invariance}
For every $\bt\in\mathbb{R}^d,$ there exists a unitary element $V(\bt)$ on $L_2(\mathbb{R}^d)$ such that
$$[V(\bt), x]=0,\quad x\in L_{\infty}(\mathbb{R}^d_{\theta})$$
and 
$$V(\bt)e^{is\cD_k}V(\bt)^{*}=e^{is(\cD_k+t_k)},$$
in particular $\cD_k$ is unitary equivalent to $\cD_k+t_k$.
\end{lemma}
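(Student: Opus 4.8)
The plan is to produce $V(\bt)$ by hand as the ordinary translation $\bu\mapsto\bu+\bt$ corrected by a unimodular multiplier tailored to cancel the cocycle phase in \eqref{nc_plane_comm_relation}. First I would define, for $\bt\in\bbR^d$,
$$(V(\bt)\xi)(\bu)=e^{\frac{i}{2}\langle\bu,\theta\bt\rangle}\,\xi(\bu+\bt),\qquad \xi\in L_2(\bbR^d),\ \bu\in\bbR^d.$$
This is visibly unitary, being the composition of a unitary multiplication operator with a translation, and the change of variables $\bv=\bu+\bt$ together with $\langle\bt,\theta\bt\rangle=0$ (antisymmetry of $\theta$) gives $(V(\bt)^{*}\eta)(\bv)=e^{-\frac{i}{2}\langle\bv,\theta\bt\rangle}\eta(\bv-\bt)$.

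Next I would check that $V(\bt)$ commutes with every $U(\bs)$. Applying both $U(\bs)V(\bt)$ and $V(\bt)U(\bs)$ to a vector $\xi$ and using \eqref{nc_plane_realisation}, one obtains in either order the single function
$$\bu\longmapsto \exp\!\Big(\tfrac{i}{2}\langle\bu,\theta\bt\rangle-\tfrac{i}{2}\langle\bs,\theta\bu\rangle-\tfrac{i}{2}\langle\bs,\theta\bt\rangle\Big)\,\xi(\bu-\bs+\bt),$$
the point being that the extra phase $e^{-\frac{i}{2}\langle\bs,\theta\bt\rangle}$ produced when the translation of $U(\bs)$ is pushed through the translation of $V(\bt)$ is exactly cancelled by the phase $e^{\frac{i}{2}\langle\bs,\theta\bt\rangle}$ produced when $U(\bs)$ is pushed through the multiplier of $V(\bt)$. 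Since by Definition \ref{nc_plane_definition} the algebra $L_\infty(\bbR^d_\theta)$ is the von Neumann algebra generated by $\{U(\bs):\bs\in\bbR^d\}$, this shows $V(\bt)\in\{U(\bs):\bs\in\bbR^d\}'=L_\infty(\bbR^d_\theta)'$, i.e. $[V(\bt),x]=0$ for every $x\in L_\infty(\bbR^d_\theta)$.

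Then I would verify the conjugation formula. The operator $e^{is\cD_k}$ is multiplication by $\bu\mapsto e^{isu_k}$, hence commutes with the multiplier in $V(\bt)$, while conjugating it by the translation replaces $e^{isu_k}$ by $e^{is(u_k+t_k)}$; collecting the factors (and using $\langle\bt,\theta\bt\rangle=0$ once more) yields $V(\bt)e^{is\cD_k}V(\bt)^{*}=e^{is(\cD_k+t_k)}$, which one may equally well read off by a direct computation on a vector $\eta$ as in the previous step. Finally, $\{e^{is\cD_k}\}_{s\in\bbR}$ is a strongly continuous one-parameter unitary group, so its unitary conjugate $\{V(\bt)e^{is\cD_k}V(\bt)^{*}\}_{s\in\bbR}$ is too; Stone's theorem then identifies generators to give $V(\bt)\cD_k V(\bt)^{*}=\cD_k+t_k$, and in particular $\cD_k$ is unitarily equivalent to $\cD_k+t_k$.

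I do not expect any real obstacle: the entire content of the statement is the choice of the phase $e^{\frac{i}{2}\langle\bu,\theta\bt\rangle}$, which is forced by insisting that the twisted translation kill the $2$-cocycle $\exp(-\tfrac{i}{2}\langle\bs,\theta\bt\rangle)$ attached to ordinary translation of $U(\bs)$ --- a cancellation that works precisely because $\theta$ is real and antisymmetric. With that multiplier in place, every remaining step is an elementary manipulation of exponentials on $L_2(\bbR^d)$.
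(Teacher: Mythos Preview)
Your proof is correct and more direct than the paper's. The paper first reduces to the case where $\theta$ has the block-diagonal form \eqref{nc_plane_theta_block-diagonal}, then builds $V(\bt)$ as a product $\prod_{k=1}^d e^{it_kA_k}$ where the generators $A_k$ are combinations of $\partial_k$ and a multiplication operator chosen via the Weyl canonical commutation relations \eqref{CCR}; the required commutation and conjugation identities are then verified one factor at a time. Your single formula $(V(\bt)\xi)(\bu)=e^{\frac{i}{2}\langle\bu,\theta\bt\rangle}\xi(\bu+\bt)$ accomplishes the same thing in one stroke for \emph{arbitrary} antisymmetric $\theta$, and in the block-diagonal case it agrees with the paper's operator up to the scalar phase $e^{-\frac{i}{2}h(\bt)}$ (with $h$ as in \eqref{nc_plane_quad_form}). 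The advantage of your route is economy and the fact that no normal-form reduction is needed; the paper's route, by contrast, makes the connection to the Schr\"odinger representation of the CCR more explicit, which fits naturally with the earlier spatial isomorphism in Theorem~\ref{nc_plane_is_BL_2}.
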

\begin{proof} 
Without loss of generality we assume $\theta$ is block-diagonal $d\times d$ matrix, with every block equal to the matrix $S =\begin{pmatrix} 0&-1\\1&0\end{pmatrix}$ (otherwise apply $W$ defined in \eqref{nc_plane_change_coordinaes}).

Since for $k\neq n$, $k,n=1,\dots d$ the operators $\partial_k$ and $M_{x_n}$ commute, we can consider strongly continuous unitary groups defined by
\begin{align*}
e^{itA_{2j-1}}&=e^{it\partial_{2j-1}}e^{\frac{it}{2}M_{x_{2j}}},\\
e^{itA_{2j}}&=e^{it\partial_{2j}}e^{-\frac{it}{2}M_{x_{2j-1}}},\quad j=1,\dots \frac{d}2,
\end{align*}
where $A_k, k=1,\dots, d$ denotes the generator of respective group. 
We note that by Weyl canonical commutation relations \eqref{CCR} we have 
 $$e^{itA_{2k}}e^{isA_{2k-1}}=e^{its}e^{isA_{2k-1}}e^{itA_{2k}}$$ and $[e^{itA_k}, e^{isA_n}]=0$ otherwise.

%
%

Since $\cD_k=M_{t_k}$ it follows that 
\begin{equation}\label{nc_plane_commut_A_k_D_l}
e^{itA_k}e^{i\cD_l}=e^{is\cD_l}e^{itA_k},\quad l\neq k\\
\end{equation}
and 
\begin{align}\label{nc_plane_commut_A_k_D_k}
e^{itA_k}e^{is\cD_k}&=e^{it\partial_k}e^{(-1)^{k-1}\frac{it}2 M_{x_{k+(-1)^{k-1}}}}e^{isM_{x_k}}\nonumber\\
&=e^{_its} e^{isM_{x_k}}e^{it\partial_k}e^{(-1)^{k-1}\frac{it}2 M_{x_{k+(-1)^{k-1}}}}\\
&=e^{its}e^{is\cD_k}e^{itA_k},\quad 1\leq k\leq n.\nonumber
\end{align}

Moreover, for any $\xi\in L_2(\bbR^d)$  we have 
\begin{align*}
([e^{itA_{2j}},&U(\bs)]\xi)(\bu)=(e^{itA_{2j}}U(\bs) \xi)(\bu)-(U(\bs)e^{itA_{2j}} \xi)(\bu)\\
&=\Big(e^{-i\frac{t}{2}u_{2j-1}}e^{-\frac{it}2\langle \bs,\theta\be_{2j}\rangle}-e^{-i\frac{t}{2}(u_{2j-1}-s_{2j-1})}\Big)e^{-\frac{i}2\langle \bs,\theta\bu\rangle}\xi(\bu-\bs+t\be_{2j}).
\end{align*}
Since $e^{-\frac{it}2\langle \bs,\theta\be_{2j}\rangle}=e^{\frac{it}2s_{2j-1}},$ we conclude that 
$$[e^{itA_{2j}},U(\bs)]=0.$$
Repeating similar argument for $[e^{itA_{2j}-1},U(\bs)]$ one can obtain that 
\begin{equation}\label{nc_plane_commut_A_k_pi}
[e^{itA_{j}},U(\bs)]=0,\quad j=1,\dots d, \quad t\in\bbR, \quad\bs\in\mathbb{R}^d.
\end{equation}
 
For $\bt=(t_1,\dots, t_d)$ we set 
$$V(\bt)=\prod_{k=1}^d\exp(it_kA_k).$$
Clearly $V(\bt)$ is a unitary operator for any $\bt\in\bbR^d$. 

Equality \eqref{nc_plane_commut_A_k_pi} implies that 
$[V(\bt),U(\bs)]=0,$ for any $\bs,\bt\in\bbR^d$, and therefore $[V(\bt), x]=0,$ for any $x\in L_\infty(\bbR^d_\theta)$.

On the other hand, equations \eqref{nc_plane_commut_A_k_D_l} and \eqref{nc_plane_commut_A_k_D_k} imply that 
\begin{align*}
V(\bt)e^{is\cD_k}V(\bt)^{-1}&=\prod_{j=1}^d\exp(it_jA_j) e^{is\cD_k}\prod_{l=0}^{d-1}\exp(-it_lA_l)\\
&=e^{it_ks}e^{is\cD_k}=e^{is(\cD_k+t_k)}.
\end{align*}
\end{proof}

We are now ready to prove the main result of the present section. The estimates we obtain significantly extend the estimates obtained in \cite{Gayral_Iochum_Varilly} and \cite{CGRS_Memoirs}.

\begin{theorem}\label{nc_plane_Cwikel_weakL1}Let $1\leq p\leq 2$.
For every $x\in W^{d,p}(\bbR^d_\theta)$ and $g\in \ell_{p,\infty}(L_\infty)(\bbR^d)$ we have that $x g(-i\nabla_\theta)\in \cL_{p,\infty}(L_2(\bbR^d))$ and 
$$\|x g(-i\nabla_\theta)\|_{\cL_{p,\infty}(L_2(\bbR^d))}\leq \const \|x\|_{W^{d,p}}\|g\|_{\ell_{p,\infty}(L_\infty)}.$$
\end{theorem}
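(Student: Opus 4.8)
The plan is to reduce everything to a single \emph{localised} Cwikel estimate, to use the translation invariance of $L_\infty(\bbR^d_\theta)$ to make that estimate uniform over the integer lattice, and then to reassemble the pieces by a direct‑sum/majorisation argument. Fix the unit cube $K$ and put $g_\bm=g\chi_{K+\bm}$, $\bm\in\bbZ^d$, so that (by the proof of Lemma~\ref{nc_plane_l2_cwikel}) $g(-i\nabla_\theta)=M_g=\sum_\bm M_{g_\bm}$, with the series converging in the strong operator topology, and hence $xg(-i\nabla_\theta)=\sum_\bm xM_{g_\bm}$ in the same topology; note $xg(-i\nabla_\theta)$ is automatically bounded since $\|g\|_\infty=\sup_\bm\|g_\bm\|_\infty\le\|g\|_{\ell_{p,\infty}(L_\infty)}$. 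Because $g_\bm\overline{g_{\bm'}}=0$ for $\bm\ne\bm'$ we have $(xM_{g_\bm})(xM_{g_{\bm'}})^*=xM_{g_\bm\overline{g_{\bm'}}}x^*=0$, so the family $\{xM_{g_\bm}\}$ is disjoint (from the left) in the sense needed for Lemma~\ref{lem_strong_major}(i).

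Next I would establish the localised estimate: there is $\phi\in L_p(0,\infty)$ with $\|\phi\|_p\le C_d\|x\|_{W^{d,p}}$ such that $\mu(xM_{g_\bm})\le\|g_\bm\|_\infty\,\phi$ for all $\bm$. By Lemma~\ref{nc_plane_translation_invariance} there is a unitary $V(\bm)$ commuting with $L_\infty(\bbR^d_\theta)$ with $V(\bm)\cD_kV(\bm)^*=\cD_k+\bm_k$; conjugating $xM_{g_\bm}$ by $V(\bm)$ fixes $x$ and replaces $M_{g_\bm}$ by $M_{\tilde g_\bm}$ with $\tilde g_\bm$ supported in $K$ and $\|\tilde g_\bm\|_\infty=\|g_\bm\|_\infty$, so $\mu(xM_{g_\bm})=\mu(xM_{\tilde g_\bm})\le\|g_\bm\|_\infty\,\mu(xM_{\chi_K})$. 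To see $xM_{\chi_K}\in\cL_p$, set $\beta=\tfrac{d+1}2+\tfrac d4$ and write $xM_{\chi_K}=\big(x(1-\Delta_\theta)^{-\beta}\big)\big((1-\Delta_\theta)^\beta M_{\chi_K}\big)$: the second factor is $M_{(1+|t|^2)^\beta\chi_K}$, bounded with norm $\le C_d$, while the first factor equals $(1-\Delta_\theta)^{-\frac{d-2}4}\cdot\big((1-\Delta_\theta)^{\frac{d-2}4}x(1-\Delta_\theta)^{-\beta}\big)$, a bounded operator (here $d$ is even, so $\tfrac{d-2}4\ge0$) times the $\cL_p$‑operator produced by Lemma~\ref{nc_plane_Cwikel_L1} with $k=d$, of $\cL_p$‑norm $\le2^d\|x\|_{W^{d,p}}$. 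Thus $\phi:=\mu(xM_{\chi_K})\in L_p(0,\infty)$ with $\|\phi\|_p\le C_d\|x\|_{W^{d,p}}$.

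For $1\le p<2$ I would now conclude. Since $\ell_{p,\infty}\subset\ell_2$ and $\|xM_{g_\bm}\|_2\le\|g_\bm\|_\infty\|\phi\|_2$, the series $\sum_\bm xM_{g_\bm}$ converges in $\cL_2$, necessarily to $xg(-i\nabla_\theta)$. Also $\mu\big(\bigoplus_\bm xM_{g_\bm}\big)=\mu\big(\bigoplus_\bm\mu(xM_{g_\bm})\big)\le\mu\big(\{\|g_\bm\|_\infty\}_{\bm}\otimes\phi\big)$, and by Lemma~\ref{tensor_L_1_weak_L_1} (applied to $\{\|g_\bm\|_\infty\}\in\ell_{p,\infty}(\bbZ^d)$, of norm $\|g\|_{\ell_{p,\infty}(L_\infty)}$, and $\phi\in L_p(0,\infty)$) this gives $\bigoplus_\bm xM_{g_\bm}\in\cL_{p,\infty}$ with $\big\|\bigoplus_\bm xM_{g_\bm}\big\|_{p,\infty}\le C_d\|x\|_{W^{d,p}}\|g\|_{\ell_{p,\infty}(L_\infty)}$. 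Lemma~\ref{lem_strong_major}(i) then yields $\mu^2\big(\bigoplus_\bm xM_{g_\bm}\big)\prec\mu^2\big(xg(-i\nabla_\theta)\big)$, and Proposition~\ref{clas_Lp_weakLp}(ii) upgrades this to $xg(-i\nabla_\theta)\in\cL_{p,\infty}$ with the asserted bound.

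I expect the endpoint $p=2$ to be the main obstacle: there $\cL_{2,\infty}$ is an interpolation space for neither $(L_1,L_\infty)$ nor $(L_2,L_\infty)$, the sequence $\{\|g_\bm\|_\infty\}$ need not lie in $\ell_2$, and $xg(-i\nabla_\theta)$ need not lie in $\cL_2$, so neither Corollary~\ref{abs_Cwikel_main_thm} nor Proposition~\ref{clas_Lp_weakLp} is available. Here I would instead invoke the logarithmic convexity of $\cL_{1,\infty}$ (Proposition~\ref{log_convexity}) on $\big(xg(-i\nabla_\theta)\big)\big(xg(-i\nabla_\theta)\big)^*=\sum_\bm xM_{|g_\bm|^2}x^*$, combined with a grouping of $\bbZ^d$ into blocks on which $\|g_\bm\|_\infty$ has comparable size and with the localised bound $\|xM_{|g_\bm|^2}x^*\|_{1,\infty}=\|xM_{g_\bm}\|_{2,\infty}^2\le\|g_\bm\|_\infty^2\|\phi\|_2^2\le C_d\|g_\bm\|_\infty^2\|x\|_{W^{d,2}}^2$; the crux is that the $W^{d,2}$‑regularity of $x$ — which in Lemma~\ref{nc_plane_Cwikel_L1} is precisely what promotes the Hilbert–Schmidt Cwikel estimate to an $\cL_1$‑type one — must absorb the logarithmic loss intrinsic to Proposition~\ref{log_convexity}, in the same way that the $\ell_{2,\log}(L_\infty)$‑hypothesis did in Theorem~\ref{weak_L2_main}. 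Running the $1\le p<2$ argument for finite truncations $\sum_{|\bm|\le N}xM_{g_\bm}$ and passing to a limit is tempting, but for $p=2$ the step through Proposition~\ref{clas_Lp_weakLp} fails, so the logarithmic‑convexity route seems to be the one that goes through.
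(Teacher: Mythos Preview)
For $1\le p<2$ your argument is essentially the paper's own proof: decompose $xg(-i\nabla_\theta)=\sum_\bn B_\bn$ with $B_\bn=xM_{g\chi_{\bn+K}}$, use disjointness and Lemma~\ref{lem_strong_major}(i) to get $\mu^2(\bigoplus B_\bn)\prec\mu^2(xg(-i\nabla_\theta))$, invoke the translation invariance of Lemma~\ref{nc_plane_translation_invariance} to reduce each $B_\bn$ to a common model $x\chi_K(-i\nabla_\theta)$, control the latter in $\cL_p$ via Lemma~\ref{nc_plane_Cwikel_L1}, tensorise with $\{\|g_\bn\|_\infty\}\in\ell_{p,\infty}$ using Lemma~\ref{tensor_L_1_weak_L_1}, and finish with Proposition~\ref{clas_Lp_weakLp}(ii). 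The only cosmetic differences are that the paper takes $k=1+\tfrac d2$ in Lemma~\ref{nc_plane_Cwikel_L1} (so that the left exponent vanishes) whereas you take $k=d$ and then discard the extra positive power of $(1-\Delta_\theta)^{-1}$, and that the paper phrases the disjointness of the $B_\bn$ as ``from the right'' (each $B_\bn$ carries the projection $M_{\chi_{\bn+K}}$ on the right) rather than ``from the left''; Lemma~\ref{lem_strong_major}(i) covers both, but your justification via $B_\bm B_{\bm'}^*=0$ is the left version, so you should keep the phrasing consistent.

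Your diagnosis of the endpoint $p=2$ is correct, and in fact the paper's own proof shares exactly the difficulty you describe: the step $\sum_\bn\|B_\bn\|_2^2<\infty$ uses $\ell_{p,\infty}\subset\ell_2$, which fails at $p=2$, and the final appeal to Proposition~\ref{clas_Lp_weakLp}(ii) is stated only for $0<p<2$ (its proof uses $\int_t^\infty s^{-2/p}\,ds<\infty$, which diverges at $p=2$). So the paper does not supply an argument that survives at $p=2$ either. Your proposed repair via Proposition~\ref{log_convexity}, however, does not close the gap as written: applying logarithmic convexity to $\sum_\bm xM_{|g_\bm|^2}x^*$ yields a bound by $\sum_\bm(1+\log|\bm|)\|g_\bm\|_\infty^2$, which is finite precisely when $g\in\ell_{2,\log}(L_\infty)$, a strictly stronger hypothesis than $g\in\ell_{2,\infty}(L_\infty)$ --- this is exactly the extra assumption needed in Theorem~\ref{weak_L2_main}, and no amount of $W^{d,2}$-regularity on $x$ can absorb that loss, since the $x$-contribution and the $g$-contribution decouple in the bound. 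The ``grouping into blocks'' idea would at best trade the logarithmic weight for a $\log$ of the block count, which again diverges for generic $\ell_{2,\infty}$ data. In short: your $p<2$ proof is the paper's proof; your $p=2$ concern is well founded and exposes a genuine gap in the paper's argument; but your sketch does not resolve it.
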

\begin{proof} For every $\bn\in\mathbb{Z}^d,$ we set
$$B_\bn= xg(-i\nabla_\theta)\chi_{\bn+K}(-i\nabla_\theta).$$

By Lemma \ref{nc_plane_l2_cwikel} we have
$$\sum_{\bn}\|B_\bn\|_2^2=\sum_\bn \|x\|_2^2\|g\chi_{K+\bn}\|_2^2\leq\|x\|_2^2\|g\|_{\ell_{p,\infty}(L_\infty)}<\infty.$$
Hence, since, $\{B_\bn\}$ are disjoint from the right, Lemma \ref{lem_strong_major} implies that 
\begin{equation}\label{nc_plane_via_direct_sum}
\mu^2(\bigoplus_{\bn\in\mathbb{Z}^d}B_\bn)\prec \mu^2(\sum_{\bn\in\mathbb{Z}^d}B_\bn)=\mu^2(x g(-i\nabla_\theta)).
\end{equation}

We claim that $\bigoplus_{\bn\in\mathbb{Z}^d}B_\bn\in \cL_{p,\infty}(L_2(\bbR^d))$. 

For every $\bn\in\mathbb{Z}^d,$ we set
$$A_\bn:=x\chi_{\bn+K}(-i\nabla_\theta).$$
We have 
\begin{align*}
\mu(B_\bn)&=\mu\big(A_\bn\cdot g(-i\nabla_\theta)\chi_{\bn+K}(-i\nabla_\theta)\big)\leq \big\|g(-i\nabla_\theta)\chi_{\bn+K}(-i\nabla_\theta)\big\|_\infty \mu(A_\bn)\\
&\leq \|g\chi_{K+n}\|_\infty\mu(A_\bn).
\end{align*}
Therefore,
$$\Big\|\bigoplus_{\bn\in\mathbb{Z}^d}B_\bn\Big\|_{p,\infty}\leq \Big\|\bigoplus_{\bn\in\mathbb{Z}^d}\|g\chi_{K+\bn}\|_\infty A_\bn\Big\|_{p,\infty}.$$

Let $V(\bn)$ be the unitary operator constructed in Lemma \ref{nc_plane_translation_invariance}. We have
$$V(\bn)A_\bn V(\bn)^*=xV(\bn)\chi_{\bn+K}(-i\nabla_\theta)V(\bn)^*=x\chi_{K}(-i\nabla_\theta)=A_0.$$
 Hence, by Lemma \ref{tensor_L_1_weak_L_1} we obtain  
\begin{align*}
\Big\|&\bigoplus_{\bn\in\mathbb{Z}^d}B_\bn\Big\|_{p,\infty}\leq \Big\|\bigoplus_{\bn\in\mathbb{Z}^d}\|g\chi_{K+\bn}\|_\infty V(\bn)A_\bn V(\bn)^*\Big\|_{p,\infty}\\
&=c_d\Big\|A_0\otimes\Big\{\|g\chi_{K+\bn}\|_\infty\Big\}_{\bn\in\mathbb{Z}^d}\Big\|_{p,\infty}\leq \|A_0\|_p\cdot\Big\|\Big\{|g\chi_{K+\bn}\|_\infty\Big\}_{\bn\in\mathbb{Z}^d}\Big\|_{p,\infty}\\
&=\|A_0\|_p\|g\|_{\ell_{p,\infty}(L_\infty)}.
\end{align*}
We have 
\begin{align*}
\mu(A_0)&\leq \mu(x (1-\Delta_\theta)^{-\frac{d}{2}-1})\Big\|(1-\Delta_\theta)^{\frac{d}{2}+1}\prod_{k=1}^d\chi_{(0,1)}(\cD_k)\Big\|_\infty\\
&=(1+d)^{\frac{d}{2}+1}\mu( x(1-\Delta_\theta)^{-\frac{d}{2}-1}),
\end{align*}
in particular, 
$$\|A_0\|_p\leq \const \|x(1-\Delta_\theta)^{-\frac{d}{2}-1}\|_p.$$
It follows from Lemma \ref{nc_plane_Cwikel_L1} (for $k=1+\frac{d}{2}$) that
$$\|A_0\|_{p,\infty}\leq c_d\|x\|_{W^{\frac{d}{2}+1,1}}\leq c_d\|x\|_{W^{d,p}}.$$
Thus, $\bigoplus_{\bn\in\mathbb{Z}^d}B_\bn\in\cL_{p,\infty}(L_2(\bbR^d))$ and 
$$\Big\|\bigoplus_{\bn\in\mathbb{Z}^d}B_\bn\Big\|_{p,\infty}\leq c_d\|x\|_{W^{d,p}}\|g\|_{\ell_{p,\infty}(L_\infty)}.$$

Hence, combining Proposition \ref{clas_Lp_weakLp} with \eqref{nc_plane_via_direct_sum} we infer that
 $$x g(-i\nabla_\theta)\in \cL_{p,\infty}(L_2(\bbR^d))$$ and 
$$\|x g(-i\nabla_\theta)\|_{\cL_{p,\infty}(L_2(\bbR^d))}\leq C_p\Big\|\bigoplus_{\bn\in\mathbb{Z}^d}B_\bn\Big\|_{p,\infty}\leq \const \|x\|_{W^{d,p}}\|g\|_{\ell_{p,\infty}(L_\infty)}.$$
\end{proof}

To conclude this section, we prove Cwikel estimates for the Schatten ideals $\cL_p(L_2(\bbR^d))$, for $1\leq p<2$.

\begin{theorem}Let $1\leq p<2$. For every $x\in W^{d,p}(\mathbb{R}^d_{\theta})$ and for every $g\in l_p(L_{\infty})(\bbR^d),$ we have that $xg(-i\nabla_\theta)\in\mathcal{L}_p(L_2(\mathbb{R}^d))$ and
$$\|xg(-i\nabla_\theta)\|_{\mathcal{L}_p(L_2(\mathbb{R}^d))}\leq{\rm const}\|x\|_{W^{d,p}(\mathbb{R}^d_{\theta})}\|g\|_{l_p(L_{\infty})}.$$
\end{theorem}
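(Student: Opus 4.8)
The plan is to mimic the proof of Theorem \ref{nc_plane_Cwikel_weakL1} almost verbatim, replacing the weak ideal $\cL_{p,\infty}$ by $\cL_p$ and the sequence space $\ell_{p,\infty}(L_\infty)$ by $\ell_p(L_\infty)$ throughout. First I would fix $\bn\in\bbZ^d$ and set $B_\bn = xg(-i\nabla_\theta)\chi_{\bn+K}(-i\nabla_\theta)$ and $A_\bn = x\chi_{\bn+K}(-i\nabla_\theta)$, exactly as before. Lemma \ref{nc_plane_l2_cwikel} gives $\sum_\bn\|B_\bn\|_2^2 = \|x\|_2^2\sum_\bn\|g\chi_{K+\bn}\|_2^2 \leq \|x\|_2^2\|g\|_{\ell_p(L_\infty)}^2 < \infty$ (since $p<2$ means $\ell_p(L_\infty)\subset\ell_2(L_\infty)$ with control of the $L_2$-norm of $g$ on each cube by its $L_\infty$-norm). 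Since the $\{B_\bn\}$ are disjoint from the right, Lemma \ref{lem_strong_major}(i) yields
$$\mu^2\Big(\bigoplus_{\bn\in\bbZ^d}B_\bn\Big) \prec \mu^2\Big(\sum_{\bn\in\bbZ^d}B_\bn\Big) = \mu^2(xg(-i\nabla_\theta)).$$

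The second step is to bound the direct sum in $\cL_p$. As in Theorem \ref{nc_plane_Cwikel_weakL1}, $\mu(B_\bn)\leq \|g\chi_{K+\bn}\|_\infty\mu(A_\bn)$, and conjugating by the translation unitaries $V(\bn)$ from Lemma \ref{nc_plane_translation_invariance} gives $V(\bn)A_\bn V(\bn)^* = A_0$ where $A_0 = x\chi_K(-i\nabla_\theta)$. Hence
$$\Big\|\bigoplus_{\bn\in\bbZ^d}B_\bn\Big\|_p \leq \Big\|\bigoplus_{\bn\in\bbZ^d}\|g\chi_{K+\bn}\|_\infty A_0\Big\|_p = \Big\|A_0\otimes\{\|g\chi_{K+\bn}\|_\infty\}_{\bn\in\bbZ^d}\Big\|_p = \|A_0\|_p\,\|g\|_{\ell_p(L_\infty)},$$
using that $\mu(A_0\otimes c) = \mu(\mu(A_0)\otimes\mu(c))$ together with the identity $\|\mu(A_0)\otimes\{c_\bn\}\|_{L_p} = \|A_0\|_p\|\{c_\bn\}\|_{\ell_p}$ for the $L_p$-norm (the analogue for $\cL_p$ of Lemma \ref{tensor_L_1_weak_L_1}, which is just the multiplicativity of the $L_p$-(quasi-)norm under tensoring). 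Then, as in the earlier proof, $\mu(A_0)\leq(1+d)^{d/2+1}\mu(x(1-\Delta_\theta)^{-d/2-1})$, so $\|A_0\|_p\leq \const\|x(1-\Delta_\theta)^{-d/2-1}\|_p$, and Lemma \ref{nc_plane_Cwikel_L1} applied with $k=1+\tfrac d2$ gives $\|A_0\|_p\leq c_d\|x\|_{W^{d/2+1,p}}\leq c_d\|x\|_{W^{d,p}}$. Thus $\bigoplus_\bn B_\bn\in\cL_p(L_2(\bbR^d))$ with $\|\bigoplus_\bn B_\bn\|_p\leq c_d\|x\|_{W^{d,p}}\|g\|_{\ell_p(L_\infty)}$.

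Finally I would invoke Proposition \ref{clas_Lp_weakLp}(i): since $x g(-i\nabla_\theta)\in\cL_2$ (by Lemma \ref{nc_plane_l2_cwikel}, as $g\in\ell_p(L_\infty)\subset L_2$ locally forces $g\in L_2$... more precisely $\sum_\bn\|B_\bn\|_2^2<\infty$ gives $\sum_\bn B_\bn\in\cL_2$) and $\bigoplus_\bn B_\bn\in\cL_p$ with $\mu^2(\bigoplus_\bn B_\bn)\prec\mu^2(\sum_\bn B_\bn) = \mu^2(xg(-i\nabla_\theta))$, part (i) of that proposition (with the roles $x=\bigoplus_\bn B_\bn$, $y=xg(-i\nabla_\theta)$) yields $xg(-i\nabla_\theta)\in\cL_p(L_2(\bbR^d))$ and $\|xg(-i\nabla_\theta)\|_p\leq\|\bigoplus_\bn B_\bn\|_p\leq\const\|x\|_{W^{d,p}}\|g\|_{\ell_p(L_\infty)}$, which is the claim. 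The only point requiring genuine care is verifying the hypothesis $xg(-i\nabla_\theta)\in\cL_2(L_2(\bbR^d))$ needed to apply Proposition \ref{clas_Lp_weakLp} — this follows because the series $\sum_\bn B_\bn$ converges in $\cL_2$ (the $B_\bn$ are mutually right-disjoint with $\sum_\bn\|B_\bn\|_2^2<\infty$, exactly the argument in the proof of Lemma \ref{lem_strong_major}(i)), and this series equals $xg(-i\nabla_\theta)$; everything else is a routine transcription of the weak-ideal proof with $\cL_{p,\infty}\rightsquigarrow\cL_p$.
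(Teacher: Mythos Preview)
Your proposal is correct and follows essentially the same route as the paper's own proof: decompose into the right-disjoint pieces $B_\bn$, apply Lemma \ref{lem_strong_major}(i) to get the strong majorization $\mu^2(\bigoplus_\bn B_\bn)\prec\mu^2(xg(-i\nabla_\theta))$, use the translation unitaries of Lemma \ref{nc_plane_translation_invariance} together with Lemma \ref{nc_plane_Cwikel_L1} (with $k=1+\tfrac d2$) to bound $\|A_0\|_p$, and conclude via Proposition \ref{clas_Lp_weakLp}(i). The only cosmetic difference is that the paper bounds $\|\bigoplus_\bn B_\bn\|_p$ by directly summing $\|B_\bn\|_p^p$, whereas you pass through the tensor-product identity $\|A_0\otimes\{c_\bn\}\|_p=\|A_0\|_p\|\{c_\bn\}\|_{\ell_p}$; the two computations are equivalent.
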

\begin{proof} Since $p\geq 2$, we have that $x\in \cL_2(\mathbb{R}^d_{\theta})$ and $g\in L_2(\bbR^d)$. Hence, by Lemma \ref{nc_plane_l2_cwikel}, we have that $xg(-i\nabla_\theta)\in \cL_2(L_2(\bbR^d))$.
 Let, as before, $K$ be the unit cube in $\bbR^d$. As in the proof of Theorem \ref{nc_plane_Cwikel_weakL1}, we have 
\begin{equation}\label{nc_plane_Cwikel_L1_decomp}
\mu^2\Big(\bigoplus_{\bn\in\bbZ^d}xg(-i\nabla_{\theta})\chi_{{\bf n}+K}(-i\nabla_\theta)\Big)\prec \mu^2\Big(\sum_{{\bf n}\in\mathbb{Z}^d}xg(-i\nabla_{\theta})\chi_{{\bf n}+K}(-i\nabla_\theta)\Big)=\mu^2(xg(-i\nabla_\theta)).
\end{equation}

By Lemma \ref{nc_plane_translation_invariance} the operators $x\chi_{{\bf n}+K}(-i\nabla_\theta)$ and $x\chi_K(-i\nabla_\theta)$ are unitarily equivalent, and therefore,
\begin{align*}
\|xg(-i\nabla_\theta)\chi_{{\bf n}+K}(-i\nabla_\theta)\|_{\mathcal{L}_p(L_2(\mathbb{R}^d))}\leq\|g\chi_{{\bf n}+K}\|_{\infty}\cdot\|x\chi_{{\bf n}+K}(-i\nabla_\theta)\|_{\mathcal{L}_p(L_2(\mathbb{R}^d))}\\
=\|g\chi_{{\bf n}+K}\|_{\infty}\cdot\|x\chi_{K}(-i\nabla_\theta)\|_{\mathcal{L}_p(L_2(\mathbb{R}^d))}.
\end{align*}
By Lemma \ref{nc_plane_Cwikel_L1} (for $k=1+\frac{d}2$) we have 
\begin{align*}
\|x\chi_K(-i\nabla_\theta)\|_{\mathcal{L}_p(L_2(\mathbb{R}^d))}&\leq \|(1-\Delta_\theta)^{\frac{d+1}{2}}\chi_K(-i\nabla_\theta)\|_\infty\|x(1-\Delta_\theta)^{-\frac{d+1}{2}}\|_{\mathcal{L}_p(L_2(\mathbb{R}^d))}\\
&\leq{\rm const}\|x\|_{W^{d,p}(\mathbb{R}^d_{\theta})}.
\end{align*}

Therefore, we obtain that 
\begin{align*}
\Big\|\bigoplus_{\bn\in\bbZ^d}&xg(-i\nabla_{\theta})\chi_{{\bf n}+K}(-i\nabla_\theta)\Big\|_p=
\sum_{{\bf n}\in\mathbb{Z}^d}\|xg(-i\nabla_\theta)\chi_{{\bf n}+K}(-i\nabla_\theta)\|^p_{\mathcal{L}_p(L_2(\mathbb{R}^d))}\\
&\leq \const \|x\|^p_{W^{d,p}(\mathbb{R}^d_{\theta})} \sum_{{\bf n}\in\mathbb{Z}^d}\|g\chi_{{\bf n}+K}\|^p_{\infty}=\const\|x\|^p_{W^{d,p}(\mathbb{R}^d_{\theta})}\|g\|^p_{\ell_p(L_\infty)(\bbR^d)}.
\end{align*}

Hence, Proposition \ref{clas_Lp_weakLp} (i) together with \eqref{nc_plane_Cwikel_L1_decomp} implies that $xg(-i\nabla_\theta)\in\mathcal{L}_p(L_2(\mathbb{R}^d))$ and
$$\|xg(-i\nabla_\theta)\|_{\mathcal{L}_p(L_2(\mathbb{R}^d))}\leq{\rm const}\|x\|_{W^{d,p}(\mathbb{R}^d_{\theta})}\|g\|_{l_p(L_{\infty})}.$$
\end{proof}

\section{Cwikel estimates for magnetic Laplacian}\label{sec_magnetic} 
In this section we apply the abstract Cwikel (see Corollary \ref{abs_Cwikel_main_thm}) estimates for magnetic Laplacian ans briefly explain the connection between the noncommutative Euclidean  space and magnetic Laplacian. 

Let $d\in\bbN$ be even, let $b>0$ and let $\ba$ denotes the magnetic potential 
$$\ba=(a_1,\dots, a_d)=\frac{b}{2}(-M_{t_2},M_{t_1}, -M_{t_4}, M_{t_3},\dots, -M_{t_{d}}, M_{t_{d-1}}),$$ 
which corresponds to the isotropic magnetic field of constant strength $b$.
The magnetic Laplacian is the operator on $L_2(\bbR^d)$ defined as 
$$-\Delta_b=\sum_{i=1}^{d}(\partial_i-a_i)^2,$$
as an essentially self-adjoint operator on the set $C^\infty_0(\bbR^{d})$. 
It is well-known, that the operator $-\Delta_b$ has pure point spectrum consisting of eigenvalues $\{b(2n+\frac{d}2)\}_{n\in\bbZ_+}$ of infinite multiplicity \cite{AHS_magnetic}. Note that the magnetic potential $\ba$ can be written as $\frac{b}2\theta(M_{t_1},\dots, M_{t_d})^T=\frac{b}2\theta(-i\nabla_\theta^T)$, where $\theta$ is the matrix, which defines the noncommutative Euclidean space (see Proposition \ref{nc_plane_to_block_diagonal} with nondegenerate matrix). 

The infinitesimal generator of the group $U(t\be_j)$, $t\in\bbR, j=1,\dots, d$ (where $\{U(\bt)\}_{\bt\in\bbR^d}$ is defined in \eqref{nc_plane_realisation}) is the operator $(\partial_j-a_j)$. Since $\{U(\bt)\}_{\bt\in\bbR^d}$ generates the algebra $L_\infty(\bbR_\theta^d)$, it follows that $(\partial_j-a_j)$ is affiliated with $L_\infty(\bbR_\theta^d)$. Therefore, the operator $-\Delta_b$ is also affiliated with $L_\infty(\bbR_\theta^d)$. Hence, any bounded function $g$ of $-\Delta_b$ is an operator from $L_\infty(\bbR_\theta^d)$. 
At the same time the multiplication operator $M_f$, $f\in L_\infty(\bbR^d)$, is the function of the gradient $-i\nabla_\theta$ associated with $L_\infty(\bbR_\theta^d)$ (see Definition \ref{nc_plane_gradient}). 

Thus, the operator of the form $M_fg(-\Delta_b)$ is of the type which we studied in  Section \ref{sec_nc_plane_Cwikel}, and therefore, one can apply Theorem \ref{nc_plane_Cwikel_p>2} to find estimates on the eigenvalues of the operators of the form $M_fg(-\Delta_b)$. In Theorem \ref{magnetic_Cwikel} below we show a straightforward proof of this result for the special case, when $d=2$. 
%
%

Let $n\in\mathbb{Z}_+$ and let $P_n$ be the eigenprojection which corresponds to the eigenvalue $(2n+1)b$. It is well-known (see e.g. \cite{RW}) that the integral kernel of $P_n$ is given by the formula
$$K_n(\bs,\bt)=\frac{b}{2\pi}L_n(\frac{b}{2}|\bs-\bt|^2)\exp(-\frac{b}{4}|\bs-\bt|^2+2i(t_1s_2-t_2s_1)),\quad \bt,\bs\in\mathbb{R}^2.$$
Here, $L_n$ is the $n-$th Laguerre polynomial defined by the formula
$$L_n(u)=\sum_{m=0}^n\binom{n}{m}\frac{(-u)^m}{m!},\quad u\in\mathbb{R}.$$

Firstly, we need an auxiliary lemma.
\begin{lemma}\label{magnetic_projection} For every $n\in\mathbb{Z}_+$ and for every $f\in L_2(\mathbb{R}^2),$ we have $M_fP_n\in \cL_2(L_2(\bbR^2))$ and
$$\|M_fP_n\|_2=(\frac{b}{2\pi})^{\frac12}\|f\|_2.$$
\end{lemma}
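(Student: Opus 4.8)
The plan is to realise $M_fP_n$ as an integral operator and to compute its Hilbert--Schmidt norm directly from the kernel. Since $P_n$ has the integral kernel $K_n(\bs,\bt)$ displayed above, the operator $M_fP_n$ is an integral operator with kernel $(\bs,\bt)\mapsto f(\bs)K_n(\bs,\bt)$. Recalling that an integral operator on $L_2(\bbR^2)$ belongs to $\cL_2(L_2(\bbR^2))$ precisely when its kernel lies in $L_2(\bbR^2\times\bbR^2)$, in which case the Hilbert--Schmidt norm equals the $L_2$-norm of the kernel, I would write
$$\|M_fP_n\|_2^2=\int_{\bbR^2}\int_{\bbR^2}|f(\bs)|^2\,|K_n(\bs,\bt)|^2\,d\bt\,d\bs=\int_{\bbR^2}|f(\bs)|^2\Big(\int_{\bbR^2}|K_n(\bs,\bt)|^2\,d\bt\Big)d\bs.$$

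Next I would note that $|K_n(\bs,\bt)|^2=\big(\tfrac{b}{2\pi}\big)^2 L_n\big(\tfrac b2|\bs-\bt|^2\big)^2\exp\big(-\tfrac b2|\bs-\bt|^2\big)$ depends on $(\bs,\bt)$ only through $|\bs-\bt|$, so the inner integral does not depend on $\bs$. Substituting $\bu=\bt-\bs$, passing to polar coordinates, and then using the change of variable $v=\tfrac b2|\bu|^2$, the inner integral becomes
$$\Big(\frac{b}{2\pi}\Big)^2\cdot2\pi\cdot\frac1b\int_0^\infty L_n(v)^2e^{-v}\,dv.$$
The key computational input is the orthonormality relation $\int_0^\infty L_n(v)^2e^{-v}\,dv=1$ for the Laguerre polynomials normalised as in the statement; hence the inner integral equals $\tfrac{b}{2\pi}$ for every $n$.

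Combining the two displays yields $\|M_fP_n\|_2^2=\tfrac{b}{2\pi}\|f\|_2^2<\infty$, which simultaneously shows $M_fP_n\in\cL_2(L_2(\bbR^2))$ and gives the claimed value of the Hilbert--Schmidt norm. There is no genuine obstacle here: the only points needing care are the standard identification of the Hilbert--Schmidt norm of an integral operator with the $L_2$-norm of its kernel, and the recollection that the Laguerre polynomials appearing in the statement are orthonormal with respect to $e^{-v}\,dv$. (Alternatively one could argue $\|M_fP_n\|_2^2=\tr(P_nM_{|f|^2}P_n)=\int_{\bbR^2}|f(\bs)|^2K_n(\bs,\bs)\,d\bs$ together with $K_n(\bs,\bs)=\tfrac{b}{2\pi}L_n(0)=\tfrac{b}{2\pi}$, but the direct kernel computation is cleaner and self-contained.)
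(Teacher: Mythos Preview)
Your proof is correct and follows essentially the same route as the paper: both identify $M_fP_n$ as an integral operator with kernel $f(\bs)K_n(\bs,\bt)$, compute the Hilbert--Schmidt norm as the $L_2$-norm of the kernel, exploit that $|K_n(\bs,\bt)|^2$ depends only on $|\bs-\bt|$ to separate the integrals, and finish with the orthonormality relation $\int_0^\infty L_n(v)^2e^{-v}\,dv=1$ for the Laguerre polynomials. The alternative trace argument you mention at the end is a nice extra observation not in the paper.
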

\begin{proof} By the definition of a Hilbert-Schmidt norm, we have
\begin{align*}
\|M_fP_n\|_2^2&=\frac{b^2}{4\pi^2}\int_{\mathbb{R}^2}\int_{\mathbb{R}^2}|f|^2(\bs)L_n^2(\frac{b}{2}|\bs-\bt|^2)\exp(-\frac{b}{2}|\bs-\bt|^2)d\bs d\bt\\
&=\frac{b^2}{4\pi^2}\int_{\mathbb{R}^2}|f|^2(\bs)d\bs\cdot\int_{\mathbb{R}^2}L_n^2(\frac{b}{2}|\bt|^2)\exp(-\frac{b}{2}|\bt|^2)d\bt\\
&=\frac{b}{2\pi^2}\int_{\mathbb{R}^2}|f|^2(\bs)d\bs\cdot\int_{\mathbb{R}^2}L_n^2(|\bt|^2)\exp(-|\bt|^2)d\bt\\
&=\frac{b}{2\pi}\int_{\mathbb{R}^2}|f|^2(\bs)d\bs\cdot\int_0^{\infty}L_n^2(u)\exp(-u)du=\frac{b}{2\pi}\int_{\mathbb{R}^2}|f|^2(\bs)d\bs.
\end{align*}
Here, the last equality is guaranteed by \cite[22.2.13]{Abr_Stegun}.
\end{proof}

We define the measure $\nu$ on the set $b+2b\mathbb{Z}_+$ by assigning the weight $2b$ to every atom. We immediately derive the following result, which ensures that the Hypothesis \ref{abs_Cwikel_hyp} is satisfied. One should also compare this lemma with Lemma \ref{nc_plane_l2_cwikel}.

\begin{lemma} For every $f\in L_2(\mathbb{R}^d)$ and for every $g\in L_2(b+2b\mathbb{Z}_+,\nu),$ we have $M_fg(-\Delta_b)\in \cL_2(L_2(\bbR^2))$ and
$$\|M_fg(-\Delta_b)\|_2=\frac1{(2\pi)^{\frac12}}\|f\|_2\|g\|_2.$$
\end{lemma}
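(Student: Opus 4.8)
The statement to be proved is that for every $f\in L_2(\bbR^d)$ and $g\in L_2(b+2b\bbZ_+,\nu)$ the operator $M_fg(-\Delta_b)$ lies in $\cL_2(L_2(\bbR^2))$ with $\|M_fg(-\Delta_b)\|_2=\frac1{(2\pi)^{1/2}}\|f\|_2\|g\|_2$. The natural approach is to decompose $g(-\Delta_b)$ along the eigenprojections $P_n$ of the magnetic Laplacian and reduce to the single-projection estimate already proved in Lemma \ref{magnetic_projection}. Write $g=\sum_{n\in\bbZ_+} g(b(2n+1))\chi_{\{b(2n+1)\}}$; then, by the spectral theorem for $-\Delta_b$, $g(-\Delta_b)=\sum_n g(b(2n+1))P_n$ with convergence in the strong operator topology (and since the $P_n$ are mutually orthogonal with $\sum_n P_n=1$, this decomposition is orthogonal).

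The key point is that the operators $M_fP_n$, $n\in\bbZ_+$, are pairwise right-orthogonal: indeed $P_nP_m=0$ for $n\neq m$, so $(M_fP_n)^*(M_fP_m)=P_nM_{|f|^2}P_m$ and, more to the point, $(M_fP_n)(M_fP_m)^*$ and the Hilbert--Schmidt inner products $\langle M_fP_n, M_fP_m\rangle_2 = \tr(P_mM_{|f|^2}P_n)=\tr(P_nP_mM_{|f|^2})=0$ vanish whenever $n\neq m$. Here I use that $M_fP_n\in\cL_2$ by Lemma \ref{magnetic_projection} together with the trace identity \eqref{tr_AB}. Consequently
$$\Big\|M_f\sum_{n=0}^{N} g(b(2n+1))P_n\Big\|_2^2=\sum_{n=0}^{N}|g(b(2n+1))|^2\,\|M_fP_n\|_2^2=\frac{b}{2\pi}\|f\|_2^2\sum_{n=0}^{N}|g(b(2n+1))|^2,$$
using Lemma \ref{magnetic_projection}. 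Since each atom of $\nu$ carries weight $2b$, we have $\sum_{n}|g(b(2n+1))|^2\cdot 2b=\|g\|_{L_2(b+2b\bbZ_+,\nu)}^2$, so the right-hand side equals $\frac1{2\pi}\|f\|_2^2\|g\|_2^2$ in the limit; in particular the partial sums form a Cauchy sequence in $\cL_2(L_2(\bbR^2))$, hence converge there to some $T$. It remains to identify $T$ with $M_fg(-\Delta_b)$: the partial sums converge to $M_fg(-\Delta_b)$ in the strong operator topology (because $\sum_{n=0}^N g(b(2n+1))P_n\to g(-\Delta_b)$ strongly by the spectral theorem and $M_f$ is closed/affiliated), and $\cL_2$-convergence implies weak-operator convergence, so the two limits agree. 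This yields $M_fg(-\Delta_b)=T\in\cL_2$ and $\|M_fg(-\Delta_b)\|_2=\frac1{(2\pi)^{1/2}}\|f\|_2\|g\|_2$.

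The main obstacle is the bookkeeping around convergence and the weight normalization: one must be careful that the measure $\nu$ assigns weight $2b$ to each atom (so $\|g\|_2^2=2b\sum_n|g(b(2n+1))|^2$), that Lemma \ref{magnetic_projection} contributes the factor $\frac{b}{2\pi}$ per projection, and that these combine to the clean constant $\frac1{2\pi}$ independent of $b$. The orthogonality step is routine given \eqref{tr_AB}, and the identification of the $\cL_2$-limit with $M_fg(-\Delta_b)$ is standard once one notes strong-operator convergence of the spectral partial sums; no delicate estimate beyond Lemma \ref{magnetic_projection} is required.
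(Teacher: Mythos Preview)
Your proposal is correct and follows essentially the same approach as the paper: decompose $g(-\Delta_b)$ along the eigenprojections $P_n$, use orthogonality to reduce $\|M_fg(-\Delta_b)\|_2^2$ to $\sum_n|g(b(2n+1))|^2\|M_fP_n\|_2^2$, and apply Lemma~\ref{magnetic_projection} together with the weight $2b$ of $\nu$ to obtain the constant $\frac1{2\pi}$. The paper's proof is terser and does not spell out the orthogonality via \eqref{tr_AB} or the $\cL_2$-convergence and identification of the limit; your treatment of these points is more careful but not a different argument.
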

\begin{proof}
Since the spectrum of the operator $-\Delta_b$ is $\{b+2bn\}_{n\in\bbZ_+}$ we have that 
$$M_fg(-\Delta_b)=\sum_{n\in\bbZ_+} M_fg(b+2bn)P_n.$$
Therefore, by Lemma \ref{magnetic_projection} we obtain 
\begin{align*}
\|M_fg(-\Delta_b)\|_2^2&=\sum_{n\in\bbZ_+}|g(b+2bn)|^2\|M_fP_n\|_2^2=\|f\|_2^2\sum_{n\in\bbZ_+}\frac{b}{2\pi}|g(b+2bn)|^2\\
&=\frac1{2\pi}\|f\|_2^2\|g\|_2^2.
\end{align*}
\end{proof}
Hence, referring to Theorem \ref{abs_Cwikel_main_thm} we obtain 
\begin{theorem}\label{magnetic_Cwikel}
Let $E(0,\infty)$ be an interpolation space for $(L_2(0,\infty), L_\infty(0,\infty))$. If $f\otimes g\in E( L_\infty(\bbR^2)\otimes \ell_\infty(b+2b\bbZ_+, \nu)),$ then
$M_fg(-\Delta_b)\in E(L_2(\bbR^2))$ and
$$\|M_fg(-\Delta_b)\|_E\leq \const \|f\otimes g\|_{E( L_\infty(\bbR^2)\otimes \ell_\infty(b+2b\bbZ_+, \nu))}.$$ 
\end{theorem}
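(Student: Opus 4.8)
The plan is to obtain Theorem~\ref{magnetic_Cwikel} as a direct application of the abstract Cwikel estimate in Corollary~\ref{abs_Cwikel_main_thm}. What is needed is to exhibit two semifinite von Neumann algebras represented on the common Hilbert space $\cH = L_2(\bbR^2)$, each equipped with a faithful normal semifinite trace, and to check that Hypothesis~\ref{abs_Cwikel_hyp} holds for this pair.

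First I would take $\cA_1 = L_\infty(\bbR^2)$, with $\tau_1$ Lebesgue integration and $\pi_1(f) = M_f$, and $\cA_2 = \ell_\infty(b+2b\bbZ_+,\nu)$, with $\tau_2$ integration against $\nu$ (so that every atom has finite mass $2b$ and $\tau_2$ is semifinite) and with $\pi_2$ the Borel functional calculus $\pi_2(g) = g(-\Delta_b) = \sum_{n\in\bbZ_+} g(b+2bn) P_n$. Since the spectrum of $-\Delta_b$ is exactly $b+2b\bbZ_+$, the map $\pi_2$ is a normal, unital, injective $*$-representation of $\cA_2$ on $L_2(\bbR^2)$ whose image is precisely the algebra of bounded functions of $-\Delta_b$; normality is automatic for functional calculus and faithfulness uses only that every Landau level $P_n$ is nonzero.

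Next I would invoke the lemma immediately preceding the theorem: for $f \in L_2(\bbR^2)$, i.e. $f \in \cL_2(\cA_1,\tau_1)$, and $g \in L_2(b+2b\bbZ_+,\nu)$, i.e. $g \in \cL_2(\cA_2,\tau_2)$, one has $M_f g(-\Delta_b) \in \cL_2(L_2(\bbR^2))$ with $\|M_f g(-\Delta_b)\|_2 = (2\pi)^{-1/2}\|f\|_2\|g\|_2 \le \|f\|_2\|g\|_2$. This is exactly Hypothesis~\ref{abs_Cwikel_hyp}, the admissible constant being $(2\pi)^{-1/2} \le 1$. With the hypotheses of Corollary~\ref{abs_Cwikel_main_thm} verified, and identifying $\cA_1 \bar\otimes \cA_2$ with $L_\infty(\bbR^2) \otimes \ell_\infty(b+2b\bbZ_+,\nu)$ so that $\|f\otimes g\|_{\cE(\cA_1\otimes\cA_2)}$ is the quantity $\|f\otimes g\|_{E(L_\infty(\bbR^2)\otimes\ell_\infty(b+2b\bbZ_+,\nu))}$ of the statement, the corollary yields at once that $M_f g(-\Delta_b) \in \cE(L_2(\bbR^2))$ and the claimed bound with $\const = C_E$.

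The argument is thus essentially a matter of bookkeeping, and the only point meriting attention — the "hard part", such as it is — is the identification of the symmetric operator spaces $\cE(\cH)$ and $\cE(\cA_1\otimes\cA_2)$, and of the relevant singular value functions, with the concrete objects $E(L_2(\bbR^2))$ and $E(L_\infty(\bbR^2)\otimes\ell_\infty(b+2b\bbZ_+,\nu))$ appearing in the statement, together with the verification that $\pi_2$ is a faithful normal semifinite representation. No estimate beyond the preceding lemma is required.
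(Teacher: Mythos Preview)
Your proposal is correct and follows exactly the paper's approach: the paper's proof consists of a single sentence invoking Corollary~\ref{abs_Cwikel_main_thm} after the preceding lemma has established Hypothesis~\ref{abs_Cwikel_hyp}. Your write-up is in fact more careful than the paper's, since you spell out the choice of $(\cA_1,\tau_1,\pi_1)$ and $(\cA_2,\tau_2,\pi_2)$ and note the faithfulness and normality of $\pi_2$, which the paper leaves implicit.
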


Note, that in the special case when $b=0,$ and $E=L_p(0,\infty),\, p\geq 2,$ or $E=L_{p,\infty}(0,\infty), p>2$, the result exactly coincides with original Cwikel estimates \cite{Cwikel} (see also \cite{Simon}).

\bigskip

\noindent 
{\bf Acknowledgements.} The authors are indebted to their colleagues Alan Carey, Victor Gayral, Steven Lord, Serge Richard, Georgi Raikov, Bruno Iochum   for helpful 
discussions and correspondence concerning various aspects of this paper.

\end{document}